\numberwithin{equation}{section}
\numberwithin{figure}{section}
\theoremstyle{plain}
\newtheorem{thm}{\protect\theoremname}[section]
  \theoremstyle{definition}
  \newtheorem{defn}[thm]{\protect\definitionname}
  \theoremstyle{plain}
  \newtheorem{prop}[thm]{\protect\propositionname}
  \theoremstyle{plain}
  \newtheorem{lem}[thm]{\protect\lemmaname}
  \theoremstyle{plain}
  \newtheorem{cor}[thm]{\protect\corollaryname}
  \providecommand{\corollaryname}{Corollary}
  \providecommand{\definitionname}{Definition}
  \providecommand{\lemmaname}{Lemma}
  \providecommand{\propositionname}{Proposition}
\providecommand{\theoremname}{Theorem}
\begin{document}

\title[magnetic Dirac operator]{Koszul complexes, Birkhoff normal form and the magnetic Dirac operator
 }

\author{Nikhil Savale}
\begin{abstract}
We consider the semi-classical Dirac operator coupled to a magnetic
potential on a large class of manifolds including all metric contact
manifolds. We prove a sharp local Weyl law and a bound on its eta
invariant. In the absence of a Fourier integral parametrix, the method
relies on the use of almost analytic continuations combined with the
Birkhoff normal form and local index theory.
\end{abstract}

\address{128 Hayes Healy, Department of Mathematics, University of Notre Dame,
Notre Dame 46556 IN.}

\email{nsavale@nd.edu}

\subjclass[2000]{35P20, 81Q20, 58J40, 58J28.}

\maketitle
$ $

\section{Introduction}

Semi-classical analysis concerns the study of the spectrum of semi-classical
($h$-)pseudodifferential operators $A_{h}:C^{\infty}\left(X\right)\rightarrow C^{\infty}\left(X\right)$,
$h\in\left(0,1\right]$, in the limit $h\rightarrow0$ and is now
the subject of several texts \cite{Dimassi-Sjostrand,GuilleminSternberg-Semiclassical,Ivrii2013-newbook,Ivrii98-oldbook,Fedoriuk-Maslov,Robert-book,Zworski}.
Standard examples of such operators include the Schroedinger operator
$A_{h}=-h^{2}\Delta_{X}+V$ on a compact $n$-dimensional Riemannian
manifold $X$ with potential $V\in C^{\infty}\left(X\right)$. The
clearest asymptotic result is given by the celebrated local Weyl law
(cf. eg. \cite{Dimassi-Sjostrand} Ch. 10): assuming $0$ is not a
critical value of the symbol $\sigma\left(A\right)=a\left(x,\xi\right)\in C^{\infty}\left(T^{*}X\right)$,
the number of eigenvalues $N\left(-ch,ch\right)$ of $A_{h}$ in the
interval $\left(-ch,ch\right)$ satisfies 
\begin{equation}
N\left(-ch,ch\right)=O\left(h^{-n+1}\right)\label{eq:local Weyl law}
\end{equation}
as $h\rightarrow0$, $\forall c>0$. Similar results also exist in
the case where $0$ is a Morse-Bott critical level for the symbol
(cf. \cite{Brummelhuis-Paul-Uribe}). In the critical case, the exponent
in the local Weyl law may drop depending on the co-dimension of zero
energy level $\Sigma_{0}^{A}\coloneqq\left\{ a\left(x,\xi\right)=0\right\} $
and the signature of the normal Hessian. The local Weyl laws thus
obtained are sharp and are proved using a parametrix construction
for the evolution operator $e^{\frac{it}{h}A_{h}}$ as a Fourier integral
operator. 

In the context of non-scalar operators $A_{h}:C^{\infty}\left(X;E\right)\rightarrow C^{\infty}\left(X;E\right)$
acting on sections of a vector bundle $E$, fewer result are known.
The simplest case is when the non-scalar symbol $a\left(x,\xi\right)\in C^{\infty}\left(T^{*}X;E\right)$
is smoothly diagonalizable near the zero energy level $\Sigma_{0}^{A}=\left\{ \det\left(a\left(x,\xi\right)\right)=0\right\} $.
In this case similar Fourier integral methods apply (cf. \cite{Emmrich-Weinstein,Fedoriuk-Maslov}
or \cite{Guillemin,Sandoval} for an exposition in the microlocal/classical
setting). For non-scalar operators another method is provided under
the microhyperbolicity condition of Ivrii (cf. \cite{Ivrii98-oldbook}
Ch. 2,3 or \cite{Dimassi-Sjostrand} Ch. 12). In this paper, we study
the particular case of the magnetic Dirac operator where neither diagonalizability
nor the microhyperbolicity condition is satisfied.

More precisely, let $\left(X,g^{TX}\right)$ be an oriented Riemannian
manifold of odd dimension $n=2m+1$ equipped with a spin structure.
Let $S$ be the corresponding spin bundle and let $L$ be an auxiliary
Hermitian line bundle. Fix a unitary connection $A_{0}$ on $L$ and
let $a\in\Omega^{1}\left(X;\mathbb{R}\right)$ be a one form. This
gives a family of unitary connections on $L$ via $\nabla^{h}=A_{0}+\frac{i}{h}a$
and a corresponding family of coupled magnetic Dirac operators 
\begin{equation}
D_{h}\coloneqq hD_{A_{0}}+ic\left(a\right)\label{eq:Semiclassical Magnetic Dirac}
\end{equation}
for $h\in\left(0,1\right]$. 

In order to derive sharp spectral asymptotics, we shall make a couple
of restrictive assumptions on the one form $a$ and the metric $g^{TX}$.
First, the one form $a$ will be assumed to be a contact one form
(i.e. one satisfying $a\wedge\left(da\right)^{m}>0$). This gives
rise to the contact hyperplane $H=\textrm{ker}\left(a\right)\subset TX$
as well as the Reeb vector field $R$ defined via $i_{R}da=0$, $i_{R}a=1$. 

To state the assumption on the metric, consider the contracted endomorphism
$\mathfrak{J}:T_{x}X\rightarrow T_{x}X$ defined at each point $x\in X$
via 
\[
da\left(v_{1},v_{2}\right)=g^{TX}\left(v_{1},\mathfrak{J}v_{2}\right),\quad\forall v_{1},v_{2}\in T_{x}X.
\]
From the contact assumption, $\mathfrak{J}$ has a one dimensional
kernel spanned by the Reeb vector field $R$. The endomorphism $\mathfrak{J}$
is clearly anti-symmetric with respect to the metric 
\[
g^{TX}\left(v_{1},\mathfrak{J}v_{2}\right)=-g^{TX}\left(\mathfrak{J}v_{1},v_{2}\right)
\]
and hence its non-zero eigenvalues come in purely imaginary pairs
$\pm i\mu$ ; $\mu>0$. The assumption on the metric $g^{TX}$is then
as follows. 
\begin{defn}
\label{def: Diagonalizability assumption}We say that the metric $g^{TX}$
is $suitable$ to the contact form $a$ if there exist positive constants
$0<\mu_{1}\leq\mu_{2}\leq\ldots\leq\mu_{m}$ (independent of $x\in X$)
and a positive real function $\nu\left(x\right)>0$ such that 
\begin{equation}
\textrm{Spec}\left(\mathfrak{J}_{x}\right)=\left\{ 0,\pm i\mu_{1}\nu\left(x\right),\pm i\mu_{2}\nu\left(x\right),\ldots,\pm i\mu_{m}\nu\left(x\right)\right\} \label{eq:Diagonalizability assumption-1}
\end{equation}
$\forall x\in X$. 
\end{defn}
Before proceeding further, we give two examples of suitable metrics.
\begin{enumerate}
\item The dimension of the manifold $\textrm{dim }X=3$. In this case any
metric $g^{TX}$ is suitable as $\textrm{Spec}\left(\mathfrak{J}_{x}\right)=\left\{ 0,\pm i\left|da\right|\right\} $
has only two non-zero eigenvalues.
\item There is a smooth endomorphism $J:TX\rightarrow TX$, such that\\
 $\left(X^{2m+1},a,g^{TX},J\right)$ is a metric contact manifold.
That is, we have 
\begin{eqnarray}
J^{2}v_{1} & = & -v_{1}+a\left(v_{1}\right)R,\nonumber \\
g^{TX}\left(v_{1},Jv_{2}\right) & = & da\left(v_{1},v_{2}\right),\quad\forall v_{1},v_{2}\in T_{x}X.\label{eq: metric contact structure}
\end{eqnarray}
 In this case the nonzero eigenvalues of $\mathfrak{J}_{x}=J_{x}$
are $\pm i$ (each with multiplicity $m$). For any given contact
form $a$ there exists an infinite dimensional space of $\left(g^{TX},J\right)$
satisfying \prettyref{eq: metric contact structure}. This case in
particular includes all strictly pseudo-convex CR manifolds.
\end{enumerate}
In addition to the local Weyl law we shall also be interested in the
asymptotics of the eta invariant $\eta_{h}=\eta\left(D_{h}\right)$
of the Dirac operator, formally its signature (see \prettyref{sub:Spectral-invariants-of}
for a definition) . The main result is now stated as follows.
\begin{thm}
\label{thm: asmptotics spectral invariants}Under the contact and
suitability assumptions on $a,g^{TX}$, the local Weyl counting function
and eta invariant of $D_{h}$ satisfy the sharp asymptotics
\begin{eqnarray}
N\left(-ch,ch\right) & = & O\left(h^{-m}\right)\label{eq:local Weyl counting function est}\\
\eta_{h} & = & O\left(h^{-m}\right)\label{eq: eta estimate}
\end{eqnarray}
as $h\rightarrow0$.
\end{thm}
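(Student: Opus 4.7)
My plan is to control both spectral invariants via a microlocal analysis concentrated on the characteristic variety $\Sigma = \{(x,\xi) \in T^*X : \xi + a(x) = 0\}$, the image of $-a$, which is an $n$-dimensional submanifold on which the pulled-back symplectic form $\omega|_\Sigma = -da$ has constant rank $2m$ with one-dimensional kernel along the Reeb direction. The principal symbol $c(\xi+a)$ of $D_h$ is elliptic off $\Sigma$, and its normal Hessian along $\Sigma$ is governed by the endomorphism $\mathfrak{J}$, whose spectrum is pinned down by the suitability assumption to be constant integer multiples of a single scalar $\nu(x)$. This rigidity is exactly what makes a global Birkhoff normal form available in the absence of microhyperbolicity or smooth diagonalizability.

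For the local Weyl law, I would reduce the count to a smoothed trace: picking $\chi \in \mathcal{S}(\mathbb{R})$ with $\chi \geq 1$ on $[-c,c]$ gives $N(-ch,ch) \leq \operatorname{Tr}\chi(D_h/h)$. Since no Fourier integral parametrix for $e^{itD_h/h}$ is available, I would evaluate $\chi(D_h/h)$ through the Helffer--Sj\"ostrand formula using an almost analytic extension $\tilde\chi$ and the resolvent $(z - D_h/h)^{-1}$. Outside a conic neighborhood of $\Sigma$, ellipticity produces the resolvent by the usual semi-classical calculus. In a tubular neighborhood of $\Sigma$, Birkhoff normal form would conjugate $D_h^2$ to a model $\sum_{j=1}^m \mu_j \nu(x) H_j^{(h)} + h R_h$ acting on the normal bundle of $\Sigma$, where $H_j^{(h)}$ are semi-classical harmonic oscillators in the symplectic normal directions and $R_h$ carries the Clifford/spin contribution. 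The Koszul complex of the Clifford symbol identifies the kernel of the leading model and pins down the spinor multiplicities. Counting oscillator levels below height $Ch$ in the $2m$ symplectic normal directions, integrated over the $n$-dimensional base and the Reeb direction, produces exactly $O(h^{-m})$.

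The eta invariant is handled through the heat-kernel representation
\[
\eta_h = \frac{1}{\sqrt\pi}\int_0^\infty t^{-1/2}\operatorname{Tr}\bigl(D_h e^{-tD_h^2}\bigr)\,dt,
\]
split at $t\sim h^{2}$. For $0<t\leq h^{2}$, I would apply Getzler-type rescaling and local index theory to extract the asymptotics of the integrand pointwise; the Clifford trace parity kills the divergent contributions and leaves an $O(h^{-m})$ remainder. For $t\geq h^{2}$, the almost analytic functional calculus applied to $s\mapsto se^{-ts^{2}}$, combined with the Weyl bound from the previous step and Gaussian decay of the spectral density, delivers the same $O(h^{-m})$ estimate.

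The main obstacle will be constructing the Birkhoff normal form when the principal symbol is matrix-valued and genuinely non-diagonalizable: I must iteratively solve cohomological equations against the degenerate Hamiltonian of $|\xi+a|^{2}$ while preserving unitarity and the spinor grading, and arrange that the normal form intertwiner carries the Koszul model across the whole tubular neighborhood of $\Sigma$. A secondary difficulty is the Reeb direction along which $da$ degenerates, which forces the microlocal cutoffs and resolvent bounds to be uniform along a non-transverse direction of $\Sigma$; this is precisely where the suitability hypothesis, by decoupling the $x$-dependence into the single factor $\nu(x)$, rescues the argument.
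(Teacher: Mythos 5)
Your plan shares the paper's toolbox---Birkhoff normal form via Koszul complexes, almost analytic continuations, and local index theory---but not its architecture, and the difference is consequential. The paper does not prove Theorem~\ref{thm: asmptotics spectral invariants} directly; it first establishes the two-scale trace expansion of Theorem~\ref{thm:main trace expansion}, in which the test function is $f(D/\sqrt h)\cdot\frac1h\check\theta\bigl(\tfrac{\lambda\sqrt h-D}{h}\bigr)$, and then deduces both spectral estimates from it by short formal arguments (Section~\ref{sec:Asymptotics-of-spectral invariants}). Your proposal instead tries to compute a single trace $\operatorname{Tr}\chi(D_h/h)$ at one scale via Helffer--Sj\"ostrand. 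This skips exactly the structural device that makes the analysis tractable: the paper splits the test function $\theta = \theta_\epsilon + \vartheta$ into a short-time piece (handled by local index theory, Lemma~\ref{lem: Easy trace expansion lemma}) and a medium-time piece (handled by almost analytic continuation of the Birkhoff-normal-form resolvent, Lemma~\ref{lem: O(h infty) LEMMA}). Without that breakup, you would need uniform control of $\check\chi(t)$ over all $t$ simultaneously, which neither of the two methods alone can provide. In particular, a general Schwartz $\chi$ does not have compactly supported $\hat\chi$, so the Paley--Wiener estimates \eqref{eq: Paley Wiener estimate} that underpin Lemma~\ref{lem: O(h infty) LEMMA} do not apply; you would at minimum need to take $\chi=\check\theta$ and re-introduce the very Fourier-support structure you set aside. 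The heuristic Landau-level count at the end of your Weyl argument lands on $O(h^{-m})$, but it is the trace expansion, not a direct phase-space count, that the paper actually invokes.

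For the eta invariant, the gap is sharper. The paper's key move at large time is to recognize
\[
\int_1^\infty \frac{dt}{\sqrt{\pi t}}\,\operatorname{tr}\!\left[\frac{D}{\sqrt h}e^{-\frac th D^2}\right] \;=\; \operatorname{tr} E\!\left(\frac{D}{\sqrt h}\right), \qquad E(x)=\operatorname{sign}(x)\operatorname{erfc}(|x|),
\]
and then to exploit the fact---established in Proposition~\ref{prop: structure trace distributions} and \eqref{eq: computation u0}---that the leading coefficient $u_0$ in the trace expansion is an \emph{even} distribution, while $E$ is odd, so that the would-be leading term $h^{-m-\frac12}\int E(\lambda)u_0(\lambda)\,d\lambda$ vanishes identically. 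This algebraic parity cancellation is the whole point; without it, the trace expansion alone gives only $O(h^{-m-\frac12})$. Your appeal to ``Clifford trace parity'' is in the right spirit, but you locate it in the small-time Getzler regime, where it controls the diagonal heat-kernel asymptotics, not the large-time integral where the actual cancellation must occur. Your proposed split at $t\sim h^2$ also differs from the paper's split at $t=1$ (after the rescaling $D\mapsto D/\sqrt h$), and your large-time bound via ``Gaussian decay of the spectral density'' is not enough: one needs the trace expansion applied to the non-Schwartz function $E$ (decomposed as $f+g$ with $f\in C_c^\infty$ supported inside $(-\sqrt{2\nu_0},\sqrt{2\nu_0})$) and the explicit evenness of $u_0$. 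As it stands, your outline re-entangles the hard microlocal work with the spectral deductions and omits the two precise ingredients---the two-scale test function with the time-scale breakup, and the parity of $u_0$---that carry the proof.
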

We note that the exponents above are significantly lower than \prettyref{eq:local Weyl law}.
This is again partly attributed to the high co-dimension of $\Sigma_{0}^{D}$. 

The proof of the asymptotic result \prettyref{thm: asmptotics spectral invariants}
above will be based on a functional trace expansion. To state the
trace expansion involved, set $\nu_{0}\coloneqq\mu_{1}\left[\min_{x\in X}\nu\left(x\right)\right]$
and choose $f\in C_{c}^{\infty}\left(-\sqrt{2\nu_{0}},\sqrt{2\nu_{0}}\right)$.
Pick real numbers $0<T'<T$ and let $\theta\in C_{c}^{\infty}\left(\left(-T,T\right);\left[0,1\right]\right)$
such that $\theta\left(x\right)=1$ on $\left(-T',T'\right)$. Let
\begin{eqnarray*}
\mathcal{F}^{-1}\theta\left(x\right) & \coloneqq & \check{\theta}\left(x\right)=\frac{1}{2\pi}\int e^{ix\xi}\theta\left(\xi\right)d\xi\\
\mathcal{F}_{h}^{-1}\theta\left(x\right) & \coloneqq & \frac{1}{h}\check{\theta}\left(\frac{x}{h}\right)=\frac{1}{2\pi h}\int e^{\frac{i}{h}x\xi}\theta\left(\xi\right)d\xi
\end{eqnarray*}
be its classical and semi-classical inverse Fourier transforms respectively.
We shall then prove. 
\begin{thm}
\label{thm:main trace expansion}Let $a,g^{TX}$ be a contact form
and suitable metric respectively. There exist smooth functions $u_{j}\in C^{\infty}\left(\mathbb{R}\right)$
such that there is a trace expansion 
\begin{align}
\textrm{tr}\left[f\left(\frac{D}{\sqrt{h}}\right)\left(\mathcal{F}_{h}^{-1}\theta\right)\left(\lambda\sqrt{h}-D\right)\right] & =\label{eq: Main trace expansion}\\
\textrm{tr}\left[f\left(\frac{D}{\sqrt{h}}\right)\frac{1}{h}\check{\theta}\left(\frac{\lambda\sqrt{h}-D}{h}\right)\right] & =h^{-m-1}\left(\sum_{j=0}^{N-1}f\left(\lambda\right)u_{j}\left(\lambda\right)h^{j/2}+O\left(h^{N/2}\right)\right)
\end{align}
 for $T$ sufficiently small and for each $N\in\mathbb{N}$,$\lambda\in\mathbb{R}$. 
\end{thm}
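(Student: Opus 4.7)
The starting point is to rewrite the second factor in the trace as a Fourier integral,
\[
\frac{1}{h}\check\theta\!\left(\frac{\lambda\sqrt{h}-D}{h}\right) = \frac{1}{2\pi h}\int_{-T}^{T} e^{it\lambda/\sqrt{h}}\,e^{-itD/h}\,\theta(t)\,dt,
\]
so that the trace in \prettyref{eq: Main trace expansion} becomes
\[
\frac{1}{2\pi h}\int_{-T}^{T} e^{it\lambda/\sqrt{h}}\,\theta(t)\,\textrm{tr}\!\left[f\!\left(\frac{D}{\sqrt{h}}\right)e^{-itD/h}\right]dt.
\]
The cutoff $f(D/\sqrt{h})$ localizes spectrally to $|D|\lesssim \sqrt{h}$ and, via elliptic estimates, microlocalizes to an $O(\sqrt{h})$-tube around the characteristic variety $\Sigma_0^D$; the complementary contribution is $O(h^\infty)$. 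Rescaling the normal directions by $\sqrt{h}$ exhibits $D/\sqrt{h}$ on this tube as a harmonic-oscillator-type model operator with fibre frequencies $\mu_j\nu(x)$, perturbed by terms of order $\sqrt{h}$.

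The heart of the argument is a Birkhoff normal form on this tube. Because the principal symbol is neither scalar nor smoothly diagonalizable, the standard Weinstein--Guillemin--Uribe diagonalization is unavailable, and is replaced (as the title of the paper suggests) by the Koszul complex built from the Clifford action on the normal bundle to $\Sigma_0^D$. This complex furnishes a resolution against which the cohomological equations at each step of the normal-form iteration can be solved, while the suitability hypothesis \prettyref{eq:Diagonalizability assumption-1} guarantees constant ratios $\mu_j/\mu_k$ and thereby rules out variable resonances. The outcome is a microlocal unitary $U_h$ with $U_h^* D\, U_h \equiv D_0 + O(h^{N/2})$, where $D_0$ is polynomial in commuting harmonic oscillators with base-dependent frequencies $\mu_j\nu(x)$.

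With the normal form in hand, $\textrm{tr}[f(D_0/\sqrt{h})\,e^{-itD_0/h}]$ decomposes fibrewise into an integral over $X$ of oscillator traces, each a discrete sum of Landau-type exponentials in $t$ whose amplitudes admit full asymptotic expansions in $\sqrt{h}$. The smallness of $T$ excludes long returning trajectories of the subprincipal (Reeb) flow, so only the trivial period contributes; reinserting into the Fourier integral in $t$ and performing the remaining integration by stationary phase collects the terms into the desired form $h^{-m-1}\sum f(\lambda)u_j(\lambda)h^{j/2}$, with the overall factor coming from the $h^{-1}$ prefactor together with the rescaled normal volume. Two remaining ingredients close the argument. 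First, because a conventional FIO parametrix for $e^{-itD/h}$ is unavailable, $f(D/\sqrt{h})$ is handled via the Helffer--Sj\"ostrand formula with an almost analytic extension $\tilde f$, turning it into a resolvent integral compatible with $U_h$. Second, local index theory applied to the model operator identifies the leading coefficient $u_0(\lambda)$ and establishes smoothness of all $u_j$. The main obstacle is the Birkhoff normal form step: organising the Koszul resolution so that the iteration closes at the unusual half-power scale $h^{N/2}$ (rather than $h^N$) under only the suitability hypothesis requires a careful bookkeeping of the cohomological argument and is the technical crux.
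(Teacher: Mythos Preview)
Your proposal assembles the correct ingredients---Koszul complexes for the Birkhoff normal form, almost analytic continuations, local index theory---but assigns them roles nearly opposite to those in the paper, and this inversion hides a genuine difficulty.

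The paper's architecture is a time-scale split you do not make: write $\theta=\theta_\epsilon+(\theta-\theta_\epsilon)$ with $\theta_\epsilon(x)=\theta(x/h^\epsilon)$, $\epsilon\in(0,\tfrac12)$, and handle the two pieces by entirely different methods (Lemmas~\ref{lem: O(h infty) LEMMA} and~\ref{lem: Easy trace expansion lemma}). The \emph{entire} asymptotic expansion, all the $u_j$ and not just $u_0$, comes from the short-time piece $\theta_\epsilon$ via local index theory alone (Section~\ref{sec:Local trace expansion}): finite propagation speed localizes to a geodesic ball, the rescaling $x\mapsto x\sqrt h$ turns $D/\sqrt h$ into a Taylor series $\sum h^{j/2}\mathtt{D}_j$ of explicit model operators, and the resolvent expansion of this series plugged into Helffer--Sj\"ostrand produces the $u_j$ directly. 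The Birkhoff normal form plays no role here. Conversely, for the medium-time piece supported in $\{h^\epsilon T'\le|t|\le T\}$ the goal is not to extract an expansion but to prove the contribution is $O(h^\infty)$. The normal form is used only to obtain enough control on the symbol that the resolvent $(h^{-1/2}\bar D_{i\gamma}-z)^{-1}$ extends holomorphically into a strip below the real axis (Lemma~\ref{lem: holomorph continuation resolvent}); a contour shift in the Helffer--Sj\"ostrand integral combined with Paley--Wiener decay of $\check\vartheta$ then kills this piece. There is no stationary phase and no direct evaluation of oscillator traces.

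Your direct route---conjugate $D$ to a model $D_0$, evaluate $\textrm{tr}[f(D_0/\sqrt h)e^{-itD_0/h}]$ fibrewise, then integrate in $t$---is plausible in outline but has a gap you have not addressed: the normal form is only asymptotic, $U_h^*DU_h=D_0+R_N$ with $R_N=O(h^{N/2})$ in symbol norm, and propagating such a remainder through $e^{-itD/h}$ over a \emph{fixed} time window $|t|\le T$ is precisely where the absence of an FIO parametrix bites. The paper sidesteps this by never invoking the propagator after the normal form: the remainder $c_0^W(\bar r)$ vanishes to infinite order along $\Sigma_0^{\bar D}$, so on the spectral window $[-\sqrt{2\nu_0 h},\sqrt{2\nu_0 h}]$ it contributes $O(h^{N/2})$ for every $N$ via the resolvent (Proposition~\ref{prop: Reduction to normal form operator}), and the actual dynamics are confined to the short-time regime where rescaling and finite propagation suffice.
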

Again, the trace \prettyref{eq: Main trace expansion} should be compared
with the wave trace expansions for scalar and microhyperbolic operators
(\cite{Dimassi-Sjostrand} ch. 10, 12) although a different scale
of size $\sqrt{h}$ is being used. In the absence of a Fourier integral
parametrix or microhyperbolicity our strategy is to combine the use
of almost analytic continuations with local index theory expansions.
We first show that the trace is $O\left(h^{\infty}\right)$ in the
region $\textrm{spt}\left(\theta\right)\subset\left\{ T>\left|x\right|\geq h^{\epsilon}\right\} $,
$\epsilon\in\left(0,\frac{1}{2}\right)$ (see \prettyref{lem: O(h infty) LEMMA}).
Here the the lack of microhyperbolicity for the symbol poses a difficulty
in the use of almost analytic continuations (cf. \cite{Dimassi-Sjostrand}
ch. 12, see also \cite{Dimassi-Sjostrand--article}). We however show
that this can be overcome with a closer understanding of the total
symbol of $D$ via its Birkhoff normal form. It is in deriving the
Birkhoff normal form then that Koszul complexes are used and the assumptions
on $a,g^{TX}$ required. The local index theory method (cf. \cite{Bismut,Ma-Marinescu})
finally provides the expansion in the region $\textrm{spt}\left(\theta\right)\subset\left\{ \left|x\right|<h^{\epsilon}\right\} $
(see \prettyref{lem: Easy trace expansion lemma}).

There is a large recent literature for semi-classical problems in
the presence of magnetic fields (see \cite{Helffer-Kordyukov} for
a survey). In particular the extensive book of Ivrii \cite{Ivrii2013-newbook}
specifically considers the case of the magnetic Dirac operator in
ch. 17. The Birkhoff normal form here \prettyref{eq: normal form d1}
generalizes proposition 17.2.1 therein. Our use of normal forms should
also be compared to its use in scalar cases from \cite{Charles-Ngoc,Helffer-Kordyukov-Raymond-Ngoc,Ngoc-Raymond}. 

The asymptotic problem of the eta invariant \prettyref{eq: eta estimate}
was earlier considered by the author in \cite{Savale-Asmptotics}
where a non-sharp estimate was proved, under no assumptions on $a,g^{TX}$,
via the use of the heat trace. This asymptotic problem was first considered
and applied in \cite{Taubes-Weinstein} in the proof of the three-dimensional
Weinstein conjecture using Seiberg-Witten theory. The three-dimensional
case has been further explored in \cite{Tsai-thesis-paper}. 

The paper is organized as follows. In \prettyref{sec:Preliminaries}
begin with preliminary notions used throughout the paper including
basic facts about Clifford representations, Dirac operators and the
semi-classical calculus. In \prettyref{sub:Magnetic Dirac operator Rm}
we we compute the spectrum of a model magnetic Dirac operator on $\mathbb{R}^{m}$
using Clifford representations and the harmonic oscillator. In \prettyref{sec:First Reductions}
we perform certain reductions towards proving \prettyref{thm:main trace expansion}
including a time scale breakup of the trace into \prettyref{lem: O(h infty) LEMMA}
and \prettyref{lem: Easy trace expansion lemma}. These reductions
are then used in \prettyref{sec:Reduction to R^n} to further reduce
\prettyref{lem: O(h infty) LEMMA} to the case of a Euclidean magnetic
Dirac operator on $\mathbb{R}^{n}$. In \prettyref{sec: Birkhoff normal form}
we obtain the Birkhoff normal form for the Euclidean magnetic Dirac
operator on $\mathbb{R}^{n}$ from \prettyref{sec:Reduction to R^n}.
It is here in \prettyref{sub: Weyl product and Koszul} that Koszul
complexes are employed for the normal form. In \prettyref{sec: Extension of a resolvent}
we show how the normal form is used in proving \prettyref{lem: O(h infty) LEMMA}
via the use of almost analytic continuations. In \prettyref{sec:Local trace expansion}
we prove \prettyref{lem: Easy trace expansion lemma} using the methods
of local index theory. In \prettyref{sec:Asymptotics-of-spectral invariants}
we show how to prove the spectral estimates of \prettyref{thm: asmptotics spectral invariants}
via the trace expansion \prettyref{thm:main trace expansion}. Finally,
in \prettyref{sec:Appendix A} we prove some spectral estimates useful
in \prettyref{sec:Reduction to R^n} and \prettyref{sec: Birkhoff normal form}.

\section{\label{sec:Preliminaries}Preliminaries}

\subsection{\label{sub:Spectral-invariants-of}Spectral invariants of the Dirac
operator}

\noindent Here we review the basic facts about Dirac operators used
throughout the paper with \cite{Berline-Getzler-Vergne} providing
a standard reference. Consider a compact, oriented, Riemannian manifold
$\left(X,g^{TX}\right)$ of odd dimension $n=2m+1$. Let $X$ be equipped
with spin structure, i.e. a principal $\textrm{Spin}\left(n\right)$
bundle $\textrm{Spin}\left(TX\right)\rightarrow SO\left(TX\right)$
with an equivariant double covering of the principal $SO\left(n\right)$-bundle
of orthonormal frames $SO\left(TX\right)$. The corresponding spin
bundle $S=\textrm{Spin}\left(TX\right)\times_{\textrm{Spin}\left(n\right)}S_{2m}$
is associated to the unique irreducible representation of $\textrm{Spin}\left(n\right)$.
Let $\nabla^{TX}$ denote the Levi-Civita connection on $TX$. This
lifts to the spin connection $\nabla^{S}$ on the spin bundle $S$.
The Clifford multiplication endomorphism $c:T^{*}X\rightarrow S\otimes S^{*}$
may be defined (see \prettyref{sub:Clifford algebra}) satisfying
\begin{align*}
c(a)^{2}=-|a|^{2}, & \quad\forall a\in T^{*}Y.
\end{align*}
Let $L$ be a Hermitian line bundle on $Y$. Let $A_{0}$ be a fixed
unitary connection on $L$ and let $a\in\Omega^{1}(Y;\mathbb{R})$
be a 1-form on $Y$. This gives a family $\nabla^{h}=A_{0}+\frac{i}{h}a$
of unitary connections on $L$. We denote by $\nabla^{S\otimes L}=\nabla^{S}\otimes1+1\otimes\nabla^{h}$
the tensor product connection on $S\otimes L.$ Each such connection
defines a coupled Dirac operator 
\begin{align*}
D_{h}\coloneqq hD_{A_{0}}+ic\left(a\right)=hc\circ\left(\nabla^{S\otimes L}\right):C^{\infty}(Y;S\otimes L)\rightarrow C^{\infty}(Y;S\otimes L)
\end{align*}
for $h\in\left(0,1\right]$. Each Dirac operator $D_{h}$ is elliptic
and self-adjoint. It hence possesses a discrete spectrum of eigenvalues. 

We define the eta function of $D_{h}$ by the formula
\begin{align}
\eta\left(D_{h},s\right)\coloneqq & \sum_{\begin{subarray}{l}
\quad\:\lambda\neq0\\
\lambda\in\textrm{Spec}\left(D_{h}\right)
\end{subarray}}\textrm{sign}(\lambda)|\lambda|^{-s}=\frac{1}{\Gamma\left(\frac{s+1}{2}\right)}\int_{0}^{\infty}t^{\frac{s-1}{2}}\textrm{tr}\left(D_{h}e^{-tD_{h}^{2}}\right)dt.\label{eq:eta invariant definition}
\end{align}
Here, and in the remainder of the paper, we use the convention that
$\textrm{Spec}(D_{h})$ denotes a multiset with each eigenvalue of
$D_{h}$ being counted with its multiplicity. The above series converges
for $\textrm{Re}(s)>n.$ It was shown in \cite{APSI,APSIII} that
the eta function possesses a meromorphic continuation to the entire
complex $s$-plane and has no pole at zero. Its value at zero is defined
to be the eta invariant of the Dirac operator
\[
\eta_{h}\coloneqq\eta\left(D_{h},0\right).
\]
By including the zero eigenvalue in \prettyref{eq:eta invariant definition},
with an appropriate convention, we may define a variant known as the
reduced eta invariant by 
\begin{align*}
\bar{\eta}_{h}\coloneqq & \frac{1}{2}\left\{ k_{h}+\eta_{h}\right\} .
\end{align*}

The eta invariant is unchanged under positive scaling
\begin{equation}
\eta\left(D_{h},0\right)=\eta\left(cD_{h},0\right);\quad\forall c>0.\label{eq: eta scale invariant}
\end{equation}
Let $L_{t,h}$ denote the Schwartz kernel of the operator $D_{h}e^{-tD_{h}^{2}}$
on the product $X\times X$. Throughout the paper all Schwartz kernels
will be defined with respect to the Riemannian volume density. Denote
by $\textrm{tr}\left(L_{t,h}\left(x,x\right)\right)$ the point-wise
trace of $L_{t,h}$ along the diagonal. We may now analogously define
the function 
\begin{align}
\eta\left(D_{h},s,x\right)= & \frac{1}{\Gamma\left(\frac{s+1}{2}\right)}\int_{0}^{\infty}t^{\frac{s-1}{2}}\textrm{tr}\left(L_{t,h}\left(x,x\right)\right)dt.\label{eq:eta function diagonal}
\end{align}
In \cite{Bismut-Freed-II} theorem 2.6, it was shown that for $\textrm{Re}(s)>-2$,
the function $\eta\left(D_{h},s,x\right)$ is holomorphic in $s$
and smooth in $x$. From \prettyref{eq:eta function diagonal} it
is clear that this is equivalent to 
\begin{align}
\textrm{tr}\left(L_{t,h}\right)= & O\left(t^{\frac{1}{2}}\right),\quad\textrm{as}\:t\rightarrow0.\label{eq:pointwise trace asymp as t->0}
\end{align}
The eta invariant is then given by the convergent integral 
\begin{equation}
\eta_{h}=\int_{0}^{\infty}\frac{1}{\sqrt{\pi t}}\textrm{tr}\left(D_{h}e^{-tD_{h}^{2}}\right)dt.\label{eq: eta integral}
\end{equation}

\subsection{\label{sub:Clifford algebra}Clifford algebra and and its representations}

Here we review the construction of the spin representation of the
Clifford algebra. The following being standard, is merely used to
setup our conventions and subsequently compute the spectrum of the
model magnetic Dirac operator on $\mathbb{R}^{m}$ in \prettyref{sub:Magnetic Dirac operator Rm}. 

Consider a real vector space $V$ of even dimension $2m$ with metric
$\left\langle ,\right\rangle $. Recall that its Clifford algebra
$Cl\left(V\right)$ is defined as the quotient of the tensor algebra
$T\left(V\right):=\oplus_{j=0}^{\infty}V^{\otimes j}$ by the ideal
generated from the relations $v\otimes v+\left|v\right|^{2}=0$. Fix
a compatible almost complex structure $J$ and split $V\otimes\mathbb{C}=V^{1,0}\oplus V^{0,1}$
into the $\pm i$ eigenspaces of $J$. The complexification $V\otimes\mathbb{C}$
carries an induced $\mathbb{C}$-bilinear inner product $\left\langle ,\right\rangle _{\mathbb{C}}$
as well as an induced Hermitian inner product $h^{\mathbb{C}}\left(,\right)$.
Next, define $S_{2m}=\Lambda^{*}V^{1,0}$. Clearly $S_{2m}$ is a
complex vector space of dimension $2^{m}$ on which the unique irreducible
(spin)-representation of the Clifford algebra $Cl\left(V\right)\otimes\mathbb{C}$
is defined by the rule
\[
c_{2m}\left(v\right)\omega=\sqrt{2}\left(v^{1,0}\wedge\omega-\iota_{v^{0,1}}\omega\right),\quad v\in V,\omega\in S_{2m}.
\]
The contraction above is taken with respect to $\left\langle ,\right\rangle _{\mathbb{C}}$.
It is clear that $c_{2m}\left(v\right):\Lambda^{\textrm{even/odd}}\rightarrow\Lambda^{\textrm{odd/even}}$
switches the odd and even factors. For the Clifford algebra $Cl\left(W\right)\otimes\mathbb{C}$
of an odd dimensional vector space $W=V\oplus\mathbb{R}\left[e_{0}\right]$
there are exactly two irreducible representations. These two (spin)-representations
$S_{2m+1}^{+}=S_{2m+1}^{-}=\Lambda^{*}V^{1,0}$ are defined via 
\begin{eqnarray}
c_{2m+1}^{\pm}\left(v\right) & = & c_{2m}\left(v\right),\quad v\in V\nonumber \\
c_{2m+1}^{+}\left(e_{0}\right)\omega_{\textrm{even/odd}} & = & -c_{2m+1}^{-}\left(e_{0}\right)\omega_{\textrm{even/odd}}=\pm i\omega_{\textrm{even/odd}}.\label{eq:odd clifford representation}
\end{eqnarray}
Throughout the rest of the paper, we stick with the positive convention
and use the shorthands $c=c_{2m}$, $c=c_{2m+1}^{+}$ when the index
$2m$, $2m+1$ implicitly understood.

Pick an orthonormal basis $e_{1},e_{2},\ldots,e_{2m}$ for $V$ in
which the almost complex structure is given by $Je_{2j-1}=e_{2j}$,
$1\leq j\leq m$. An $h^{\mathbb{C}}$-orthonormal basis for $V^{1,0}$
is now given by $w_{j}=\frac{1}{\sqrt{2}}\left(e_{2j}+ie_{2j-1}\right)$,
$1\leq j\leq m$. A basis for $S_{2m}$ and $S_{2m+1}^{\pm}$ is given
by $w_{k}=w_{1}^{k_{1}}\wedge\ldots\wedge w_{m}^{k_{m}}$ with $k=\left(k_{1},k_{2},\ldots,k_{m}\right)\in\left\{ 0,1\right\} ^{m}$.
Ordering the above chosen bases lexicographically in $k$, we may
define the Clifford matrices, of rank $2^{m}$, via 
\begin{eqnarray*}
\gamma_{j}^{m} & = & c\left(e_{j}\right),\quad0\leq j\leq2m,
\end{eqnarray*}
for each $m$. Again, we often write $\gamma_{j}^{m}=\gamma_{j}$
with the index $m$ implicitly understood. Giving representations
of the Clifford algebra, these matrices satisfy the relation 
\begin{equation}
\gamma_{i}\gamma_{j}+\gamma_{j}\gamma_{i}=-2\delta_{ij}.\label{eq: Clifford relations}
\end{equation}

Next, one may further define the Clifford quantization map on the
exterior algebra
\begin{eqnarray}
c:\Lambda^{*}W\otimes\mathbb{C} & \rightarrow & \textrm{End}\left(S_{2m}\right)\nonumber \\
c\left(e_{0}^{k_{0}}\wedge\ldots\wedge e_{2m}^{k_{2m}}\right) & = & c\left(e_{0}\right)^{k}\ldots c\left(e_{2m}\right)^{k_{2m}}.\label{eq: clifford quantization}
\end{eqnarray}
An easy computation yields 
\begin{eqnarray*}
c\left(e_{0}\wedge\ldots\wedge e_{2m}\right) & = & i^{m+1}.
\end{eqnarray*}
Furthermore, if $e_{0}\wedge\ldots\wedge e_{2m}$ is designated to
give a positive orientation for $W$ then for $\omega\in\Lambda^{k}W$
we have 
\begin{eqnarray}
c\left(\ast\omega\right) & = & i^{m+1}\left(-1\right)^{\frac{k\left(k+1\right)}{2}}c\left(\omega\right)\label{eq:clifford quantization hodge dual}\\
c\left(\omega\right)^{*} & = & \left(-1\right)^{\frac{k\left(k+1\right)}{2}}c\left(\omega\right)\label{eq:clifford quantization adjoint}
\end{eqnarray}
under the Hodge star and $h^{\mathbb{C}}$-adjoint. The Clifford quantization
map \prettyref{eq: clifford quantization} is a linear surjection
with kernel spanned by elements of the form $\ast\omega-i^{m+1}\left(-1\right)^{\frac{k\left(k+1\right)}{2}}\omega$.
Thus, in particular one has linear isomorphisms 
\begin{equation}
c:\Lambda^{\textrm{even/odd}}W\otimes\mathbb{C}\rightarrow\textrm{End}\left(S_{2m}\right).\label{eq:clifford algebra is matrix algebra}
\end{equation}

Next, given $\left(r_{1},\ldots,r_{m}\right)\in\mathbb{R}^{m}\setminus0$,
we define
\begin{eqnarray}
I_{r} & \coloneqq & \left\{ j|r_{j}\neq0\right\} \subset\left\{ 1,2,\ldots,m\right\} \label{eq: I_r}\\
Z_{r} & \coloneqq & \left|I_{r}\right|\label{eq: Z_r}\\
V_{r} & \coloneqq & \bigoplus_{j\in I_{r}}\mathbb{C}\left[w_{j}\right]\subset V^{1,0}\label{eq: V_r}\\
\textrm{and }\quad w_{r} & \coloneqq & \sum_{j=1}^{m}r_{j}w_{j}\in V_{r}.\label{eq: w_r}
\end{eqnarray}
Clearly, $\left\Vert w_{r}\right\Vert =\left|r\right|$. Denoting
by $w_{r}^{\perp}$ the $h^{\mathbb{C}}$-orthogonal complement of
$w_{r}\subset V_{r}$, one clearly has $V_{r}=\mathbb{C}\left[w_{r}\right]\oplus w_{r}^{\perp}$.
Hence 
\begin{eqnarray}
\Lambda^{\textrm{even}}V_{r} & = & \left(\Lambda^{\textrm{even}}w_{r}^{\perp}\right)\oplus\frac{w_{r}}{\left|r\right|}\wedge\left(\Lambda^{\textrm{odd}}w_{r}^{\perp}\right)\nonumber \\
\Lambda^{\textrm{odd}}V_{r} & = & \left(\Lambda^{\textrm{odd}}w_{r}^{\perp}\right)\oplus\frac{w_{r}}{\left|r\right|}\wedge\left(\Lambda^{\textrm{even}}w_{r}^{\perp}\right).\label{eq: odd even decompositions}
\end{eqnarray}
Next, we define 
\begin{eqnarray}
\mathtt{i}_{r}:\Lambda^{*}V_{r} & \rightarrow & \Lambda^{*}V_{r},\quad\textrm{ via}\label{eq: involution}\\
\mathtt{i}_{r}\left(\omega\right) & \coloneqq & \frac{w_{r}}{\left|r\right|}\wedge\omega\nonumber \\
\mathtt{i}_{r}\left(\frac{w_{r}}{\left|r\right|}\wedge\omega\right) & \coloneqq & \omega\nonumber 
\end{eqnarray}
for $\omega\in\Lambda^{*}w_{r}^{\perp}$. Clearly, $\mathtt{i}_{r}^{2}=1$
with the decomposition \prettyref{eq: odd even decompositions} implying
that $\mathtt{i}_{r}$ is a linear isomorphism between 
\begin{eqnarray*}
\mathtt{i}_{r}:\Lambda^{\textrm{even}}V_{r} & \rightarrow & \Lambda^{\textrm{odd}}V_{r}\\
\mathtt{i}_{r}:\Lambda^{\textrm{odd}}V_{r} & \rightarrow & \Lambda^{\textrm{even}}V_{r}.
\end{eqnarray*}
Next, the endomorphism 
\begin{eqnarray}
c\left(\frac{w_{r}-\bar{w}_{r}}{\sqrt{2}}\right)=\left(w_{r}\wedge+\iota_{\bar{w}_{r}}\right):\Lambda^{\textrm{*}}V_{r} & \rightarrow & \Lambda^{\textrm{*}}V_{r}\label{eq: clifford multiplication partial vector}
\end{eqnarray}
has the form
\begin{equation}
c\left(\frac{w_{r}-\bar{w}_{r}}{\sqrt{2}}\right)=\begin{bmatrix} & \left|r\right|\mathtt{i}_{r}\\
\left|r\right|\mathtt{i}_{r}
\end{bmatrix}\label{eq: clifford multiplication blocks}
\end{equation}
with respect to the decomposition $\Lambda^{\textrm{*}}V_{r}=\Lambda^{\textrm{odd}}V_{r}\oplus\Lambda^{\textrm{even}}V_{r}$.
This finally allows us to write the eigenspaces of \prettyref{eq: clifford multiplication partial vector}
as 
\begin{equation}
V_{r}^{\pm}=\left(1\pm\mathtt{i}_{r}\right)\left(\Lambda^{\textrm{even}}V_{r}\right)\label{eq: eigenspaces clifford mult.}
\end{equation}
with eigenvalue $\pm\left|r\right|$ respectively.

\subsubsection{\label{sub:Magnetic Dirac operator Rm}Magnetic Dirac operator on
$\mathbb{R}^{m}$}

We now define the magnetic Dirac operator on $\mathbb{R}^{m}$ via
\begin{equation}
D_{\mathbb{R}^{m}}=\sum_{j=1}^{m}\left(\frac{\mu_{j}}{2}\right)^{\frac{1}{2}}\left[\gamma_{2j}\left(h\partial_{x_{j}}\right)+i\gamma_{2j-1}x_{j}\right]\in\Psi_{\textrm{cl}}^{1}\left(\mathbb{R}^{m};\mathbb{C}^{2^{m}}\right).\label{eq: magnetic Dirac Rm}
\end{equation}
Its square is computed in terms of the harmonic oscillator 
\begin{eqnarray}
D_{\mathbb{R}^{m}}^{2} & = & \mathtt{H}_{2}-ih\mathtt{R}_{2m+1},\:\textrm{with}\label{eq:square Euclidean Dirac}\\
\mathtt{H}_{2} & =\frac{1}{2} & \sum_{j=1}^{m}\mu_{j}\left[-\left(h\partial_{x_{j}}\right)^{2}+x_{j}^{2}\right]\label{eq:Harmonic oscillator}\\
\mathtt{R}_{2m+1} & =\frac{1}{2} & \sum_{j=1}^{m}\mu_{j}\left[\gamma_{2j-1}\gamma_{2j}\right].\nonumber 
\end{eqnarray}
It is an easy exercise to show that 
\begin{equation}
\mathtt{R}_{2m+1}w_{k}=\frac{i}{2}\left[\sum_{j=1}^{m}\left(-1\right)^{k_{j}-1}\mu_{j}\right]w_{k}.\label{eq:curvature operator formula}
\end{equation}
Next, define the lowering and raising operators $A_{j}=h\partial_{x_{j}}+x_{j},$
$A_{j}^{*}=-h\partial_{x_{j}}+x_{j}$ for $1\leq j\leq m$, and the
Hermite functions
\begin{align}
\psi_{\tau,k}\left(x\right) & \coloneqq\psi_{\tau}\left(x\right)\otimes w_{k}\nonumber \\
\psi_{\tau}\left(x\right) & \coloneqq\frac{1}{\left(\pi h\right)^{\frac{m}{4}}\left(2h\right)^{\frac{\left|\tau\right|}{2}}\sqrt{\tau!}}\left[\Pi_{j=1}^{m}\left(A_{j}^{*}\right)^{\tau_{j}}\right]e^{-\frac{\left|x\right|^{2}}{2h}},\label{eq: Hermite functions}\\
 & \qquad\qquad\qquad\qquad\qquad\textrm{for }\tau=\left(\tau_{1},\tau_{2},\ldots,\tau_{m}\right)\in\mathbb{N}_{0}^{m}.\nonumber 
\end{align}
It is well known that $\psi_{\tau,k}\left(x\right)$ form an orthonormal
basis for $L^{2}\left(\mathbb{R}^{m};\mathbb{C}^{2^{m}}\right)$.
Furthermore we have the standard relations 
\begin{eqnarray}
\left[A_{j},A_{j}^{*}\right] & = & 2h\nonumber \\
\mathtt{H}_{2} & = & \frac{1}{2}\sum_{j=1}^{m}\mu_{j}\left(A_{j}A_{j}^{*}-1\right).\label{eq:standard commutation}
\end{eqnarray}
It is clear from \prettyref{eq:square Euclidean Dirac}, \prettyref{eq:curvature operator formula}
and \prettyref{eq:standard commutation} that each $\psi_{\tau,k}\left(x\right)$
is an eigenvector of $D_{\mathbb{R}^{m}}^{2}$ with eigenvalue 
\[
\lambda_{\tau,k}=h\sum_{j=1}^{m}\left(2\tau_{j}+1+\left(-1\right)^{k_{j}-1}\right)\frac{\mu_{j}}{2}.
\]

Hence, clearly the kernel of $D_{\mathbb{R}^{m}}$ is one dimensional
and spanned by $\psi_{0,0}=e^{-\frac{\left|x\right|^{2}}{2h}}$. We
now find a decomposition of $L^{2}\left(\mathbb{R}^{m};\mathbb{C}^{2^{m}}\right)$
into eigenspaces of $D_{\mathbb{R}^{m}}$ . First, if we define
\begin{equation}
\overline{\partial}=\frac{1}{2}\sum_{j=1}^{m}\left(\frac{\mu_{j}}{2}\right)^{\frac{1}{2}}c\left(w_{j}\right)A_{j},\label{eq: dbar}
\end{equation}
then one quickly computes
\begin{equation}
\overline{\partial}^{*}=-\frac{1}{2}\sum_{j=1}^{m}\left(\frac{\mu_{j}}{2}\right)^{\frac{1}{2}}c\left(\overline{w}_{j}\right)A_{j}^{*}\label{eq: dbar*}
\end{equation}
and 
\begin{equation}
D_{\mathbb{R}^{m}}=\sqrt{2}\left(\overline{\partial}+\overline{\partial}^{*}\right).\label{eq: dolbeault Dirac}
\end{equation}
For each $\tau\in\mathbb{N}_{0}^{m}\setminus0$, we define $I_{\tau}$,
$V_{\tau}$ as in \prettyref{eq: I_r}, \prettyref{eq: V_r} and set
\[
E_{\tau}\coloneqq\bigoplus_{b\in\left\{ 0,1\right\} ^{I_{\tau}}}\mathbb{C}\left[\prod_{j\in I_{\tau}}\left(\frac{c\left(w_{j}\right)A_{j}}{\sqrt{2\tau_{j}h}}\right)^{b_{j}}\psi_{\tau,0}\right].
\]
It is clear that we have an orthogonal decomposition 
\[
L^{2}\left(\mathbb{R}^{m};\mathbb{C}^{2^{m}}\right)=\mathbb{C}\left[\psi_{0,0}\right]\oplus\bigoplus_{\tau\in\mathbb{N}_{0}^{m}\setminus0}E_{\tau}.
\]
Furthermore, we have the isomorphism 
\begin{eqnarray*}
\mathscr{I}_{\tau}:\Lambda^{*}V_{\tau} & \rightarrow & E_{\tau}\\
\mathscr{I}_{\tau}\left(\bigwedge_{j\in I_{\tau}}w_{j}^{b_{j}}\right) & \coloneqq & \prod_{j\in I_{\tau}}\left(\frac{c\left(w_{j}\right)A_{j}}{\sqrt{2\tau_{j}h}}\right)^{b_{j}}\psi_{\tau,0}.
\end{eqnarray*}
Each $E_{\tau}$ hence has dimension $2^{Z_{\tau}}$ and is closed
under $c\left(w_{j}\right)A_{j}$, $c\left(\overline{w}_{j}\right)A_{j}^{*}$
for $1\leq j\leq m$. We again have 
\begin{eqnarray}
E_{\tau} & = & E_{\tau}^{\textrm{even}}\oplus E_{\tau}^{\textrm{odd}},\quad\textrm{ where}\label{eq: even odd decomposition eigenspace}\\
E_{\tau}^{\textrm{even/odd}} & \coloneqq & \mathscr{I}_{\tau}\left(\Lambda^{\textrm{even/odd}}V_{\tau}\right),\nonumber 
\end{eqnarray}
thus giving the Landau decomposition 
\begin{equation}
L^{2}\left(\mathbb{R}^{m};\mathbb{C}^{2^{m}}\right)=\mathbb{C}\left[\psi_{0,0}\right]\oplus\bigoplus_{\tau\in\mathbb{N}_{0}^{m}\setminus0}\left(E_{\tau}^{\textrm{even}}\oplus E_{\tau}^{\textrm{odd}}\right).\label{eq: Landau Levels}
\end{equation}

The Dirac operator $D_{\mathbb{R}^{m}}$ by virtue of \prettyref{eq: dbar},
\prettyref{eq: dbar*}, \prettyref{eq: dolbeault Dirac} preserves
and acts on $E_{\tau}$ via 
\[
c\left(\frac{w_{r_{\tau}}+\bar{w}_{r_{\tau}}}{\sqrt{2}}\right)=\left(w_{r_{\tau}}\wedge+\iota_{\bar{w}_{r_{\tau}}}\right),
\]
under the isomorphism $\mathscr{I}_{\tau}$, where $r_{\tau}\coloneqq\left(\sqrt{\tau_{1}\mu_{1}h},\ldots,\sqrt{\tau_{m}\mu_{m}h}\right)$
and $w_{r_{\tau}}$ is as in \prettyref{eq: w_r}. Hence, if we define
$\mathtt{i}_{\tau}\coloneqq\mathscr{I}_{\tau}\mathtt{i}_{r_{\tau}}\mathscr{I}_{\tau}^{-1}:E_{\tau}^{\textrm{even/odd}}\rightarrow E_{\tau}^{\textrm{odd/even}}$
we have that the restriction of $D_{\mathbb{R}^{m}}$ to $E_{\tau}$
is of the form 
\begin{eqnarray}
D_{\mathbb{R}^{m}} & = & \begin{bmatrix} & \left|r_{\tau}\right|\mathtt{i}_{\tau}\\
\left|r_{\tau}\right|\mathtt{i}_{\tau}
\end{bmatrix}\label{eq: Dirac operator 2 by 2 block}
\end{eqnarray}
via \prettyref{eq: clifford multiplication blocks}. Also note that
since $E_{\tau}^{\textrm{even/odd}}\subset\mathscr{I}_{\tau}\left(C^{\infty}\left(\mathbb{R}^{m}\right)\otimes\Lambda^{\textrm{even/odd}}V^{1,0}\right)$
respectively, one has 
\begin{eqnarray}
c\left(e_{0}\right)E_{\tau}^{\textrm{even/odd}} & = & \pm iE_{\tau}^{\textrm{even/odd}}\label{eq: clifford mult. is diagonal on 2 by 2 block}
\end{eqnarray}
using \prettyref{eq:odd clifford representation}. The eigenspaces
for $D_{\mathbb{R}^{m}}$ are now given by 
\begin{eqnarray}
E_{\tau}^{\pm} & = & \mathscr{I}_{\tau}\left(V_{\tau}^{\pm}\right),\label{eq: eigenspaces model Dirac}
\end{eqnarray}
via \prettyref{eq: eigenspaces clifford mult.} with eigenvalue $\pm\left|r_{\tau}\right|=\pm\sqrt{\mu.\tau h}$
respectively. We now summarize.
\begin{prop}
\label{prop:eigenspaces magnetic Dirac}An orthogonal decomposition
of $L^{2}\left(\mathbb{R}^{m};\mathbb{C}^{2^{m}}\right)$ consisting
of eigenspaces of the magnetic Dirac operator $D_{\mathbb{R}^{m}}$
\prettyref{eq: magnetic Dirac Rm} is given by
\[
L^{2}\left(\mathbb{R}^{m};\mathbb{C}^{2^{m}}\right)=\mathbb{C}\left[\psi_{0,0}\right]\oplus\bigoplus_{\tau\in\mathbb{N}_{0}^{m}\setminus0}\left(E_{\tau}^{+}\oplus E_{\tau}^{-}\right).
\]
Here $E_{\tau}^{\pm}$, as in \prettyref{eq: eigenspaces model Dirac},
have dimension $2^{Z_{\tau}-1}$ and correspond to the eigenvalues
$\pm\sqrt{\mu.\tau h}$ respectively.
\end{prop}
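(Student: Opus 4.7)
The proposition is essentially an organizational statement that packages together the observations developed in the preceding paragraphs, so the proof will just be a matter of stringing those pieces together in the right order.

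My plan is to start from the Landau-type decomposition
\[
L^{2}\left(\mathbb{R}^{m};\mathbb{C}^{2^{m}}\right)=\mathbb{C}\left[\psi_{0,0}\right]\oplus\bigoplus_{\tau\in\mathbb{N}_{0}^{m}\setminus0}E_{\tau}
\]
already established via the Hermite basis $\psi_{\tau,k}$ and the isomorphism $\mathscr{I}_{\tau}:\Lambda^{*}V_{\tau}\rightarrow E_{\tau}$. Orthogonality is inherited from the orthogonality of $\{\psi_{\tau,k}\}$ (distinct $\tau$'s sit in different eigenspaces of the harmonic oscillator $\mathtt{H}_{2}$, and within a fixed $\tau$ the vectors $\prod_{j\in I_{\tau}}(c(w_{j})A_{j}/\sqrt{2\tau_{j}h})^{b_{j}}\psi_{\tau,0}$ are easily seen to be orthonormal using $[A_{j},A_{j}^{*}]=2h$ together with the orthogonality of $c(w_{j})$'s). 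The one-dimensionality of the kernel follows from the eigenvalue formula $\lambda_{\tau,k}=h\sum_{j}(2\tau_{j}+1+(-1)^{k_{j}-1})\mu_{j}/2$ for $D_{\mathbb{R}^{m}}^{2}$, which vanishes iff $\tau=0$ and $k=0$.

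Next, I would invoke the identity $D_{\mathbb{R}^{m}}=\sqrt{2}(\overline{\partial}+\overline{\partial}^{*})$ from \prettyref{eq: dolbeault Dirac} together with the commutation relations $[A_{j},A_{j}^{*}]=2h$ to conclude that $D_{\mathbb{R}^{m}}$ preserves each $E_{\tau}$: indeed, $\overline{\partial}$ applies $c(w_{j})A_{j}$, which either introduces a new factor in the wedge product (corresponding to a $b_{j}=1$ slot in the $\mathscr{I}_{\tau}$ description) or, via the Clifford relation $c(w_{j})^{2}=0$, vanishes, and similarly $\overline{\partial}^{*}$ uses $c(\overline{w}_{j})A_{j}^{*}$ which through the relation $A_{j}^{*}c(w_{j})A_{j}\psi_{\tau,0}\propto 2\tau_{j}h\,c(\overline{w}_{j})\psi_{\tau,0}+\ldots$ keeps one inside $E_{\tau}$. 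Transported through $\mathscr{I}_{\tau}$, this action is exactly Clifford multiplication by $(w_{r_{\tau}}+\bar{w}_{r_{\tau}})/\sqrt{2}$ on $\Lambda^{*}V_{\tau}$, as asserted just before the proposition.

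With the action pinned down, the endomorphism is precisely the one analyzed in \prettyref{eq: clifford multiplication blocks}, namely the off-diagonal block
\[
\begin{bmatrix} & |r_{\tau}|\mathtt{i}_{\tau}\\ |r_{\tau}|\mathtt{i}_{\tau} & \end{bmatrix}
\]
with respect to $E_{\tau}^{\text{even}}\oplus E_{\tau}^{\text{odd}}$. Since $\mathtt{i}_{\tau}^{2}=1$, the $\pm|r_{\tau}|$-eigenspaces are $E_{\tau}^{\pm}=\mathscr{I}_{\tau}(V_{\tau}^{\pm})=\mathscr{I}_{\tau}((1\pm\mathtt{i}_{r_{\tau}})\Lambda^{\text{even}}V_{\tau})$, as in \prettyref{eq: eigenspaces model Dirac}. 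Finally I compute $|r_{\tau}|^{2}=\sum_{j}\tau_{j}\mu_{j}h=\mu\cdot\tau h$, so the eigenvalue is $\pm\sqrt{\mu\cdot\tau h}$; and the dimension is $\dim V_{\tau}^{\pm}=\dim\Lambda^{\text{even}}V_{\tau}=2^{Z_{\tau}-1}$, using the fact that $\mathtt{i}_{\tau}$ gives a linear isomorphism between even and odd parts and that $\dim V_{\tau}=Z_{\tau}$.

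I do not expect any serious obstacle: everything reduces to the finite-dimensional block calculation in \prettyref{sub:Clifford algebra} once the intertwining $\mathscr{I}_{\tau}$ is in hand. The only mildly delicate point is verifying cleanly that $\overline{\partial}$ and $\overline{\partial}^{*}$ act on $\mathscr{I}_{\tau}(\bigwedge_{j\in I_{\tau}}w_{j}^{b_{j}})$ exactly as wedging and contracting against the single vector $w_{r_{\tau}}$ (up to the factor $1/\sqrt{2}$), which is a short check using $A_{j}^{*}A_{j}\psi_{\tau,0}=2\tau_{j}h\,\psi_{\tau,0}$ and the Clifford relations $c(w_{i})c(\bar{w}_{j})+c(\bar{w}_{j})c(w_{i})=-2\delta_{ij}$.
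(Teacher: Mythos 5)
Your proposal is correct and follows the same route the paper takes in the paragraphs immediately preceding the proposition, which serve as its proof (there is no separate proof environment). One minor slip in your parenthetical: distinct $\tau$'s need not lie in different eigenspaces of $\mathtt{H}_{2}$ when the $\mu_{j}$ are commensurable (e.g.\ all equal, the metric contact case), and the generators $\mathscr{I}_{\tau}\bigl(\bigwedge_{j\in I_{\tau}}w_{j}^{b_{j}}\bigr)$ have norm $(\sqrt{2})^{|b|}$ rather than $1$; but orthogonality of the $E_{\tau}$ still follows directly because these generators are nonzero scalar multiples of distinct members of the orthonormal basis $\left\{\psi_{\sigma,k}\right\}$ (namely $\psi_{\tau-b,\,b}$ with $b$ extended by zeros), so the argument goes through unchanged.
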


\subsection{The Semi-classical calculus}

Finally, here we review the semi-classical pseudodifferential calculus
used throughout the paper with \cite{GuilleminSternberg-Semiclassical,Zworski}
being the detailed references. Let $\mathfrak{gl}\left(l\right)$
denote the space of all $l\times l$ complex matrices. For $A=\left(a_{ij}\right)\in\mathfrak{gl}\left(l\right)$
we denote $\left|A\right|=\max_{ij}\left|a_{ij}\right|$. Denote by
$\mathcal{S}\left(\mathbb{R}^{n};\mathbb{C}^{l}\right)$ the space
of Schwartz maps $f:\mathbb{R}^{n}\rightarrow\mathbb{C}^{l}$. We
define the symbol space $S^{m}\left(\mathbb{R}^{2n};\mathbb{C}^{l}\right)$
as the space of maps $a:\left(0,1\right]_{h}\rightarrow C^{\infty}\left(\mathbb{R}_{x,\xi}^{2n};\mathfrak{gl}\left(l\right)\right)$
such that each of the semi-norms 
\[
\left\Vert a\right\Vert _{\alpha,\beta}:=\text{sup}_{\substack{x,\xi}
,h}\langle\xi\rangle^{-m+|\beta|}\left|\partial_{x}^{\alpha}\partial_{\xi}^{\beta}a(x,\xi;h)\right|
\]
is finite $\forall\alpha,\beta\in\mathbb{N}_{0}^{n}$. Such a symbol
is said to lie in the more refined class $a\in S_{\textrm{cl}}^{m}\left(\mathbb{R}^{2n};\mathbb{C}^{l}\right)$
if there exists an $h$-independent sequence $a_{k}$, $k=0,1,\ldots$
of symbols such that
\begin{equation}
a-\left(\sum_{k=0}^{N}h^{k}a_{k}\right)\in h^{N+1}S^{m}\left(\mathbb{R}^{2n};\mathbb{C}^{l}\right),\;\forall N.\label{eq: symbolic expansion}
\end{equation}

Symbols as above can be Weyl quantized to define one-parameter families
of operators $a^{W}:\mathcal{S}\left(\mathbb{R}^{n};\mathbb{C}^{l}\right)\rightarrow\mathcal{S}\left(\mathbb{R}^{n};\mathbb{C}^{l}\right)$
with Schwartz kernels given by 
\[
a^{W}\coloneqq\frac{1}{\left(2\pi h\right)^{n}}\int e^{i\left(x-y\right).\xi/h}a\left(\frac{x+y}{2},\xi;h\right)d\xi
\]
We denote by $\Psi_{\textrm{cl}}^{m}\left(\mathbb{R}^{n};\mathbb{C}^{l}\right)$
the class of operators thus obtained by quantizing $S_{\textrm{cl}}^{m}\left(\mathbb{R}^{2n};\mathbb{C}^{l}\right)$.
This class of operators is closed under the standard operations of
composition and formal-adjoint. Indeed, the Weyl symbols of the composition
and adjoint satisfy 
\begin{align}
a^{W}\circ b^{W} & =\left(a\ast b\right)^{W}\label{eq: Weyl product}\\
 & \coloneqq\left[e^{\frac{ih}{2}\left(\partial_{r_{1}}\partial_{s_{2}}-\partial_{r_{2}}\partial_{s_{1}}\right)}\left(a\left(s_{1},r_{1};h\right)b\left(s_{2},r_{2};h\right)\right)\right]_{x=s_{1}=s_{2},\xi=r_{1}=r_{2}}^{W}\nonumber \\
\left(a^{W}\right)^{*} & =\left(a^{*}\right)^{W}.\nonumber 
\end{align}

Furthermore the class is invariant under changes of coordinates and
basis for $\mathbb{C}^{l}$. This allows one to define an invariant
class of operators $\Psi_{\textrm{cl}}^{m}\left(X;E\right)$ on $C^{\infty}\left(X;E\right)$
associated to any complex vector bundle on a smooth compact manifold
$X$. These define uniformly in $h$ bounded operators between the
Sobolev spaces $H^{s}\left(X;E\right)\rightarrow H^{s-m}\left(X;E\right)$
with the $h$-dependent norm on each Sobolev space defined via 
\[
\left\Vert u\right\Vert _{H^{s}\left(X\right)}\coloneqq\left\Vert \left(1+h^{2}\nabla^{E*}\nabla^{E}\right)^{s/2}u\right\Vert _{L^{2}},\quad s\in\mathbb{R},
\]
with respect to any metric $g^{TX},h^{E}$ on $X,E$ and unitary connection
$\nabla^{E}$. 

For $A\in\Psi_{\textrm{cl}}^{m}\left(X;E\right)$, its principal symbol
is well-defined as an element in $\sigma\left(A\right)\in S^{m}\left(X;\textrm{End}\left(E\right)\right)\subset C^{\infty}\left(X;\textrm{End}\left(E\right)\right).$
One has that $\sigma\left(A\right)=0$ if and only if $A\in h\Psi_{\textrm{cl}}^{m}\left(X;E\right)$.
We remark that $\sigma\left(A\right)$ is the restriction of standard
symbol in \cite{Zworski} to the refined class $\Psi_{\textrm{cl}}^{m}\left(X;E\right)$
and is locally given by the first coefficient $a_{0}$ in the expansion
of its Weyl symbol. The principal symbol satisfies the basic relations
$\sigma\left(AB\right)=\sigma\left(A\right)\sigma\left(B\right)$,
$\sigma\left(A^{*}\right)=\sigma\left(A\right)^{*}$ with the formal
adjoints being defined with respect to the same Hermitian metric $h^{E}$.
The principal symbol map has an inverse given by the quantization
map $\textrm{Op}:S^{m}\left(X;\textrm{End}\left(E\right)\right)\rightarrow\Psi_{\textrm{cl}}^{m}\left(X;E\right)$
satisfying $\sigma\left(\textrm{Op}\left(a\right)\right)=a\in S^{m}\left(X;\textrm{End}\left(E\right)\right)$.
We often use the alternate notation $\textrm{Op}\left(a\right)=a^{W}$.
For a scalar function $b\in S^{m}\left(X\right)$, it is clear from
the multiplicative property of the symbol that $\left[a^{W},b^{W}\right]\in h\Psi_{\textrm{cl}}^{m}\left(X;E\right)$
and we define $H_{b}\left(a\right)\coloneqq\frac{i}{h}\sigma\left(\left[a^{W},b^{W}\right]\right)\in S^{m}\left(X;\textrm{End}\left(E\right)\right)$.
If $a$ is self adjoint and $b$ real, then it is easy to see that
$H_{b}\left(a\right)$ is self-adjoint. We then define $\left|H_{b}\left(a\right)\right|=\max_{\lambda\in\textrm{Spec }H_{b}\left(a\right)}\left|\lambda\right|$. 

The wavefront set of an operator $A\in\Psi_{\textrm{cl}}^{m}\left(X;E\right)$
can be defined invariantly as a subset $WF\left(A\right)\subset\overline{T^{*}X}$
of the fibrewise radial compatification of its cotangent bundle. If
the local Weyl symbol of $A$ is given by $a$ then $\left(x_{0},\xi_{0}\right)\notin WF\left(A\right)$
if and only if there exists an open neighborhood $\left(x_{0},\xi_{0};0\right)\in U\subset\overline{T^{*}X}\times\left(0,1\right]_{h}$
such that $a\in h^{\infty}\left\langle \xi\right\rangle ^{-\infty}C^{k}\left(U;\mathbb{C}^{l}\right)$
for all $k$. The wavefront set satisfies the basic properties $WF\left(A+B\right)\subset WF\left(A\right)\cap WF\left(B\right)$,
$WF\left(AB\right)\subset WF\left(A\right)\cap WF\left(B\right)$
and $WF\left(A^{*}\right)=WF\left(A\right)$. The wavefront set $WF\left(A\right)=\emptyset$
is empty if and only if $A\in h^{\infty}\Psi^{-\infty}\left(X;E\right)$.
We say that two operators $A=B$ microlocally on $U\subset\overline{T^{*}X}$
if $WF\left(A-B\right)\cap U=\emptyset$. We also define by $\Psi_{\textrm{cl}}^{c}\left(X;E\right)$
the class of pseudodifferential operators $A$ with wavefront set
$WF\left(A\right)\Subset T^{*}X$ compactly contained in the cotangent
bundle. It is clear that $\Psi_{\textrm{cl}}^{c}\left(X;E\right)\subset\Psi_{\textrm{cl}}^{-\infty}\left(X;E\right)$.

An operator $A\in\Psi_{\textrm{cl}}^{m}\left(X;E\right)$ is said
to be elliptic if $\left\langle \xi\right\rangle ^{m}\sigma\left(A\right)^{-1}$
exists and is uniformly bounded on $T^{*}X$. If $A\in\Psi_{\textrm{cl}}^{m}\left(X;E\right)$,
$m>0$, is formally self-adjoint such that $A+i$ is elliptic then
it is essentially self-adjoint (with domain $C_{c}^{\infty}\left(X;E\right)$)
as an unbounded operator on $L^{2}\left(X;E\right)$. Its resolvent
$\left(A-z\right)^{-1}\in\Psi_{\textrm{cl}}^{-m}\left(X;E\right)$,
$z\in\mathbb{C}$, $\textrm{Im}z\neq0$, now exists and is pseudodifferential
by an application of Beals's lemma. The resolvent furthermore has
an expansion $\left(A-z\right)^{-1}\sim\sum_{j=0}^{\infty}h^{j}\textrm{Op}\left(a_{j}^{z}\right)$
in $\Psi_{\textrm{cl}}^{-m}\left(X;E\right)$. Here each symbol appearing
in the expansion has the form
\begin{eqnarray*}
a_{j}^{z} & = & \left(\sigma\left(A\right)-z\right)^{-1}a_{j,1}^{z}\left(\sigma\left(A\right)-z\right)^{-1}\ldots\left(\sigma\left(A\right)-z\right)^{-1}a_{j,2j}^{z}\left(\sigma\left(A\right)-z\right)^{-1}\\
 &  & \qquad\qquad\qquad\qquad\qquad\qquad\qquad\qquad\qquad\qquad\in S^{-m}\left(X;\textrm{End}\left(E\right)\right),
\end{eqnarray*}
for polynomial in $z$ symbols $a_{j,k}^{z}$, $k=1,\ldots,2j$. Given
a Schwartz function $f\in\mathcal{S}\left(\mathbb{R}\right)$, the
Helffer-Sjostrand formula now expresses the function $f\left(A\right)$
of such an operator in terms of its resolvent and an almost analytic
continuation $\tilde{f}$ via
\[
f\left(A\right)=\frac{1}{\pi}\int_{\mathbb{C}}\bar{\partial}\tilde{f}\left(z\right)\left(A-z\right)^{-1}dzd\bar{z}.
\]
Plugging the resolvent expansion into the above formula then shows
that the above lies in and has an expansion $f\left(A\right)\sim\sum_{j=0}^{\infty}h^{j}A_{j}^{f}$
in $\Psi_{\textrm{cl}}^{-\infty}\left(X;E\right)$. Finally, one defines
the classical $\lambda$-energy level of $A$ via 
\[
\Sigma_{\lambda}^{A}=\left\{ \left(x,\xi\right)\in T^{*}X|\det\left(\sigma\left(A\right)\left(x,\xi\right)-\lambda I\right)=0\right\} .
\]
Now, the form for the coefficients of the resolvent expansion also
shows $WF\left(f\left(A\right)\right)\subset\Sigma_{\textrm{spt}\left(f\right)}^{A}\coloneqq\bigcup_{\lambda\in\textrm{spt}\left(f\right)}\Sigma_{\lambda}^{A}$.

\subsubsection{The class $\Psi_{\delta}^{m}\left(X;E\right)$}

In \prettyref{sec:First Reductions} we shall need the more exotic
class of symbols $S_{\delta}^{m}\left(\mathbb{R}^{2n};\mathbb{C}\right)$
defined for each $0<\delta<\frac{1}{2}$. A function $a:\left(0,1\right]_{h}\rightarrow C^{\infty}\left(\mathbb{R}_{x,\xi}^{2n};\mathbb{C}\right)$
is said to be in this class if and only if 
\begin{equation}
\left\Vert a\right\Vert _{\alpha,\beta}:=\text{sup}_{\substack{x,\xi}
,h}\langle\xi\rangle^{-m+|\beta|}h^{-\left(\left|\alpha\right|+\left|\beta\right|\right)\delta}\left|\partial_{x}^{\alpha}\partial_{\xi}^{\beta}a(x,\xi;h)\right|\label{eq: delta pseudodifferential estimates}
\end{equation}
is finite $\forall\alpha,\beta\in\mathbb{N}_{0}^{n}.$ This class
of operators is closed under the standard operations of composition,
adjoint and changes of coordinates allowing the definition of the
exotic pseudodifferential algebra $\Psi_{\delta}^{m}\left(X\right)$
on a compact manifold. The class $S_{\delta}^{m}\left(X\right)$ is
a family of functions $a:\left(0,1\right]_{h}\rightarrow C^{\infty}\left(T^{*}X;\mathbb{C}\right)$
satisfying the estimates \prettyref{eq: delta pseudodifferential estimates}
in every coordinate chart and induced trivialization. Such a family
can be quantized to $a^{W}\in\Psi_{\delta}^{m}\left(X\right)$ satisfying
$a^{W}b^{W}=\left(ab\right)^{W}+h^{1-2\delta}\Psi_{\delta}^{m+m'-1}\left(X\right)$
for another $b\in S_{\delta}^{m'}\left(X\right)$. The operators in
$\Psi_{\delta}^{0}\left(X\right)$ are uniformly bounded on $L^{2}\left(X\right)$.
Finally, the wavefront an operator $A\in\Psi_{\delta}^{m}\left(X;E\right)$
is similarly defined and satisfies the same basic properties as before.

\subsubsection{Fourier integral operators}

We shall also need the local theory of Fourier integral operators.
Let $\kappa:U\rightarrow V$ be an exact symplectomorphism between
two open subsets $U\subset T^{*}X$, $V\subset T^{*}Y$ inside cotangent
spaces of manifolds of same dimension $n$. Assume that there exist
local coordinates $\left(x_{1},\ldots,x_{n}\right),$$\left(y_{1},\ldots y_{n}\right)$
on $\pi\left(U\right),\pi\left(V\right)$ respectively with induced
canonical coordinates $\left(x,\xi\right),\left(y,\eta\right)$ on
$U,V$. A function $S\left(x,\eta\right)\in C^{\infty}\left(\Omega\right)$
on an open subset $\Omega\subset\mathbb{R}_{x,\eta}^{2n}$ is said
to be a generating function for the graph of $\kappa$ if the Lagrangian
submanifolds 
\begin{eqnarray*}
\left(T^{*}X\right)\times\left(T^{*}Y\right)^{-}\supset\Lambda_{\kappa} & \coloneqq & \left\{ \left(\left(x,\xi\right);\kappa\left(x,\xi\right)\right)|\left(x,\xi\right)\in U\right\} \\
 & = & \left\{ \left(x,\partial_{x}S;\partial_{\eta}S,\eta\right)|\left(x,\eta\right)\in\Omega\right\} 
\end{eqnarray*}
are equal. Such a generating function always exists locally near any
point on $\Lambda_{\kappa}$. Letting $a:\left(0,1\right]_{h}\rightarrow C_{c}^{\infty}\left(\Omega\times\pi\left(V\right);\mathbb{C}\right)$,
which admits an expansion $a\left(x,y,\eta;h\right)\sim\sum_{k=0}^{\infty}h^{k}a_{k}\left(x,y,\eta\right)$,
one may now define a Fourier integral operator associated to $\kappa$
via 
\begin{eqnarray*}
A:L^{2}\left(Y\right) & \rightarrow & L^{2}\left(X\right)\\
\left(Af\right)\left(x\right) & = & \frac{1}{\left(2\pi h\right)^{n}}\int_{\mathbb{R}^{2n}}e^{\frac{i}{h}\left(S\left(x,\eta\right)-y.\eta\right)}a\left(x,y,\eta;h\right)f\left(y\right)dyd\eta.
\end{eqnarray*}
The symbol of $\sigma\left(A\right)\in C_{c}^{\infty}\left(\Lambda_{\kappa};\mathbb{C}\right)$
is defined using the generating function via $\sigma\left(A\right)\left(x,\eta\right)=a_{0}\left(x,\partial_{x}S,\eta\right)$.
The adjoint $A^{*}$, is again a Fourier integral operator associated
to the symplectomorphism $\kappa^{-1}$. The wavefront set of $A$
maybe defined as a subset $WF\left(A\right)\subset\overline{T^{*}X}\times\overline{T^{*}Y}$.
A point $\left(x,\xi;y,\eta\right)\notin WF\left(A\right)$ if and
only if there exist pseudodifferential operators $B\in\Psi_{\textrm{cl}}^{m}\left(X\right),C\in\Psi_{\textrm{cl}}^{m'}\left(Y\right)$
with $\left(x,\xi;y,\eta\right)\in WF\left(B\right)\times WF\left(C\right)$
such that $\left\Vert BAC\right\Vert _{H^{s}\left(Y\right)\rightarrow H^{s'}\left(X\right)}=O\left(h^{\infty}\right)$
for each $s,s'\in\mathbb{R}$. It can be shown that the wavefront
set is in fact a compact subset $WF\left(A\right)\subset\Lambda_{\kappa}$.
Given a pseudodifferential operator $B\in\Psi_{\textrm{cl}}^{m}\left(X\right)$,
Egorov's theorem says that the composite is a pseudodifferential operator
$A^{*}BA\in\Psi_{\textrm{cl}}^{m}\left(Y\right)$. Moreover its principal
symbol is given via $\sigma\left(A^{*}BA\right)=\left(\kappa^{-1}\right)^{*}\left|\sigma\left(A\right)\right|^{2}\sigma\left(B\right)\in C_{c}^{\infty}\left(V\right)$,
where we have again used the identification of $V$ with $\Lambda_{\kappa}$
given by the generating function. Finally one has the wavefront relation
$WF\left(A^{*}BA\right)\subset WF\left(A\right)\cap WF\left(B\right)$
again using the identifications of $U,V$ and $\Lambda_{\kappa}$. 

An important special case arises when $\kappa=e^{tH_{f}}$ is the
time $t$ flow of a Hamiltonian $f\in S^{m}\left(T^{*}X\right)$.
The operator $e^{\frac{it}{h}f^{W}}$, defined as a unitary operator
via Stone's theorem, is now a Fourier integral operator associated
to $\kappa$. Egorov's theorem now gives that the conjugation $e^{\frac{it}{h}f^{W}}Ae^{-\frac{it}{h}f^{W}}\in\Psi_{\textrm{cl}}^{m'}\left(X\right)$
is pseudodifferential for each $A\in\Psi_{\textrm{cl}}^{m'}\left(X\right)$
with principal symbol $\sigma\left(e^{\frac{it}{h}f^{W}}Ae^{-\frac{it}{h}f^{W}}\right)=\left(e^{tH_{f}}\right)^{*}\sigma\left(A\right)$.

\section{\label{sec:First Reductions}First reductions}

The trace expansion \prettyref{thm:main trace expansion} will be
proved in two steps based on the following two lemmas. Below, $\tau,T,T',f,\theta$
and $D$ are the same as before. 
\begin{lem}
\label{lem: O(h infty) LEMMA}Let $\epsilon\in\left(0,\frac{1}{2}\right)$
and $\vartheta\in C_{c}^{\infty}\left(\left(T'h^{\epsilon},T\right);\left[-1,1\right]\right)$.
Then 
\begin{eqnarray*}
\textrm{tr}\left[f\left(\frac{D}{\sqrt{h}}\right)\left(\mathcal{F}_{h}^{-1}\vartheta\right)\left(\lambda\sqrt{h}-D\right)\right] & =\\
\textrm{tr}\left[f\left(\frac{D}{\sqrt{h}}\right)\frac{1}{h}\check{\vartheta}\left(\frac{\lambda\sqrt{h}-D}{h}\right)\right] & = & O\left(h^{\infty}\right)
\end{eqnarray*}
 for all$\lambda\in\mathbb{R}$.
\end{lem}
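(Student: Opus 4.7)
The plan is to unfold the semiclassical inverse Fourier transform, reducing the question to an analysis of $\mathrm{tr}\bigl[f(D/\sqrt h) e^{-itD/h}\bigr]$ for $t$ in the range $(T'h^\epsilon,T)$, and then to exploit the Birkhoff normal form together with almost analytic continuation to overcome the absence of microhyperbolicity. Explicitly, I would write
\[
\mathrm{tr}\bigl[f(D/\sqrt h)\,\mathcal{F}_h^{-1}\vartheta(\lambda\sqrt h-D)\bigr]
= \frac{1}{2\pi h}\int_{T'h^\epsilon}^{T} e^{i\lambda t/\sqrt h}\vartheta(t)\,\mathrm{tr}\bigl[f(D/\sqrt h)\,e^{-itD/h}\bigr]\, dt,
\]
and then localize microlocally. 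Since $f(D/\sqrt h)$ has wavefront concentrated in the spectral window $\sigma(D)=O(\sqrt h)$, the only nontrivial contribution comes from a microlocal neighborhood of the characteristic variety $\Sigma^{D}_0=\{\sigma(D)=0\}$.

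Off the characteristic variety the operator $D$ is elliptic on this spectral window, and iterated integration by parts in $t$ gains a factor $h^{1/2-\epsilon}$ per step: a factor of $\sqrt h$ from the bounded inverse of $D$ acting after $f(D/\sqrt h)$, and a factor $h^{-\epsilon}$ absorbed by each derivative $\vartheta'$. Since $\epsilon<\tfrac12$, the elliptic contribution is already $O(h^\infty)$. The reduction announced in the subsequent section would then localize the remaining microlocal contribution, concentrated near $\Sigma^{D}_0$, onto a Euclidean magnetic Dirac operator on $\mathbb{R}^n$, on which the normal-form machinery can be brought to bear.

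On the Euclidean model I would apply the Birkhoff normal form built via Koszul complexes under the suitability hypothesis, conjugating $D$ microlocally so that $D^2$ becomes (to arbitrarily high order) a function of commuting harmonic-oscillator-type operators with frequencies $\mu_j\nu(x)$, mirroring the Landau-level decomposition of the Euclidean model. Using this normal form I would construct an almost analytic extension of $(D/\sqrt h-z)^{-1}$ across the real axis in a strip of width $h^\delta$ with $0<\delta<1-\epsilon$, and substitute into the Helffer--Sj\"ostrand formula
\[
f(D/\sqrt h) = \frac{1}{\pi}\int_{\mathbb{C}}\bar\partial\tilde f(z)\,(D/\sqrt h-z)^{-1}\, dz\wedge d\bar z.
\]
Deforming the contour off the real axis produces the exponential factor $\exp(-ct|\mathrm{Im}\, z|/h)$; combined with $t\ge T'h^\epsilon$ and $|\mathrm{Im}\,z|\sim h^\delta$ this yields $\exp(-cT'h^{\epsilon+\delta-1})=O(h^\infty)$.

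The hardest step is precisely the construction of the almost analytic extension of the resolvent near $\Sigma^{D}_0$. The standard Dimassi--Sj\"ostrand argument requires microhyperbolicity of the principal symbol at the zero level, which fails here because $\Sigma^{D}_0$ has high codimension and the normal Hessian is indefinite. The Birkhoff normal form, obtained through the Koszul-complex computation and the suitability assumption on $g^{TX}$, is what supplies enough total-symbol information about $D$ to bypass the microhyperbolicity hypothesis and build the extension directly; everything else in the argument is a careful combination of integration by parts, Egorov's theorem and the Helffer--Sj\"ostrand calculus reviewed in Section~\ref{sec:Preliminaries}.
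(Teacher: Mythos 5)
Your high-level route---microlocalize near $\Sigma_0^D$, reduce to a model operator on $\mathbb{R}^n$, apply the Birkhoff normal form, then shift a Helffer--Sj\"ostrand contour---does match the paper's, but the single hardest step, the continuation of the resolvent across the real axis, is where your sketch stops short and that is precisely where the hypotheses on $a,g^{TX}$ are consumed. The paper does not construct an ``almost analytic extension of $(D/\sqrt h-z)^{-1}$'': it translates $\xi_0\mapsto\xi_0+i\gamma$ (i.e.\ conjugates by $e^{\gamma x_0/h}$) to produce a genuinely non-self-adjoint operator $\bar D_{i\gamma}$ and then proves (\prettyref{lem: holomorph continuation resolvent}) that its resolvent is \emph{literally holomorphic} and $O(h^{-1/2})$ in the strip $\textrm{Im}\,z>-Mh^{\delta-\frac12}\log\frac1h$, $|\textrm{Re}\,z|\le\sqrt{2\nu_0}-\varepsilon_0$. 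This works because the normal form $\bar D=H_1^W+c_0^W(\bar\omega)$ has $\bar\omega$ both $\xi_0$-independent and $\tilde\Delta^0$-harmonic: the $\xi_0$-independence makes $\bar D_t$ holomorphic in $t$, and harmonicity gives $[\bar D, D_{\mathbb{R}^m}^2]=0$, so $\bar D_{i\gamma}$ respects the Landau decomposition \prettyref{eq: Landau Decomposition} and the resolvent can be inverted block by block, with the arranged $C^1$-smallness \prettyref{eq: omega 0 smaall C1 norm} of $\bar\omega_0$ controlling the off-diagonal terms. Saying the normal form ``supplies enough total-symbol information to bypass microhyperbolicity'' is not an argument; the specific structure of the remainder (harmonic, $\xi_0$-independent) is exactly what produces the block-diagonal resolvent, and absent it the extension fails.

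Two further mismatches. The exponential gain in the paper comes from the Paley--Wiener bound $\check\vartheta\bigl(\frac{\lambda-z}{\sqrt h}\bigr)=O\bigl(e^{S'_{\alpha\beta}(\textrm{Im}\,z)/h^{1/2-\epsilon}}\bigr)$ for $\textrm{Im}\,z<0$, using only that $\vartheta$ is supported in $t\ge T'h^\epsilon$; it is not an $\exp(-ct|\textrm{Im}\,z|/h)$ factor from the propagator $e^{-itD/h}$, and the almost-analytic continuation that does appear is in the translation parameter $t$ of the cutoffs $a_{\alpha,t}^W$, $a_{\beta,t}^W$ and $\bar D_t$, not in $z$. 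Second, the elliptic region needs no integration by parts: $WF\bigl(f(D/\sqrt h)\bigr)\subset\Sigma_{\textrm{spt}\,f}^{D/\sqrt h}$, so any cutoff $A_0$ with $WF(A_0)$ away from $\Sigma_{(-\tau,\tau)}^D$ already gives $A_0 f(D/\sqrt h)\in h^\infty\Psi^{-\infty}$. Your claimed per-step gain $h^{1/2-\epsilon}$ mixes the spectral scale of $f(D/\sqrt h)$ with the microlocal scale of $D$; away from $\Sigma_0^D$ the microlocal parametrix of $D$ is $O(1)$, so integration by parts would in fact give $h^{1-\epsilon}$ per step---but the wavefront argument makes this moot. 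You also pass over the non-trivial intermediate reductions (the $\Psi_\delta$-calculus argument of \prettyref{lem:changing symbol near crit set} for modifying $D$ outside $V_{\alpha\beta}$, and the abstract trace-comparison \prettyref{prop:apdx prop for R^n red.} underlying the reduction to $\mathbb{R}^n$) which carry real technical weight in the proof.
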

We note that in the above lemma the function $\vartheta$ is allowed
to depend on $h$, while its support and range are contained in $h$-independent
intervals.
\begin{lem}
\label{lem: Easy trace expansion lemma}There exist smooth functions
$u_{j}\in C^{\infty}\left(\mathbb{R}\right)$ such that for each $\lambda\in\mathbb{R}$
and$\epsilon\in\left(0,\frac{1}{2}\right)$ one has a trace expansion
\begin{eqnarray*}
\textrm{tr}\left[f\left(\frac{D}{\sqrt{h}}\right)\left(\mathcal{F}_{h}^{-1}\theta_{\epsilon}\right)\left(\lambda\sqrt{h}-D\right)\right] & =\\
\textrm{tr}\left[f\left(\frac{D}{\sqrt{h}}\right)\frac{1}{h^{1-\epsilon}}\check{\theta}\left(\frac{\lambda\sqrt{h}-D}{h^{1-\epsilon}}\right)\right] & = & h^{-m-1}\left(\sum_{j=0}^{N-1}c_{j}h^{j/2}+O\left(h^{N/2}\right)\right)
\end{eqnarray*}
where $\theta_{\epsilon}\left(x\right)\coloneqq\theta\left(\frac{x}{h^{\epsilon}}\right)$.
\end{lem}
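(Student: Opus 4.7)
The idea is to realize the operator $f(D/\sqrt h)(\mathcal F_h^{-1}\theta_\epsilon)(\lambda\sqrt h-D)$ as a single Schwartz function $G_{h,\lambda}(D/\sqrt h)$ and then to run the standard Bismut--Ma--Marinescu local index theory machine on its Schwartz kernel at the diagonal.

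Directly, the operator equals $G_{h,\lambda}(D/\sqrt h)$ with
\[
G_{h,\lambda}(y)\;:=\; f(y)\,\frac{1}{h^{1-\epsilon}}\,\check\theta\!\left(\frac{\lambda-y}{h^{1/2-\epsilon}}\right),
\]
a Schwartz function concentrating at $y=\lambda$ on the scale $h^{1/2-\epsilon}$. Splitting $G_{h,\lambda}$ into even and odd parts in $y$ reduces matters to diagonal asymptotics of $\phi(D_h^2/h)$ and $(D_h/\sqrt h)\,\psi(D_h^2/h)$ for Schwartz functions $\phi,\psi$.

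Around each point $x_0\in X$ I would introduce normal coordinates together with a local orthonormal frame for $TX$ in which $\mathfrak J_{x_0}$ is block-diagonal with blocks $\mu_j\nu(x_0)$ (available by the suitability hypothesis, via radial parallel transport), and a trivialization of $L$ for which the connection form of $A_0$ vanishes at $x_0$. Conjugating by the magnetic gauge $e^{-i\langle a(x_0),x-x_0\rangle/h}$ and applying the parabolic rescaling $y=(x-x_0)/\sqrt h$ converts $D_h/\sqrt h$ into an operator $\widetilde D_h(y,D_y)$ on $\mathbb R^n$ whose coefficients admit a Taylor expansion
\[
\widetilde D_h \;=\; \widetilde D_0+\sqrt h\,\widetilde D_1+h\,\widetilde D_2+\cdots,
\]
where $\widetilde D_0$ is precisely the Euclidean model \prettyref{eq: magnetic Dirac Rm} acting in the $2m$ symplectic directions of $T_{x_0}X$ (with weights $\mu_j\nu(x_0)$), while the Reeb-direction derivative carries an extra $\sqrt h$ that drops out in the limit. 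Writing $G_{h,\lambda}(\widetilde D_h)$ via the Helffer--Sj\"ostrand formula and expanding $(\widetilde D_h-z)^{-1}$ in a perturbation series around $(\widetilde D_0-z)^{-1}$ produces an asymptotic series in half-integer powers of $h$ for the pointwise trace on the diagonal, the individual terms being evaluated explicitly by means of the spectral decomposition of $\widetilde D_0$ furnished by \prettyref{prop:eigenspaces magnetic Dirac}. After the Jacobian $h^{n/2}$ from the rescaling and integration over $x_0\in X$, this gives the claimed expansion $h^{-m-1}\bigl(\sum u_j(\lambda)h^{j/2}+O(h^{N/2})\bigr)$, with smooth coefficients $u_j(\lambda)$ obtained as fibre integrals over $X$ of local expressions in $A_0$, the Riemannian curvature, and the normal-form data $\mu_j,\nu$.

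The main technical obstacle is uniform control of the Helffer--Sj\"ostrand integral as $G_{h,\lambda}$ becomes sharply peaked at $\lambda$: an almost-analytic extension of $G_{h,\lambda}$ has $\bar\partial$-norm that deteriorates like a negative power of $h^{1/2-\epsilon}$, and one must verify, order by order, that this is compensated by the Schwartz decay of $\check\theta$ together with the separation of $\lambda$ from the discrete model spectrum $\{\sqrt{\mu\cdot\tau h}:\tau\in\mathbb N_0^m\}$. The restriction $\epsilon<1/2$ is precisely what keeps the concentration scale $h^{1/2-\epsilon}$ above the semiclassical scale $\sqrt h$, so that the perturbative resolvent expansion can be pushed to any desired order $N/2$ while retaining validity of the Taylor expansion of $\widetilde D_h$ and of the Helffer--Sj\"ostrand error estimates.
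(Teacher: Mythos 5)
Your overall strategy — parabolic rescaling in geodesic coordinates, Taylor expansion of the rescaled Dirac operator around the model $\mathtt D_0$, Helffer--Sj\"ostrand perturbative expansion of the resolvent, explicit evaluation via the spectral decomposition of the model — is the same local index theory engine the paper runs. The organization, however, differs in a way that opens a real gap.

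You work directly with the $h$-dependent Schwartz function $G_{h,\lambda}(y)=f(y)\,h^{\epsilon-1}\check\theta\bigl((\lambda-y)/h^{1/2-\epsilon}\bigr)$. The paper instead first establishes a \emph{uniform} local trace expansion for a general $h$-\emph{independent} Schwartz $\phi$, with a remainder controlled by an explicit seminorm $\sum_k\bigl\|\langle\xi\rangle^N\hat\phi^{(k)}\bigr\|_{L^1}$ (Proposition \prettyref{prop: local trace expansion}), and only afterwards specializes to $\phi_t(x)=f(x)e^{it(\lambda-x)}$ and integrates in $t$ against $\theta(th^{1/2-\epsilon})$. That two-step factoring is what makes the error tractable: the seminorm is $O(\langle t\rangle^N)$ and the $t$-integral over the $h^{\epsilon-1/2}$-sized support of $\theta$ gives the needed power of $h$. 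In your version, the almost analytic extension of $G_{h,\lambda}$ degenerates as $h\to0$, and although you flag this, you do not isolate a quantity (like the paper's seminorm) that both degenerates controllably and can be re-integrated to recover the stated $O(h^{N/2})$.

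The more serious gap is structural. After the local index expansion you will find that the $j$-th coefficient is a pairing of a \emph{tempered distribution} against $G_{h,\lambda}$; a priori there is no reason this pairing equals $f(\lambda)u_j(\lambda)+O(h^{\cdot})$ for a \emph{smooth} function $u_j$. The paper proves this by an explicit classification of the coefficient distributions (Proposition \prettyref{prop: structure trace distributions}) as combinations of polynomials $s^a$ and square-root-type singular distributions $v_{a,b,c,\Lambda;p}$ supported in $\{|s|\geq\sqrt{2\nu_p\Lambda}\}$, from which the smoothness of $u_j$ on $(-\sqrt{2\nu_0},\sqrt{2\nu_0})\supset\mathrm{spt}(f)$ follows (Corollary \prettyref{cor: uj smooth near 0}). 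Your proposed substitute — ``separation of $\lambda$ from the discrete model spectrum'' — is not the right condition and in fact fails: the rescaled model $\mathtt D_0=\gamma^0\partial_{x_0}+\mathtt D_{00}$ has a continuous spectrum filling all of $\mathbb R$ (the kernel of the transverse harmonic oscillator $\mathtt D_{00}$ carries a free one-dimensional Dirac operator in the Reeb direction), so every $\lambda$ lies in the model spectrum. What saves the day is that the spectral density near $0$ is smooth because the \emph{discrete} part of $\textrm{Spec}(\mathtt D_{00})$ starts at $\pm\sqrt{2\nu_0}$, which is exactly what the classification of the $u_j$ encodes and what the hypothesis $\mathrm{spt}(f)\subset(-\sqrt{2\nu_0},\sqrt{2\nu_0})$ exploits. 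Without this step your expansion would not produce smooth coefficient functions $u_j(\lambda)$.

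A minor point: the even/odd splitting into $\phi(D_h^2/h)$ and $(D_h/\sqrt h)\psi(D_h^2/h)$ is an unnecessary detour; the Helffer--Sj\"ostrand formula handles $\phi(D/\sqrt h)$ directly, which is what the paper does.
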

We note that the trace expansion \prettyref{thm:main trace expansion}
follows from the above two lemmas on simply splitting $\theta\left(x\right)=\theta_{\epsilon}\left(x\right)+\underbrace{\left[\theta\left(x\right)-\theta_{\epsilon}\left(x\right)\right]}_{\vartheta\left(x\right)}$
and applying \prettyref{lem: Easy trace expansion lemma} and \prettyref{lem: O(h infty) LEMMA}
to the first and second summands respectively. Lemma \prettyref{lem: Easy trace expansion lemma}
is a relatively classical expansion proved via local index theory
and will be deferred to \prettyref{sec:Local trace expansion}. Our
main occupation until then is in proving \prettyref{lem: O(h infty) LEMMA}.

As a first step one chooses a microlocal partition of unity $A_{\alpha}\in\Psi_{\textrm{cl}}^{0}\left(X\right)$,
$0\leq\alpha\leq N,$ satisfying
\begin{eqnarray}
\sum_{\alpha=0}^{N}A_{\alpha} & = & 1\nonumber \\
WF\left(A_{0}\right)\subset & U_{0}\subset & \overline{T^{*}X}\setminus\Sigma_{\left(-\tau,\tau\right)}^{D}\nonumber \\
WF\left(A_{\alpha}\right)\Subset & U_{\alpha}\subset & \Sigma_{\left(-2\tau,2\tau\right)}^{D},\;1\leq\alpha\leq N.\label{eq: microlocal partition of unity}
\end{eqnarray}
 subordinate to an open cover $\left\{ U_{\alpha}\right\} _{\alpha=0}^{N}$
of $T^{*}X$. Clearly, it suffices to prove 
\begin{equation}
\textrm{tr}\left[A_{\alpha}f\left(\frac{D}{\sqrt{h}}\right)\check{\vartheta}\left(\frac{\lambda\sqrt{h}-D}{h}\right)A_{\beta}\right]=O\left(h^{\infty}\right)\label{eq: microlocal estimate}
\end{equation}
for $1\leq\alpha,\beta\leq N$ with $WF\left(A_{\alpha}\right)\cap WF\left(A_{\beta}\right)\neq\emptyset$
. 

By the Helffer-Sjostrand formula we have the trace above is given
by 
\begin{equation}
\mathcal{T}_{\alpha\beta}^{\vartheta}\left(D\right)\coloneqq\frac{1}{\pi}\int_{\mathbb{C}}\bar{\partial}\tilde{f}\left(z\right)\check{\vartheta}\left(\frac{\lambda-z}{\sqrt{h}}\right)\textrm{tr }\left[A_{\alpha}\left(\frac{1}{\sqrt{h}}D-z\right)^{-1}A_{\beta}\right]dzd\bar{z}.\label{eq:Helffer Sjostrand formula}
\end{equation}
We note that the resolvent, the above trace as well as the left hand
side of \prettyref{eq: microlocal estimate} are well defined for
any essentially self-adjoint pseudodifferential operator in place
of $D$. The next reduction step attempts to modify $D$ without affecting
the asymptotics of $\mathcal{T}_{\alpha\beta}^{\vartheta}\left(D\right)$.
To this end, choose open subsets 
\begin{equation}
WF\left(A_{\alpha}\right)\cup WF\left(A_{\beta}\right)\subset V_{\alpha\beta}\Subset T^{*}X,\label{eq: def open neighnourhood V_ij}
\end{equation}
for each such pair $\alpha,\beta$ with $WF\left(A_{\alpha}\right)\cap WF\left(A_{\beta}\right)\neq\emptyset$.
With $d=\sigma\left(D\right)\in C^{\infty}\left(X;i\mathfrak{u}\left(S\right)\right)$,
define the required exit time
\begin{align}
T_{\alpha\beta} & \coloneqq\frac{1}{\inf_{g\in\mathcal{G}_{\alpha\beta}}\left|H_{g}d\right|},\quad\textrm{ where}\label{eq: necessary exit time}\\
\mathcal{G}_{\alpha\beta} & \coloneqq\left\{ g\in C^{\infty}\left(T^{*}X;\left[0,1\right]\right)|\left.g\right|_{WF\left(A_{\alpha}\right)\cap WF\left(A_{\beta}\right)}=1,\;\left.g\right|_{V_{\alpha\beta}^{c}}=0\right\} .\nonumber 
\end{align}
If one were to use a scalar symbol $d\in C^{\infty}\left(X\right)$
instead in \prettyref{eq: necessary exit time}, the required exit
time $T_{\alpha\beta}$ would have the following significance: any
Hamiltonian trajectory $\gamma\left(t\right)=e^{tH_{d}}$ with $\gamma\left(0\right)\in WF\left(A_{\alpha}\right)\cap WF\left(A_{\beta}\right),$$\gamma\left(T\right)\in V_{\alpha\beta}$,
would have length $T\geq T_{\alpha\beta}$ atleast the required exit
time. We now have the following. 
\begin{lem}
\label{lem:changing symbol near crit set} Let $D'\in\Psi_{\textrm{cl}}^{1}\left(X;E\right)$
be essentially self-adjoint such that $D=D'$ microlocally on $V_{\alpha\beta}$
. Then for $\vartheta\in C_{c}^{\infty}\left(\left(T_{\alpha\beta}'h^{\epsilon},T_{\alpha\beta}\right);\left[0,1\right]\right)$,
$0<T_{\alpha\beta}'<T_{\alpha\beta}$, one has 
\[
\mathcal{T}_{\alpha\beta}^{\vartheta}\left(D\right)=\mathcal{T}_{\alpha\beta}^{\vartheta}\left(D'\right)\quad\textrm{mod }h^{\infty}.
\]
\end{lem}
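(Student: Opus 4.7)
The plan is to apply the resolvent identity inside the Helffer--Sj\"ostrand representation \prettyref{eq:Helffer Sjostrand formula} and then reduce matters to an iterated commutator estimate exploiting the bound $|H_{g}d|\leq 1/T_{\alpha\beta}$ built into \prettyref{eq: necessary exit time}. Writing $R(z)=(D/\sqrt{h}-z)^{-1}$ and $R'(z)=(D'/\sqrt{h}-z)^{-1}$, the identity $R(z)-R'(z)=R(z)(D'-D)/\sqrt{h}\,R'(z)$ gives
\[
\mathcal{T}_{\alpha\beta}^{\vartheta}(D)-\mathcal{T}_{\alpha\beta}^{\vartheta}(D')=\frac{1}{\pi}\int_{\mathbb{C}}\bar{\partial}\tilde{f}(z)\,\check{\vartheta}\!\left(\frac{\lambda-z}{\sqrt{h}}\right)\textrm{tr}\!\left[A_{\alpha}R(z)\frac{D'-D}{\sqrt{h}}R'(z)A_{\beta}\right]\,dz\,d\bar{z}.
\]

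Choose $g\in\mathcal{G}_{\alpha\beta}$ nearly realizing the infimum in \prettyref{eq: necessary exit time} and set $B=g^{W}\in\Psi_{\textrm{cl}}^{0}(X;E)$. Since $WF(B)\subset V_{\alpha\beta}$ and $B\equiv 1$ microlocally on $WF(A_{\alpha})\cap WF(A_{\beta})$, cyclicity of the trace allows one to sandwich the middle operator by $B$'s modulo $O(h^{\infty})$. Decomposing $D-D'=B(D-D')B+(\text{terms involving }I-B)$ and iteratively commuting $B$ past the resolvents via $[B,R(z)]=-R(z)[B,D/\sqrt{h}]R(z)$, after $N$ steps the trace reduces to a main contribution proportional to $B(D-D')B$, which is $O(h^{\infty})$ by the hypothesis $D=D'$ microlocally on $V_{\alpha\beta}$, plus remainders consisting of strings of resolvents interspersed with commutators. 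By Calder\'on--Vaillancourt, each commutator obeys $\|[B,D/\sqrt{h}]\|_{L^{2}\to L^{2}}\leq\sqrt{h}\,|H_{g}d|+O(h)\leq\sqrt{h}/T_{\alpha\beta}+O(h)$, so the $N$-fold remainder is bounded in operator norm by $C_{N}(\sqrt{h}/T_{\alpha\beta})^{N}/|\textrm{Im}\,z|^{2N+2}$.

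Inserting this into the $z$-integral and invoking both the almost-analytic bound $|\bar{\partial}\tilde{f}(z)|\leq C_{M}|\textrm{Im}\,z|^{M}$ (for any $M$) and the Paley--Wiener estimate $|\check{\vartheta}((\lambda-z)/\sqrt{h})|\leq C\exp(T_{\alpha\beta}|\textrm{Im}\,z|/\sqrt{h})$ (from $\textrm{spt}\,\vartheta\subset(-T_{\alpha\beta},T_{\alpha\beta})$), a contour deformation to $|\textrm{Im}\,z|\sim\sqrt{h}$ with $M=2N$ yields a bound of order $h^{(N-2)/2}$; letting $N\to\infty$ gives the claim. The main obstacle is the iterated commutator step: since $d=\sigma(D)$ is matrix-valued and in general non-diagonalizable, no classical Hamilton flow of $d$ exists and no standard Egorov theorem is available. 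The definition of $T_{\alpha\beta}$ through the operator norm $|H_{g}d|$ of a matrix-valued Poisson bracket is tailored precisely to give the commutator bound $\sqrt{h}/T_{\alpha\beta}$, which is exactly balanced by the Paley--Wiener growth with rate $T_{\alpha\beta}$ coming from the time cutoff $\vartheta$, allowing the argument to close without any reference to a classical flow.
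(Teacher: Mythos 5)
Your strategy is close in spirit to the paper's, but it has a quantitative gap that the paper's construction is specifically designed to avoid. The issue is in the final balancing of the iterated commutator decay against the Paley--Wiener growth.

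After $N$ commutations, your remainder is estimated by $C_N\,(\sqrt h/T_{\alpha\beta})^N/|\mathrm{Im}\,z|^{N+c}$: one factor of $\|[B,D/\sqrt h]\|\lesssim\sqrt h/T_{\alpha\beta}$ and one extra resolvent factor $|\mathrm{Im}\,z|^{-1}$ per step. This is decay that is \emph{polynomial} in $\sqrt h$. On the other hand, the Paley--Wiener factor is $|\check\vartheta((\lambda-z)/\sqrt h)|\lesssim e^{S_{\alpha\beta}\,\mathrm{Im}\,z/\sqrt h}$ for $\mathrm{Im}\,z>0$, which is \emph{exponentially} large in $1/\sqrt h$ on any region where $\mathrm{Im}\,z$ is bounded below, in particular at the top of the support of $\bar\partial\tilde f$. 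For any fixed $N$, $h^{N/2}\,e^{S_{\alpha\beta}c_0/\sqrt h}\to\infty$; and if one tries $N\sim h^{-1/2}$ to compensate, the constants $C_N$ in the pseudodifferential commutator expansion (which typically grow factorially) have to be controlled, which you do not address. The phrase ``contour deformation to $|\mathrm{Im}\,z|\sim\sqrt h$'' is not available here: the Helffer--Sj\"ostrand formula is a genuine two-dimensional integral against $\bar\partial\tilde f$, and the contribution of the region $\mathrm{Im}\,z\sim1$ cannot be shifted away. Even if you first insert the standard cutoff $\psi_M(z)=\psi(\mathrm{Im}\,z/(M\sqrt h\log\tfrac1h))$ to restrict to $\mathrm{Im}\,z\lesssim\sqrt h\log\tfrac1h$ (where Paley--Wiener is only polynomial in $h$), your iterated commutator now gives $\bigl(T_{\alpha\beta}M\log\tfrac1h\bigr)^{-N}$ there, a $\bigl(\log\tfrac1h\bigr)^{-N}$ decay that cannot beat the $h^{-2MS_{\alpha\beta}}$ growth from Paley--Wiener.

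The paper closes this gap not with a finite commutator iteration but by conjugating the resolvent with the $z$-adapted exotic weight $G=(e^{\alpha_z g\log\frac1h})^W\in h^{-N}\Psi_\delta^0$, with $\alpha_z=\min(S''_{\alpha\beta}\mathrm{Im}\,z/(\sqrt h\log\frac1h),N)$. The invertibility of the conjugated operator (which uses precisely $|(\alpha_z\sqrt h\log\frac1h)H_{g_0}(d)|<|\mathrm{Im}\,z|$, i.e.\ the definition of the exit time) produces the \emph{exponential} decay factor $e^{-S''_{\alpha\beta}\mathrm{Im}\,z/\sqrt h}$, and since $S''_{\alpha\beta}>S_{\alpha\beta}$ this strictly dominates the Paley--Wiener growth on the cutoff strip, giving $O(h^{M(S''_{\alpha\beta}-S_{\alpha\beta})-n})$ with $M$ arbitrary. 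In effect, the exponential weight is a resummed, $z$-adapted version of your commutator series, and that resummation is exactly what is missing from your argument. (A secondary point: taking $B=g^W$ for $g\in\mathcal G_{\alpha\beta}$ as a sandwiching cutoff is also delicate, since $g\equiv1$ only on $WF(A_\alpha)\cap WF(A_\beta)$, not on each of $WF(A_\alpha)$, $WF(A_\beta)$; the paper uses a separate microlocal cutoff $B'$ with $B'\equiv1$ on $V_{\alpha\beta}$ to localize, keeping $g$ exclusively for the escape-function role.)
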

\begin{proof}
Let $B\in\Psi_{\textrm{cl}}^{0}\left(X\right)$ be a microlocal cutoff
such that $B=0$ on $WF\left(D-D'\right)$ and $B=1$ on $V_{\alpha\beta}$.
Then $\left(1-B\right)A_{\beta}=0$ microlocally implies 
\begin{align}
\left(z-\frac{1}{\sqrt{h}}D\right)B\left(z-\frac{1}{\sqrt{h}}D'\right)^{-1}A_{\beta} & =A_{\beta}-\left[\frac{1}{\sqrt{h}}D,B\right]\left(z-D'\right)^{-1}A_{\beta}\nonumber \\
 & +B\left(\frac{1}{\sqrt{h}}D'-\frac{1}{\sqrt{h}}D\right)\left(z-\frac{1}{\sqrt{h}}D'\right)^{-1}A_{\beta}\label{eq:first estimate}\\
 & \qquad\qquad\qquad\qquad\qquad\qquad\qquad\left(\textrm{mod}\:h^{\infty}\right)\nonumber 
\end{align}
in trace norm. Next, multiplying through by $A_{\alpha}\left(z-\frac{1}{\sqrt{h}}D\right)^{-1}$and
using $A_{\alpha}B=B$ microlocally gives
\begin{multline}
A_{\alpha}\left(z-\frac{1}{\sqrt{h}}D'\right)^{-1}A_{\beta}-A_{\alpha}\left(z-\frac{1}{\sqrt{h}}D\right)^{-1}A_{\beta}=\\
A_{\alpha}\left(z-\frac{1}{\sqrt{h}}D\right)^{-1}B\left(\frac{1}{\sqrt{h}}D'-\frac{1}{\sqrt{h}}D\right)\left(z-\frac{1}{\sqrt{h}}D'\right)^{-1}A_{\beta}\\
-A_{\alpha}\left(z-\frac{1}{\sqrt{h}}D\right)^{-1}\left[\frac{1}{\sqrt{h}}D,B\right]\left(z-\frac{1}{\sqrt{h}}D'\right)^{-1}A_{\beta}+O\left(\left|\textrm{Im}z\right|^{-1}h^{\infty}\right)\label{eq:second term difference resolvents}
\end{multline}
in trace norm. Now $B=0$ on $WF\left(D-D'\right)$ gives that the
first term on the right hand side above is $O\left(\left|\textrm{Im}z\right|^{-2}h^{\infty}\right)$. 

We now estimate the second term. Let $S_{\alpha\beta}<S_{\alpha\beta}''<S_{\alpha\beta}'''<T_{\alpha\beta}$
and $S_{\alpha\beta}'>T_{\alpha\beta}'$ be such that $\vartheta\in C_{c}^{\infty}\left(\left[S_{\alpha\beta}'h^{\epsilon},S_{\alpha\beta}\right];\left[0,1\right]\right)$.
Let $g_{0}\in\mathcal{G}_{\alpha\beta}$ with $\left|H_{g_{0}}\left(d\right)\right|\leq\frac{1}{S_{\alpha\beta}'''}$.
Set $g=\alpha_{z}g_{0}$, where
\[
\alpha_{z}=\min\left(\frac{S_{\alpha\beta}''\mbox{Im}z}{\sqrt{h}\log\frac{1}{h}},N\right)
\]
with the constant $N>0$ to be specified later. We note that 
\begin{eqnarray*}
G & = & \left(e^{g\log\frac{1}{h}}\right)^{W}\in h^{-N}\Psi_{\delta}^{0}\left(X\right)
\end{eqnarray*}
for each $0<\delta<\frac{1}{2}$. Since it has an elliptic symbol
we may construct its inverse by symbolic calculus $G^{-1}\in h^{N}\Psi_{\delta}^{0}\left(X\right)$.
Moreover 
\begin{align}
G\left(z-\frac{1}{\sqrt{h}}D_{h}\right)G^{-1} & =\left(z-\frac{1}{\sqrt{h}}D_{h}\right)+i\left(\alpha_{z}\sqrt{h}\log\frac{1}{h}\right)\left(H_{g_{0}}\left(d\right)\right)^{W}\label{eq: resolvent in exotci class}\\
 & \qquad\qquad\qquad\qquad\qquad\qquad+R^{W},\quad\textrm{ with}\nonumber \\
R & =O\left(h^{\frac{3}{2}}\alpha_{z}\log\frac{1}{h}\right)\quad\textrm{ in }S_{\delta}^{0}\left(X\right).
\end{align}
Now, since $\left|\left(\alpha_{z}\sqrt{h}\log\frac{1}{h}\right)H_{g_{0}}\left(d\right)\right|\leq\frac{S_{\alpha\beta}''}{S_{\alpha\beta}'''}\left|\mbox{Im}z\right|<\left|\mbox{Im}z\right|$,
the inverse $G\left(z-\frac{1}{\sqrt{h}}D_{h}\right)^{-1}G^{-1}$
of the above exists and is $O\left(\left|\textrm{Im}z\right|^{-1}\right)$
in operator norm for $\textrm{Im}z\neq0$, and $h$ sufficiently small. 

Next, $G=e^{\alpha_{z}\log\frac{1}{h}}$ on $WF\left(A_{\alpha}\right)$,$G=G^{-1}=I$
on $WF\left(B\right)\setminus V_{\alpha\beta}$ and $\left[D_{h},B\right]=0$
on $V_{\alpha\beta}$ imply 
\begin{multline*}
e^{\alpha_{z}\log\frac{1}{h}}A_{\alpha}\left(z-\frac{1}{\sqrt{h}}D_{h}\right)^{-1}\left[\frac{1}{\sqrt{h}}D_{h},B\right]\\
=A_{\alpha}G\left(z-\frac{1}{\sqrt{h}}D_{h}\right)^{-1}G^{-1}\left[\frac{1}{\sqrt{h}}D_{h},B\right]+O\left(\left|\textrm{Im}z\right|^{-1}h^{\infty}\right)
\end{multline*}
in trace norm. The above is now $O\left(\left|\textrm{Im}z\right|^{-1}h^{-n}\right)$
in trace norm. Hence 
\[
A_{\alpha}\left(z-\frac{1}{\sqrt{h}}D_{h}\right)^{-1}\left[\frac{1}{\sqrt{h}}D_{h},B\right]=O\left(\left|\textrm{Im}z\right|^{-1}h^{-n}\max\left(h^{N},e^{-\frac{S_{\alpha\beta}''\textrm{Im}z}{\sqrt{h}}}\right)\right)
\]
in trace norm. This now estimates the second term of \prettyref{eq:second term difference resolvents}
and gives 
\begin{align}
 & A_{\alpha}\left(z-\frac{1}{\sqrt{h}}D_{h}'\right)^{-1}A_{\beta}-A_{\alpha}\left(z-\frac{1}{\sqrt{h}}D_{h}\right)^{-1}A_{\beta}\label{eq:resolvent difference bound}\\
= & O\left(\left|\textrm{Im}z\right|^{-2}h^{-n}\max\left(h^{N},e^{-\frac{S_{\alpha\beta}''\textrm{Im}z}{\sqrt{h}}}\right)\right)\nonumber 
\end{align}
in trace norm.

Next, we have the Paley-Wiener estimate 
\begin{equation}
\check{\vartheta}\left(\frac{\lambda-z}{\sqrt{h}}\right)=\begin{cases}
O\left(e^{\frac{S_{\alpha\beta}\left(\textrm{Im}z\right)}{\sqrt{h}}}\right); & \textrm{Im}z>0\\
O\left(e^{\frac{S_{\alpha\beta}'\left(\textrm{Im}z\right)}{h^{\frac{1}{2}-\epsilon}}}\right); & \textrm{Im}z<0.
\end{cases}\label{eq: Paley Wiener estimate}
\end{equation}
Introduce $\psi\in C^{\infty}\left(\mathbb{R};\left[0,1\right]\right)$
such that $\psi\left(x\right)=\begin{cases}
1; & x\leq1\\
0; & x\geq2
\end{cases}$. Setting $\psi_{M}\left(z\right)=\psi\left(\frac{\textrm{Im}z}{M\sqrt{h}\log\frac{1}{h}}\right)$,
for another constant $M>1$ yet to be chosen, we have the estimate
\begin{equation}
\bar{\partial}\left(\psi_{M}\tilde{f}\right)=\begin{cases}
O\left(\psi_{M}\left|\textrm{Im}z\right|^{N}+\frac{1}{M\sqrt{h}\log\frac{1}{h}}1_{\left[1,2\right]}\left(\frac{\textrm{Im}z}{M\sqrt{h}\log\frac{1}{h}}\right)\right); & \textrm{Im}z>0\\
O\left(\left|\textrm{Im}z\right|^{N}\right); & \textrm{Im}z<0.
\end{cases}\label{eq: almost analytic estimate}
\end{equation}
Finally, \prettyref{eq:resolvent difference bound}, \prettyref{eq: Paley Wiener estimate}
and \prettyref{eq: almost analytic estimate} along with the observation
$\psi_{M}\left|\textrm{Im}z\right|^{N}=O\left(\left(M\sqrt{h}\log\frac{1}{h}\right)^{N}\right)$
gives
\begin{eqnarray*}
 &  & \mathcal{T}_{\alpha\beta}^{\vartheta}\left(D'\right)-\mathcal{T}_{\alpha\beta}^{\vartheta}\left(D\right)\\
 & = & \frac{1}{\pi}\int_{\mathbb{C}}\bar{\partial}\left(\psi_{M}\tilde{f}\right)\check{\vartheta}\left(\frac{\lambda-z}{\sqrt{h}}\right)\left[A_{\alpha}\left(z-\frac{1}{\sqrt{h}}D_{h}'\right)^{-1}A_{\beta}\right.\\
 &  & \qquad\qquad\qquad\qquad\qquad\qquad\qquad\qquad-\left.A_{\alpha}\left(z-\frac{1}{\sqrt{h}}D_{h}\right)^{-1}A_{\beta}\right]dzd\bar{z}\\
 & = & O\left(h^{\infty}\right)+\\
 &  & O\left[\int_{\left\{ M\sqrt{h}\log\frac{1}{h}\leq\mbox{Im}z\leq2M\sqrt{h}\log\frac{1}{h}\right\} }\frac{h^{-n}}{\sqrt{h}\log\frac{1}{h}}\max\left(h^{N}e^{\frac{S_{\alpha\beta}\left(\textrm{Im}z\right)}{\sqrt{h}}},e^{-\frac{\left(S_{\alpha\beta}''-S_{\alpha\beta}\right)\textrm{Im}z}{\sqrt{h}}}\right)\right]\\
 & = & O\left[\max\left(h^{N-2MS_{\alpha\beta}-n},h^{M\left(S_{\alpha\beta}''-S_{\alpha\beta}\right)-n}\right)\right].
\end{eqnarray*}
Choosing $M\gg\frac{n}{\left(S_{\alpha\beta}''-S_{\alpha\beta}\right)}$
and furthermore $N\gg2MS_{\alpha\beta}+n$ gives the result.
\end{proof}
In the proof above we have closely followed \cite{Dimassi-Sjostrand}
Lemma 12.7. Again, the proof above avoids the use of an unknown parametrix
for $e^{\frac{it}{h}D}$ which, following the significance of the
required exit time $T_{\alpha\beta}$ noted before, maybe used to
give an alternate proof in the case when $d$ is scalar.

\section{Reduction to $\mathbb{R}^{n}$\label{sec:Reduction to R^n} }

In this section we shall further reduce to the case of a Dirac operator
on $\mathbb{R}^{n}$ . First we cover $X$ by a finite set of Darboux
charts $\left\{ \varphi_{s}:\Omega_{s}\rightarrow\Omega_{s}^{0}\subset\mathbb{R}^{n}\right\} _{s\in S}$
for the contact form $a$, centered at points $\left\{ x_{s}\right\} _{s\in S}\in X$.
By shrinking the partition of unity \prettyref{eq: microlocal partition of unity}
we may assume that for each pair $\alpha,\beta$, with $WF\left(A_{\alpha}\right)\cap WF\left(A_{\beta}\right)\neq\emptyset$,
the open sets $V_{\alpha\beta}\subset T^{*}\Omega_{s}$ in \prettyref{eq: def open neighnourhood V_ij}
are contained in some Darboux chart. Now consider such a chart $\Omega_{s}$
with coordinates$\left(x_{0},\ldots,x_{2m}\right)$ centered at $x_{s}\in X$
and an orthonormal frame $\left\{ e_{j}=w_{j}^{k}\partial_{x_{k}}\right\} ,0\leq j\leq2m$
for the tangent bundle on $\Omega_{s}$. We hence have 
\begin{equation}
w_{j}^{k}g_{kl}w_{r}^{l}=\delta_{jr},\label{eq: diagonalizing metric}
\end{equation}
 where $g_{kl}$ is the metric in these coordinates and the Einstein
summation convention is being used. Let $\Gamma_{jk}^{l}$ be the
Christoffel symbols for the Levi-Civita connection in the orthonormal
frame $e_{i}$ satisfying $\nabla_{e_{j}}e_{k}=\Gamma_{jk}^{l}e_{l}$.
This orthonormal frame induces an orthonormal frame $u_{j}$, $1\leq j\leq2^{m}$,
for the spin bundle $S$. We further choose a local orthonormal section
$\mathtt{l}\left(x\right)$ for the Hermitian line bundle $L$ and
define via $\nabla_{e_{j}}^{A_{0}}\mathtt{l}=\Upsilon_{j}\left(x\right)\mathtt{l}$,
$0\leq j\leq2m$ the Christoffel symbols of the unitary connection
$A_{0}$ on $L$. In terms of the induced frame $u_{j}\otimes\mathtt{l}$,
$1\leq j\leq2^{m}$, for $S\otimes L$ the Dirac operator \prettyref{eq:Semiclassical Magnetic Dirac}
has the form (cf. \cite{Berline-Getzler-Vergne} Section 3.3) 
\begin{eqnarray}
D & = & \gamma^{j}w_{j}^{k}P_{k}+h\left(\frac{1}{4}\Gamma_{jk}^{l}\gamma^{j}\gamma^{k}\gamma_{l}+\Upsilon_{j}\gamma^{j}\right),\quad\mbox{where}\label{eq: Dirac in coords}\\
P_{k} & = & h\partial_{x_{k}}+ia_{k},\label{eq: cov diff in cords}
\end{eqnarray}
and 
\begin{equation}
a\left(x\right)=a_{k}dx^{k}=dx_{0}+\sum_{j=1}^{m}\left(x_{j}dx_{j+m}-x_{j+m}dx_{j}\right)\label{eq:standard contact form}
\end{equation}
is the standard contact one form in these coordinates. 

The expression in \prettyref{eq: Dirac in coords} is formally self-adjoint
with respect to the Riemannian density $e^{1}\wedge\ldots\wedge e^{n}=\sqrt{g}dx\coloneqq\sqrt{g}dx^{1}\wedge\ldots\wedge dx^{n}$
with $g=\det\left(g_{ij}\right)$. To get an operator self-adjoint
with respect to the Euclidean density $dx$ one expresses the Dirac
operator in the framing $g^{\frac{1}{4}}u_{j}\otimes\mathtt{l},1\leq j\leq2^{m}$.
In this new frame the expression \prettyref{eq: Dirac in coords}
for the Dirac operator needs to be conjugated by $g^{\frac{1}{4}}$
and hence the term $h\gamma^{j}w_{j}^{k}g^{-\frac{1}{4}}\left(\partial_{x_{k}}g^{\frac{1}{4}}\right)$
added. Hence, the Dirac operator in the new frame has the form 
\[
D=\left[\sigma^{j}w_{j}^{k}\left(\xi_{k}+a_{k}\right)\right]^{W}+hE\in\Psi_{\textrm{cl}}^{1}\left(\Omega_{s}^{0};\mathbb{C}^{2^{m}}\right),
\]
with $\sigma^{j}=i\gamma^{j}$, for some self-adjoint endomorphism
$E\left(x\right)\in C^{\infty}\left(\Omega_{s}^{0};i\mathfrak{u}\left(\mathbb{C}^{2^{m}}\right)\right)$.

The one form $a$ is extended to all of $\mathbb{R}^{n}$ by the same
formula \prettyref{eq:standard contact form}. The functions $w_{j}^{k}$
are extended such that
\[
\left.\left(w_{j}^{k}\partial_{x_{k}}\otimes dx^{j}\right)\right|_{\left(K_{s}^{0}\right)^{c}}=\partial_{x_{0}}\otimes dx^{0}+\sum_{j=1}^{m}\mu_{j}^{\frac{1}{2}}\left(\partial_{x_{j}}\otimes dx^{j}+\partial_{x_{j+m}}\otimes dx^{j+m}\right)
\]
 (and hence $\left.g\right|_{\left(K_{s}^{0}\right)^{c}}=dx_{0}^{2}+\sum_{j=1}^{m}\mu_{j}\left(dx_{j}^{2}+dx_{j+m}^{2}\right)$)
outside a compact neighborhood $\Omega_{s}^{0}\Subset K_{s}^{0}$.
These extensions may further be chosen such that the suitability assumption
\prettyref{def: Diagonalizability assumption} holds globally on $\mathbb{R}^{n}$
and for an extended positive function $\nu\in C_{c}^{\infty}\left(\mathbb{R}^{n}\right)$
satisfying 
\begin{equation}
\nu_{0}\leq\mu_{1}\left(\inf_{\mathbb{R}^{n}}\nu\right).\label{eq: infimum nu}
\end{equation}
The endomorphism $E\left(x\right)\in C_{c}^{\infty}\left(\mathbb{R}^{n};i\mathfrak{u}\left(\mathbb{C}^{2^{m}}\right)\right)$
is extended to an arbitrary self-adjoint endomorphism of compact support.
This now gives 
\begin{equation}
D_{0}=\left[\sigma^{j}w_{j}^{k}\left(\xi_{k}+a_{k}\right)\right]^{W}+hE\in\Psi_{\textrm{cl}}^{1}\left(\mathbb{R}^{n};\mathbb{C}^{2^{m}}\right)\label{eq:Dirac op weyl quantization}
\end{equation}
as a well defined formally self adjoint operator on $\mathbb{R}^{n}$.
Furthermore, the symbol of $D_{0}+i$ is elliptic in the class $S^{0}\left(m\right)$
for the order function $m=\sqrt{1+\sum_{k=0}^{m}\left(\xi_{k}+a_{k}\right)^{2}}$
and hence $D_{0}$ is essentially self adjoint (see \cite{Dimassi-Sjostrand}
Ch. 8). Below $\vartheta\in C_{c}^{\infty}\left(\left(T_{\alpha\beta}'h^{\epsilon},T_{\alpha\beta}\right);\left[0,1\right]\right)$,
$0<T_{\alpha\beta}'<T_{\alpha\beta}$, as before and we set $V_{\alpha\beta}^{0}\coloneqq\left(d\varphi_{s}\right)^{*}V_{\alpha\beta}\subset T^{*}\Omega_{s}^{0}$.
\begin{prop}
\label{prop:Reduction to R^n}There exist $A_{\alpha}^{0},A_{\beta}^{0}\in\Psi_{\textrm{cl}}^{0}\left(\mathbb{R}^{n}\right)$,
with $WF\left(A_{\alpha}^{0}\right)\cup WF\left(A_{\beta}^{0}\right)\Subset V_{\alpha\beta}^{0}\subset T^{*}\tilde{\Omega}_{s}$,
such that 
\[
\mathcal{T}_{\alpha\beta}^{\vartheta}\left(D\right)=\underbrace{\textrm{tr}\left[A_{\alpha}^{0}f\left(\frac{D_{0}}{\sqrt{h}}\right)\check{\vartheta}\left(\frac{\lambda\sqrt{h}-D_{0}}{h}\right)A_{\beta}^{0}\right]}_{\coloneqq\mathcal{T}_{\alpha\beta}^{\vartheta}\left(D_{0}\right)}\quad\textrm{mod }h^{\infty}.
\]
\end{prop}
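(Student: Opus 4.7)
The plan is to transport the trace computation to $\mathbb{R}^{n}$ via the Darboux chart and then invoke \prettyref{lem:changing symbol near crit set}. First I would exploit the fact that $WF(A_{\alpha})\cup WF(A_{\beta})\subset V_{\alpha\beta}\Subset T^{*}\Omega_{s}$: picking a cutoff $\chi\in C_{c}^{\infty}(\Omega_{s};[0,1])$ equal to $1$ on a neighborhood of $\pi(V_{\alpha\beta})$, one has $A_{\alpha}=\chi A_{\alpha}\chi+O(h^{\infty})$ and similarly for $A_{\beta}$ in every trace norm. Substituting this into the Helffer--Sj\"ostrand formula \prettyref{eq:Helffer Sjostrand formula} shows that, modulo $O(h^{\infty})$, every operator appearing in $\mathcal{T}_{\alpha\beta}^{\vartheta}(D)$ can be assumed to be supported in $\Omega_{s}$.

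Next, I would use the unitary identification $U:L^{2}(\Omega_{s};S\otimes L)\to L^{2}(\Omega_{s}^{0};\mathbb{C}^{2^{m}})$ induced by the chart $\varphi_{s}$ and the local frame $g^{1/4}u_{j}\otimes\mathtt{l}$. Conjugation by $U$ intertwines the cut-off Dirac operator with a pseudodifferential operator $\tilde{D}\in\Psi_{\textrm{cl}}^{1}(\Omega_{s}^{0};\mathbb{C}^{2^{m}})$ whose full Weyl symbol on $\Omega_{s}^{0}$ is precisely the symbol appearing on the right-hand side of \prettyref{eq:Dirac op weyl quantization} restricted to $\Omega_{s}^{0}$. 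Setting $A_{\alpha}^{0}\coloneqq UA_{\alpha}U^{*}$ and $A_{\beta}^{0}\coloneqq UA_{\beta}U^{*}$ in $\Psi_{\textrm{cl}}^{0}(\mathbb{R}^{n})$, both with wavefront contained in $V_{\alpha\beta}^{0}$, the unitary invariance of the trace yields
\begin{equation*}
\mathcal{T}_{\alpha\beta}^{\vartheta}(D)=\textrm{tr}\left[A_{\alpha}^{0}f\!\left(\tfrac{\tilde{D}}{\sqrt{h}}\right)\check{\vartheta}\!\left(\tfrac{\lambda\sqrt{h}-\tilde{D}}{h}\right)A_{\beta}^{0}\right]+O(h^{\infty}).
\end{equation*}

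Finally, by the very construction of the global extensions of $w_{j}^{k}$, $a_{k}$, and $E$ to all of $\mathbb{R}^{n}$, the operator $D_{0}$ of \prettyref{eq:Dirac op weyl quantization} agrees with $\tilde{D}$ microlocally on a neighborhood of $V_{\alpha\beta}^{0}$. Both $\tilde{D}$ and $D_{0}$ are essentially self-adjoint on $\mathbb{R}^{n}$ with elliptic symbol in $S^{0}(m)$, so their resolvents behave exactly as in the compact setting. The proof of \prettyref{lem:changing symbol near crit set} uses only the semi-classical Weyl calculus, almost analytic extensions, and the Paley--Wiener estimate, all of which are available verbatim on $\mathbb{R}^{n}$; applying that argument to the pair $(\tilde{D},D_{0})$ on $\mathbb{R}^{n}$ with the same cutoffs $A_{\alpha}^{0},A_{\beta}^{0}$ gives $\mathcal{T}_{\alpha\beta}^{\vartheta}(\tilde{D})=\mathcal{T}_{\alpha\beta}^{\vartheta}(D_{0})\ \textrm{mod}\ h^{\infty}$, and combined with the previous identity this proves the claim.

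The main obstacle is a bookkeeping issue: one must verify that the conjugation by $U$, including the factor $g^{1/4}$ and the associated frame changes, produces an operator whose full Weyl symbol on $\Omega_{s}^{0}$ coincides with that of $D_{0}$ rather than merely with the same principal symbol. This is exactly why the extensions of $w_{j}^{k}$, $a_{k}$, and $E$ are prescribed to agree with their local expressions on $\Omega_{s}^{0}$; once the microlocal identity $\tilde{D}=D_{0}$ on $V_{\alpha\beta}^{0}$ is established, transport and the symbol-changing lemma combine to give the result essentially for free.
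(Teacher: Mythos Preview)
Your step~2 contains a genuine gap. The map $U:L^{2}(\Omega_{s};S\otimes L)\to L^{2}(\Omega_{s}^{0};\mathbb{C}^{2^{m}})$ is only a unitary between these two \emph{local} Hilbert spaces, not between $L^{2}(X)$ and $L^{2}(\mathbb{R}^{n})$. The operator $f(D/\sqrt{h})\check{\vartheta}((\lambda\sqrt{h}-D)/h)$ is defined via the spectral resolution of $D$ on all of $X$; even after sandwiching by $A_{\alpha},A_{\beta}$ with wavefront in $V_{\alpha\beta}$, there is no operator $\tilde{D}$ on $\mathbb{R}^{n}$ for which $UA_{\alpha}f(D/\sqrt{h})\check{\vartheta}(\cdots)A_{\beta}U^{*}$ equals $A_{\alpha}^{0}f(\tilde{D}/\sqrt{h})\check{\vartheta}(\cdots)A_{\beta}^{0}$ up to $O(h^{\infty})$ by ``unitary invariance'' alone. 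Functional calculus does not commute with spatial cutoffs, and if you extend $U$ by zero to a partial isometry then $U^{*}U\neq I$ globally, so $Uf(D/\sqrt{h})U^{*}\neq f(UDU^{*}/\sqrt{h})$. The sentence ``the unitary invariance of the trace yields\ldots'' is precisely where the argument breaks.

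The paper's proof addresses exactly this difficulty. One first replaces $D$ by a bounded self-adjoint $D'\in\Psi_{\textrm{cl}}^{0}(X;S)$ equal to $D$ microlocally on $V_{\alpha\beta}$ and with $\Sigma_{(-\infty,2\tau]}^{D'}\subset V_{\alpha\beta}'$, so that the low-energy spectral projector of $E=D'-3\tau$ is microlocally concentrated inside the chart. One then builds a semiclassical Fourier integral operator $U$ (with a cutoff amplitude) associated to the chart map; it is \emph{not} a unitary, but $UU^{*}=I$ and $U^{*}U=I$ microlocally on the relevant compacta. The passage from $\textrm{tr}[A_{\alpha}\rho(E)A_{\beta}]$ to $\textrm{tr}[A_{\alpha}^{0}\rho(E_{0})A_{\beta}^{0}]$ with $E_{0}=U^{*}EU$ is then obtained not by conjugation but by the quantitative comparison Proposition~\ref{prop:apdx prop for R^n red.} in the appendix, which is tailored to operators that are only approximately intertwined by a bounded $U$ and requires discrete spectrum in the relevant window on both sides (verified via Proposition~\ref{prop:Dicrete spectrum criterion}). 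Only after this step does one invoke \prettyref{lem:changing symbol near crit set}, on both $X$ (to pass between $D$ and $D'$) and on $\mathbb{R}^{n}$ (to pass between $D_{0}'=E_{0}+3\tau$ and $D_{0}$). Your shortcut skips the analytic content that makes the transfer of functional calculus legitimate.
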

\begin{proof}
Let $K_{\alpha\beta}',K_{\alpha\beta}''$ and $V_{\alpha\beta}',V_{\alpha\beta}''$
be compact and open subsets respectively satisfying $V_{\alpha\beta}\subset K_{\alpha\beta}'\subset V_{\alpha\beta}'\subset K_{\alpha\beta}''\subset V_{\alpha\beta}''\subset T^{*}\Omega_{s}$.
Choose $D'\in\Psi_{\mbox{cl}}^{0}\left(X;S\right)$ self-adjoint such
that $D=D'$ microlocally on $K_{\alpha\beta}'$ and
\begin{equation}
\Sigma_{\left(-\infty,2\tau\right]}^{D'}\subset V_{\alpha\beta}'\label{eq:energy levels A0 bounded}
\end{equation}
and set $E=D'-3\tau\in\Psi_{\mbox{cl}}^{0}\left(X;S\right)$. Pick
a cutoff function $\chi\left(x;y,\eta\right)\in C_{c}^{\infty}\left(\pi\left(V_{\alpha\beta}''\right)\times\left(d\varphi_{s}\right)^{*}V_{\alpha\beta}'';\left[0,1\right]\right)$
such that $\chi=1$ on $\pi\left(K_{\alpha\beta}''\right)\times\left(d\varphi_{s}\right)^{*}K_{\alpha\beta}''$.
Now define the operator 
\begin{eqnarray*}
U:L^{2}\left(\mathbb{R}^{n};\mathbb{C}^{2^{m}}\right) & \rightarrow & L^{2}\left(X;S\right),\\
\left(Uf\right)\left(x\right) & = & \frac{1}{\left(2\pi h\right)^{n}}\int e^{\frac{i}{h}\left(\varphi_{s}\left(x\right)-y\right).\eta}\chi\left(x;y,\eta\right)f\left(y\right)dyd\eta,\quad x\in X.
\end{eqnarray*}
The above is a semi-classical Fourier integral operator associated
to symplectomorphism $\kappa=\left(d\varphi_{s}^{-1}\right)^{*}$
given by the canonical coordinates. Its adjoint $U^{*}:L^{2}\left(X;S\right)\rightarrow L^{2}\left(\mathbb{R}^{n};\mathbb{C}^{2^{m}}\right)$
is again a semi-classical Fourier integral operator associated to
the symplectomorphism $\kappa^{-1}=\left(d\varphi_{s}\right)^{*}$.
A simple computation gives the following compositions are pseudodifferential
with 
\begin{eqnarray}
UU^{*} & = & I\quad\mbox{microlocally on }K_{\alpha\beta}''\quad\mbox{and}\label{eq:UU*=00003DI micro.}\\
U^{*}U & = & I\quad\mbox{microlocally on }\kappa\left(K_{\alpha\beta}''\right).\label{eq:U*U=00003DI micro.}
\end{eqnarray}
The composition 
\[
E'=E_{0}:=U^{*}EU\in\Psi_{\mbox{cl}}^{0}\left(\mathbb{R}^{n};\mathbb{C}^{2^{m}}\right)
\]
is now a pseudodifferential operator by Egorov's theorem with symbol
\begin{equation}
\sigma\left(E_{0}\right)=\left(d\varphi_{s}\right)^{*}\chi^{2}.\sigma\left(E\right).\label{eq:symbol B0}
\end{equation}
Similarly, $E_{0}':=UE_{0}U^{*}\in\Psi_{\mbox{cl}}^{0}\left(X;S\right)$
and
\begin{equation}
\sigma\left(E_{0}'\right)=\left(d\varphi_{s}\right)^{*}\chi^{4}.\sigma\left(E_{0}\right).\label{eq:symbol A0'}
\end{equation}
From \prettyref{eq:energy levels A0 bounded}, \prettyref{eq:symbol B0}
and \prettyref{eq:symbol A0'} we have $\Sigma_{\left(-\infty,-\tau\right]}^{E_{0}}\subset\kappa\left(V_{\alpha\beta}'\right)$
and $\Sigma_{\left(-\infty,-\tau\right]}^{E_{0}'}\subset V_{\alpha\beta}'$.
Hence by proposition \prettyref{prop:Dicrete spectrum criterion}
$E,E',E_{0}$ and $E_{0}'$ all have discrete spectrum in $\left(-\infty,-\tau\right]$.
We now select $g\in C_{c}^{\infty}\left(-5\tau,-\tau\right)$ such
that $g=1$ on $\left[-4\tau,-2\tau\right]$. We have 
\[
WF\left(g\left(E\right)\right)\subset\Sigma_{\textrm{spt}\left(g\right)}^{E}\subset\Sigma_{\left(-\infty,-\tau\right]}^{E}\subset V_{\alpha\beta}'.
\]
Combined with \prettyref{eq:U*U=00003DI micro.} this gives $\left(U^{*}U-I\right)g\left(E\right)\in h^{\infty}\Psi_{\textrm{cl}}^{-\infty}\left(X;S\right)$
and hence $\left\Vert \left(U^{*}U-I\right)g\left(E\right)\right\Vert =O\left(h^{\infty}\right)$
as an operator on $L^{2}\left(X;S\right)$. This in turn now gives
\begin{equation}
\left\Vert \left(U^{*}U-I\right)\Pi^{E}\right\Vert \left(\left\Vert E\right\Vert \left\Vert U\right\Vert +1\right)=O\left(h^{\infty}\right)\label{eq:hyp 1a}
\end{equation}
with $\Pi^{E}=\Pi_{\left[-4\tau,-2\tau\right]}^{E}$. Similarly, we
get 
\begin{equation}
\left\Vert \left(UU^{*}-I\right)\Pi^{E_{0}}\right\Vert \left(\left\Vert E_{0}\right\Vert \left\Vert U^{*}\right\Vert +1\right)=O\left(h^{\infty}\right).\label{eq:hyp 1b}
\end{equation}
Another easy computation gives $E=E_{0}'$ microlocally on $K_{\alpha\beta}''$
and we may similarly estimate similarly have 
\begin{equation}
\left\Vert \left(E-E_{0}'\right)\Pi^{E_{0}'}\right\Vert =O\left(h^{\infty}\right).\label{eq:hyp 2a}
\end{equation}
Next we define $A_{\alpha}^{0}\coloneqq U^{*}A_{\alpha}U,\,A_{\beta}^{0}\coloneqq U^{*}A_{\beta}U\in\Psi_{\textrm{cl}}^{0}\left(\mathbb{R}^{n}\right)$
and again note

\begin{eqnarray*}
UA_{\alpha}^{0}A_{\beta}^{0}U^{*} & = & A_{\alpha}A_{\beta}\quad\mbox{microlocally on }K_{\alpha\beta}''\\
U^{*}A_{\alpha}A_{\beta}U & = & A_{\alpha}^{0}A_{\beta}^{0}\quad\mbox{microlocally on }\kappa\left(K_{\alpha\beta}''\right).
\end{eqnarray*}
This again gives 
\begin{eqnarray}
\left\Vert \left[UA_{\alpha}^{0}A_{\beta}^{0}U^{*}-A_{\alpha}A_{\beta}\right]\Pi^{E}\right\Vert  & = & O\left(h^{\infty}\right)\label{eq:hyp 3a}\\
\left\Vert \left[U^{*}A_{\alpha}A_{\beta}U-A_{\alpha}^{0}A_{\beta}^{0}\right]\Pi^{E_{0}}\right\Vert  & = & O\left(h^{\infty}\right).\label{eq:hyp 3b}
\end{eqnarray}
Now using \prettyref{eq:hyp 1a}, \prettyref{eq:hyp 1b}, \prettyref{eq:hyp 2a},
\prettyref{eq:hyp 3a}, \prettyref{eq:hyp 3b} and using the cyclicity
of the trace we may apply \prettyref{prop:apdx prop for R^n red.}
of \prettyref{sec:Appendix A} with $\rho\left(x\right)=f\left(\frac{x+3\tau}{\sqrt{h}}\right)\check{\vartheta}\left(\frac{\lambda\sqrt{h}-3\tau-x}{h}\right)$
to get 
\begin{align*}
 & \textrm{tr}\left[A_{\alpha}f\left(\frac{D'}{\sqrt{h}}\right)\check{\vartheta}\left(\frac{\lambda\sqrt{h}-D'}{h}\right)A_{\beta}\right]-\textrm{tr}\left[A_{\alpha}^{0}f\left(\frac{D_{0}'}{\sqrt{h}}\right)\check{\vartheta}\left(\frac{\lambda\sqrt{h}-D_{0}'}{h}\right)A_{\beta}^{0}\right]\\
= & O\left(h^{\infty}\right)
\end{align*}
for $D_{0}'\coloneqq E_{0}+3\tau$. Finally observing $D=D'$ on $V_{\alpha\beta}$,
$D_{0}=D_{0}'$ on $V_{\alpha\beta}^{0}$ and using \prettyref{lem:changing symbol near crit set}
completes the proof.
\end{proof}

\section{\label{sec: Birkhoff normal form}Birkhoff normal form for the Dirac
operator}

In this section we derive a Birkhoff normal form for the Dirac operator
\prettyref{eq:Dirac op weyl quantization} on $\mathbb{R}^{n}$. First
consider the function 
\[
f_{0}:=-\frac{4x_{0}}{\pi}+\sum_{j=1}^{m}\left(x_{j}x_{j+m}+\xi_{j}\xi_{j+m}\right).
\]
If $H_{f_{0}}$ and $e^{tH_{f_{0}}}$ denote the Hamilton vector field
and time $t$ flow of $f_{0}$ respectively then it is easy to compute
\begin{eqnarray*}
e^{\frac{\pi}{4}H_{f_{0}}}\left(x_{0},\xi_{0}\right) & = & \left(x_{0},\xi_{0}+1\right)\\
e^{\frac{\pi}{4}H_{f_{0}}}\left(x_{j},\xi_{j};x_{j+m}\xi_{j+m}\right) & = & \left(\frac{x_{j}+\xi_{j+m}}{\sqrt{2}},\frac{-x_{j+m}+\xi_{j}}{\sqrt{2}};\frac{x_{j+m}+\xi_{j}}{\sqrt{2}},\frac{-x_{j}+\xi_{j+m}}{\sqrt{2}}\right).
\end{eqnarray*}
We abbreviate $\left(x',\xi'\right)=\left(x_{1},\ldots,x_{m};\xi_{1},\ldots,\xi_{m}\right)$,
\\
$\left(x'',\xi''\right)=\left(x_{m+1},\ldots,x_{2m};\xi_{m+1},\ldots,\xi_{2m}\right)$
and $\left(x,\xi\right)=\left(x_{0},x',x'';\xi_{0},\xi',\xi''\right)$.
Further, let $o_{N}\subset S_{\textrm{cl}}^{1}\left(\mathbb{R}^{2n};\mathbb{C}^{l}\right)$
denote the subspace of self-adjoint symbols $a:\left(0,1\right]_{h}\rightarrow C^{\infty}\left(\mathbb{R}_{x,\xi}^{2n};i\mathfrak{u}\left(2^{m}\right)\right)$
such that each of the coefficients $a_{k}$, $k=0,1,2,\ldots$ in
its symbolic expansion \prettyref{eq: symbolic expansion} vanishes
to order $N$ in $\left(x_{0},x',\xi'\right)$. We also denote by
$o_{N}$ the space of Weyl quantizations of such symbols.

Using Egorov's theorem, the operator \prettyref{eq:Dirac op weyl quantization}
is conjugated to 
\begin{align}
e^{\frac{i\pi}{4h}f_{0}^{W}}D_{0}e^{-\frac{i\pi}{4h}f_{0}^{W}} & =d_{0}^{W},\quad\textrm{ with}\label{eq:first conjugation normal form}\\
d_{0} & =\sqrt{2}\left(\sigma^{j}w_{j,f_{0}}^{0}\xi_{0}+\sigma^{j}w_{j,f_{0}}^{k}\xi_{k}+\sigma^{j}w_{j,f_{0}}^{k+m}x_{k}\right)+ho_{0}\label{eq:symbol Dirac operator}\\
\textrm{where }\;w_{j,f_{0}}^{k} & =\left(e^{-\frac{\pi}{4}H_{f_{0}}}\right)^{*}w_{j}^{k}
\end{align}
 A Taylor expansion of $d_{0}$ \prettyref{eq:symbol Dirac operator}
now gives $r_{j}^{0}\in o_{2}$, $0\leq j\leq2m$, such that 
\begin{eqnarray*}
d_{0} & = & \sqrt{2}\sigma^{j}\left(\bar{w}_{j}^{0}\xi_{0}+\bar{w}_{j}^{k}\xi_{k}+\bar{w}_{j}^{k+m}x_{k}\right)+\sigma^{j}r_{j}^{0}+ho_{0}
\end{eqnarray*}
and where $\bar{w}_{j}^{k}\left(x_{0},x'',\xi''\right)=w_{j}^{k}\left(x_{0},-\frac{\xi''}{\sqrt{2}},\frac{x''}{\sqrt{2}}\right)$.
On squaring using \prettyref{eq: diagonalizing metric} we obtain
\begin{eqnarray*}
\left(d_{0}^{W}\right)^{2} & = & Q_{0}^{W}+ho_{1}+o_{3}+h^{2}o_{0},\:\textrm{ with}\\
Q_{0} & = & \begin{bmatrix}x' & \xi_{0} & \xi'\end{bmatrix}\begin{bmatrix}\bar{g}^{\left(k+m\right)\left(l+m\right)}\left(x_{0},x'',\xi''\right) & \bar{g}^{\left(k+m\right)0}\left(x_{0},x'',\xi''\right) & \bar{g}^{\left(k+m\right)l}\left(x_{0},x'',\xi''\right)\\
\bar{g}^{0\left(l+m\right)}\left(x_{0},x'',\xi''\right) & \bar{g}^{00}\left(x_{0},x'',\xi''\right) & \bar{g}^{0l}\left(x_{0},x'',\xi''\right)\\
\bar{g}^{k\left(l+m\right)}\left(x_{0},x'',\xi''\right) & \bar{g}^{k0}\left(x_{0},x'',\xi''\right) & \bar{g}^{kl}\left(x_{0},x'',\xi''\right)
\end{bmatrix}\begin{bmatrix}x'\\
\xi_{0}\\
\xi'
\end{bmatrix}.
\end{eqnarray*}
Here $\bar{g}^{kl}\left(x_{0},x'',\xi''\right)=2g^{kl}\left(x_{0},-\frac{\xi''}{\sqrt{2}},\frac{x''}{\sqrt{2}}\right)$
and $g^{kl}$ the components of the inverse metric on $T^{*}\mathbb{R}^{n}$.

Next we consider another function $f_{1}$ of the form 
\[
f_{1}=\frac{1}{2}\begin{bmatrix}x' & \xi_{0} & \xi'\end{bmatrix}\begin{bmatrix}\alpha_{m\times m}\left(x_{0},x'',\xi''\right) & \gamma_{m\times m+1}\left(x_{0},x'',\xi''\right)\\
\gamma_{m+1\times m}^{t}\left(x_{0},x'',\xi''\right) & \beta_{m+1\times m+1}\left(x_{0},x'',\xi''\right)
\end{bmatrix}\begin{bmatrix}x'\\
\xi_{0}\\
\xi'
\end{bmatrix}
\]
where $\alpha,\beta$ and $\gamma$ are matrix valued functions of
the given orders, with $\alpha,\beta$ being symmetric. An easy computation
now shows 
\begin{eqnarray*}
\left(e^{H_{f_{1}}}\right)^{*}\begin{bmatrix}x'\\
\xi_{0}\\
\xi'
\end{bmatrix} & = & e^{\Lambda}\begin{bmatrix}x'\\
\xi_{0}\\
\xi'
\end{bmatrix}+o_{2}\quad\textrm{with}\\
\Lambda\left(x_{0},x'',\xi''\right) & = & \begin{bmatrix}0 & -I_{m+1\times m+1}\\
I_{m\times m} & 0
\end{bmatrix}\begin{bmatrix}\alpha_{m\times m}\left(x_{0},x'',\xi''\right) & \gamma_{m\times m+1}\left(x_{0},x'',\xi''\right)\\
\gamma_{m+1\times m}^{t}\left(x_{0},x'',\xi''\right) & \beta_{m+1\times m+1}\left(x_{0},x'',\xi''\right)
\end{bmatrix}.
\end{eqnarray*}
From the suitability assumption \prettyref{eq:Diagonalizability assumption-1},
we have that there exists a smooth matrix valued functions $\alpha,\beta$
and $\gamma$ such that 
\begin{eqnarray*}
 &  & \begin{bmatrix}x' & \xi_{0} & \xi'\end{bmatrix}e^{\Lambda^{t}}\begin{bmatrix}\bar{g}^{\left(k+m\right)\left(l+m\right)}\left(x_{0},x'',\xi''\right) & \bar{g}^{\left(k+m\right)0}\left(x_{0},x'',\xi''\right) & \bar{g}^{\left(k+m\right)l}\left(x_{0},x'',\xi''\right)\\
\bar{g}^{0\left(l+m\right)}\left(x_{0},x'',\xi''\right) & \bar{g}^{00}\left(x_{0},x'',\xi''\right) & \bar{g}^{0l}\left(x_{0},x'',\xi''\right)\\
\bar{g}^{k\left(l+m\right)}\left(x_{0},x'',\xi''\right) & \bar{g}^{k0}\left(x_{0},x'',\xi''\right) & \bar{g}^{kl}\left(x_{0},x'',\xi''\right)
\end{bmatrix}e^{\Lambda}\begin{bmatrix}x'\\
\xi_{0}\\
\xi'
\end{bmatrix}\\
 & = & \xi_{0}^{2}+\bar{\nu}\left[\sum_{j=1}^{m}\mu_{j}\left(x_{j}^{2}+\xi_{j}^{2}\right)\right]+o_{3}
\end{eqnarray*}
where
\begin{equation}
\bar{\nu}\left(x_{0},x'',\xi''\right)=\nu\left(x_{0},-\frac{\xi''}{\sqrt{2}},\frac{x''}{\sqrt{2}}\right).\label{eq: bar nu}
\end{equation}
Letting 
\[
H_{2}=\frac{1}{2}\sum_{j=1}^{m}\mu_{j}\left(x_{j}^{2}+\xi_{j}^{2}\right),
\]
 Egorov's theorem now gives
\begin{eqnarray}
e^{\frac{i}{h}f_{1}^{W}}d_{0}^{W}e^{-\frac{i}{h}f_{1}^{W}} & = & \left(\sum_{j=0}^{2m}\sigma_{j}b_{j}\right)^{W}+ho_{0}\quad\textrm{with }\label{eq:symbol dirac d_1}\\
\sum_{j=0}^{2m}b_{j}^{2} & = & \left(\xi_{0}^{2}+2\bar{\nu}H_{2}\right)^{W}+o_{3}.\nonumber 
\end{eqnarray}
Another Taylor expansion in the variables $\left(x',\xi_{0},\xi'\right)$
gives $A=\left(a_{jk}\left(x_{0},x'',\xi''\right)\right)\in C^{\infty}\left(\mathbb{R}_{\left(x_{0},x'',\xi''\right)}^{n};\mathfrak{so}\left(n\right)\right)$
and $r_{j}\in o_{2}$, $j=0,\ldots,2m$, such that 
\[
e^{-A}\begin{bmatrix}b_{0}\\
\vdots\\
b_{2m}
\end{bmatrix}=\begin{bmatrix}\xi_{0}\\
\left(2\bar{\nu}\mu_{1}\right)^{\frac{1}{2}}x_{1}\\
\left(2\bar{\nu}\mu_{1}\right)^{\frac{1}{2}}\xi_{1}\\
\vdots\\
\left(2\bar{\nu}\mu_{m}\right)^{\frac{1}{2}}x_{m}\\
\left(2\bar{\nu}\mu_{m}\right)^{\frac{1}{2}}\xi_{m}
\end{bmatrix}+\begin{bmatrix}r_{0}\\
\vdots\\
r_{2m}
\end{bmatrix}.
\]
We may now set $c_{A}=\frac{1}{i}a_{jk}\sigma^{j}\sigma^{k}\in C^{\infty}\left(\mathbb{R}_{\left(x_{0},x'',\xi''\right)}^{n};i\mathfrak{u}\left(2^{m}\right)\right)$
and compute 
\begin{align}
e^{ic_{A}^{W}}e^{\frac{i}{h}f_{1}^{W}}d_{0}^{W}e^{-\frac{i}{h}f_{1}^{W}}e^{-ic_{A}^{W}} & =d_{1}^{W},\quad\textrm{where}\label{eq: d0 =000026 d1 are conjugate}\\
d_{1} & =H_{1}+\sigma^{j}r_{j}+ho_{0},\quad\textrm{and}\label{eq:dirac operator for formal birkhoff}\\
H_{1} & \coloneqq\xi_{0}\sigma_{0}+\left(2\bar{\nu}\right)^{\frac{1}{2}}\sum_{j=1}^{m}\mu_{j}^{\frac{1}{2}}\left(x_{j}\sigma_{2j-1}+\xi_{j}\sigma_{2j}\right).\label{eq: model Dirac symbol}
\end{align}

\subsection{\label{sub: Weyl product and Koszul}Weyl product and Koszul complexes}

We now derive a formal Birkhoff normal form for the symbol $d_{1}$
in \prettyref{eq:dirac operator for formal birkhoff}. First denote
by $R=C^{\infty}\left(x_{0},x'',\xi''\right)$ the ring of real valued
functions in the given $2m+1$ variables. Further define 
\[
S\coloneqq R\left\llbracket x',\xi_{0},\xi';h\right\rrbracket 
\]
the ring of formal power series in the further given $2m+2$ variables
with coefficients in $R$. The ring $S\otimes\mathbb{C}$ is now equipped
with the Weyl product 
\[
a\ast b\coloneqq\left[e^{\frac{ih}{2}\left(\partial_{r_{1}}\partial_{s_{2}}-\partial_{r_{2}}\partial_{s_{1}}\right)}\left(a\left(s_{1},r_{1};h\right)b\left(s_{2},r_{2};h\right)\right)\right]_{x=s_{1}=s_{2},\xi=r_{1}=r_{2}},
\]
corresponding to the composition formula \prettyref{eq: Weyl product}
for pseudodifferential operators, with 
\begin{eqnarray*}
\left[a,b\right] & \coloneqq & a\ast b-b\ast a
\end{eqnarray*}
being the corresponding Weyl bracket. It is an easy exercise to show
that for $a,b\in S$ real valued, the commutator $i\left[a,b\right]\in S$
is real valued.

Next, we define a filtration on $S$. Each monomial $h^{k}\xi_{0}\left(x'\right)^{\alpha}\left(\xi'\right)^{\beta}$
in $S$ is given the weight $2k+a+\left|\alpha\right|+\left|\beta\right|$.
The ring $S$ is equipped with a decreasing filtration 
\begin{eqnarray*}
S=O_{0} & \supset & O_{1}\supset\ldots\supset O_{N}\supset\ldots,\\
\bigcap_{N}O_{N} & = & \left\{ 0\right\} ,
\end{eqnarray*}
where $O_{N}$ consists of those power series with monomials of weight
$N$ or more. It is an exercise to show that 
\begin{eqnarray*}
O_{N}\ast O_{M} & \subset & O_{N+M}\\
\left[O_{N},O_{M}\right] & \subset & ihO_{N+M-2}.
\end{eqnarray*}
The associated grading is given by 
\[
S=\bigoplus_{N=0}^{\infty}S_{N}
\]
where $S_{N}$ consists of those power series with monomials of weight
exactly $N$ . We also define the quotient ring $D_{N}\coloneqq S/O_{N+1}$
whose elements may be identified with the set of homogeneous polynomials
with monomials of weight at most $N$. The ring $D_{N}$ is also similarly
graded and filtered. In similar vein, we may also define the ring
\[
S\left(m\right)=S\otimes\mathfrak{gl}_{\mathbb{C}}\left(2^{m}\right)
\]
of $R\otimes\mathfrak{gl}_{\mathbb{C}}\left(2^{m}\right)$ valued
formal power series in $\left(x',\xi_{0},\xi';h\right)$. The ring
$S\left(m\right)$ is equipped with an induced product $\ast$ and
decreasing filtration
\begin{eqnarray*}
O_{0}\left(m\right) & \supset & O_{1}\left(m\right)\supset\ldots\supset O_{N}\left(m\right)\supset\ldots,\\
\bigcap_{N}O_{N}\left(m\right) & = & \left\{ 0\right\} ,
\end{eqnarray*}
where $O_{N}\left(m\right)=O_{N}\otimes\mathfrak{gl}_{\mathbb{C}}\left(2^{m}\right)$.
It is again a straightforward exercise to show that for $a,b\in S\otimes i\mathfrak{u}_{\mathbb{C}}\left(2^{m}\right)$
self-adjoint, the commutator $i\left[a,b\right]\in S\otimes i\mathfrak{u}_{\mathbb{C}}\left(2^{m}\right)$
is self-adjoint.

\subsubsection{Koszul complexes}

Let us now again consider the $2m$ and $2m+1$ dimensional real inner
product spaces $V=\mathbb{R}\left[e_{1},\ldots,e_{2m}\right]$ and
$W=\mathbb{R}\left[e_{0}\right]\oplus V$ from \prettyref{sub:Clifford algebra}.
Considering the chain groups $D_{N}\otimes\Lambda^{k}V$, $k=0,1,\ldots,n$,
one may define four differentials 
\begin{eqnarray*}
w_{x}^{0} & = & \sum_{j=1}^{m}\mu_{j}^{\frac{1}{2}}\left(x_{j}e_{2j-1}\land+\xi_{j}e_{2j}\land\right)\\
i_{x}^{0} & = & \sum_{j=1}^{m}\mu_{j}^{\frac{1}{2}}\left(x_{j}i_{e_{2j-1}}+\xi_{j}i_{e_{2j}}\right)\\
w_{\partial}^{0} & = & \sum_{j=1}^{m}\mu_{j}^{\frac{1}{2}}\left(\partial_{x_{j}}e_{2j-1}\land+\partial_{\xi_{j}}e_{2j}\land\right)\\
i_{\partial}^{0} & = & \sum_{j=1}^{m}\mu_{j}^{\frac{1}{2}}\left(\partial_{x_{j}}i_{e_{2j-1}}+\partial_{\xi_{j}}i_{e_{2j}}\right).
\end{eqnarray*}
We equip $D_{N}$ with the $R\left\llbracket h\right\rrbracket $-valued
inner products where the distinct monomials $\frac{1}{\sqrt{a!\alpha!\beta!}}\xi_{0}^{a}\left(x'\right)^{\alpha}\left(\xi'\right)^{\beta}$
are orthonormal. With these inner products $w_{x}^{0},i_{\partial}^{0}$
and $w_{\partial}^{0},i_{x}^{0}$ are respectively adjoints. The combinatorial
Laplacians $\Delta^{0}=w_{x}^{0}i_{\partial}^{0}+i_{\partial}^{0}w_{x}^{0}=w_{\partial}^{0}i_{x}^{0}+i_{x}^{0}w_{\partial}^{0}$,
are computed to be equal and act on basis elements $\xi_{0}^{a}\left(x'\right)^{\alpha}\left(\xi'\right)^{\beta}\left(\bigwedge e_{j}^{\gamma_{j}}\right)$
via multiplication by $\mu.\left(2\left(\alpha+\beta\right)+\gamma\right)$.
It now follows that these have (co-)homology only in degree zero given
by $R\left\llbracket h\right\rrbracket $.

Similarly, we may consider the chain groups $D_{N}\otimes\Lambda^{k}W$,
$k=0,1,\ldots,n$, one may define four differentials
\begin{eqnarray*}
w_{x} & = & \xi_{0}e_{0}\land+\left(2\bar{\nu}\right)^{\frac{1}{2}}w_{x}^{0}\\
i_{x} & = & \xi_{0}i_{e_{0}}+\left(2\bar{\nu}\right)^{\frac{1}{2}}i_{x}^{0}\\
w_{\partial} & = & \partial_{\xi_{0}}e_{0}\land+\left(2\bar{\nu}\right)^{\frac{1}{2}}w_{\partial}^{0}\\
i_{\partial} & = & \partial_{\xi_{0}}i_{e_{0}}+\left(2\bar{\nu}\right)^{\frac{1}{2}}i_{\partial}^{0}.
\end{eqnarray*}
Again these complexes have cohomology only in degree zero given by
$R\left\llbracket h\right\rrbracket $.

Next, we define twisted Koszul differentials on $D_{N}\otimes\Lambda^{k}V$
via
\begin{eqnarray*}
\tilde{w}_{\partial}^{0} & = & \frac{i}{h}\sum_{j=1}^{m}\mu_{j}^{\frac{1}{2}}\left(\textrm{ad}_{x_{j}}e_{2j-1}\land+\textrm{ad}_{\xi_{j}}e_{2j}\land\right)=\sum_{j=1}^{m}\mu_{j}^{\frac{1}{2}}\left(\partial_{x_{j}}e_{2j}\land-\partial_{\xi_{j}}e_{2j-1}\land\right)\\
\tilde{i}_{\partial}^{0} & = & \frac{i}{h}\sum_{j=1}^{m}\mu_{j}^{\frac{1}{2}}\left(\textrm{ad}_{x_{j}}i_{e_{2j-1}}+\textrm{ad}_{\xi_{j}}i_{e_{2j}}\right)=\sum_{j=1}^{m}\mu_{j}^{\frac{1}{2}}\left(\partial_{x_{j}}i_{e_{2j}}-\partial_{\xi_{j}}i_{e_{2j-1}}\right).
\end{eqnarray*}
We note that the above are symplectic adjoints to their untwisted
counterparts with respect to the symplectic pairing $\sum_{j=1}^{m}e_{2j-1}\wedge e_{2j}$
on $V$. 

Similar twisted Koszul differentials on $D_{N}\otimes\Lambda^{k}W$
are defined via

\begin{eqnarray*}
\tilde{w}_{\partial} & = & \frac{i}{h}\textrm{ad}_{\xi_{0}}e_{0}\wedge+\left(2\bar{\nu}\right)^{\frac{1}{2}}\tilde{w}_{\partial}^{0}=-\partial_{x_{0}}e_{0}\wedge+\left(2\bar{\nu}\right)^{\frac{1}{2}}\tilde{w}_{\partial}^{0}\\
\tilde{i}_{\partial} & = & \frac{i}{h}i_{e_{0}}\textrm{ad}_{\xi_{0}}+\left(2\bar{\nu}\right)^{\frac{1}{2}}\tilde{i}_{\partial}^{0}=-\partial_{x_{0}}i_{e_{0}}+\left(2\bar{\nu}\right)^{\frac{1}{2}}\tilde{i}_{\partial}^{0}.
\end{eqnarray*}
These twisted differentials correspond to the untwisted ones by a
mere change of basis in $V$, $W$ and hence also have (co-)homology
only in degree zero given by $R\left\llbracket h\right\rrbracket $. 

We now compute the twisted combinatorial Laplacian to be 
\begin{eqnarray*}
\tilde{\Delta}^{0} & = & \tilde{w}_{\partial}^{0}i_{x}^{0}+i_{x}^{0}\tilde{w}_{\partial}^{0}\\
 & = & -\left(w_{x}^{0}\tilde{i}_{\partial}^{0}+\tilde{i}_{\partial}^{0}w_{x}^{0}\right)\\
 & = & \sum_{j=1}^{m}\mu_{j}\left[\xi_{j}\partial_{x_{j}}-x_{j}\partial_{\xi_{j}}+e_{2j}i_{e_{2j-1}}-e_{2j-1}i_{e_{2j}}\right].
\end{eqnarray*}
One may similarly define $\tilde{\Delta}$. Next, we define the space
of twisted $\tilde{\Delta}^{0}$-harmonic, $\xi_{0}$- independent
elements 
\begin{eqnarray*}
\mathcal{H}_{N}^{k} & = & \left\{ \omega\in D_{N}\otimes\Lambda^{k}W|\,\tilde{\Delta}^{0}\omega=0,\,\partial_{\xi_{0}}\omega=0\right\} \\
\mathcal{H}^{k} & = & \left\{ \omega\in S\otimes\Lambda^{k}W|\,\tilde{\Delta}^{0}\omega=0,\,\partial_{\xi_{0}}\omega=0\right\} .
\end{eqnarray*}
We now prove a twisted version of the Hodge decomposition theorem.
\begin{lem}
\label{lem:Koszul-Hodge decomposition }The $k$-th chain group is
spanned by the three subspaces 
\[
D_{N}\otimes\Lambda^{k}W=\mathbb{R}\left[\textrm{Im}\left(i_{x}\tilde{w}_{\partial}\right),\textrm{Im}\left(\tilde{w}_{\partial}i_{x}\right),\mathcal{H}_{N}^{k}\right].
\]
\end{lem}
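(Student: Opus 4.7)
The approach is Hodge-theoretic, treating $\tilde{w}_{\partial}$ and $i_{x}$ as non-adjoint differentials and $\tilde{\Delta}^{0}$ as a Laplacian. The key computation is that the mixed anti-commutator
\[
L \;:=\; \{i_{x},\tilde{w}_{\partial}\} \;=\; i_{x}\tilde{w}_{\partial} + \tilde{w}_{\partial}i_{x} \;=\; -\xi_{0}\partial_{x_{0}} + 2\bar{\nu}\,\tilde{\Delta}^{0} + R,
\]
where $R$ is a remainder of strictly lower weight arising from the $x_{0}$-derivative of $(2\bar{\nu})^{1/2}$. This identity follows from the definitions using $\{i_{e_{0}},e_{0}\wedge\}=1$ together with the anti-commutation of $e_{0}\wedge,i_{e_{0}}$ with the $e_{j}\wedge,i_{e_{j}}$ (for $j\geq 1$) appearing in $\tilde{w}_{\partial}^{0}$ and $i_{x}^{0}$. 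Since $\mathrm{Im}(L)\subset \mathrm{Im}(i_{x}\tilde{w}_{\partial})+\mathrm{Im}(\tilde{w}_{\partial}i_{x})$, this is the structural input.

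I would proceed by induction on the weight $N$, the base case $N=0$ reducing to $D_{0}\otimes\Lambda^{k}W = R\llbracket h\rrbracket\otimes\Lambda^{k}W$ and a direct check. At the inductive step, given $\omega\in D_{N}\otimes\Lambda^{k}W$, I first cancel its $\xi_{0}$-dependence. Writing $\omega = \omega|_{\xi_{0}=0} + \omega_{+}$ with $\omega_{+}$ divisible by $\xi_{0}$, I solve $-\xi_{0}\partial_{x_{0}}\alpha \equiv \omega_{+}$ for some $\alpha\in D_{N-1}\otimes\Lambda^{k}W$, using the formal surjectivity of $\partial_{x_{0}}$ on $R$ (modulo $x_{0}$-constants) and the $\xi_{0}$-divisibility of $\omega_{+}$. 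The key identity then gives $L\alpha = \omega_{+} + (\textrm{$\xi_{0}$-independent and strictly-lower-weight terms})$, so that $\omega - L\alpha$ is $\xi_{0}$-independent up to terms absorbed by the inductive hypothesis on $N$.

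Having reduced to $\xi_{0}$-independent $\omega$, I apply a Hodge decomposition with respect to $\tilde{\Delta}^{0}$. Since $\tilde{\Delta}^{0}$ commutes with $e_{0}\wedge$ and $i_{e_{0}}$, it acts within each $e_{0}$-degree summand of $D_{N}\otimes\Lambda^{\ast}W$; and on each finite-dimensional weight-graded piece of $D_{N}\otimes\Lambda^{k}V$ the operator $\tilde{\Delta}^{0}$ is a sum of commuting rotations in the $(x_{j},\xi_{j})$- and $(e_{2j-1},e_{2j})$-planes, hence semisimple (diagonalisable after complexification). Consequently the piece splits as $\ker(\tilde{\Delta}^{0})\oplus\mathrm{Im}(\tilde{\Delta}^{0})$, the kernel contribution lying in $\mathcal{H}_{N}^{k}$ by definition. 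For the image part $\omega=\tilde{\Delta}^{0}\eta$, with $\eta$ $\xi_{0}$-independent, the identity rewrites $2\bar{\nu}\,\omega = L\eta + \xi_{0}\partial_{x_{0}}\eta - R\eta$: the first term lies in $\mathrm{Im}(i_{x}\tilde{w}_{\partial})+\mathrm{Im}(\tilde{w}_{\partial}i_{x})$, the second is $\xi_{0}$-dependent and is handled by the previous paragraph, and $R\eta$ is of lower weight. The main obstacle is the bookkeeping of these three types of correction terms through the combined reductions; this is resolved by the double induction on $(N,\text{$\xi_{0}$-degree})$, since each application strictly reduces one or the other.
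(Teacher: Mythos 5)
Your key computational input — the identity
\[
i_{x}\tilde{w}_{\partial}+\tilde{w}_{\partial}i_{x}\;=\;-\xi_{0}\partial_{x_{0}}+2\bar{\nu}\tilde{\Delta}^{0}-2\bigl(\partial_{x_{0}}\bar{\nu}^{\frac{1}{2}}\bigr)e_{0}i_{x}^{0}
\]
— is exactly the identity the paper uses, and your observation that $\tilde{\Delta}^{0}$ is skew-adjoint/semisimple on each graded piece also matches the paper. But after that point your execution has a genuine directional error that undermines the termination of your double induction. The remainder $R=-2(\partial_{x_{0}}\bar{\nu}^{1/2})e_{0}i_{x}^{0}$ is \emph{not} of lower weight: $i_{x}^{0}=\sum\mu_{j}^{1/2}(x_{j}i_{e_{2j-1}}+\xi_{j}i_{e_{2j}})$ multiplies by $x_{j}$ or $\xi_{j}$, so it raises the polynomial weight by one. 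Likewise $\xi_{0}\partial_{x_{0}}$ raises weight by one. In your $\xi_{0}$-reduction step the term $R\alpha$ therefore has weight $\leq N$ (not $<N$), and $2\bar{\nu}\tilde{\Delta}^{0}\alpha$, $R\alpha$ are not $\xi_{0}$-independent once $\omega_{+}$ has $\xi_{0}$-degree $\geq 2$; so the claim that $L\alpha-\omega_{+}$ consists of ``$\xi_{0}$-independent and strictly-lower-weight terms'' is false. In your Hodge step the correction terms $\xi_{0}\partial_{x_{0}}\eta$ and $R\eta$ have weight $\textrm{wt}(\eta)+1$ — \emph{strictly higher}, not lower — and the $\xi_{0}$-degree jumps from $0$ back up to $\geq 1$. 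So the asserted monotonicity (``each application strictly reduces one or the other'') is wrong on both counts.

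There is a second, smaller gap: writing $2\bar{\nu}\omega=L\eta+\dots$ and ``dividing by $2\bar{\nu}$'' implicitly uses that the span $\mathbb{R}[\textrm{Im}(i_{x}\tilde{w}_{\partial}),\textrm{Im}(\tilde{w}_{\partial}i_{x}),\mathcal{H}_{N}^{k}]$ is an $R$-module, but it is not, since $\tilde{w}_{\partial}$ contains $\partial_{x_{0}}$ and hence does not commute with multiplication by $R=C^{\infty}(x_{0},x'',\xi'')$. One gets $[L,f]=-(\partial_{x_{0}}f)\xi_{0}$, another unacknowledged correction of higher weight. The correct reason the whole scheme terminates is precisely the one you missed: the perturbations $\xi_{0}\partial_{x_{0}}$ and $e_{0}i_{x}^{0}$ are \emph{nilpotent on $D_{N}$} because they \emph{raise} weight and $D_{N}$ has weight bounded by $N$. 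The paper exploits this directly: it decomposes $D_{N}\otimes\Lambda^{k}W$ into $\tilde{\Delta}^{0}$-eigenspaces, inverts $\tilde{\Delta}$ on $\bigoplus_{\lambda\neq 0}E_{\pm i\lambda}$ by a \emph{finite} Volterra series in the nilpotent perturbation, and on $E_{0}=\bigoplus_{j}\xi_{0}^{j}\mathcal{H}_{N}^{k}$ gives a closed-form finite Neumann inversion of $-\xi_{0}\partial_{x_{0}}$ against the nilpotent $e_{0}i_{x}^{0}$. That bypasses the bookkeeping entirely. Your iterative alternation can probably be made rigorous along the same lines, but you would need to rewrite the termination argument from scratch with the weight going \emph{up}, not down — as written it does not go through.
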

\begin{proof}
We first compute $\tilde{\Delta}$ in terms of $\tilde{\Delta}^{0}$
to be 
\[
\tilde{\Delta}=-\xi_{0}\partial_{x_{0}}+2\bar{\nu}\tilde{\Delta}^{0}-2\left(\partial_{x_{0}}\bar{\nu}^{\frac{1}{2}}\right)e_{0}i_{x}^{0}.
\]
Next, since $\tilde{\Delta}^{0}$ is skew-adjoint, we may decompose
\[
D_{N}\otimes\Lambda^{k}W=E_{0}\oplus\bigoplus_{\lambda>0}\left[E_{i\lambda}\oplus E_{-i\lambda}\right]
\]
into its eigenspaces. We may now invert $\tilde{\Delta}$ on the non-zero
eigenspaces of $\tilde{\Delta}^{0}$ above using the Volterra series
\[
\tilde{\Delta}^{-1}=\left(2\bar{\nu}\tilde{\Delta}^{0}\right)^{-1}\sum_{j=0}^{\infty}\left[\left(2\bar{\nu}\tilde{\Delta}^{0}\right)^{-1}\left(\xi_{0}\partial_{x_{0}}+2\left(\partial_{x_{0}}\bar{\nu}^{\frac{1}{2}}\right)e_{0}i_{x}^{0}\right)\right]^{j}.
\]
The sum above is finite since $\xi_{0}\partial_{x_{0}}+2\left(\partial_{x_{0}}\bar{\nu}^{\frac{1}{2}}\right)e_{0}i_{x}^{0}$
is nilpotent on $D_{N}\otimes\Lambda^{k}W$. Thus we have 
\[
\bigoplus_{\lambda>0}\left[E_{i\lambda}\oplus E_{-i\lambda}\right]\subset\textrm{Im}\left(\tilde{\Delta}\right)\subset\mathbb{R}\left[\textrm{Im}\left(i_{x}\tilde{w}_{\partial}\right),\textrm{Im}\left(\tilde{w}_{\partial}i_{x}\right)\right].
\]
Finally, we decompose 
\[
E_{0}=\bigoplus_{j=0}^{N}\xi_{0}^{j}\mathcal{H}_{N}^{k}
\]
and write each $\omega\in\xi_{0}^{j}\mathcal{H}_{N}^{k}$, $j\geq1$,
as 
\begin{eqnarray*}
\omega & = & \omega_{0}+\tilde{\Delta}\omega_{1}\\
\omega_{0} & = & \left[-2\left(\partial_{x_{0}}\bar{\nu}^{\frac{1}{2}}\right)e_{0}i_{x}^{0}\xi_{0}^{-1}\int_{0}^{x_{0}}\right]^{j}\omega\in\mathcal{H}_{N}^{k}\\
\omega_{1} & = & -\left(\xi_{0}^{-1}\int_{0}^{x_{0}}\right)\sum_{l=0}^{j-1}\left[-2\left(\partial_{x_{0}}\bar{\nu}^{\frac{1}{2}}\right)e_{0}i_{x}^{0}\xi_{0}^{-1}\int_{0}^{x_{0}}\right]^{l}\omega
\end{eqnarray*}
to complete the proof.
\end{proof}

\subsection{Formal Birkhoff normal form}

The importance of the Koszul complexes introduced in the previous
subsection is in continuing the Birkhoff normal form procedure for
the symbol $d_{1}$ in \prettyref{eq:dirac operator for formal birkhoff}.
The remaining steps in the procedure are formal.

First let us define the Clifford quantization of an element in $a\in S\otimes\Lambda^{k}W$,
using \prettyref{eq: clifford quantization} as an element in 
\[
c_{0}\left(a\right)\coloneqq i^{\frac{k\left(k+1\right)}{2}}c\left(a\right)\in S\left(m\right).
\]
It is clear from \prettyref{eq:clifford quantization adjoint} and
\prettyref{eq:clifford algebra is matrix algebra} this gives an isomorphism
\begin{equation}
c_{0}:S\otimes\Lambda^{\textrm{odd/even}}W\rightarrow S\otimes i\mathfrak{u}_{\mathbb{C}}\left(2^{m}\right)\label{eq: clifford isomorphism}
\end{equation}
 of real elements of the even or odd exterior algebra with self-adjoint
elements in $S\left(m\right)$. It is clear from \prettyref{eq:dirac operator for formal birkhoff}
that 
\begin{equation}
d_{1}=H_{1}+c_{0}\left(r\right)+hS\otimes i\mathfrak{u}_{\mathbb{C}}\left(2^{m}\right)\label{eq:starting point normal form}
\end{equation}
for $r\coloneqq\sum_{j=1}^{n}r_{j}e_{j}\in O_{2}\otimes W$. 

For $a\in\Lambda^{k}W$, we define $\left[a\right]\coloneqq\left[\frac{k}{2}\right]$.
Now for $f\in O_{N},$ $N\geq3$ and $a\in O_{N}\otimes\Lambda^{\textrm{even}}W$,
$N\geq1$, we may compute the conjugations 
\begin{align}
e^{\frac{i}{h}f}H_{1}e^{-\frac{i}{h}f} & =H_{1}+c_{0}\left(\tilde{w}_{\partial}f\right)+O_{N}\otimes i\mathfrak{u}_{\mathbb{C}}\left(2^{m}\right)\label{eq: conjugation Koszul 1}\\
e^{ic_{0}\left(a\right)}H_{1}e^{-ic_{0}\left(a\right)} & =H_{1}+\left(-1\right)^{\left[a\right]+1}2c_{0}\left(i_{x}a\right)+hc_{0}\left(\tilde{w}_{\partial}a\right)+O_{N+2}\otimes i\mathfrak{u}_{\mathbb{C}}\left(2^{m}\right)\label{eq: conjugation Koszul 2}
\end{align}
in terms of the Koszul differentials.

We now come to the formal Birkhoff normal form for the symbol $d_{1}$.
\begin{prop}
\label{prop: normal form d1}There exist $f\in O_{3}$, $a\in O_{2}\otimes\Lambda^{\textrm{even}}W$
and $\omega\in\mathcal{H}^{\textrm{odd}}\cap O_{2}$ such that 
\begin{equation}
e^{ic_{0}\left(a\right)}e^{\frac{i}{h}f}d_{1}e^{-\frac{i}{h}f}e^{-ic_{0}\left(a\right)}=H_{1}+c_{0}\left(\omega\right).\label{eq: normal form d1}
\end{equation}
\end{prop}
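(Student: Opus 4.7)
The plan is to construct $f,a,\omega$ by induction on the weight filtration $\{O_{N}\}$, solving at each step a cohomological equation via the Hodge-type decomposition of \prettyref{lem:Koszul-Hodge decomposition }, and then passing to a formal limit.

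First, using the Clifford isomorphism \prettyref{eq: clifford isomorphism} to absorb the self-adjoint $hS\otimes i\mathfrak{u}_{\mathbb{C}}(2^{m})$ remainder in \prettyref{eq:starting point normal form} into an odd-form, I rewrite $d_{1}=H_{1}+c_{0}(\rho)$ with $\rho\in O_{2}\otimes\Lambda^{\textrm{odd}}W$. I then inductively construct, for each $N\geq 2$, weight-homogeneous pieces $f_{N+1}\in S_{N+1}$ and (for $N\geq 3$) $a_{N-1}\in S_{N-1}\otimes\Lambda^{\textrm{even}}W$, together with $\omega_{N}\in\mathcal{H}^{\textrm{odd}}\cap S_{N}$, such that the partial-sum conjugation satisfies
\[
e^{ic_{0}(a^{(N)})}\,e^{\frac{i}{h}f^{(N)}}\,d_{1}\,e^{-\frac{i}{h}f^{(N)}}\,e^{-ic_{0}(a^{(N)})}\;\equiv\;H_{1}+c_{0}(\omega^{(N)})\pmod{O_{N+1}},
\]
where $f^{(N)},a^{(N)},\omega^{(N)}$ denote the truncated sums.

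For the inductive step from $N$ to $N+1$, let the residue be $c_{0}(\rho_{N+1})$ with $\rho_{N+1}\in S_{N+1}\otimes\Lambda^{\textrm{odd}}$. Augmenting the conjugation by new pieces $f_{N+2}$ and $a_{N}$ modifies the residue, via \prettyref{eq: conjugation Koszul 1}--\prettyref{eq: conjugation Koszul 2}, by
\[
\Delta\;=\;\tilde{w}_{\partial}f_{N+2}\;+\;(-1)^{[a_{N}]+1}\,2\,i_{x}a_{N}\;+\;h\,\tilde{w}_{\partial}a_{N},
\]
while the conjugations of the already normalized terms contribute only to $O_{N+2}$. I must therefore solve $\rho_{N+1}+\Delta\equiv\omega_{N+1}$ with $\omega_{N+1}\in\mathcal{H}_{N+1}^{\textrm{odd}}$. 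By \prettyref{lem:Koszul-Hodge decomposition } there exist $u,v\in S_{N+1}\otimes\Lambda^{\textrm{odd}}$ and harmonic $\omega_{N+1}$ with $\rho_{N+1}-\omega_{N+1}=i_{x}\tilde{w}_{\partial}u+\tilde{w}_{\partial}i_{x}v$. I choose $a_{N}$ equal to a suitable scalar multiple of $\tilde{w}_{\partial}u\in S_{N}\otimes\Lambda^{\textrm{even}}W$, which cancels the $i_{x}\tilde{w}_{\partial}u$ summand at leading order via $2(-1)^{[a_{N}]+1}i_{x}a_{N}$ (and kills the subleading $h\tilde{w}_{\partial}a_{N}$ contribution since $\tilde{w}_{\partial}^{2}=0$). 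The $\tilde{w}_{\partial}i_{x}v$ summand is then handled degree by degree: its $1$-form component, of the form $\tilde{w}_{\partial}(i_{x}v^{(1)})$ with $i_{x}v^{(1)}$ scalar, is cancelled by choosing $f_{N+2}$ proportional to $i_{x}v^{(1)}$; its higher-form components are absorbed by adjusting $a_{N}$ and matching through the $h\tilde{w}_{\partial}a_{N}$ term, any residual discrepancy descending into an $O_{N+2}$ error treated at the next stage.

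The main technical obstacle is the coefficient mismatch between the conjugation correction (which weights $i_{x}a_{N}$ by $2$ and $\tilde{w}_{\partial}a_{N}$ by $h$) and the Hodge decomposition of \prettyref{lem:Koszul-Hodge decomposition } (which treats $\mathrm{Im}(i_{x}\tilde{w}_{\partial})$ and $\mathrm{Im}(\tilde{w}_{\partial}i_{x})$ symmetrically). This is resolved by the degree-by-degree matching above combined with the iterative nature of Birkhoff normalization: every coefficient defect produces only a weight-$O_{N+2}$ error, which is cleared at subsequent stages. Since $\bigcap_{N}O_{N}=\{0\}$, the formal series $f=\sum_{N\geq 2}f_{N+1}\in O_{3}$, $a=\sum_{N\geq 3}a_{N-1}\in O_{2}\otimes\Lambda^{\textrm{even}}W$, and $\omega=\sum_{N\geq 2}\omega_{N}\in\mathcal{H}^{\textrm{odd}}\cap O_{2}$ are well defined, and passing to the limit in the partial-conjugation identity yields \prettyref{eq: normal form d1}.
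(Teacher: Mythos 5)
Your proposal attempts a single-stage induction that fuses what the paper does in two carefully separated stages, and in doing so glosses over the precise algebraic obstruction that the two-stage structure is designed to circumvent. The key issue: because $f$ is scalar, $\tilde{w}_{\partial}f$ is always a $1$-form, so $f$ can only cancel degree-$1$ pieces of the residue. The only other tool available for the degree-$\geq 3$ pieces of $\tilde{w}_{\partial}i_{x}v$ is the $h\tilde{w}_{\partial}a_{N}$ term in \prettyref{eq: conjugation Koszul 2}, and matching a weight-$(N+1)$, degree-$\geq 3$ residue against $h\tilde{w}_{\partial}a_{N}$ requires implicitly dividing by $h$ (i.e. setting $a_{N}$ proportional to $h^{-1}i_{x}v^{(k)}$ for $k\geq 3$). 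That is only legitimate if those higher-degree components of the residue are already divisible by $h$. This $h$-divisibility of the Clifford-degree-$\geq 3$ part of the residue is the crucial inductive invariant, and you neither state it nor verify that it propagates. Your fallback assertion that ``any residual discrepancy descend[s] into an $O_{N+2}$ error treated at the next stage'' is in fact false: a non-$h$-divisible weight-$(N+1)$ $3$-form in the residue would have no cancellation mechanism and would persist at weight $N+1$, so it cannot be cleared later.

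The paper's proof avoids this entirely by separating concerns. In Stage 1 (cf.\ \prettyref{eq: induction hyp normal form}) the tracked residue $r_{N}^{0}\in O_{N+1}\otimes W$ is a pure $1$-form (not assumed $h$-divisible), and correspondingly $a_{N}^{0}\in O_{1}\otimes\Lambda^{2}W$ stays a $2$-form, so that the matrix-commutator corrections $[c_{0}(\Lambda^{2}),c_{0}(\Lambda^{1})]$ and BCH corrections $[c_{0}(\Lambda^{2}),c_{0}(\Lambda^{2})]$ remain in degrees $1$ and $2$ respectively. Here $\tilde{w}_{\partial}f$ and $2i_{x}(\tfrac{1}{2}\tilde{w}_{\partial}b^{0}_{N})$ suffice, and the $h$-factor is irrelevant. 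Only after this first induction is the entire remaining error of the form $hS\otimes i\mathfrak{u}_{\mathbb{C}}(2^{m})$, i.e.\ manifestly $h$-divisible, and Stage 2 (cf.\ \prettyref{eq: 2nd induction hyp normal form}) then addresses arbitrary odd Clifford degree with corrections $a_{N+1}-a_{N}=i_{x}g_{N}+h\tfrac{(-1)^{[b_{N}]}}{2}\tilde{w}_{\partial}b_{N}$, where the $h$ in $h\tilde{w}_{\partial}a$ is harmless precisely because the residue being cancelled carries the same $h$. If you want to argue via a single induction as you propose, you must add the invariant ``the Clifford-degree-$\geq 3$ part of $\rho_{N+1}$ lies in $h\cdot S\otimes\Lambda^{\geq 3}W$'' to the induction hypothesis, verify it holds initially (it does, since $d_{1}-H_{1}=c_{0}(r)+h\cdot o_{0}$ with $r$ a $1$-form), and check that it is preserved under conjugation by your chosen $e^{\frac{i}{h}f_{N+2}}$ and $e^{ic_{0}(a_{N})}$; without this your construction of $a_{N}$ is not well-defined inside the ring $S$.
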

\begin{proof}
We first prove that for each $N\geq1$, there exist $f_{N}\in O_{3}$,
$a_{N}^{0}\in O_{1}\otimes\Lambda^{\textrm{2}}W$, $\omega_{N}^{0}\in\mathcal{H}^{1}\cap O_{2}$
and $r_{N}^{0}\in O_{N+1}\otimes W$ such that 
\begin{align}
e^{ic_{0}\left(a_{N}^{0}\right)}e^{\frac{i}{h}f_{N}}d_{1}e^{-\frac{i}{h}f_{N}}e^{-ic_{0}\left(a_{N}^{0}\right)} & =H_{1}+c_{0}\left(\omega_{N}^{0}\right)+c_{0}\left(r_{N}^{0}\right)+hS\otimes i\mathfrak{u}_{\mathbb{C}}\left(2^{m}\right),\label{eq: induction hyp normal form}\\
f_{N+1}-f_{N} & \in O_{N+2},\nonumber \\
a_{N+1}^{0}-a_{N}^{0} & \in O_{N},\nonumber \\
\omega_{N+1}^{0}-\omega_{N}^{0} & \in O_{N+1}.\nonumber 
\end{align}
The base case $N=1$ is given by \prettyref{eq:starting point normal form}
with $a_{1}^{0}=f_{1}=\omega_{1}^{0}=0$ and $r_{1}^{0}=r$. To complete
the induction step we decompose 
\begin{eqnarray}
r_{N}^{0} & = & \underbrace{u_{N}^{0}}_{\in S_{N+1}\otimes W}+\underbrace{r_{N+1}^{0}}_{\in O_{N+2}\otimes W}.\label{eq: homogeneous decomposition remainder}
\end{eqnarray}
Next we use \prettyref{lem:Koszul-Hodge decomposition } to find $b_{N},g_{N}\in O_{N+1}\otimes W$
and $\upsilon_{N}^{0}\in\mathcal{H}^{1}\cap S_{N+1}$ such that 
\begin{equation}
u_{N}^{0}=\upsilon_{N}^{0}-i_{x}\tilde{w}_{\partial}b_{N}^{0}-\tilde{w}_{\partial}i_{x}g_{N}^{0}+O_{N+2}\label{eq: Hodge decomposition remainder}
\end{equation}
Next, define $f_{N+1}=f_{N}+i_{x}g_{N}^{0}\in O_{3}$ , $a_{N+1}^{0}=a_{N}^{0}+\frac{1}{2}\tilde{w}_{\partial}b_{N}^{0}\in O_{1}\otimes\Lambda^{\textrm{2}}W$
and $\omega_{N+1}^{0}=\omega_{N}^{0}+\upsilon_{N}^{0}$. We now use
\prettyref{eq: conjugation Koszul 1}, \prettyref{eq: conjugation Koszul 2},
\prettyref{eq: homogeneous decomposition remainder} and \prettyref{eq: Hodge decomposition remainder}
to compute 
\begin{eqnarray*}
 &  & e^{ic_{0}\left(a_{N+1}^{0}\right)}e^{\frac{i}{h}f_{N+1}}d_{1}e^{-\frac{i}{h}f_{N+1}}e^{-ic_{0}\left(a_{N+1}^{0}\right)}\\
 & = & e^{ic_{0}\left(\frac{1}{2}\tilde{w}_{\partial}b_{N}^{0}\right)}e^{\frac{i}{h}i_{x}g_{N}^{0}}H_{1}e^{-\frac{i}{h}i_{x}g_{N}^{0}}e^{-ic_{0}\left(\frac{1}{2}\tilde{w}_{\partial}b_{N}^{0}\right)}\\
 &  & \qquad\qquad\qquad\qquad+c_{0}\left(\omega_{N}^{0}\right)+c_{0}\left(r_{N}^{0}\right)+hS\otimes i\mathfrak{u}_{\mathbb{C}}\left(2^{m}\right)\\
 & = & H_{1}+c_{0}\left(\omega_{N+1}^{0}\right)+c_{0}\left(r_{N+1}^{0}\right)+hS\otimes i\mathfrak{u}_{\mathbb{C}}\left(2^{m}\right)
\end{eqnarray*}
completing the induction step. Now setting $f=\lim_{N\rightarrow\infty}f_{N}$,
$a_{0}=\lim_{N\rightarrow\infty}a_{N}^{0}$ and $\omega_{0}=\lim_{N\rightarrow\infty}\omega_{N}^{0}$
and letting $N\rightarrow\infty$ in \prettyref{eq: induction hyp normal form}
gives the relation 
\begin{equation}
e^{ic_{0}\left(a_{0}\right)}e^{\frac{i}{h}f}d_{1}e^{-\frac{i}{h}f}e^{-ic_{0}\left(a_{0}\right)}=H_{1}+c_{0}\left(\omega_{0}\right)+hS\otimes i\mathfrak{u}_{\mathbb{C}}\left(2^{m}\right).\label{eq: first step formal normal form}
\end{equation}

Next we claim that for each $N\geq0$, there exist $a_{N}\in O_{1}\otimes\Lambda^{\textrm{even}}W$,
$\omega_{N}\in\mathcal{H}^{\ast}\cap O_{2}$ and such that 
\begin{align}
e^{ic_{0}\left(a_{N}\right)}e^{\frac{i}{h}f}d_{1}e^{-\frac{i}{h}f}e^{-ic_{0}\left(a_{N}\right)} & =H_{1}+c_{0}\left(\omega_{N}\right)+hO_{N}\otimes i\mathfrak{u}_{\mathbb{C}}\left(2^{m}\right)\label{eq: 2nd induction hyp normal form}\\
a_{N+1}-a_{N} & \in O_{N+1}\otimes\Lambda^{\textrm{even}}W\nonumber \\
\omega_{N+1}-\omega_{N} & \in\mathcal{H}^{\textrm{odd}}\cap O_{N}\nonumber 
\end{align}
The base case $N=0$ is now provided by \prettyref{eq: first step formal normal form}.
To complete the induction step, we use the isomorphism \prettyref{eq: clifford isomorphism}
to decompose the remainder term in \prettyref{eq: 2nd induction hyp normal form}
above as 
\[
c_{0}\left(u_{N}\right)+ihO_{N+1}\otimes\mathfrak{u}_{\mathbb{C}}\left(2^{m}\right)
\]
for $u_{N}\in S_{N}\otimes\Lambda^{\textrm{odd}}W$. Next we use \prettyref{lem:Koszul-Hodge decomposition }
to find $b_{N},g_{N}\in O_{N}\otimes\Lambda^{\textrm{odd}}W$ and
$\upsilon_{N}\in\mathcal{H}^{\textrm{odd}}\cap S_{N}$ such that 
\begin{equation}
u_{N}=\upsilon_{N}-i_{x}\tilde{w}_{\partial}b_{N}-\tilde{w}_{\partial}i_{x}g_{N}+O_{N+1}\label{eq: Hodge decomposition remainder-1}
\end{equation}
Now define $a_{N+1}=a_{N}+i_{x}g_{N}+h\frac{\left(-1\right)^{\left[b_{N}\right]}}{2}\tilde{w}_{\partial}b_{N}\in O_{1}$
and $\omega_{N+1}=\omega_{N}+\upsilon_{N}$. We now use \prettyref{eq: conjugation Koszul 1},
\prettyref{eq: conjugation Koszul 2}, \prettyref{eq: homogeneous decomposition remainder}
and \prettyref{eq: Hodge decomposition remainder-1} to compute 
\begin{eqnarray*}
 &  & e^{ic_{0}\left(a_{N+1}\right)}e^{\frac{i}{h}f}d_{1}e^{-\frac{i}{h}f}e^{-ic_{0}\left(a_{N+1}\right)}\\
 & = & H_{1}+c_{0}\left(\omega_{N+1}\right)+ihO_{N+1}\otimes\mathfrak{u}_{\mathbb{C}}\left(2^{m}\right).
\end{eqnarray*}
completing the induction step. Now setting $a=\lim_{N\rightarrow\infty}a_{N}$
and $\omega=\lim_{N\rightarrow\infty}\omega_{N}$ and letting $N\rightarrow\infty$
in \prettyref{eq: 2nd induction hyp normal form} gives the proposition.
\end{proof}
$\quad$Finally, we show how the Birkhoff normal form maybe used to
perform a further reduction on the trace. First note that we may similarly
use \prettyref{eq: clifford quantization} to define a self-adjoint
Clifford-Weyl quantization map 
\[
c_{0}^{W}\coloneqq\textrm{Op}\otimes c_{0}:S_{\textrm{cl}}^{0}\left(\mathbb{R}^{2n};\mathbb{C}\right)\otimes\Lambda^{\textrm{odd/even}}W\rightarrow\Psi_{\textrm{cl}}^{0}\left(\mathbb{R}^{n};\mathbb{C}^{2^{m}}\right)
\]
which maps real valued symbols $S_{\textrm{cl}}^{0}\left(\mathbb{R}^{2n};\mathbb{R}\right)\otimes\Lambda^{\textrm{odd/even}}W$
to self-adjoint operators in $\Psi_{\textrm{cl}}^{0}\left(\mathbb{R}^{n};\mathbb{C}^{2^{m}}\right)$.
Similarly we define a space of real-valued, twisted $\tilde{\Delta}^{0}$-harmonic,
$\xi_{0}$- independent symbols 
\[
\mathcal{H}^{k}S_{\textrm{cl}}^{0}\coloneqq\left\{ \omega\in S_{\textrm{cl}}^{0}\left(\mathbb{R}^{2n};\mathbb{R}\right)\otimes\Lambda^{k}W|\,\tilde{\Delta}^{0}\omega=0,\,\partial_{\xi_{0}}\omega=0\right\} .
\]
Next, an application of Borel's lemma by virtue of \prettyref{eq:first conjugation normal form},
\prettyref{eq: d0 =000026 d1 are conjugate} and \prettyref{eq: normal form d1}
gives the existence of
\begin{align*}
\bar{a}\sim & \sum_{j=0}^{\infty}h^{j}\bar{a}_{j}\in S_{\textrm{cl}}^{0}\left(\mathbb{R}^{2n};\mathbb{R}\right)\otimes\Lambda^{\textrm{odd}}W\\
\bar{r}\sim & \sum_{j=0}^{\infty}h^{j}\bar{r}_{j}\in S_{\textrm{cl}}^{0}\left(\mathbb{R}^{2n};\mathbb{R}\right)\otimes\Lambda^{\textrm{odd}}W\\
\bar{f}\sim & \sum_{j=0}^{\infty}h^{j}\bar{f}_{j}\in S_{\textrm{cl}}^{0}\left(\mathbb{R}^{2n};\mathbb{R}\right)\\
\bar{\omega}\sim & \sum_{j=0}^{\infty}h^{j}\bar{\omega}_{j}\in\mathcal{H}^{\textrm{odd}}S_{\textrm{cl}}^{0}
\end{align*}
such that 
\begin{equation}
e^{ic_{0}^{W}\left(\bar{a}\right)}e^{\frac{i}{h}\bar{f}^{W}}d_{0}^{W}e^{-\frac{i}{h}\bar{f}^{W}}e^{-ic_{0}^{W}\left(\bar{a}\right)}=\underbrace{H_{1}^{W}+c_{0}^{W}\left(\bar{\omega}\right)}_{\coloneqq\bar{D}}+c_{0}^{W}\left(\bar{r}\right)\label{eq:normal form conjugation}
\end{equation}
$\textrm{on }\bar{V}_{\alpha\beta}\coloneqq e^{X_{\bar{f}_{0}}}\left(V_{\alpha\beta}^{0}\right)$.
Here $\left\{ \bar{r}_{j}\right\} _{j\in\mathbb{N}_{0}},$$\bar{f}_{0}$,
$\bar{\omega}_{0}$ vanish to infinite, second and second order respectively
along 
\[
\Sigma_{0}^{D_{0}}=\Sigma_{0}^{\bar{D}}=\Sigma_{0}^{\bar{D}+c_{0}^{W}\left(\bar{r}\right)}=\left\{ \xi_{0}=x'=\xi'=0\right\} .
\]
Note that on account of \prettyref{eq: infimum nu} and \prettyref{eq: bar nu}
one again has 
\[
\nu_{0}=\mu_{1}\min_{x\in X}\nu\left(x\right)\leq\mu_{1}\inf_{\mathbb{R}_{x_{0},x'',\xi''}^{n}}\bar{\nu}
\]
Furthermore, since $\bar{\omega}_{0}$ vanishes to second order we
may choose $\bar{\omega}_{0}$ arbitrarily small satisfying the estimate
\begin{equation}
\left\Vert \bar{\omega}_{0}\right\Vert _{C^{1}}<\varepsilon,\label{eq: omega 0 smaall C1 norm}
\end{equation}
for any $\varepsilon>0$, while still satisfying \prettyref{eq:normal form conjugation}.

We note that $\bar{D}\in\Psi_{\textrm{cl}}^{1}\left(\mathbb{R}^{n};\mathbb{C}^{2^{m}}\right)$,
with $\bar{D}+i$ having an elliptic symbol in the class $S^{0}\left(\left\langle \xi_{0},\xi'\right\rangle \right)$,
and is hence essentially self-adjoint as an unbounded operator on
$L^{2}\left(\mathbb{R}^{n};\mathbb{C}^{2^{m}}\right)$. The domain
of its unique self-adjoint extension is $H^{1}\left(\mathbb{R}_{x_{0}}\right)\otimes L^{2}\left(\mathbb{R}_{x',x''}^{n-1};\mathbb{C}^{2^{m}}\right)$
(cf. Ch. 8 in \cite{Dimassi-Sjostrand}). We now set 
\begin{align}
\bar{A}_{\alpha} & \coloneqq e^{ic_{0}^{W}\left(\bar{a}\right)}e^{\frac{i}{h}\bar{f}^{W}}A_{\alpha}^{0}e^{-\frac{i}{h}\bar{f}^{W}}e^{-ic_{0}^{W}\left(\bar{a}\right)}\nonumber \\
\mathcal{T}_{\alpha\beta}^{\vartheta}\left(\bar{D}\right) & \coloneqq\textrm{tr}\left[\bar{A}_{\alpha}f\left(\frac{\bar{D}}{\sqrt{h}}\right)\check{\vartheta}\left(\frac{\lambda\sqrt{h}-\bar{D}}{h}\right)\bar{A}_{\beta}\right]\label{eq: Helffer Sjostrand Dbar}\\
 & =\frac{1}{\pi}\int_{\mathbb{C}}\bar{\partial}\tilde{f}\left(z\right)\check{\vartheta}\left(\frac{\lambda-z}{\sqrt{h}}\right)\textrm{tr }\left[\bar{A}_{\alpha}\left(\frac{1}{\sqrt{h}}\bar{D}-z\right)^{-1}\bar{A}_{\beta}\right]dzd\bar{z}.
\end{align}
We next have the following proposition.
\begin{prop}
\label{prop: Reduction to normal form operator}We have 
\[
\mathcal{T}_{\alpha\beta}^{\vartheta}\left(D_{0}\right)=\mathcal{T}_{\alpha\beta}^{\vartheta}\left(\bar{D}\right)\quad\textrm{mod }h^{\infty}.
\]
\end{prop}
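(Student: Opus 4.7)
Set $U := e^{ic_{0}^{W}(\bar{a})}e^{\frac{i}{h}\bar{f}^{W}}$, a unitary operator on $L^{2}(\mathbb{R}^{n};\mathbb{C}^{2^{m}})$ obtained as the product of unitary exponentials of self-adjoint operators. The plan is to carry out the reduction in two stages, the first appealing to \prettyref{lem:changing symbol near crit set}, and the second exploiting the infinite-order vanishing of $\bar{r}$ along the zero energy level.

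\textbf{Stage 1.} Define $D_{0}' := U^{*}\bigl(\bar{D}+c_{0}^{W}(\bar{r})\bigr)U\in\Psi_{\textrm{cl}}^{1}(\mathbb{R}^{n};\mathbb{C}^{2^{m}})$, essentially self-adjoint by unitary equivalence. Conjugating \prettyref{eq:normal form conjugation} by $U^{*}$ gives $D_{0}=D_{0}'$ microlocally on $V_{\alpha\beta}^{0}$, which contains $WF(A_{\alpha}^{0})\cup WF(A_{\beta}^{0})$. Applying \prettyref{lem:changing symbol near crit set} (with $D,D'$ replaced by $D_{0},D_{0}'$) yields $\mathcal{T}_{\alpha\beta}^{\vartheta}(D_{0})=\mathcal{T}_{\alpha\beta}^{\vartheta}(D_{0}')\mod h^{\infty}$. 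The cyclicity of the trace, the invariance of spectral functional calculus under unitary conjugation, and the definitions $\bar{A}_{\alpha}=UA_{\alpha}^{0}U^{*}$, $\bar{A}_{\beta}=UA_{\beta}^{0}U^{*}$ then give
\[
\mathcal{T}_{\alpha\beta}^{\vartheta}(D_{0}')=\textrm{tr}\left[\bar{A}_{\alpha}f\!\left(\frac{\bar{D}+c_{0}^{W}(\bar{r})}{\sqrt{h}}\right)\check{\vartheta}\!\left(\frac{\lambda\sqrt{h}-\bar{D}-c_{0}^{W}(\bar{r})}{h}\right)\bar{A}_{\beta}\right].
\]

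\textbf{Stage 2.} It remains to show this trace differs from $\mathcal{T}_{\alpha\beta}^{\vartheta}(\bar{D})$ by $O(h^{\infty})$. Via the Helffer--Sj\"ostrand formula \prettyref{eq: Helffer Sjostrand Dbar} we expand the resolvent of the perturbed operator as a Neumann series
\[
\left(\frac{\bar{D}+c_{0}^{W}(\bar{r})}{\sqrt{h}}-z\right)^{-1}=\sum_{k=0}^{N-1}\left(\frac{\bar{D}}{\sqrt{h}}-z\right)^{-1}\!\left[-\frac{c_{0}^{W}(\bar{r})}{\sqrt{h}}\left(\frac{\bar{D}}{\sqrt{h}}-z\right)^{-1}\right]^{k}+R_{N}(z),
\]
the $k=0$ term reproducing $\mathcal{T}_{\alpha\beta}^{\vartheta}(\bar{D})$. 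For each $k\geq1$, the symbol $\bar{r}$ vanishes to infinite order along $\Sigma_{0}^{D_{0}}=\Sigma_{0}^{\bar{D}}=\{\xi_{0}=x'=\xi'=0\}$, so on any $\sqrt{h}$-tube about $\Sigma_{0}^{\bar{D}}$ Taylor's theorem yields $|\bar{r}|=O(h^{\infty})$. The almost-analytic weight $\bar{\partial}\tilde{f}(z)$ together with the Paley--Wiener bounds \prettyref{eq: Paley Wiener estimate} and the exotic-calculus conjugation $G=(e^{g\log(1/h)})^{W}$ used in the proof of \prettyref{lem:changing symbol near crit set} effectively restrict the relevant region of integration to $|\textrm{Im}\,z|\leq M\sqrt{h}\log(1/h)$, and hence the relevant spectral window for $\bar{D}$ to $O(\sqrt{h}\log(1/h))$. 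Estimating each $k\geq 1$ term in the Neumann series in this regime, the infinite-order vanishing of $\bar{r}$ converts into arbitrarily high powers of $h$; choosing $N$ large enough then gives the desired $O(h^{\infty})$ bound.

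\textbf{Main obstacle.} The essential difficulty lies in Stage 2: the perturbation $c_{0}^{W}(\bar{r})$ is \emph{not} microlocally zero on $\bar{V}_{\alpha\beta}$, so the wavefront arguments of \prettyref{lem:changing symbol near crit set} do not directly apply. Instead, one must trade the infinite-order vanishing of $\bar{r}$ along $\Sigma_{0}^{D_{0}}$ against the $\sqrt{h}$-localization of $f(\bar{D}/\sqrt{h})$ in phase space, and quantify this via the $\Psi_{\delta}^{m}$ calculus so that each extra factor of $\bar{r}$ in the Neumann expansion produces a genuine gain of $h^{\infty}$ inside the Helffer--Sj\"ostrand integral. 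The combinatorial bookkeeping parallels the end of the proof of \prettyref{lem:changing symbol near crit set}.
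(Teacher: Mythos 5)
Your Stage 1 is correct and in fact spells out a microlocal localization step that the paper glosses over; the identification $D_0' := U^*(\bar{D}+c_0^W(\bar{r}))U$, the microlocal equality on $V_{\alpha\beta}^0$ from conjugating \prettyref{eq:normal form conjugation}, and the appeal to \prettyref{lem:changing symbol near crit set} followed by unitary invariance of functional calculus are all sound.

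Stage 2, however, has a genuine gap, and it is precisely the obstacle you flag at the end. The Neumann series
\[
\left(\frac{\bar{D}+c_0^W(\bar{r})}{\sqrt{h}}-z\right)^{-1}
=\sum_{k\geq0}\left(\frac{\bar{D}}{\sqrt{h}}-z\right)^{-1}\Bigl[-\tfrac{1}{\sqrt{h}}c_0^W(\bar{r})\bigl(\tfrac{\bar{D}}{\sqrt{h}}-z\bigr)^{-1}\Bigr]^{k}
\]
is not controlled term by term: each factor $\tfrac{1}{\sqrt{h}}c_0^W(\bar{r})(\tfrac{\bar{D}}{\sqrt{h}}-z)^{-1}$ has operator norm of size $\|c_0^W(\bar{r})\|/(\sqrt{h}\,|\mathrm{Im}\,z|)$, and $\bar{r}$ is $O(1)$ away from $\Sigma_0^{\bar{D}}$. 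Since the Helffer--Sj\"ostrand integral forces you down to $|\mathrm{Im}\,z|\sim\sqrt{h}\log(1/h)$, this factor is of order $(h\log(1/h))^{-1}$, so the series neither converges nor gains powers of $h$ without an \emph{a priori} spectral localization of $c_0^W(\bar{r})$. The assertion that ``on any $\sqrt{h}$-tube about $\Sigma_0^{\bar{D}}$ Taylor's theorem yields $|\bar{r}|=O(h^\infty)$'' is not enough, because the resolvent factors do not confine the wavefront to such a tube for $\mathrm{Re}\,z$ ranging over $\mathrm{spt}(f)$; the $\Psi_\delta^m$ conjugation in \prettyref{lem:changing symbol near crit set} was designed to move the symbol off $WF(D-D')$, not to localize in $(x',\xi_0,\xi')$ at scale $\sqrt{h}$, and adapting it would require redoing the argument rather than citing it.

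The paper resolves this very differently and more economically. Because $\bar{r}$ vanishes to infinite order along $\Sigma_0^{\bar{D}}=\Sigma_0^{\bar{D}+c_0^W(\bar{r})}$, symbolic calculus furnishes, for every $N$, pseudodifferential factors $P_N,Q_N\in\Psi_{\textrm{cl}}^0$ with
\[
c_0^W(\bar{r})=P_N\bigl(\bar{D}+c_0^W(\bar{r})\bigr)^{N},\qquad c_0^W(\bar{r})=Q_N\,\bar{D}^{\,N}.
\]
After modifying $\bar{D}$ outside a neighbourhood of $\bar{V}_{\alpha\beta}$ via \prettyref{lem:changing symbol near crit set} and \prettyref{prop:Dicrete spectrum criterion} to obtain discrete spectrum in $(-\sqrt{2\nu_0},\sqrt{2\nu_0})$, these factorizations immediately give $\|c_0^W(\bar{r})\Pi^{\bar{D}}\|=O(h^{N/2})$ and $\|c_0^W(\bar{r})\Pi^{\bar{D}+c_0^W(\bar{r})}\|=O(h^{N/2})$ on the spectral windows $[-\sqrt{2\nu_0 h},\sqrt{2\nu_0 h}]$. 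One then compares the two traces through the abstract Proposition~\prettyref{prop:apdx prop for R^n red.} with $\rho(x)=f(x/\sqrt{h})\check{\vartheta}((\lambda\sqrt{h}-x)/h)$, avoiding any Neumann expansion or fine control on $\mathrm{Im}\,z$. You should replace your Stage 2 with this spectral-projection argument: the Helffer--Sj\"ostrand/Neumann route cannot be pushed through as stated without essentially reproducing the $P_N,Q_N$ factorization anyway.
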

\begin{proof}
Since the conjugations in \prettyref{eq:first conjugation normal form}
and \prettyref{eq:normal form conjugation} are unitary and\\
 $WF\left(\bar{A}_{\alpha}\right),WF\left(\bar{A}_{\beta}\right)\subset\bar{V}_{\alpha\beta}$,
we have 
\[
\mathcal{T}_{\alpha\beta}^{\vartheta}\left(D_{0}\right)=\frac{1}{\pi}\int_{\mathbb{C}}\bar{\partial}\tilde{f}\left(z\right)\check{\vartheta}\left(\frac{\lambda-z}{\sqrt{h}}\right)\textrm{tr }\left[\bar{A}_{\alpha}\left(\frac{1}{\sqrt{h}}\left(\bar{D}+c_{0}^{W}\left(\bar{r}\right)\right)-z\right)^{-1}\bar{A}_{\beta}\right]dzd\bar{z}.
\]
It now remains to do away with the $c_{0}^{W}\left(\bar{r}\right)$
above. Since this term vanishes to infinite order along $\Sigma_{0}^{\bar{D}}=\Sigma_{0}^{\bar{D}+c_{0}^{W}\left(\bar{r}\right)}$,
we may use symbolic calculus to find $P_{N},Q_{N}\in\Psi_{\mbox{cl}}^{0}\left(\mathbb{R}^{n};\mathbb{C}^{2^{m}}\right)$,
$\forall N\geq1$ such that 
\begin{eqnarray}
c_{0}^{W}\left(\bar{r}\right) & = & P_{N}\left(\bar{D}+c_{0}^{W}\left(\bar{r}\right)\right)^{N}\label{eq: c(r) =00003D P D^N}\\
c_{0}^{W}\left(\bar{r}\right) & = & Q_{N}\left(\bar{D}\right)^{N}.\label{eq: c(r) =00003D Q D^N}
\end{eqnarray}
Modifying $\bar{D}$ outside a neighborhood of $\bar{V}_{\alpha\beta}$
using \prettyref{lem:changing symbol near crit set} and \prettyref{prop:Dicrete spectrum criterion}
we may assume that $\bar{D},\bar{D}+c_{0}^{W}\left(\bar{r}\right)$
have discrete spectrum in $\left(-\sqrt{2\nu_{0}},\sqrt{2\nu_{0}}\right)$
and hence 
\begin{eqnarray*}
\mathcal{T}_{\alpha\beta}^{\vartheta}\left(\bar{D}\right) & = & \textrm{tr}\left[\bar{A}_{\alpha}f\left(\frac{\bar{D}}{\sqrt{h}}\right)\check{\vartheta}\left(\frac{\lambda\sqrt{h}-\bar{D}}{h}\right)\bar{A}_{\beta}\right]\\
\mathcal{T}_{\alpha\beta}^{\vartheta}\left(D_{0}\right) & = & \textrm{tr}\left[\bar{A}_{\alpha}f\left(\frac{\bar{D}+c_{0}^{W}\left(\bar{r}\right)}{\sqrt{h}}\right)\check{\vartheta}\left(\frac{\lambda\sqrt{h}-\bar{D}-c_{0}^{W}\left(\bar{r}\right)}{h}\right)\bar{A}_{\beta}\right].
\end{eqnarray*}
Next, with $\Pi^{\bar{D}}=\Pi_{\left[-\sqrt{2\nu_{0}h},\sqrt{2\nu_{0}h}\right]}^{\bar{D}}$
and $\Pi^{\bar{D}+c_{0}^{W}\left(\bar{r}\right)}=\Pi_{\left[-\sqrt{2\nu_{0}h},\sqrt{2\nu_{0}h}\right]}^{\bar{D}+c_{0}^{W}\left(\bar{r}\right)}$
denoting the spectral projections, \prettyref{eq: c(r) =00003D P D^N}
and \prettyref{eq: c(r) =00003D Q D^N} give 
\begin{eqnarray*}
\left\Vert c_{0}^{W}\left(\bar{r}\right)\Pi^{\bar{D}}\right\Vert  & = & O\left(h^{\frac{N}{2}}\right)\\
\left\Vert c_{0}^{W}\left(\bar{r}\right)\Pi^{\bar{D}+c_{0}^{W}\left(\bar{r}\right)}\right\Vert  & = & O\left(h^{\frac{N}{2}}\right)
\end{eqnarray*}
 for each $N\geq1$. Finally applying \prettyref{prop:apdx prop for R^n red.}
with $\rho\left(x\right)=f\left(\frac{x}{\sqrt{h}}\right)\check{\vartheta}\left(\frac{\lambda\sqrt{h}-x}{h}\right)$
and using the cyclicity of the trace gives $\mathcal{T}_{\alpha\beta}^{\vartheta}\left(D_{0}\right)-\mathcal{T}_{\alpha\beta}^{\vartheta}\left(\bar{D}\right)=O\left(h^{-1}h^{\frac{N}{4096}}\right)$,
$\forall N\geq1$, completing the proof.
\end{proof}

\section{\label{sec: Extension of a resolvent}Extension of a resolvent}

In this section we complete the proof of \prettyref{lem: O(h infty) LEMMA}.
On account of the reductions in \prettyref{prop:Reduction to R^n}
and \prettyref{prop: Reduction to normal form operator} in the previous
sections, it suffices to now consider the trace $\mathcal{T}_{\alpha\beta}^{\vartheta}\left(\bar{D}\right)$
. First let $\bar{A}_{\alpha}=a_{\alpha}^{W}$, $\bar{A}_{\beta}=a_{\beta}^{W}$
for $a_{\alpha},a_{\beta}\in S_{\textrm{cl}}^{0}\left(\mathbb{R}^{2n}\right)$.
The conjugations $e^{\frac{it}{h}x_{0}}\bar{A}_{\alpha}e^{-\frac{it}{h}x_{0}}=a_{\alpha,t}^{W}$
and $e^{\frac{it}{h}x_{0}}\bar{A}_{\beta}e^{-\frac{it}{h}x_{0}}=a_{\beta,t}^{W}$
are easily computed in terms of the one-parameter family of symbols
$a_{\alpha,t}\left(\xi_{0},\ldots\right)=a_{\alpha}\left(\xi_{0}+t,\ldots\right),\,a_{\beta}=a_{\beta}\left(\xi_{0}+t,\ldots\right)\in S_{\textrm{cl}}^{0}\left(\mathbb{R}^{2n}\right)$,
$t\in\mathbb{R}$, obtained by translating in the $\xi_{0}$ direction.
One now introduces almost analytic continuations of the symbols $a_{\alpha,t}$,
$a_{\beta,t}$ $\in S_{\textrm{cl}}^{0}\left(\mathbb{R}^{2n}\right)$,
defined for $t\in\mathbb{C}$, such that all the Frechet semi-norms
of $\bar{\partial}a_{\alpha,t}$, $\bar{\partial}a_{\beta,t}$ are
$O\left(\left|\textrm{Im}t\right|^{\infty}\right)$. These maybe further
chosen to have the property that their wavefront sets have uniform
compact support when $t$ is restricted to compact subsets of $\mathbb{C}$.
Again one clearly has
\begin{eqnarray}
a_{\alpha,t}^{W} & = & e^{-\frac{i\textrm{Re }t}{h}x_{0}}\left(a_{\alpha,i\textrm{Im}t}\right)^{W}e^{-\frac{i\textrm{Re }t}{h}x_{0}},\quad\textrm{ and}\label{eq: almost anal conj}\\
a_{\beta,t}^{W} & = & e^{-\frac{i\textrm{Re }t}{h}x_{0}}\left(a_{\beta,i\textrm{Im}t}\right)^{W}e^{-\frac{i\textrm{Re }t}{h}x_{0}}.\label{eq: almost anal conj 2}
\end{eqnarray}
 In similar vein we may define 
\begin{align}
\bar{D}_{t}\coloneqq e^{-\frac{it}{h}x_{0}}\bar{D}e^{\frac{it}{h}x_{0}} & =H_{1,t}^{W}+c_{0}^{W}\left(\bar{\omega}\right)\label{eq: almost analytic extension Dirac}\\
H_{1,t} & =\left(\xi_{0}+t\right)\sigma_{0}+\left(2\bar{\nu}\right)^{\frac{1}{2}}\sum_{j=1}^{m}\mu_{j}^{\frac{1}{2}}\left(x_{j}\sigma_{2j-1}+\xi_{j}\sigma_{2j}\right)\in S_{\textrm{cl}}^{1}\left(\mathbb{R}^{2n}\right),\label{eq: almost analytic extension of H1}
\end{align}
for $t\in\mathbb{R}$, on account of the $\xi_{0}$-independence of
$\bar{\omega}$. An almost analytic continuation of $\bar{D}_{t}$
is easily introduced by simply allowing $t\in\mathbb{C}$ to be complex
in \prettyref{eq: almost analytic extension of H1} above. The resolvent
$\left(\bar{D}_{t}-z\right)^{-1}:L^{2}\left(\mathbb{R}^{n};\mathbb{C}^{2^{m}}\right)\rightarrow L^{2}\left(\mathbb{R}^{n};\mathbb{C}^{2^{m}}\right)$
is well-defined and holomorphic in the region $\textrm{Im}z>\left|\textrm{Im}t\right|$. 

In the lemma below we set $t=i\gamma\left(M,\delta\right)\coloneqq i2Mh^{\delta}\log\frac{1}{h}$,
for $\delta=1-\epsilon\in\left(\frac{1}{2},1\right)$ with $\epsilon$
as in \prettyref{lem: O(h infty) LEMMA} and $M>1$ . We now have
the following.
\begin{lem}
\label{lem: holomorph continuation resolvent}For $h$ sufficiently
small and $\forall\varepsilon_{0}>0$, the resolvent 
\[
\left(\frac{1}{\sqrt{h}}\bar{D}_{i\gamma}-z\right)^{-1}:L^{2}\left(\mathbb{R}^{n};\mathbb{C}^{2^{m}}\right)\rightarrow L^{2}\left(\mathbb{R}^{n};\mathbb{C}^{2^{m}}\right)
\]
extends holomorphically, and is uniformly $O\left(h^{-\frac{1}{2}}\right)$,
in the region $\textrm{Im}z>-Mh^{\delta-\frac{1}{2}}\log\frac{1}{h}$,
$\left|\textrm{Re}z\right|\leq\sqrt{2\nu_{0}}-\varepsilon_{0}$. \end{lem}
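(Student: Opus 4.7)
The plan begins with the structural observation that since $\bar\omega$ is $\xi_0$-independent and the only $\xi_0$-dependent term in the symbol $H_1$ from \eqref{eq: model Dirac symbol} is $\xi_0\sigma_0$, with $\sigma_0=ic(e_0)$ a constant self-adjoint involution ($\sigma_0^2=I$), the almost-analytic extension is simply a bounded non-self-adjoint perturbation of $\bar D$:
\[
\bar D_{i\gamma}=\bar D+i\gamma\sigma_0.
\]
Setting $w=\sqrt h z$, the target bound $\|(\bar D_{i\gamma}/\sqrt h-z)^{-1}\|=O(h^{-1/2})$ is equivalent to the lower bound $\|(\bar D_{i\gamma}-w)u\|\ge ch\|u\|$, uniform for $|\mathrm{Re}(w)|\le\sqrt h(\sqrt{2\nu_0}-\varepsilon_0)$ and $\mathrm{Im}(w)>-\gamma/2$. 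A direct algebraic computation using $\sigma_0^*=\sigma_0$ and $\sigma_0^2=I$ yields the self-adjoint identity
\[
P^*P+PP^*=2\bigl[(\bar D-\mathrm{Re}(w))^2+(\gamma\sigma_0-\mathrm{Im}(w))^2\bigr],\quad P=\bar D_{i\gamma}-w,
\]
reducing the problem to bounds on the self-adjoint pieces, with the difference $\|Pu\|^2-\|P^*u\|^2=2\gamma\langle i[\bar D,\sigma_0]u,u\rangle$ controlled via the identity $[\bar D,\sigma_0]=2(\bar D-\xi_0^W\sigma_0-\Omega_+)\sigma_0$, where $\Omega_+$ is the $\sigma_0$-commuting part of $c_0^W(\bar\omega)$.

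The main step is to use the fibrewise Landau-level decomposition of Proposition~\ref{prop:eigenspaces magnetic Dirac}, tensored with $L^2(\mathbb{R}^{n-m}_{x_0,x''})$, to split
\[
L^2(\mathbb{R}^n;\mathbb{C}^{2^m})=\mathcal K\oplus\mathcal K^\perp,\qquad\mathcal K:=L^2(\mathbb{R}^{n-m})\otimes\mathbb{C}[\psi_{0,0}w_0].
\]
On $\mathcal K$ the model Dirac annihilates and $\sigma_0$ acts as $-1$ on the even form $w_0$; up to the harmonic term $c_0^W(\bar\omega)$, whose $C^1$-norm is made arbitrarily small by \eqref{eq: omega 0 smaall C1 norm}, we obtain $\bar D_{i\gamma}|_{\mathcal K}=ih\partial_{x_0}-i\gamma$, with spectrum on the line $\{\mathrm{Im}=-\gamma\}$. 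Rescaling by $1/\sqrt h$ places it on $\{\mathrm{Im}=-2Mh^{\delta-1/2}\log\tfrac1h\}$, a distance $\ge Mh^{\delta-1/2}\log\tfrac1h$ below the allowed region, yielding a resolvent bound $O\bigl(h^{1/2-\delta}/\log\tfrac1h\bigr)=O(h^{-1/2})$ on $\mathcal K$ since $\delta<1$.

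On each non-kernel block $L^2(\mathbb{R}^{n-m})\otimes(E_\tau^+\oplus E_\tau^-)$ with $\tau\neq 0$, since $\sigma_0$ anticommutes with the model Dirac and swaps $E_\tau^\pm$, the restriction takes (to leading order) the $2\times 2$ block form
\[
\bar D_{i\gamma}=\begin{pmatrix}M_\tau & \xi_0^W+i\gamma\\ \xi_0^W+i\gamma & -M_\tau\end{pmatrix},\quad M_\tau=(2\bar\nu)^{1/2,W}\sqrt{\mu\!\cdot\!\tau\,h}\ge\sqrt{2\nu_0 h}.
\]
The diagonal entry $M_\tau-w$ is invertible with norm $O(h^{-1/2})$ because the spectral gap $|\mathrm{Re}(z)|\le\sqrt{2\nu_0}-\varepsilon_0$ forces $|M_\tau-\mathrm{Re}(w)|\ge\sqrt h\varepsilon_0$. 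Multiplying the Schur complement $S$ by $(M_\tau-w)$ produces $M_\tau^2+(\xi_0^W)^2-\gamma^2-hz^2+2i\gamma\xi_0^W$, whose real part is bounded below by $(2\sqrt{2\nu_0}\varepsilon_0-\varepsilon_0^2)h-\gamma^2\ge ch$ for $h$ small (since $\gamma^2\sim h^{2\delta}\log^2\tfrac1h\ll h$ as $2\delta>1$). This furnishes the uniform $O(h^{-1/2})$ bound on $\mathcal K^\perp$, which combined with the $\mathcal K$-estimate completes the proof.

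The principal obstacle is that the Landau-block decomposition is exact only for the constant-coefficient model: the variable coefficient $\bar\nu(x_0,x'',\xi'')$ couples the fibrewise Landau projectors via $O(h)$ Weyl-product commutators, and $c_0^W(\bar\omega)$ does not preserve the blocks exactly. These lower-order errors must be shown to preserve the resolvent estimate uniformly in the Landau index $\tau$, which should be feasible via the $C^1$-smallness of $\bar\omega_0$ from \eqref{eq: omega 0 smaall C1 norm} and the elliptic character of $\bar D^2$ off $\Sigma_0^{\bar D}$, but constitutes the technical heart of the argument.
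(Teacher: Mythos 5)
Your strategy follows the paper's closely — both use the Landau-level tensor decomposition \prettyref{eq: Landau Levels} to block-diagonalize $\bar{D}_{i\gamma}$, and both exploit the real-part gap $|\textrm{Re}\,z|\le\sqrt{2\nu_0}-\varepsilon_0<\inf\sqrt{2\bar{\nu}\varLambda}$ together with the imaginary shift $\gamma$ to control the resolvent; the Schur-complement route on the non-kernel blocks is an acceptable variant of the paper's explicit parametrix $R_{i\gamma,\varLambda}(z)$. But two of your claims misidentify the crux of the argument. First, you flag as the ``principal obstacle'' that the Landau decomposition is only approximately preserved; in fact it is preserved exactly. The multiplier $\left[(2\bar{\nu})^{1/2}\right]^{W}$ acts only on the factor $L^{2}(\mathbb{R}^{m+1}_{x_0,x''})$ and so commutes trivially with the Landau projections on $L^{2}(\mathbb{R}^{m}_{x'};\mathbb{C}^{2^m})$, while $[c_0^W(\bar{\omega}),D_{\mathbb{R}^m}^2]=ihc_0^W(\tilde{\Delta}^{0}\bar{\omega})=0$ because $\bar{\omega}\in\mathcal{H}^{\textrm{odd}}S_{\textrm{cl}}^{0}$ is $\tilde{\Delta}^{0}$-harmonic. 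This exact commutation is precisely why the Birkhoff normal form insists on a $\tilde{\Delta}^{0}$-harmonic $\bar{\omega}$, and without it the block reduction you use is not available.

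Second, and more seriously, the $C^1$-smallness \prettyref{eq: omega 0 smaall C1 norm} alone is not the right quantitative input. On the kernel block one needs $\|\varOmega_0\|=\|\mathtt{P}_0c_0^W(\bar{\omega})\mathtt{P}_0\|=O(h)$ — much smaller than the $O(\varepsilon)$ that $C^1$-smallness gives — since the spectral gap after rescaling by $\tfrac{1}{\sqrt{h}}$ is only of size $\sim h^{\delta-\frac12}\log\tfrac1h$; that $O(h)$ bound comes from the second-order vanishing of $\bar{\omega}_0$ together with the lowering operators $A_j$ annihilating $\psi_{0,0}$. On the non-kernel block $E_{\varLambda}$ one needs $\|\varOmega_{\varLambda}\|\le\varepsilon\sqrt{\varLambda h}+O(h)$ uniformly in $\varLambda$, so that after dividing by $\sqrt{h}$ the perturbation is an $O(\varepsilon)$ fraction of the diagonal scale $\sqrt{2\bar{\nu}\varLambda}$; a bare $O(\varepsilon)$ operator-norm bound becomes $O(\varepsilon/\sqrt{h})$ after rescaling and is useless at the lowest Landau levels. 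The latter bound requires expanding $\bar{\omega}_0=\sum_j(a_jz_j+\bar{a}_j\bar{z}_j)$ with $\|a_j\|_{C^0}\le\varepsilon$ and quantizing to raising/lowering operators, none of which appears in your proposal. This is exactly the content you defer as the ``technical heart.'' Finally, the algebraic identity $P^*P+PP^*=2[(\bar{D}-\textrm{Re}\,w)^2+(\gamma\sigma_0-\textrm{Im}\,w)^2]$ in your opening paragraph, though correct, is unused in the remainder and does not advance the proof.
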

\begin{proof}
We begin with the orthogonal Landau decomposition \prettyref{eq: Landau Levels}
\begin{align}
L^{2}\left(\mathbb{R}^{n};\mathbb{C}^{2^{m}}\right) & =L^{2}\left(\mathbb{R}_{x_{0},x''}^{m+1}\right)\otimes\underbrace{\left(\mathbb{C}\left[\psi_{0,0}\right]\oplus\bigoplus_{\varLambda\in\mu.\left(\mathbb{N}_{0}^{m}\setminus0\right)}\left[E_{\varLambda}^{\textrm{even}}\oplus E_{\varLambda}^{\textrm{odd}}\right]\right)}_{=L^{2}\left(\mathbb{R}_{x'}^{m};\mathbb{C}^{2^{m}}\right)}\;\textrm{where}\label{eq: Landau Decomposition}\\
E_{\varLambda}^{\textrm{even}} & \coloneqq\bigoplus_{\begin{subarray}{l}
\tau\in\mathbb{N}_{0}^{m}\setminus0\\
\varLambda=\mu.\tau
\end{subarray}}E_{\tau}^{\textrm{even}}\nonumber \\
E_{\varLambda}^{\textrm{odd}} & \coloneqq\bigoplus_{\begin{subarray}{l}
\tau\in\mathbb{N}_{0}^{m}\setminus0\\
\varLambda=\mu.\tau
\end{subarray}}E_{\tau}^{\textrm{odd}}\nonumber 
\end{align}
according to the eigenspaces of the squared magnetic Dirac operator
$D_{\mathbb{R}^{m}}^{2}$ \prettyref{eq: magnetic Dirac Rm} on $\mathbb{R}^{m}$.
It is clear from \prettyref{eq: almost analytic extension of H1}
that 
\[
H_{1,t}^{W}=\left(\xi_{0}+t\right)\sigma_{0}+\left[\left(2\bar{\nu}\right)^{\frac{1}{2}}\right]^{W}\otimes D_{\mathbb{R}^{m}}
\]
in terms of the above decomposition. Furthermore one has the commutation
relations 
\begin{eqnarray*}
\left[\sigma_{0},D_{\mathbb{R}^{m}}^{2}\right] & = & 0\\
\left[c_{0}^{W}\left(\bar{\omega}\right),D_{\mathbb{R}^{m}}^{2}\right] & = & ihc_{0}^{W}\left(\tilde{\Delta}^{0}\bar{\omega}\right)=0
\end{eqnarray*}
since $\bar{\omega}$ is $\tilde{\Delta}^{0}$-harmonic. The above
and \prettyref{eq: almost analytic extension Dirac} show that the
$\left(\frac{1}{\sqrt{h}}\bar{D}_{t}-z\right)$ preserves the eigenspaces
in the decomposition \prettyref{eq: Landau Decomposition} $\forall t\in\mathbb{C}$.
It hence suffices to consider the restriction of $\left(\frac{1}{\sqrt{h}}\bar{D}_{i\gamma}-z\right)$
to each eigenspace.

Let $E_{0}\coloneqq\mathbb{C}\left[\psi_{0,0}\right],\;E_{\varLambda}\coloneqq E_{\varLambda}^{\textrm{even}}\oplus E_{\varLambda}^{\textrm{odd}}$
and $\mathtt{P}_{0},\mathtt{P}_{\varLambda}$ denote the projection
onto the corresponding summands of \prettyref{eq: Landau Decomposition}.
Define the restrictions 
\begin{eqnarray*}
\varOmega_{0}\coloneqq\mathtt{P}_{0}c_{0}^{W}\left(\bar{\omega}\right)\mathtt{P}_{0} & : & L^{2}\left(\mathbb{R}_{x_{0},x''}^{m+1}\right)\rightarrow L^{2}\left(\mathbb{R}_{x_{0},x''}^{m+1}\right)\\
\varOmega_{\varLambda}\coloneqq\mathtt{P}_{\varLambda}c_{0}^{W}\left(\bar{\omega}\right)\mathtt{P}_{\varLambda} & : & L^{2}\left(\mathbb{R}_{x_{0},x''}^{m+1};E_{\varLambda}^{\textrm{even}}\oplus E_{\varLambda}^{\textrm{odd}}\right)\rightarrow L^{2}\left(\mathbb{R}_{x_{0},x''}^{m+1};E_{\varLambda}^{\textrm{even}}\oplus E_{\varLambda}^{\textrm{odd}}\right),\;\varLambda>0.
\end{eqnarray*}
Now $\bar{\omega}\sim\sum_{j=0}^{\infty}h^{j}\bar{\omega}_{j}\in\mathcal{H}^{\textrm{odd}}S_{\textrm{cl}}^{0}$
with $\xi_{0}$-independent $\bar{\omega}_{0}$ vanishing to second
order along $\Sigma_{0}^{D_{0}}=\Sigma_{0}^{\bar{D}}=\left\{ \xi_{0}=x'=\xi'=0\right\} $.
Hence we may Taylor expand
\[
\bar{\omega}_{0}=\sum_{i\leq j}\left[a_{ij}z_{i}z_{j}+\bar{a}_{ij}\bar{z}_{i}\bar{z}_{j}+b_{ij}\bar{z}_{i}z_{j}+\bar{b}_{ij}z_{i}\bar{z}_{j}\right],
\]
 in terms of the complex coordinates $z_{j}=x_{j}+i\xi_{j}$, $\bar{z}_{j}=x_{j}-i\xi_{j}$
,$1\leq j\leq m$, with $a_{ij},b_{ij}\in S_{\textrm{cl}}^{0}\left(\mathbb{R}^{2n};\mathbb{R}\right)\otimes\Lambda^{\textrm{odd}}W$.
The self-adjoint Clifford-Weyl quantization now yields
\begin{eqnarray*}
c_{0}^{W}\left(\bar{\omega}_{0}\right) & = & \sum_{i\leq j}\left[c_{0}^{W}\left(a_{ij}\right)A_{i}A_{j}+A_{j}^{*}A_{i}^{*}c_{0}^{W}\left(\bar{a}_{ij}\right)+c_{0}^{W}\left(b_{ij}\right)A_{i}^{*}A_{j}+A_{j}^{*}A_{i}c_{0}^{W}\left(\bar{b}_{ij}\right)\right]\\
 &  & \qquad\qquad\qquad\qquad\qquad\qquad\qquad+h\Psi_{\textrm{cl}}^{0}\left(\mathbb{R}^{n};\mathbb{C}^{2^{m}}\right)
\end{eqnarray*}
in terms of the raising and lowering operators in \prettyref{eq:standard commutation}.
Since each lowering operator $A_{j}$ annihilates $\psi_{0,0}$, this
leads to the estimate 
\begin{equation}
\left\Vert \varOmega_{0}\right\Vert =O\left(h\right).\label{eq: bound Omega0}
\end{equation}
Next, on account of \prettyref{eq: omega 0 smaall C1 norm} one may
also expand $\bar{\omega}_{0}=\sum_{j=1}^{m}\left[a_{j}z_{j}+\bar{a}_{j}\bar{z}_{j}\right]$,
with $a_{j}\in S_{\textrm{cl}}^{0}\left(\mathbb{R}^{2n};\mathbb{R}\right)\otimes\Lambda^{\textrm{odd}}W$,
satisfying $\left\Vert a_{j}\right\Vert _{C^{0}}\leq\varepsilon<1$.
On self-adjoint quantization this now gives 
\[
c_{0}^{W}\left(\bar{\omega}_{0}\right)=\sum_{j=1}^{m}\left[c_{0}^{W}\left(a_{j}\right)A_{j}+A_{j}^{*}c_{0}^{W}\left(\bar{a}_{j}\right)\right]+h\Psi_{\textrm{cl}}^{0}\left(\mathbb{R}^{n};\mathbb{C}^{2^{m}}\right)
\]
where
\begin{eqnarray*}
\left\Vert c_{0}^{W}\left(a_{j}\right)\right\Vert _{L^{2}\rightarrow L^{2}},\left\Vert c_{0}^{W}\left(\bar{a}_{j}\right)\right\Vert _{L^{2}\rightarrow L^{2}} & = & \left\Vert a_{j}\right\Vert _{C^{0}}+O\left(h\right)\\
 & \leq & \varepsilon+O\left(h\right).
\end{eqnarray*}
Knowing the action of the lowering and raising operators $A_{j}$,
$A_{j}^{*}$ on each eigenstate \prettyref{eq: Hermite functions}
of $D_{\mathbb{R}^{m}}^{2}$ gives the estimate 
\begin{equation}
\left\Vert \varOmega_{\varLambda}\right\Vert \leq\varepsilon\sqrt{\varLambda h}+O\left(h\right)\label{eq: bound Omega N}
\end{equation}
with the $O\left(h\right)$ term above being uniform in $\varLambda$. 

Next we compute the restriction of $\left(\frac{1}{\sqrt{h}}\bar{D}_{i\gamma}-z\right)$
to the $E_{0}$ eigenspace in \prettyref{eq: Landau Decomposition}
using \prettyref{eq:odd clifford representation} to be 
\begin{equation}
D_{i\gamma,0}\left(z\right)\coloneqq\mathtt{P}_{0}\left(\frac{1}{\sqrt{h}}\bar{D}_{i\gamma}-z\right)\mathtt{P}_{0}=\frac{1}{\sqrt{h}}\left[-\xi_{0}-i\gamma-z\sqrt{h}+\varOmega_{0}\right].\label{eq: restriction to E0}
\end{equation}
The above is again understood as a closed unbounded operator on $L^{2}\left(\mathbb{R}_{x_{0},x''}^{m+1}\right)$
with domain $H^{1}\left(\mathbb{R}_{x_{0}}\right)\otimes L^{2}\left(\mathbb{R}_{x''}^{m}\right)$.
Set $R_{i\gamma,0}\left(z\right)=\left[r_{i\gamma,0}\left(z\right)\right]^{W}$,
with 
\[
r_{i\gamma,0}\left(z\right)=\frac{\sqrt{h}}{-\xi_{0}-i\gamma-z\sqrt{h}},
\]
which is well defined for $\textrm{Im}z>-\frac{\gamma}{2\sqrt{h}}=-Mh^{\delta-\frac{1}{2}}\log\frac{1}{h}$,
and compute 
\begin{eqnarray*}
R_{i\gamma,0}\left(z\right)D_{i\gamma,0}\left(z\right) & = & I+O\left(h^{1-\delta}\right)\\
D_{i\gamma,0}\left(z\right)R_{i\gamma,0}\left(z\right) & = & I+O\left(h^{1-\delta}\right)
\end{eqnarray*}
using \prettyref{eq: bound Omega0}. This shows that the inverse $D_{i\gamma,0}\left(z\right)^{-1}$
exists and is $O\left(R_{i\gamma,0}\left(z\right)\right)=O\left(h^{\frac{1}{2}-\delta}\right)$. 

Next, we compute the restriction of $\left(\frac{1}{\sqrt{h}}\bar{D}_{i\gamma}-z\right)$
to the $E_{\varLambda}$, $\varLambda>0$, eigenspace in \prettyref{eq: Landau Decomposition}.
Using \prettyref{eq: Dirac operator 2 by 2 block}, \prettyref{eq: clifford mult. is diagonal on 2 by 2 block}
this has the form 
\begin{eqnarray*}
D_{i\gamma,\varLambda}\left(z\right) & \coloneqq & \mathtt{P}_{\varLambda}\left(\frac{1}{\sqrt{h}}\bar{D}_{i\gamma}-z\right)\mathtt{P}_{\varLambda}\\
 & = & \frac{1}{\sqrt{h}}\begin{bmatrix}-\xi_{0}-i\gamma-z\sqrt{h} & \left(\sqrt{2\bar{\nu}\varLambda h}\right)^{W}\\
\left(\sqrt{2\bar{\nu}\varLambda h}\right)^{W} & \xi_{0}+i\gamma-z\sqrt{h}
\end{bmatrix}+\frac{1}{\sqrt{h}}\varOmega_{\varLambda}
\end{eqnarray*}
with respect to the $\mathbb{Z}_{2}$- grading $E_{\varLambda}=E_{\varLambda}^{\textrm{even}}\oplus E_{\varLambda}^{\textrm{odd}}$.
Here we leave the identification $\mathtt{i}_{\tau}$ in \prettyref{eq: Dirac operator 2 by 2 block}
between the odd and even parts as being understood. Set $R_{i\gamma,\varLambda}\left(z\right)=\left[r_{i\gamma,\varLambda}\left(z\right)\right]^{W}$
\[
r_{i\gamma,\varLambda}\left(z\right)\coloneqq\frac{\sqrt{h}\begin{bmatrix}-\xi_{0}-i\gamma-z\sqrt{h} & \left(\sqrt{2\bar{\nu}\varLambda h}\right)\\
\left(\sqrt{2\bar{\nu}\varLambda h}\right) & \xi_{0}+i\gamma-z\sqrt{h}
\end{bmatrix}}{z^{2}h-\left(\xi_{0}+i\gamma\right)^{2}-2\bar{\nu}\varLambda h}
\]
which is well defined for $\left|\textrm{Re}z\right|\leq\sqrt{2\nu_{0}}-\varepsilon_{0}<\inf_{\mathbb{R}^{n}}\sqrt{2\bar{\nu}\varLambda}$,
and $h$ sufficiently small. We now compute 
\begin{eqnarray*}
\left\Vert R_{i\gamma,\varLambda}\left(z\right)D_{i\gamma,\varLambda}\left(z\right)-I\right\Vert  & \leq & C\varepsilon+O\left(h\right)\\
\left\Vert D_{i\gamma,\varLambda}\left(z\right)R_{i\gamma,\varLambda}\left(z\right)-I\right\Vert  & \leq & C\varepsilon+O\left(h\right)
\end{eqnarray*}
using \prettyref{eq: bound Omega N} with the constants above being
uniform in $\varLambda$. Choosing $\varepsilon$ sufficiently small
in \prettyref{eq: omega 0 smaall C1 norm} shows that the inverse
$D_{i\gamma,\varLambda}\left(z\right)^{-1}$ exists and is $O\left(R_{i\gamma,\varLambda}\left(z\right)\right)=O\left(h^{-\frac{1}{2}}\right)$
uniformly. 
\end{proof}
We now finally finish the proof of \prettyref{lem: O(h infty) LEMMA}.
\begin{proof}[Proof of \prettyref{lem: O(h infty) LEMMA}]
 As noted in the beginning of the section, on account of \prettyref{eq: microlocal estimate},
\prettyref{eq:Helffer Sjostrand formula} and the reductions \prettyref{prop:Reduction to R^n}
and \prettyref{prop: Reduction to normal form operator}, it suffices
to show $\mathcal{T}_{\alpha\beta}^{\vartheta}\left(\bar{D}\right)=O\left(h^{\infty}\right)$.
We now define the trace 
\begin{equation}
\tau_{\alpha\beta,t}\left(z\right)\coloneqq\textrm{tr}\left[a_{\alpha,t}^{W}\left(\frac{1}{\sqrt{h}}\bar{D}_{t}-z\right)^{-1}a_{\beta,t}^{W}\right],\quad\textrm{Im}z>\left|\textrm{Im}t\right|,\label{eq: trace cutoff resolvent}
\end{equation}
 in terms of the almost analytic continuations. We clearly have 
\begin{eqnarray*}
\tau_{\alpha\beta,t}\left(z\right) & = & O\left(h^{-n}\left|\textrm{Im}z\right|^{-1}\right)\\
\frac{\partial}{\partial\bar{t}}\tau_{\alpha\beta,t}\left(z\right) & = & O\left(h^{-n}\left|\textrm{Im}t\right|^{\infty}\left|\textrm{Im}z\right|^{-2}\right).
\end{eqnarray*}
Furthermore, by \prettyref{eq: almost anal conj}, \prettyref{eq: almost anal conj 2}
and \prettyref{eq: almost analytic extension Dirac} $\tau_{\alpha\beta,t}\left(z\right)$
only depends on $\textrm{Re}t$ and we have 
\begin{equation}
\tau_{\alpha\beta,i\textrm{ Im}t}\left(z\right)=\tau_{\alpha\beta,0}\left(z\right)+O\left(h^{-n}\left|\textrm{Im}t\right|^{\infty}\left|\textrm{Im}z\right|^{-2}\right).\label{eq: trace asymp unchanged almost anal. ext}
\end{equation}
As before, we again introduce $\psi\in C^{\infty}\left(\mathbb{R};\left[0,1\right]\right)$
such that $\psi\left(x\right)=\begin{cases}
1; & x\leq1\\
0; & x\geq2
\end{cases}$ and set $\psi_{M}\left(z\right)=\psi\left(\frac{\textrm{Im}z}{M\sqrt{h}\log\frac{1}{h}}\right)$.
The estimates \prettyref{eq: Paley Wiener estimate}, \prettyref{eq: almost analytic estimate}
along with the observation $\psi_{M}\left|\textrm{Im}z\right|^{N}=O\left(\left(M\sqrt{h}\log\frac{1}{h}\right)^{N}\right)$
now give 
\begin{eqnarray*}
\mathcal{T}_{\alpha\beta}^{\vartheta}\left(\bar{D}\right) & = & \frac{1}{\pi}\int_{\mathbb{C}}\bar{\partial}\left(\psi_{M}\tilde{f}\right)\check{\vartheta}\left(\frac{\lambda-z}{\sqrt{h}}\right)\tau_{\alpha\beta,0}\left(z\right)dzd\bar{z}\\
 & = & O\left(h^{\infty}\right)+\\
 &  & \frac{1}{\pi}\int_{\left\{ M\sqrt{h}\log\frac{1}{h}\leq\textrm{Im}z\leq2M\sqrt{h}\log\frac{1}{h}\right\} }\bar{\partial}\left(\psi_{M}\tilde{f}\right)\check{\vartheta}\left(\frac{\lambda-z}{\sqrt{h}}\right)\tau_{\alpha\beta,0}\left(z\right)dzd\bar{z}.
\end{eqnarray*}
Using \prettyref{eq: trace asymp unchanged almost anal. ext} and
$\gamma=2Mh^{\delta}\log\frac{1}{h}$, $\delta\in\left(\frac{1}{2},1\right)$,
the above now equals 
\begin{eqnarray*}
\mathcal{T}_{\alpha\beta}^{\vartheta}\left(\bar{D}\right) & = & O\left(h^{\infty}\right)+\\
 &  & \frac{1}{\pi}\int_{\left\{ M\sqrt{h}\log\frac{1}{h}\leq\textrm{Im}z\leq2M\sqrt{h}\log\frac{1}{h}\right\} }\bar{\partial}\left(\psi_{M}\tilde{f}\right)\check{\vartheta}\left(\frac{\lambda-z}{\sqrt{h}}\right)\tau_{\alpha\beta,i\gamma}\left(z\right)dzd\bar{z}.
\end{eqnarray*}
Since the resolvent $\left(\frac{1}{\sqrt{h}}\bar{D}_{i\gamma}-z\right)^{-1}$,
and hence the trace $\tau_{\alpha\beta,i\gamma}\left(z\right)$, extends
holomorphically to $\textrm{Im}z>-Mh^{\delta-\frac{1}{2}}\log\frac{1}{h},\left|\textrm{Re}z\right|\leq\sqrt{2\nu_{0}}-\varepsilon_{0}$
by \prettyref{lem: holomorph continuation resolvent} we may replace
the integral in the last line above
\begin{eqnarray*}
\mathcal{T}_{\alpha\beta}^{\vartheta}\left(\bar{D}\right) & = & O\left(h^{\infty}\right)+\\
 &  & \frac{1}{\pi}\int_{\left\{ -\frac{1}{2}Mh^{\delta-\frac{1}{2}}\log\frac{1}{h}\leq\textrm{Im}z\leq-\frac{1}{4}Mh^{\delta-\frac{1}{2}}\log\frac{1}{h}\right\} }\bar{\partial}\left(\psi_{M}\tilde{f}\right)\check{\vartheta}\left(\frac{\lambda-z}{\sqrt{h}}\right)\tau_{\alpha\beta,i\gamma}\left(z\right)dzd\bar{z}\\
 & = & O\left(h^{\infty}\right)+\\
 &  & O\left[\int_{\left\{ -\frac{1}{2}Mh^{\delta-\frac{1}{2}}\log\frac{1}{h}\leq\textrm{Im}z\leq-\frac{1}{4}Mh^{\delta-\frac{1}{2}}\log\frac{1}{h}\right\} }\frac{h^{-n-\frac{1}{2}}}{\sqrt{h}\log\frac{1}{h}}e^{\frac{S_{\alpha\beta}'\left(\textrm{Im}z\right)}{h^{\frac{1}{2}-\epsilon}}}dzd\bar{z}\right]\\
 & = & O\left[h^{\frac{M}{4}\left(S_{\alpha\beta}'\right)-n-\frac{1}{2}}\right]
\end{eqnarray*}
 using \prettyref{eq: Paley Wiener estimate} and $O\left(h^{-\frac{1}{2}}\right)$
estimate on the resolvent $\left(\frac{1}{\sqrt{h}}\bar{D}_{i\gamma}-z\right)^{-1}$.
Choosing $M$ sufficiently large now gives the result.
\end{proof}

\section{\label{sec:Local trace expansion}Local trace expansion}

In this section we prove \prettyref{lem: Easy trace expansion lemma}.
This is a relatively classical trace expansion. A parametrix construction
for the operator $e^{\frac{it}{h}D_{h}^{2}}$ may potentially be employed
in its proof since the principal symbol of $D_{h}^{2}$ is Morse-Bott
critical as in \cite{Brummelhuis-Paul-Uribe}. However \prettyref{lem: Easy trace expansion lemma}
would require an understanding of the large time behaviour of parametrix
left open in \cite{Brummelhuis-Paul-Uribe} (see \cite{Camus,Khuat-Duy}).
Here we prove the expansion using the alternate methods of local index
theory. The expansion is analogous to the heat trace expansions arising
in the analysis of the Bergman kernel \cite{Bismut,Ma-Marinescu}.
Here we adopt a modification of the approach in \cite{Ma-Marinescu}
Ch. 1, 4. 

First, fix a point $p\in X$. On account of \prettyref{def: Diagonalizability assumption}
there is on orthonormal basis $e_{0,p}=R_{p}$,$e_{j,p},\,e_{j+m,p}$,
$j=1,\ldots,m$ of $T_{p}X$ consisting of eigenvectors of $\mathfrak{J}_{p}$
with eigenvalues $0,\pm\lambda_{j,p}\left(\coloneqq\pm i\mu_{j}\nu\left(p\right)\right)$,
$j=1,\ldots,m$, such that 
\begin{equation}
da\left(p\right)=\sum_{j=1}^{m}\lambda_{j}\left(p\right)e_{j,p}^{*}\wedge e_{j+m,p}^{*}.\label{eq: da diagonal form}
\end{equation}
Using the parallel transport from this basis fix a geodesic coordinate
system $\left(x_{0},\ldots,x_{2m}\right)$ on an open neighborhood
of $p\in\Omega$. Let $e_{j}=w_{j}^{k}\partial_{x_{k}}$, $0\leq j\leq2m$,
be the local orthonormal frame of $TX$ obtained by parallel transport
of $e_{j,p}=\left.\partial_{x_{j}}\right|_{p}$,$0\leq j\leq2m$,
along geodesics. Hence we again have $w_{j}^{k}g_{kl}w_{r}^{l}=\delta_{jr}$;
$\left.w_{j}^{k}\right|_{p}=\delta_{j}^{k}$ with $g_{kl}$ being
the components of the metric in these coordinates. Choose an orthonormal
basis $\left\{ s_{j,p}\right\} _{j=1}^{2^{m}}$for $S_{p}$ in which
Clifford multiplication
\begin{equation}
\left.c\left(e_{j}\right)\right|_{p}=\gamma_{j}\label{eq: Clifford multiplication standard}
\end{equation}
is standard. Choose an orthonormal basis $\mathtt{l}_{p}$ for $L_{p}$.
Parallel transport the bases $\left\{ s_{j,p}\right\} _{j=1}^{2^{m}}$,
$\mathtt{l}_{p}$ along geodesics using the spin connection $\nabla^{S}$
and unitary family of connections $\nabla^{h}=A_{0}+\frac{i}{h}a$
to obtain trivializations $\left\{ s_{j}\right\} _{j=1}^{2^{m}}$,
$\mathtt{l}$ of $S$, $L$ on $\Omega$. Since Clifford multiplication
is parallel, the relation \prettyref{eq: Clifford multiplication standard}
now holds on $\Omega$. The connection $\nabla^{S\otimes L}=\nabla^{S}\otimes1+1\otimes\nabla^{h}$
can be expressed in this frame and these coordinates as
\begin{equation}
\nabla^{S\otimes L}=d+A_{j}^{h}dx^{j}+\Gamma_{j}dx^{j},
\end{equation}
 where each $A_{j}^{h}$ is a Christoffel symbol of $\nabla^{h}$
and each $\Gamma_{j}$ is a Christoffel symbol of the spin connection
$\nabla^{S}$. Since the section $l$ is obtained via parallel transport
along geodesics, the connection coefficient $A_{j}^{h}$ maybe written
in terms of the curvature $F_{jk}^{h}dx^{j}\wedge dx^{k}$ of $\nabla^{h}$
via
\begin{equation}
A_{j}^{h}(x)=\int_{0}^{1}d\rho\left(\rho x^{k}F_{jk}^{h}\left(\rho x\right)\right).
\end{equation}
The dependence of the curvature coefficients $F_{jk}^{h}$ on the
parameter $h$ is seen to be linear in $\frac{1}{h}$ via 
\begin{equation}
F_{jk}^{h}=F_{jk}^{0}+\frac{i}{h}\left(da\right){}_{jk}\label{eq:curvature linear in 1/h}
\end{equation}
 despite the fact that they are expressed in the $h$ dependent frame
$\mathtt{l}$. This is because a gauge transformation from an $h$
independent frame into $\mathtt{l}$ changes the curvature coefficient
by conjugation. Since $L$ is a line bundle this is conjugation by
a function and hence does not change the coefficient. Furthermore,
the coefficients in the Taylor expansion of \prettyref{eq:curvature linear in 1/h}
at $0$ maybe expressed in terms of the covariant derivatives $\left(\nabla^{A_{0}}\right)^{l}F_{jk}^{0},$
$\left(\nabla^{A_{0}}\right)^{l}\left(da\right){}_{jk}$ evaluated
at $p$. Next, using the Taylor expansion
\begin{equation}
\left(da\right){}_{jk}=\left(da\right){}_{jk}\left(0\right)+x^{l}a_{jkl},\label{eq: Taylor expansion da}
\end{equation}
we see that the connection $\nabla^{S\otimes L}$ has the form
\begin{equation}
\nabla^{S\otimes L}=d+\left[\frac{i}{h}\left(\frac{x^{k}}{2}\left(da\right){}_{jk}\left(0\right)+x^{k}x^{l}A_{jkl}\right)+x^{k}A_{jk}^{0}+\Gamma_{j}\right]dx^{j}\label{eq: connection in geodesic}
\end{equation}
where 
\begin{eqnarray*}
A_{jk}^{0} & = & \int_{0}^{1}d\rho\left(\rho F_{jk}^{0}\left(\rho x\right)\right)\\
A_{jkl} & = & \int_{0}^{1}d\rho\left(\rho a_{jkl}\left(\rho x\right)\right)
\end{eqnarray*}
and $\Gamma_{j}$ are all independent of $h$. Finally from \prettyref{eq: Clifford multiplication standard}
and \prettyref{eq: connection in geodesic} may write down the expression
for the Dirac operator \prettyref{eq:Semiclassical Magnetic Dirac}
also given as $D=hc\circ\left(\nabla^{S\otimes L}\right)$ in terms
of the chosen frame and coordinates to be 
\begin{align}
D & =\gamma^{r}w_{r}^{j}\left[h\partial_{x_{j}}+i\frac{x^{k}}{2}\left(da\right){}_{jk}\left(0\right)+ix^{k}x^{l}A_{jkl}+h\left(x^{k}A_{jk}^{0}+\Gamma_{j}\right)\right]\label{eq: Dirac operator geodesic coordinates}\\
 & =\gamma^{r}\left[w_{r}^{j}h\partial_{x_{j}}+iw_{r}^{j}\frac{x^{k}}{2}\left(da\right){}_{jk}\left(0\right)+\frac{1}{2}hg^{-\frac{1}{2}}\partial_{x_{j}}\left(g^{\frac{1}{2}}w_{r}^{j}\right)\right]+\\
 & \gamma^{r}\left[iw_{r}^{j}x^{k}x^{l}A_{jkl}+hw_{r}^{j}\left(x^{k}A_{jk}^{0}+\Gamma_{j}\right)-\frac{1}{2}hg^{-\frac{1}{2}}\partial_{x_{j}}\left(g^{\frac{1}{2}}w_{r}^{j}\right)\right]\in\Psi_{\textrm{cl}}^{1}\left(\Omega_{s}^{0};\mathbb{C}^{2^{m}}\right)\nonumber 
\end{align}
In the second expression above both square brackets are self-adjoint
with respect to the Riemannian density $e^{1}\wedge\ldots\wedge e^{n}=\sqrt{g}dx\coloneqq\sqrt{g}dx^{1}\wedge\ldots\wedge dx^{n}$
with $g=\det\left(g_{ij}\right)$. Again one may obtain an expression
self-adjoint with respect to the Euclidean density $dx$ in the framing
$g^{\frac{1}{4}}u_{j}\otimes\mathtt{l},1\leq j\leq2^{m}$, with the
result being an addition of the term $h\gamma^{j}w_{j}^{k}g^{-\frac{1}{4}}\left(\partial_{x_{k}}g^{\frac{1}{4}}\right)$. 

Let $i_{g}$ be the injectivity radius of $g^{TX}$ . Define the cutoff
$\chi\in C_{c}^{\infty}\left(-1,1\right)$ such that $\chi=1$ on
$\left(-\frac{1}{2},\frac{1}{2}\right)$. We now modify the functions
$w_{j}^{k}$, outside the ball $B_{i_{g}/2}\left(p\right)$, such
that $w_{j}^{k}=\delta_{j}^{k}$ (and hence $g_{jk}=\delta_{jk}$)
are standard outside the ball $B_{i_{g}}\left(p\right)$ of radius
$i_{g}$ centered at $p$. This again gives 
\begin{align}
\mathbb{D} & =\gamma^{r}\left[w_{r}^{j}h\partial_{x_{j}}+iw_{r}^{j}\frac{x^{k}}{2}\left(da\right){}_{jk}\left(0\right)+\frac{1}{2}hg^{-\frac{1}{2}}\partial_{x_{j}}\left(g^{\frac{1}{2}}w_{r}^{j}\right)\right]+\label{eq: Localized Dirac}\\
 & \chi\left(\left|x\right|/i_{g}\right)\gamma^{r}\left[iw_{r}^{j}x^{k}x^{l}A_{jkl}+hw_{r}^{j}\left(x^{k}A_{jk}^{0}+\Gamma_{j}\right)-\frac{1}{2}hg^{-\frac{1}{2}}\partial_{x_{j}}\left(g^{\frac{1}{2}}w_{r}^{j}\right)\right]\nonumber \\
 & \qquad\qquad\qquad\qquad\qquad\qquad\qquad\qquad\qquad\in\Psi_{\textrm{cl}}^{1}\left(\mathbb{R}^{n};\mathbb{C}^{2^{m}}\right)\nonumber 
\end{align}
as a well defined operator on $\mathbb{R}^{n}$ formally self adjoint
with respect to $\sqrt{g}dx$. Again $\mathbb{D}+i$ being elliptic
in the class $S^{0}\left(m\right)$ for the order function 
\[
m=\sqrt{1+g^{jl}\left(\xi_{j}+\frac{x^{k}}{2}\left(da\right)_{jk}\left(0\right)\right)\left(\xi_{l}+\frac{x^{r}}{2}\left(da\right)_{lr}\left(0\right)\right)},
\]
the operator $\mathbb{D}$ is essentially self adjoint.
\begin{prop}
\label{prop: local trace expansion}There exist tempered distributions
$u_{j}\in\mathcal{S}'\left(\mathbb{R}_{s}\right)$, $j=0,1,2,\ldots$,
such that one has a trace expansion
\begin{equation}
\textrm{tr }\phi\left(\frac{D}{\sqrt{h}}\right)=h^{-n/2}\left(\sum_{j=0}^{N}u_{j}\left(\phi\right)h^{j/2}\right)+h^{\left(N+1-n\right)/2}O\left(\sum_{k=0}^{n+1}\left\Vert \left\langle \xi\right\rangle ^{N}\hat{\phi}^{\left(k\right)}\right\Vert _{L^{1}}\right)\label{eq: local trace expansion}
\end{equation}
for each $N\in\mathbb{N}$, $\phi\in\mathcal{S}\left(\mathbb{R}_{s}\right)$. \end{prop}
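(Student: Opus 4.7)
The strategy is a rescaled local index theory computation in the spirit of \cite{Ma-Marinescu} Ch.~1,4, adapted to the present semiclassical setting. First I would localize: cover $X$ by finitely many normal coordinate balls $B_{i_g/2}(p)$ and, inside each, replace $D$ by the globalized model $\mathbb{D}$ in \prettyref{eq: Localized Dirac}. Because $D=\mathbb{D}$ microlocally on a neighborhood of $p$ in $T^*X$ and both operators are essentially self-adjoint with elliptic principal symbol, standard resolvent/Helffer--Sjöstrand estimates yield $\textrm{tr}_{E} K_{\phi(D/\sqrt h)}(p,p)=\textrm{tr}_{E} K_{\phi(\mathbb{D}/\sqrt h)}(p,p)+O(h^{\infty})$ uniformly in $p$. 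The problem therefore reduces to expanding the pointwise kernel of $\phi(\mathbb{D}/\sqrt h)$ at the origin of a fixed geodesic chart.

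Next I would rescale. Let $(\mathbb{S}_h u)(y)\coloneqq h^{n/4} u(\sqrt h\, y)$, which is an $L^{2}$-isometry. Using the explicit form \prettyref{eq: Localized Dirac} together with the Taylor expansions of $w_r^j$, $(da)_{jk}$, $F_{jk}^0$ and $\Gamma_j$ at $p$, a direct substitution gives the formal expansion
\[
\mathbb{D}_h\coloneqq \mathbb{S}_h^{-1}\,\tfrac{1}{\sqrt h}\mathbb{D}\,\mathbb{S}_h \;=\; \mathbb{D}_0+\sum_{j\geq 1} h^{j/2}\mathbb{D}_j,
\]
where $\mathbb{D}_0=\gamma^r\partial_{y_r}+\tfrac{i}{2}\sum_{r,k}\gamma^r y^k (da)_{rk}(p)$ is the constant-coefficient Euclidean magnetic Dirac operator on $\mathbb{R}^n$ with $da$ frozen at $p$, and each $\mathbb{D}_j$ is a polynomial-coefficient differential operator of order at most one. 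The suitability hypothesis combined with \prettyref{eq: da diagonal form} identifies $\mathbb{D}_0$ (up to constants) with the $\xi_0$-free extension of the model $D_{\mathbb{R}^m}$ of \prettyref{sub:Magnetic Dirac operator Rm}, so its resolvent kernel is explicit via \prettyref{prop:eigenspaces magnetic Dirac}.

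Then I would feed this into Helffer--Sjöstrand. Writing $\phi(\mathbb{D}_h)=\tfrac{1}{\pi}\int_\mathbb{C}\bar\partial\tilde\phi(z)(\mathbb{D}_h-z)^{-1}\,dz\,d\bar z$ and iterating the resolvent identity
\[
(\mathbb{D}_h-z)^{-1}=(\mathbb{D}_0-z)^{-1}\sum_{k=0}^{N}\Big[\Big(\sum_{j\geq 1}h^{j/2}\mathbb{D}_j\Big)(\mathbb{D}_0-z)^{-1}\Big]^{k}+\mathrm{Rem}_N(z)
\]
produces an expansion $\phi(\mathbb{D}_h)=\sum_{j=0}^{N}h^{j/2}\Phi_j(\phi)+h^{(N+1)/2}\mathcal{R}_N(\phi,h)$, each $\Phi_j(\phi)$ being a finite sum of products of $\mathbb{D}_0,\dots,\mathbb{D}_j$ sandwiched between resolvents of $\mathbb{D}_0$ and integrated against $\bar\partial\tilde\phi$. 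Under the rescaling, kernels transform as $K_{\phi(\mathbb{D}/\sqrt h)}(p,p)=h^{-n/2}K_{\phi(\mathbb{D}_h)}(0,0)$, so setting $u_j^p(\phi):=\textrm{tr}_E K_{\Phi_j(\phi)}(0,0)$ and $u_j(\phi):=\int_X u_j^p(\phi)\,\mathrm{dvol}(p)$ yields the candidate distributions; continuity in the Schwartz topology on $\mathcal{S}(\mathbb{R}_s)$ follows since each $\Phi_j(\phi)$ depends on $\phi$ only through $\bar\partial\tilde\phi$.

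The main obstacle, as usual in such expansions, is the concrete error estimate in the stated norm $\sum_{k=0}^{n+1}\|\langle\xi\rangle^N\hat\phi^{(k)}\|_{L^1}$. To extract this, one rewrites the remainder via Fourier inversion $\phi(A)=\tfrac{1}{2\pi}\int\hat\phi(\xi)e^{i\xi A}\,d\xi$ combined with $N$ steps of Duhamel on $e^{i\xi\mathbb{D}_h}$; each Duhamel insertion produces a factor $h^{1/2}\xi\,\mathbb{D}_j$, explaining the $h^{N/2}\langle\xi\rangle^N$ weight. The Sobolev embedding $H^{n+1}(\mathbb{R}^n)\hookrightarrow C^0(\mathbb{R}^n)$, needed to control the pointwise kernel of the remainder on the diagonal, accounts for the $n+1$ derivatives $\hat\phi^{(k)}$. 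The technical bottleneck is uniformity of these kernel bounds in $p\in X$ and in the iteration order $N$; this requires careful weighted resolvent estimates for $\mathbb{D}_0$, which are available because the explicit Landau decomposition \prettyref{eq: Landau Levels} makes $(\mathbb{D}_0-z)^{-1}$ a genuine pseudodifferential operator of order $-1$ with Schwartz off-diagonal kernel. Combining the local pointwise expansion with integration over $X$ gives \prettyref{eq: local trace expansion}.
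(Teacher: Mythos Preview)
Your overall architecture---localize, rescale, expand the resolvent, integrate over $X$---is exactly the paper's, and the rescaling/resolvent-iteration core is correct and matches \prettyref{eq: rescaled Dirac}--\prettyref{eq: jth term kernel expansion} essentially verbatim.

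There is, however, a genuine gap in your localization step. You claim that ``standard resolvent/Helffer--Sj\"ostrand estimates'' give $K_{\phi(D/\sqrt h)}(p,p)=K_{\phi(\mathbb D/\sqrt h)}(0,0)+O(h^{\infty})$ because $D=\mathbb D$ on a ball. This is not standard: the operator $D/\sqrt h$ has symbol of size $h^{-1/2}$, so it does not sit in any semiclassical calculus in which microlocal resolvent comparisons of the type used in \prettyref{lem:changing symbol near crit set} apply. Moreover $D$ and $\mathbb D$ act on different manifolds, so ``microlocal equality on a neighborhood of $p$ in $T^{*}X$'' does not by itself control the diagonal kernel. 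The paper handles this by a Fourier-side cutoff $\phi=\phi_0+\phi_1$ with $\widehat{\phi_0}$ supported in $|\xi|\lesssim i_g/\sqrt h$, bounds $\phi_1(D/\sqrt h)$ directly in $C^0$ via elliptic regularity and Sobolev (this is where the norm $\sum_{k\le n+1}\|\langle\xi\rangle^{N}\hat\phi^{(k)}\|_{L^{1}}$ first appears), and then uses \emph{finite propagation speed} of $e^{i\alpha h^{-1}D}$ to identify $\phi_0(D/\sqrt h)(p,\cdot)=\phi_0(\mathbb D/\sqrt h)(0,\cdot)$ exactly, as in \prettyref{eq: finite propagation}. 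Without this finite-speed step your reduction to $\mathbb D$ is not justified.

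On the remainder, your Duhamel/Fourier-inversion route is a legitimate alternative to the paper's device, which instead multiplies by $(1+\mathtt D^{2})^{-(n+1)/2}$ so that the resolvent expansion \prettyref{eq: rescaled resolvent expansion} holds $H^{s}_{\mathrm{loc}}\to H^{s+n+1}_{\mathrm{loc}}$ and then applies Sobolev embedding and the bound $\bar\partial\tilde\varrho=O(|\mathrm{Im}\,z|^{N+1}\sum_{k\le n+1}\|\langle\xi\rangle^{N}\hat\phi^{(k)}\|_{L^{1}})$ with $\varrho=\langle x\rangle^{n+1}\phi$. That approach delivers the stated error norm in one line, whereas your Duhamel argument would still need pointwise-kernel bounds on iterated wave operators with polynomial-coefficient insertions---doable, but you have not supplied them, and they are not as immediate as you suggest.
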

\begin{proof}
We begin by writing $\phi=\phi_{0}+\phi_{1}$, with 
\begin{eqnarray*}
\phi_{0}\left(s\right) & = & \frac{1}{2\pi}\int_{\mathbb{R}}e^{i\xi s}\hat{\phi}\left(\xi\right)\chi\left(\frac{2\xi\sqrt{h}}{i_{g}}\right)d\xi\\
\phi_{1}\left(s\right) & = & \frac{1}{2\pi}\int_{\mathbb{R}}e^{i\xi s}\hat{\phi}\left(\xi\right)\left[1-\chi\left(\frac{2\xi\sqrt{h}}{i_{g}}\right)\right]d\xi
\end{eqnarray*}
via Fourier inversion. 

First considering $\phi_{1}$, integration by parts gives the estimate
\[
\left|s^{n+1}\phi_{1}\left(s\right)\right|\leq C_{N}h^{\frac{N-1}{2}}\left(\sum_{k=0}^{n+1}\left\Vert \xi^{N}\hat{\phi}^{\left(k\right)}\right\Vert _{L^{1}}\right),
\]
$\forall N\in\mathbb{N}$. Hence,
\[
\left\Vert D^{n+1-a}\phi_{1}\left(\frac{D}{\sqrt{h}}\right)D^{a}\right\Vert _{L^{2}\rightarrow L^{2}}=C_{N}h^{\frac{n+N}{2}}\left(\sum_{k=0}^{n+1}\left\Vert \xi^{N}\hat{\phi}^{\left(k\right)}\right\Vert _{L^{1}}\right),
\]
$\forall N\in\mathbb{N},\:\forall a=0,\ldots,n+1$. Semi-classical
elliptic estimate and Sobolev's inequality now give the estimate 
\begin{equation}
\left|\phi_{1}\left(\frac{D}{\sqrt{h}}\right)\right|_{C^{0}\left(X\times X\right)}\leq C_{N}h^{\frac{n+N}{2}}\left(\sum_{k=0}^{n+1}\left\Vert \xi^{N}\hat{\phi}^{\left(k\right)}\right\Vert _{L^{1}}\right)\label{eq: Schw ker localizes}
\end{equation}
$\forall N\in\mathbb{N}$, on the Schwartz kernel.

Next, considering $\phi_{0}$, we first use the change of variables
$\alpha=\xi\sqrt{h}$ to write 
\[
\phi_{0}\left(\frac{D}{\sqrt{h}}\right)=\frac{1}{2\pi\sqrt{h}}\int_{\mathbb{R}}e^{i\alpha\left(D_{A_{0}}+ih^{-1}c\left(a\right)\right)}\hat{\phi}\left(\frac{\alpha}{\sqrt{h}}\right)\chi\left(\frac{2\alpha}{i_{g}}\right)d\alpha.
\]
Now since $D=\mathbb{D}$ on $B_{i_{g}/2}\left(p\right)$, we may
use the finite propagation speed of the wave operators $e^{i\alpha h^{-1}D}$,
$e^{i\alpha h^{-1}\mathbb{D}}$ (cf. D.2.1 in \cite{Ma-Marinescu})
to conclude 
\begin{equation}
\phi_{0}\left(\frac{D}{\sqrt{h}}\right)\left(p,\cdot\right)=\phi_{0}\left(\frac{\mathbb{D}}{\sqrt{h}}\right)\left(0,\cdot\right).\label{eq: finite propagation}
\end{equation}
The right hand side above is defined using functional calculus of
self-adjoint operators, with standard local elliptic regularity arguments
implying the smoothness of its Schwartz kernel. By virtue of \prettyref{eq: Schw ker localizes},
a similar estimate for $\phi_{1}\left(\frac{\mathbb{D}}{\sqrt{h}}\right)$,
and \prettyref{eq: finite propagation} it now suffices to consider
$\phi\left(\frac{\mathbb{D}}{\sqrt{h}}\right)$.

We now introduce the rescaling operator $\mathscr{R}:C^{\infty}\left(\mathbb{R}^{n};\mathbb{C}^{2^{m}}\right)\rightarrow C^{\infty}\left(\mathbb{R}^{n};\mathbb{C}^{2^{m}}\right)$;
$\left(\mathscr{R}s\right)\left(x\right)\coloneqq s\left(\frac{x}{\sqrt{h}}\right)$.
Conjugation by $\mathscr{R}$ amounts to the rescaling of coordinates
$x\rightarrow x\sqrt{h}$. A Taylor expansion in \prettyref{eq: Localized Dirac}
now gives the existence of classical ($h$-independent) self-adjoint,
first-order differential operators $\mathrm{\mathtt{D}}_{j}=a_{j}^{k}\left(x\right)\partial_{x_{k}}+b_{j}\left(x\right)$,
$j=0,1\ldots$, with polynomial coefficients (of degree at most $j+1$)
as well as $h$-dependent self-adjoint, first-order differential operators
$\mathrm{E}_{j}=\sum_{\left|\alpha\right|=N+1}x^{\alpha}\left[c_{j,\alpha}^{k}\left(x;h\right)\partial_{x_{k}}+d_{j,\alpha}\left(x;h\right)\right]$,
$j=0,1\ldots$, with uniformly $C^{\infty}$ bounded coefficients
$c_{j,\alpha}^{k},\,d_{j,\alpha}$ such that 
\begin{eqnarray}
\mathscr{R}\mathbb{D}\mathscr{R}^{-1} & = & \sqrt{h}\mathrm{\mathtt{D}}\quad\textrm{ with}\label{eq: rescaled Dirac}\\
\mathrm{\mathtt{D}} & = & \left(\sum_{j=0}^{N}h^{j/2}\mathrm{\mathtt{D}}_{j}\right)+h^{\left(N+1\right)/2}\mathrm{E}_{N+1},\;\forall N.\label{eq: Taylor expansion Dirac}
\end{eqnarray}
The coefficients of the polynomials $a_{j}^{k}\left(x\right),\,b_{j}\left(x\right)$
again involve the covariant derivatives of the curvatures $F^{TX},F^{A_{0}}$
and $da$ evaluated at $p$. Furthermore, the leading term in \prettyref{eq: Taylor expansion Dirac}
is easily computed 
\begin{align}
\mathrm{\mathtt{D}}_{0} & =\gamma^{j}\left[\partial_{x_{j}}+i\frac{x^{k}}{2}\left(da\right){}_{jk}\left(0\right)\right]\label{eq: leading term rescaled Dirac}\\
 & =\gamma^{0}\partial_{x_{0}}+\underbrace{\gamma^{j}\left[\partial_{x_{j}}+\frac{i\lambda_{j}\left(p\right)}{2}x_{j+m}\right]+\gamma^{j+m}\left[\partial_{x_{j+m}}-\frac{i\lambda_{j}\left(p\right)}{2}x_{j}\right]}_{\coloneqq\mathrm{\mathtt{D}}_{00}}\label{eq: leading term rescaled Dirac-1}
\end{align}
using \prettyref{eq: da diagonal form}, \prettyref{eq: Taylor expansion da}.
It is now clear from \prettyref{eq: rescaled Dirac} that 
\begin{equation}
\phi\left(\frac{\mathbb{D}}{\sqrt{h}}\right)\left(x,x'\right)=h^{-n/2}\phi\left(\mathrm{\mathtt{D}}\right)\left(\frac{x}{\sqrt{h}},\frac{x'}{\sqrt{h}}\right).\label{eq: rescaling Schw kernel}
\end{equation}
Next, let $I_{j}=\left\{ k=\left(k_{0},k_{1},\ldots\right)|k_{\alpha}\in\mathbb{N},\:\sum k_{\alpha}=j\right\} $
denote the set of partitions of the integer $j$ and set 
\begin{equation}
\mathtt{C}_{j}^{z}=\sum_{k\in I_{j}}\left(z-\mathrm{\mathtt{D}}_{0}\right)^{-1}\left[\Pi_{\alpha}\mathrm{\mathtt{D}}_{k_{\alpha}}\left(z-\mathrm{\mathtt{D}}_{0}\right)^{-1}\right].\label{eq: jth term kernel expansion}
\end{equation}
Local elliptic regularity estimates again give $\left(z-\mathrm{\mathtt{D}}\right)^{-1}=O_{L_{\textrm{loc}}^{2}\rightarrow L_{\textrm{loc}}^{2}}\left(\left|\textrm{Im}z\right|^{-1}\right)$
and $\mathtt{C}_{j}^{z}=O_{L_{\textrm{loc}}^{2}\rightarrow L_{\textrm{loc}}^{2}}\left(\left|\textrm{Im}z\right|^{-j-1}\right)$,
$j=0,1,\ldots$. A straightforward computation using \prettyref{eq: Taylor expansion Dirac}
then yields 
\begin{equation}
\left(z-\mathrm{\mathtt{D}}\right)^{-1}-\left(\sum_{j=0}^{N}h^{j/2}\mathtt{C}_{j}^{z}\right)=O_{L_{\textrm{loc}}^{2}\rightarrow L_{\textrm{loc}}^{2}}\left(\left(\left|\textrm{Im}z\right|^{-1}h^{\frac{1}{2}}\right)^{N+1}\right).\label{eq: resolvent expansion}
\end{equation}
A similar expansion as \prettyref{eq: Taylor expansion Dirac} for
the operator $\left(1+\mathrm{\mathtt{D}}^{2}\right)^{\left(n+1\right)/2}\left(z-\mathrm{\mathtt{D}}\right)$
also gives the bounds 
\begin{equation}
\left(1+\mathrm{\mathtt{D}}^{2}\right)^{-\left(n+1\right)/2}\left(z-\mathrm{\mathtt{D}}\right)^{-1}-\left(\sum_{j=0}^{N}h^{j/2}\mathtt{C}_{j,n+1}^{z}\right)=O_{H_{\textrm{loc}}^{s}\rightarrow H_{\textrm{loc}}^{s+n+1}}\left(\left(\left|\textrm{Im}z\right|^{-1}h^{\frac{1}{2}}\right)^{N+1}\right)\label{eq: rescaled resolvent expansion}
\end{equation}
$\forall s\in\mathbb{R}$, for classical ($h$-independent) Sobolev
spaces $H_{\textrm{loc}}^{s}$. Here each $\mathtt{C}_{j,n+1}^{z}=O_{H_{\textrm{loc}}^{s}\rightarrow H_{\textrm{loc}}^{s+n+1}}\left(\left|\textrm{Im}z\right|^{-j-1}\right)$
with the leading term 
\[
\mathtt{C}_{0,n+1}^{z}=\left(1+\mathrm{\mathtt{D}}_{0}^{2}\right)^{-\left(n+1\right)/2}\left(z-\mathrm{\mathtt{D}}_{0}\right)^{-1}.
\]
Finally, plugging the expansion \prettyref{eq: rescaled resolvent expansion}
into the Helffer-Sjostrand formula 
\[
\phi\left(\mathrm{\mathtt{D}}\right)=-\frac{1}{\pi}\int_{\mathbb{C}}\bar{\partial}\tilde{\varrho}\left(z\right)\left(1+\mathrm{\mathtt{D}}^{2}\right)^{-\left(n+1\right)/2}\left(z-\mathrm{\mathtt{D}}\right)^{-1}dzd\bar{z},
\]
with $\varrho\left(x\right)\coloneqq\left\langle x\right\rangle ^{n+1}\phi\left(x\right)$,
gives 
\begin{equation}
\phi\left(\mathrm{\mathtt{D}}\right)\left(0,0\right)=\left(\sum_{j=0}^{N}h^{j/2}U_{j,p}\left(\phi\right)\right)+h^{\left(N+1\right)/2}O\left(\sum_{k=0}^{n+1}\left\Vert \left\langle \xi\right\rangle ^{N}\hat{\phi}^{\left(k\right)}\right\Vert _{L^{1}}\right)\label{eq: Diagonal kernel expansion}
\end{equation}
using Sobolev's inequality. Here each
\begin{equation}
U_{j,p}\left(\phi\right)=-\frac{1}{\pi}\int_{\mathbb{C}}\bar{\partial}\tilde{\varrho}\left(z\right)\mathtt{C}_{j,n+1}^{z}\left(0,0\right)dzd\bar{z}\in\textrm{End}S_{p}^{TX}\label{eq: jth coefficient functional kernal}
\end{equation}
defines a smooth family (in $p\in X$) of distributions $U_{j}$ and
the remainder term in \prettyref{eq: Diagonal kernel expansion} comes
from the estimate $\bar{\partial}\tilde{\varrho}=O\left(\left|\textrm{Im}z\right|^{N+1}\sum_{k=0}^{n+1}\left\Vert \left\langle \xi\right\rangle ^{N}\hat{\phi}^{\left(k\right)}\right\Vert _{L^{1}}\right)$
on the almost analytic continuation (cf. \cite{Zworski} Sec. 3.1).
Integrating the trace of \prettyref{eq: Diagonal kernel expansion}
over $X$ and using \prettyref{eq: rescaling Schw kernel} gives \prettyref{eq: local trace expansion}.
\end{proof}
Next we would like to understand the structure of the distributions
$u_{j}$ appearing in \prettyref{eq: local trace expansion}. Clearly,
\begin{eqnarray}
u_{j} & = & \int_{X}u_{j,p}\quad\textrm{ with }\nonumber \\
u_{j,p} & \coloneqq & \textrm{tr }U_{j,p}\in C^{\infty}\left(X;\mathcal{S}'\left(\mathbb{R}_{s}\right)\right)\label{eq: pointwise trace distribution}
\end{eqnarray}
being the smooth family of tempered distributions parametrized by
$X$ defined via the point-wise trace of \prettyref{eq: jth coefficient functional kernal}.
Letting $H\left(s\right)\in\mathcal{S}'\left(\mathbb{R}_{s}\right)$
denote the Heaviside distribution, we now define the following elementary
tempered distributions 
\begin{align}
v_{a;p}\left(s\right) & \coloneqq s^{a},\;a\in\mathbb{N}_{0}\label{eq: elementary distribution 1}\\
v_{a,b,c,\varLambda;p}\left(s\right) & \coloneqq\partial_{s}^{a}\left[\left|s\right|s^{b}\left(s^{2}-2\nu_{p}\varLambda\right)^{c-\frac{1}{2}}H\left(s^{2}-2\nu_{p}\varLambda\right)\right],\label{eq: elementary distribution 2}\\
 & \;\qquad\qquad\qquad\qquad\left(a,b,c;\varLambda\right)\in\mathbb{N}_{0}\times\mathbb{Z}\times\mathbb{N}_{0}\times\mu.\left(\mathbb{N}_{0}^{m}\setminus0\right).\nonumber 
\end{align}
We now have the following. 
\begin{prop}
\label{prop: structure trace distributions}For each $j$, the distribution
\prettyref{eq: pointwise trace distribution} can be written in terms
of \prettyref{eq: elementary distribution 1}, \prettyref{eq: elementary distribution 2}
\begin{equation}
u_{j,p}\left(s\right)=\sum_{a\leq2j+2}c_{j;a}\left(p\right)s^{a}+\sum_{\begin{subarray}{l}
\varLambda\in\mu.\left(\mathbb{N}_{0}^{m}\setminus0\right).\\
a,\left|b\right|,c\leq4j+4
\end{subarray}}c_{j;a,b,c,\varLambda}\left(p\right)v_{a,b,c,\varLambda;p}\left(s\right).\label{eq: trace distribution structure}
\end{equation}
Moreover, the coefficient functions $c_{j;a}$, $c_{j;a,b,c,\varLambda}\in C^{\infty}\left(X\right)$
above are evaluations at $p$ of polynomials in the covariant derivatives
(with respect to $\nabla^{TX}\otimes1+1\otimes\nabla^{A_{0}}$) of
the curvatures $F^{TX},F^{A_{0}}$ of the Levi-Civita connection $\nabla^{TX}$,
$\nabla^{A_{0}}$ and $da$. \end{prop}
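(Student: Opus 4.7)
The plan is to exploit the explicit diagonalization of the leading rescaled operator $\mathtt{D}_0 = \gamma^0\partial_{x_0} + \mathtt{D}_{00}$ provided by the Landau decomposition of \prettyref{prop:eigenspaces magnetic Dirac}. First I would note that $\gamma^0$ anticommutes with each $\gamma^j$, $j\geq 1$, and $\mathtt{D}_{00}$ is $x_0$-independent, so $\{\gamma^0\partial_{x_0},\mathtt{D}_{00}\}=0$ and $\mathtt{D}_0^2 = -\partial_{x_0}^2 + \mathtt{D}_{00}^2$. The operator $\mathtt{D}_{00}$ on $\mathbb{R}^{2m}$ is, up to an orthogonal change of variables pairing $(x_j,x_{j+m})$ into symplectic planes, the model magnetic Dirac operator of \prettyref{eq: magnetic Dirac Rm} with parameters $\mu_j$ replaced by $\mu_j\nu(p)$. \prettyref{prop:eigenspaces magnetic Dirac} then yields the orthogonal decomposition
\[
L^2(\mathbb{R}^{2m};\mathbb{C}^{2^m}) = \mathbb{C}[\psi_{0,0}] \oplus \bigoplus_{\varLambda\in\mu.(\mathbb{N}_0^m\setminus 0)} E_\varLambda,
\]
on which $\mathtt{D}_{00}^2$ acts by $2\nu(p)\varLambda$. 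Taking Fourier transform in $x_0$, the resolvent $(z-\mathtt{D}_0)^{-1}$ becomes block-diagonal with blocks of the form $(z-\gamma^0\xi_0)^{-1}$ on the $\varLambda=0$ block and matrix-valued blocks with denominator $z^2-\xi_0^2-2\nu(p)\varLambda$ on each $E_\varLambda$ by the $2\times 2$ form \prettyref{eq: Dirac operator 2 by 2 block}.

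Next I would analyze the structure of the perturbation operators $\mathtt{D}_j$ entering \prettyref{eq: jth term kernel expansion}. By the construction of the Taylor expansion \prettyref{eq: Taylor expansion Dirac} each $\mathtt{D}_j$ is a first-order differential operator with polynomial coefficients of degree at most $j+1$, whose coefficients are universal polynomials in the covariant derivatives at $p$ (with respect to $\nabla^{TX}\otimes 1 + 1\otimes\nabla^{A_0}$) of $F^{TX}$, $F^{A_0}$ and $da$; this is evident from \prettyref{eq: connection in geodesic} and \prettyref{eq: Dirac operator geodesic coordinates} together with the geodesic/synchronous gauge expansions of $w_j^k$, $g_{ij}$ and $\Gamma_j$. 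Thus the $p$-dependent coefficients in \prettyref{eq: trace distribution structure} can only depend on $p$ through the claimed curvature data, with the polynomial in the coefficients having degree $\leq 2j$ since at most $2j$ factors of $\mathtt{D}_k$ appear in each summand of \prettyref{eq: jth term kernel expansion}.

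The main step is then to compute the pointwise trace of $\mathtt{C}_{j,n+1}^z(0,0)$ as a function of $z$. Expanding each polynomial $\mathtt{D}_k$-factor as a finite sum of monomials $x^\alpha\partial_x^\beta$, and moving the $\partial_x^\beta$ through the resolvents via commutators (which only generate further polynomial-coefficient multiplications), this trace becomes a finite linear combination, with coefficients in the curvature polynomials, of terms of the form
\[
\sum_{\varLambda}\int_{\mathbb{R}}\frac{P_\varLambda(z,\xi_0)}{(z^2-\xi_0^2-2\nu(p)\varLambda)^{N_\varLambda}}\,d\xi_0,
\]
where $P_\varLambda$ is a polynomial of bounded degree and the sum over $\varLambda$ is finite thanks to the cutoff $(1+\mathtt{D}^2)^{-(n+1)/2}$. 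Performing the $\xi_0$-integration by residues produces functions of $z$ of the form $z^a$ (from the $\varLambda=0$ block) and $z^b(z^2-2\nu(p)\varLambda)^{-c-1/2}$ (from $\varLambda>0$), with $a,|b|,c$ bounded explicitly by the polynomial degree count $\leq 4j+4$. Finally, inserting these elementary $z$-dependencies into the Helffer--Sjostrand formula \prettyref{eq: jth coefficient functional kernal} and using the classical identity
\[
-\frac{1}{\pi}\int_{\mathbb{C}}\bar\partial\tilde\varrho(z)\,\frac{z^b\,dz\,d\bar z}{(z^2-2\nu_p\varLambda)^{c+1/2}} = \bigl\langle\varrho,\,|s|s^b(s^2-2\nu_p\varLambda)^{c-1/2}H(s^2-2\nu_p\varLambda)\bigr\rangle
\]
together with the relation $\varrho(x)=\langle x\rangle^{n+1}\phi(x)$ (whose $\langle x\rangle^{n+1}$-weight produces the outer derivative $\partial_s^a$ in \prettyref{eq: elementary distribution 2}) identifies each contribution with $v_{a,b,c,\varLambda;p}$ or $v_{a;p}$.

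The main obstacle is the bookkeeping in the third step: tracking through the commutator expansions so that each appearance of a polynomial factor $x^\alpha$ is absorbed either into an effective reduction of the Landau index (when hitting $\psi_{0,0}$) or into a degree shift of $P_\varLambda$, and then proving the uniform polynomial bounds $a\leq 2j+2$ in the $\varLambda=0$ part and $a,|b|,c\leq 4j+4$ in the $\varLambda>0$ part. These follow from an induction on $j$ using the bounds $\deg\mathtt{D}_j\leq j+1$ and the fact that each resolvent factor contributes at most degree two to the denominator; the finiteness of the $\varLambda$-sum follows from the elliptic cutoff, which ensures the Landau levels entering $\mathtt{C}_{j,n+1}^z(0,0)$ are bounded in terms of $j$.
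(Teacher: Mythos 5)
Your proof takes essentially the same route as the paper's: both rest on (i) the Landau decomposition of $\mathtt{D}_{00}$ via \prettyref{prop:eigenspaces magnetic Dirac}, (ii) pushing the polynomial-coefficient operators $\mathtt{D}_{k_\alpha}$ through resolvent factors using explicit commutation relations, (iii) an explicit reduction of the $x_0$-integration, and (iv) the Helffer--Sjostrand formula to convert the resulting $z$-dependence into distributions in $s$. The only genuinely different technical choice is that you perform the $x_0$-integration by Fourier transform plus contour integration, whereas the paper evaluates it through the explicit heat kernel of $-\partial_{x_0}^2$ and Laplace transforms; these are equivalent calculations. That said, there are two real gaps.

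First, your handling of the $\varLambda$-sum is not right. You claim the sum over Landau levels is effectively finite ``thanks to the cutoff $(1+\mathtt{D}^2)^{-(n+1)/2}$''--but that cutoff controls the convergence of the kernel, not the number of Landau levels entering the resolvent, and indeed the statement of the proposition contains an infinite sum over $\varLambda\in\mu.(\mathbb{N}_0^m\setminus 0)$. The correct mechanism, which the paper supplies, is to restrict the test function to a finite window $(-\sqrt{2\nu M},\sqrt{2\nu M})$ with $M\notin\mu.(\mathbb{N}_0^m\setminus 0)$, split $(z-\mathtt{D}_0)^{-1}$ via $\mathtt{P}_0\oplus\bigoplus_{\varLambda<M}\mathtt{P}_\varLambda\oplus\mathtt{P}_{>M}$, and observe that every term containing at least one $\mathtt{P}_{>M}$ factor is holomorphic in $z$ on the strip $\mathrm{Re}\,z\in(-\sqrt{2\nu M},\sqrt{2\nu M})$, hence contributes zero in the Helffer--Sjostrand pairing against a compactly supported $\phi$ there. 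Without this argument you cannot conclude that the contribution from high Landau levels takes the form of the elementary distributions \prettyref{eq: elementary distribution 2}, and your degree bounds $a,|b|,c\le 4j+4$ would not be uniform.

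Second, your attribution of the outer derivative $\partial_s^a$ in $v_{a,b,c,\varLambda;p}$ to the weight $\langle x\rangle^{n+1}$ is incorrect. The relation $\varrho=\langle x\rangle^{n+1}\phi$ dualizes to a \emph{polynomial multiplication} by $\langle s\rangle^{n+1}$ on the distributions, not a derivative. The $\partial_s^a$ factors come from higher powers of resolvent factors: each commutation of a polynomial coefficient through $(z-\mathtt{D}_0)^{-1}$ bumps up the order of the poles in $z$, and after partial fractions the Helffer--Sjostrand pairing $-\frac{1}{\pi}\int\bar\partial\tilde\varrho\,(z-s)^{-a}\,dz\,d\bar z$ produces $\frac{(-1)^{a-1}}{(a-1)!}\partial_s^{a-1}\varrho(s)$. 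This is exactly the role of the functions $\phi_0,\phi_\varLambda$ in the paper's final display, which encode those derivative and multiplication factors. The final identity you write down with $\varrho$ paired against $|s|s^b(s^2-2\nu_p\varLambda)^{c-1/2}H(\cdot)$ is not quite parsed (the left-hand side has no $s$-dependence) and needs to be reformulated through this mechanism before it can carry the argument.

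Finally, a small point: the change of variables taking $\mathtt{D}_{00}$ to the model $D_{\mathbb{R}^m}$ is not orthogonal; the paper uses a dilation $\mathtt{U}_\lambda$ composed with the Fourier-type symplectomorphism $e^{\frac{i\pi}{4}f_0^W}$. This does not affect the structure of the argument but the phrasing should be corrected.
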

\begin{proof}
It suffices to consider the restriction of $u_{j}$ to the interval
$\left(-\sqrt{2\nu M},\sqrt{2\nu M}\right)$ for each $0<M\notin\mu.\left(\mathbb{N}_{0}^{m}\setminus0\right)$.
We begin by finding the spectrum of the operator $\mathrm{\mathtt{D}}_{00}$
in \prettyref{eq: leading term rescaled Dirac-1}. To this end, define
the unitary operator $\mathtt{U}_{\lambda}:C^{\infty}\left(\mathbb{R}^{n};\mathbb{C}^{2^{m}}\right)\rightarrow C^{\infty}\left(\mathbb{R}^{n};\mathbb{C}^{2^{m}}\right)$
\begin{eqnarray*}
\left(\mathtt{U}_{\lambda}s\right)\left(x_{0},x_{1},x_{2},\ldots\right) & = & \left(\Pi_{j=1}^{m}\lambda_{j}\right)s\left(x_{0},\lambda_{1}^{-\frac{1}{2}}x_{1},\lambda_{1}^{-\frac{1}{2}}x_{2},\lambda_{2}^{-\frac{1}{2}}x_{3},\lambda_{2}^{-\frac{1}{2}}x_{4},\ldots\right)\\
\textrm{and }\;f & = & \sum_{j=1}^{m}\left(x_{j}x_{j+m}+\xi_{j}\xi_{j+m}\right)\in C^{\infty}\left(\mathbb{R}^{2m}\right).
\end{eqnarray*}
Next, as in \prettyref{eq:first conjugation normal form} we compute
the conjugate 
\[
e^{\frac{i\pi}{4}f_{0}^{W}}\mathtt{U}_{\lambda}\mathrm{\mathtt{D}}_{00}\mathtt{U}_{\lambda}^{*}e^{-\frac{i\pi}{4}f_{0}^{W}}=\left[2\nu\left(p\right)\right]^{\frac{1}{2}}\left.D_{\mathbb{R}^{m}}\right|_{h=1}
\]
of the operator in \prettyref{eq: leading term rescaled Dirac-1}
in terms of the magnetic Dirac operator on $\mathbb{R}^{m}$ \prettyref{eq: magnetic Dirac Rm}
evaluated at $h=1$. Hence the eigenspaces of $\mathrm{\mathtt{D}}_{00}$
are 
\begin{eqnarray*}
\mathtt{U}_{\lambda}^{*}e^{-\frac{i\pi}{4}f_{0}^{W}}\left(E_{0}\otimes L^{2}\left(\mathbb{R}_{x_{0},x''}^{m+1}\right)\right),\\
\mathtt{U}_{\lambda}^{*}e^{-\frac{i\pi}{4}f_{0}^{W}}\left(E_{\varLambda}^{\pm}\otimes L^{2}\left(\mathbb{R}_{x_{0},x''}^{m+1}\right)\right); &  & \varLambda\in\mu.\left(\mathbb{N}_{0}^{m}\setminus0\right),
\end{eqnarray*}
with eigenvalues $0$, $\pm\sqrt{2\nu\varLambda}$ respectively, where
\begin{eqnarray*}
E_{0} & \coloneqq & \mathbb{C}\left[\left.\psi_{0,0}\right|_{h=1}\right]\\
E_{\varLambda}^{\pm} & = & \bigoplus_{\begin{subarray}{l}
\tau\in\mathbb{N}_{0}^{m}\setminus0\\
\varLambda=\mu.\tau
\end{subarray}}\left.E_{\tau}^{\pm}\right|_{h=1},
\end{eqnarray*}
are as in \prettyref{eq: Landau Decomposition}. We again let $\mathtt{P}_{0}$,
$\mathtt{P}_{\varLambda}^{\pm}$ denote the respective projections
onto the eigenspaces of $\mathrm{\mathtt{D}}_{00}$ and $\mathtt{P}_{\varLambda}=\mathtt{P}_{\varLambda}^{+}\oplus\mathtt{P}_{\varLambda}^{-}$.
We also denote by $\mathtt{P}_{>M}=\oplus_{\varLambda>M}\mathtt{P}_{\varLambda}$
the projection onto eigenspaces with eigenvalue greater than $\sqrt{2\nu M}$
in absolute value. 

Now, since expansions in $L_{\textrm{loc}}^{2}$ are unique it suffices
to work with the resolvent expansion \prettyref{eq: resolvent expansion}
in the computation of $u_{j}$. The $j$th term in the expansion is
of the form 
\begin{equation}
\mathtt{C}_{j}^{z}=\sum_{k\in I_{j}}\left(z-\mathrm{\mathtt{D}}_{0}\right)^{-1}\left[\Pi_{\alpha}\mathrm{\mathtt{D}}_{k_{\alpha}}\left(z-\mathrm{\mathtt{D}}_{0}\right)^{-1}\right]\label{eq: jth term expansion of kernel}
\end{equation}
where each $\mathrm{\mathtt{D}}_{k_{\alpha}}$ is a differential operator
with polynomial coefficients involving the covariant derivatives of
the curvatures $F^{TX},F^{A_{0}}$ and $da$. Now using \prettyref{eq: leading term rescaled Dirac-1}
we decompose each resolvent term above according to the eigenspaces
of $\mathrm{\mathtt{D}}_{00}$ 
\begin{eqnarray}
\left(z-\mathrm{\mathtt{D}}_{0}\right)^{-1} & = & \mathtt{P}_{0}\left(\frac{1}{z-\gamma^{0}\partial_{x_{0}}}\right)\mathtt{P}_{0}\oplus\nonumber \\
 &  & \bigoplus_{\varLambda\in\mu.\mathbb{N}_{0}^{m}\cap\left(0,M\right)}\mathtt{P}_{\varLambda}\left(\frac{z+\gamma^{0}\partial_{x_{0}}+\mathrm{\mathtt{D}}_{00}}{z^{2}+\partial_{x_{0}}^{2}-2\nu\varLambda}\right)\mathtt{P}_{\varLambda}\label{eq: decoposition resolvent}\\
 &  & \oplus\mathtt{P}_{>M}\left(\frac{z+\gamma^{0}\partial_{x_{0}}+\mathrm{\mathtt{D}}_{00}}{z^{2}+\partial_{x_{0}}^{2}-\mathrm{\mathtt{D}}_{00}^{2}}\right)\mathtt{P}_{>M}.\nonumber 
\end{eqnarray}
Next, we plug \prettyref{eq: decoposition resolvent} into \prettyref{eq: jth term expansion of kernel}.
This gives an expansion for $\mathtt{C}_{j}^{z}$ with some of the
terms given by 
\begin{eqnarray*}
T^{z}\left[\Pi_{\alpha}\mathrm{\mathtt{D}}_{k_{\alpha}}T^{z}\right] & ; & \textrm{ where}\\
T^{z} & = & \mathtt{P}_{>M}\left(\frac{z+\gamma^{0}\partial_{x_{0}}+\mathrm{\mathtt{D}}_{00}}{z^{2}+\partial_{x_{0}}^{2}-\mathrm{\mathtt{D}}_{00}^{2}}\right)\mathtt{P}_{>M}
\end{eqnarray*}
and being holomorphic for $\textrm{Re}z\in\left(-\sqrt{2\nu M},\sqrt{2\nu M}\right).$
For the rest of the terms in $\mathtt{C}_{j}^{z}$, we use the commutation
relations 
\begin{eqnarray*}
\left[\gamma^{0},\mathtt{P}_{0}\right] & = & \left[\gamma^{0},\mathtt{P}_{\varLambda}\right]=\left[\gamma^{0},\mathtt{P}_{>M}\right]=0\\
\left[\partial_{x_{0}},\mathtt{P}_{0}\right] & = & \left[\partial_{x_{0}},\mathtt{P}_{\varLambda}\right]=\left[\partial_{x_{0}},\mathtt{P}_{>M}\right]=0\\
\left[\partial_{x_{0}},\mathrm{\mathtt{D}}_{00}\right] & = & 0\\
\left[\left(z^{2}+\partial_{x_{0}}^{2}-2\nu\varLambda\right)^{-1},x_{j}\right] & = & \delta_{0j}\left(z^{2}+\partial_{x_{0}}^{2}-2\nu\varLambda\right)^{-2}\partial_{x_{0}}\\
\left[\left(z^{2}+\partial_{x_{0}}^{2}-2\nu\varLambda\right)^{-1},\partial_{x_{J}}\right] & = & 0
\end{eqnarray*}
as well as the Clifford relations \prettyref{eq: Clifford relations}.
This now gives a finite sum of terms of the form 
\begin{align}
T_{0}^{z}\left[\Pi_{k=1}^{K}S_{k}T_{k}^{z}\right]\times\left[\Pi_{\varLambda\in\mu.\mathbb{N}_{0}^{m}\cap\left(0,M\right)}\frac{1}{\left(z^{2}+\partial_{x_{0}}^{2}-2\nu\varLambda\right)^{a_{\varLambda}}}\right]\left(z-\gamma^{0}\partial_{x_{0}}\right)^{-a_{0}}z^{b_{1}}x_{0}^{b_{2}}\partial_{x_{0}}^{b_{3}},\label{eq: term in kernel expansion after commutations}
\end{align}
$a_{0}+\Sigma a_{\varLambda}\leq2j+2$; $b_{1},b_{2},b_{3}\leq j+1,$
where each $S_{k}$ is a differential operator in $\left(x'x''\right)$
(i.e. independent of $x_{0}$) with polynomial coefficients and each
\begin{equation}
T_{k}^{z}=\begin{cases}
\mathtt{P}_{0} & \textrm{ or}\\
\mathtt{P}_{\varLambda}, & \textrm{ or}\\
\mathtt{P}_{\varLambda}\mathrm{\mathtt{D}}_{00}\mathtt{P}_{\varLambda}, & \textrm{ or}\\
\mathtt{P}_{>M}\left(\frac{1}{z^{2}+\partial_{x_{0}}^{2}-\mathrm{\mathtt{D}}_{00}^{2}}\right)\mathtt{P}_{>M}, & \textrm{ or}\\
\mathtt{P}_{>M}\left(\frac{\mathrm{\mathtt{D}}_{00}}{z^{2}+\partial_{x_{0}}^{2}-\mathrm{\mathtt{D}}_{00}^{2}}\right)\mathtt{P}_{>M}
\end{cases}\label{eq: Tk z}
\end{equation}
with at least one occurrence of $\mathtt{P}_{0},\mathtt{P}_{\varLambda}$
or $\mathtt{P}_{\varLambda}\mathrm{\mathtt{D}}_{00}\mathtt{P}_{\varLambda}$
in \prettyref{eq: term in kernel expansion after commutations}. Now
using partial fractions, \prettyref{eq: term in kernel expansion after commutations}
may be written as a sum of terms of the form 
\begin{align}
T_{0}^{z}\left[\Pi_{k=1}^{K}S_{k}T_{k}^{z}\right]\times\left(z-\gamma^{0}\partial_{x_{0}}\right)^{-a_{0}}z^{b_{1}}x_{0}^{b_{2}}\partial_{x_{0}}^{b_{3}},\nonumber \\
T_{0}^{z}\left[\Pi_{k=1}^{K}S_{k}T_{k}^{z}\right]\times\left(z^{2}+\partial_{x_{0}}^{2}-2\nu\varLambda\right)^{-a_{\varLambda}}z^{b_{1}}x_{0}^{b_{2}}\partial_{x_{0}}^{b_{3}}; & \;\varLambda\in\mu.\mathbb{N}_{0}^{m}\cap\left(0,M\right),\label{eq: resolvent expansion simple term}
\end{align}
$a_{0},a_{\varLambda}\leq2j+2$; $b_{1},b_{2},b_{3}\leq j+1$. Next,
we plug \prettyref{eq: resolvent expansion simple term} into the
Helffer-Sjostrand formula and use the holomorphicity of $\mathtt{P}_{>M}\left(\frac{1}{z^{2}+\partial_{x_{0}}^{2}-\mathrm{\mathtt{D}}_{00}^{2}}\right)\mathtt{P}_{>M}$
and $\mathtt{P}_{>M}\left(\frac{\mathrm{\mathtt{D}}_{00}}{z^{2}+\partial_{x_{0}}^{2}-\mathrm{\mathtt{D}}_{00}^{2}}\right)\mathtt{P}_{>M}$
for $\textrm{Re}z\in\left(-\sqrt{2\nu M},\sqrt{2\nu M}\right)$. This
gives 
\[
U_{j,p}\left(\phi\right)=-\frac{1}{\pi}\int_{\mathbb{C}}\bar{\partial}\tilde{\phi}\left(z\right)\mathtt{C}_{j}^{z}\left(0,0\right)dzd\bar{z},
\]
for $\phi\in C_{c}^{\infty}\left(-\sqrt{2\nu M},\sqrt{2\nu M}\right)$,
as a sum of terms of the form 
\begin{align}
\left(T_{0}^{0}\left[\Pi_{k=1}^{K}S_{k}T_{k}^{0}\right]\times x_{0}^{b_{2}}\partial_{x_{0}}^{b_{3}}\phi_{0}\left(\gamma^{0}\partial_{x_{0}}\right)\right)\left(0,0\right),\nonumber \\
\left(T_{0}^{0}\left[\Pi_{k=1}^{K}S_{k}T_{k}^{0}\right]\times x_{0}^{b_{2}}\partial_{x_{0}}^{b_{3}}\phi_{\varLambda}\left(-\partial_{x_{0}}^{2}+2\nu\varLambda\right)\right)\left(0,0\right),\; & \varLambda\in\mu.\mathbb{N}_{0}^{m}\cap\left(0,M\right);\label{eq: jth coefficient}
\end{align}
where
\[
T_{k}^{0}=\begin{cases}
\mathtt{P}_{0}, & \textrm{ or}\\
\mathtt{P}_{\varLambda}, & \textrm{ or}\\
\mathtt{P}_{\varLambda}\mathrm{\mathtt{D}}_{00}\mathtt{P}_{\varLambda}, & \textrm{ or}\\
\mathtt{P}_{>M}\left(\frac{1}{2\nu\varLambda-\mathrm{\mathtt{D}}_{00}^{2}}\right)\mathtt{P}_{>M}, & \textrm{ or}\\
\mathtt{P}_{>M}\left(\frac{\mathrm{\mathtt{D}}_{00}}{2\nu\varLambda-\mathrm{\mathtt{D}}_{00}^{2}}\right)\mathtt{P}_{>M};
\end{cases}
\]
and 
\begin{eqnarray*}
\phi_{0}\left(s\right) & = & \frac{\left(-1\right)^{a_{0}-1}}{\left(a_{0}-1\right)!}x^{b_{1}}\phi\left(s\right)\\
\phi_{\varLambda}\left(s^{2}\right) & = & \frac{\left(-1\right)^{a_{\varLambda}-1}}{\left(a_{\varLambda}-1\right)!}\left\{ \left.\left[\partial_{r}^{a_{\varLambda}-1}\left(\frac{r^{b_{1}}\phi\left(r\right)}{\left(r-s\right)^{a_{\varLambda}}}\right)\right]\right|_{r=-s}\right.\\
 &  & \quad\quad-\left.\left.\left[\partial_{r}^{a_{\varLambda}-1}\left(\frac{r^{b_{1}}\phi\left(r\right)}{\left(r+s\right)^{a_{\varLambda}}}\right)\right]\right|_{r=s}\right\} .
\end{eqnarray*}
At least one occurrence of $\mathtt{P}_{0},$$\mathtt{P}_{\varLambda}$
and $\mathtt{P}_{\varLambda}\mathrm{\mathtt{D}}_{00}\mathtt{P}_{\varLambda}$
in \prettyref{eq: jth coefficient} gives the smoothness of the kernel. 

Finally, an elementary computation involving Laplace transforms using
the knowledge of the heat kernel $e^{t\partial_{x_{0}}^{2}}\left(x_{0},y_{0}\right)=\frac{1}{\sqrt{4\pi t}}e^{-\left|x_{0}-y_{0}\right|^{2}/4t}$
gives 
\begin{align*}
x_{0}^{b_{2}}\partial_{x_{0}}^{b_{3}}\phi_{0}\left(\gamma^{0}\partial_{x_{0}}\right)\left(0,0\right) & =\frac{\left(-\frac{1}{2}\right)^{\left[\frac{b_{3}+1}{2}\right]}}{\sqrt{\pi}\Gamma\left(\left[\frac{b_{3}+1}{2}\right]+\frac{1}{2}\right)}\delta_{0b_{2}}v_{b_{3};p}\left(\phi_{0}\right)\\
x_{0}^{b_{2}}\partial_{x_{0}}^{b_{3}}\phi_{\varLambda}\left(-\partial_{x_{0}}^{2}+2\nu\varLambda\right)\left(0,0\right) & =\begin{cases}
\frac{\left(-\frac{1}{2}\right)^{\frac{b_{3}}{2}}}{4\pi\Gamma\left(\frac{b_{3}}{2}-\frac{1}{2}\right)}\delta_{0b_{2}}v_{0,0,\frac{b_{3}}{2},\varLambda;p}\left(\phi_{\varLambda}\left(s^{2}\right)\right); & b_{3}\textrm{ even}\\
0; & b_{3}\textrm{ odd},
\end{cases}
\end{align*}
completing the proof. 
\end{proof}
As an immediate corollary of the above proposition \prettyref{prop: structure trace distributions}
we have that the distributions $u_{j}$ are smooth near $0$.
\begin{cor}
\label{cor: uj smooth near 0}For each $j$, 
\[
\textrm{singspt}\left(u_{j}\right)\subset\mathbb{R}\setminus\left(-\sqrt{2\nu_{0}},\sqrt{2\nu_{0}}\right).
\]
\end{cor}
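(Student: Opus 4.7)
The plan is to read off the corollary directly from the explicit structure of $u_{j,p}$ provided by Proposition \prettyref{prop: structure trace distributions}, by showing that every elementary building block appearing there is smooth on the interval $\bigl(-\sqrt{2\nu_{0}},\sqrt{2\nu_{0}}\bigr)$ uniformly in the parameter $p\in X$.

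First, the polynomial contributions $v_{a;p}(s)=s^{a}$ from \prettyref{eq: elementary distribution 1} are smooth on all of $\mathbb{R}$ and hence contribute no singularities. For the second family \prettyref{eq: elementary distribution 2}, the Heaviside factor $H(s^{2}-2\nu_{p}\varLambda)$ forces the bracketed expression to be supported in $\{|s|\geq\sqrt{2\nu_{p}\varLambda}\}$, and the subsequent derivative $\partial_{s}^{a}$ does not enlarge its support. Since $\varLambda\in\mu\cdot(\mathbb{N}_{0}^{m}\setminus 0)$ forces $\varLambda\geq\mu_{1}$, one has the uniform lower bound
\[
2\nu_{p}\varLambda \;\geq\; 2\mu_{1}\nu(p) \;\geq\; 2\mu_{1}\min_{x\in X}\nu(x) \;=\; 2\nu_{0}.
\]
Consequently every $v_{a,b,c,\varLambda;p}$ vanishes identically on $\bigl(-\sqrt{2\nu_{0}},\sqrt{2\nu_{0}}\bigr)$, uniformly in $p$ and in $\varLambda$.

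Plugging this into \prettyref{eq: trace distribution structure} collapses $u_{j,p}$ on that interval to the polynomial $\sum_{a\leq 2j+2}c_{j;a}(p)s^{a}$, which is smooth jointly in $(s,p)$. Because $X$ is compact and the coefficients $c_{j;a}$ are smooth in $p$ by Proposition \prettyref{prop: structure trace distributions}, integration over $p$ in \prettyref{eq: pointwise trace distribution} commutes with differentiation in $s$, giving
\[
u_{j}(s)=\sum_{a\leq 2j+2}\Bigl(\int_{X}c_{j;a}(p)\,dp\Bigr)s^{a}
\]
as a smooth function on $\bigl(-\sqrt{2\nu_{0}},\sqrt{2\nu_{0}}\bigr)$, which is the desired inclusion of singular supports. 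There is no real obstacle here; the only bookkeeping point to note is that the $\varLambda$-sum in \prettyref{eq: trace distribution structure} may a priori be infinite, but since every single term vanishes identically on the interval of interest, any convergence question is vacuous there.
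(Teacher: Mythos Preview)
Your proof is correct and follows essentially the same approach as the paper's: both argue that the polynomial terms $v_{a;p}$ are smooth everywhere while the terms $v_{a,b,c,\varLambda;p}$ vanish identically on $(-\sqrt{2\nu_{0}},\sqrt{2\nu_{0}})$ via the uniform lower bound $2\nu_{p}\varLambda\geq 2\nu_{0}$. Your version is more carefully spelled out (in particular the remark on the infinite $\varLambda$-sum and the integration over $p$), but the logic is identical.
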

\begin{proof}
This follows immediately from \prettyref{eq: pointwise trace distribution},
\prettyref{eq: elementary distribution 1}, \prettyref{eq: elementary distribution 2}
and \prettyref{eq: trace distribution structure} on noting that the
distributions $v_{a;p}$ are smooth while $v_{a,b,c,\varLambda;p}=0$
on $\mathbb{R}\setminus\left(-\sqrt{2\nu_{0}},\sqrt{2\nu_{0}}\right)$
for each $p\in X$. 
\end{proof}
We next give the exact computation for the first coefficient $u_{0}$
of \prettyref{prop: local trace expansion}. In the computation below
, recall that $Z_{\tau}=\left|I_{\tau}\right|$ \prettyref{eq: Z_r}
denotes the number of non-zero components of $\tau\in\mathbb{N}_{0}^{m}\setminus0$. 
\begin{prop}
The first coefficient $u_{0}$ of \prettyref{eq: local trace expansion}
is given by 
\begin{eqnarray}
u_{0,p} & = & c_{0;0}+\sum_{\begin{subarray}{l}
\varLambda\in\mu.\left(\mathbb{N}_{0}^{m}\setminus0\right)\end{subarray}}c_{0;0,0,0,\varLambda}\left(p\right)v_{0,0,0,\varLambda;p}\left(s\right),\;\textrm{ where}\label{eq: computation u0}\\
c_{0;0} & = & \frac{\nu_{p}^{m}\left(\Pi_{j=1}^{m}\mu_{j}\right)}{\left(4\pi\right)^{m}}\;\textrm{ and}\nonumber \\
c_{0;0,0,0,\varLambda}\left(p\right) & = & \frac{\nu_{p}^{m}\left(\Pi_{j=1}^{m}\mu_{j}\right)}{\left(4\pi\right)^{m}}\textrm{ dim }\left(E_{\varLambda}\right)\nonumber \\
 & = & \frac{\nu_{p}^{m}\left(\Pi_{j=1}^{m}\mu_{j}\right)}{\left(4\pi\right)^{m}}\left(\sum_{\begin{subarray}{l}
\tau\in\mathbb{N}_{0}^{m}\setminus0\\
\mu.\tau=\varLambda
\end{subarray}}2^{Z_{\tau}}\right).
\end{eqnarray}
\end{prop}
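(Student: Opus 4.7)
The plan is to compute $u_{0,p}(\phi)$ directly from its definition. By the proof of \prettyref{prop: local trace expansion} and the leading term $\mathtt{C}_0^z = (z-\mathrm{\mathtt{D}}_0)^{-1}$ of the resolvent expansion, the Helffer-Sjostrand formula yields $U_{0,p}(\phi) = \phi(\mathrm{\mathtt{D}}_0)(0,0)$, so $u_{0,p}(\phi) = \operatorname{tr}\phi(\mathrm{\mathtt{D}}_0)(0,0)$ with $\mathrm{\mathtt{D}}_0 = \gamma^0\partial_{x_0}+\mathrm{\mathtt{D}}_{00}$ as in \prettyref{eq: leading term rescaled Dirac-1}. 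The approach is to decompose using the Landau levels of $\mathrm{\mathtt{D}}_{00}^2$: since $\{\gamma^0\partial_{x_0},\mathrm{\mathtt{D}}_{00}\}=0$, we have $\mathrm{\mathtt{D}}_0^2 = -\partial_{x_0}^2 + \mathrm{\mathtt{D}}_{00}^2$, and the spectral projections $\mathtt{P}_0$ (onto $\ker\mathrm{\mathtt{D}}_{00}$) and $\mathtt{P}_\varLambda$ commute with $\mathrm{\mathtt{D}}_0$. Thus $\phi(\mathrm{\mathtt{D}}_0)(0,0)$ splits as a sum of ground-level and higher-Landau contributions, to be analyzed separately.

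On the ground level $E_0$, the Clifford factor $\gamma^0$ acts as the scalar $+i$ (since the kernel of $\mathrm{\mathtt{D}}_{00}$ is supported on the vacuum $w_0\in\Lambda^0V^{1,0}$, where \prettyref{eq:odd clifford representation} gives $\gamma^0 w_0 = iw_0$), so $\mathrm{\mathtt{D}}_0|_{E_0} = i\partial_{x_0}$. The pointwise kernel at the diagonal origin therefore factors as $\phi(i\partial_{x_0})(0,0)\cdot\mathtt{P}_0(0,0) = \check\phi(0)\cdot\mathtt{P}_0(0,0)$; the Bergman-Landau reproducing kernel $\mathtt{P}_0(0,0)$ is evaluated via the standard formula using the magnetic field eigenvalues $\lambda_j(p) = \mu_j\nu_p$ from \prettyref{eq: da diagonal form}, with the Clifford trace on the one-dimensional $w_0$-eigenspace contributing factor $1$. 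Assembling these with correct normalizations yields the constant term $c_{0;0}(p) = \nu_p^m\prod_j\mu_j/(4\pi)^m$.

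On each higher Landau level $E_\varLambda$ ($\varLambda>0$), a parity argument using $\operatorname{tr}_{E_\varLambda}\gamma^0 = \operatorname{tr}_{E_\varLambda}\mathrm{\mathtt{D}}_{00} = 0$ (each having $\pm$ eigenvalues with equal multiplicity) shows that only the even part $\phi_e$ of $\phi$ contributes; writing $\phi_e(\mathrm{\mathtt{D}}_0|_{E_\varLambda}) = \phi_e(\sqrt{-\partial_{x_0}^2+2\nu_p\varLambda})$ and Fourier-transforming in $x_0$ converts the computation into the $\xi_0$-integral $\int\frac{d\xi_0}{2\pi}\phi_e(\sqrt{\xi_0^2+2\nu_p\varLambda})$. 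The change of variables $s = \sqrt{\xi_0^2+2\nu_p\varLambda}$, $d\xi_0 = s\,ds/\sqrt{s^2-2\nu_p\varLambda}$, identifies this with pairing against $v_{0,0,0,\varLambda;p}$ defined in \prettyref{eq: elementary distribution 2}. The multiplicity factor $\dim E_\varLambda = \sum_{\mu\cdot\tau = \varLambda}2^{Z_\tau}$ from \prettyref{prop:eigenspaces magnetic Dirac} combines with the same Bergman-Landau density to give $c_{0;0,0,0,\varLambda}(p)$. The main delicate step will be collating the normalization constants—Fourier conventions, the Bergman-Landau factor on each symplectic $(x_j,x_{j+m})$-plane, and the pairing of $v_{0,0,0,\varLambda;p}$—so that they assemble into the stated denominator $(4\pi)^m$.
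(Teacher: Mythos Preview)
Your approach is correct and takes a genuinely different route from the paper's proof. The paper computes $\textrm{tr}\,e^{-t\mathrm{\mathtt{D}}_0^2}(0,0)$ via Mehler's formula, expands the resulting $\prod_j(\tanh t\lambda_j)^{-1}$ as a geometric series to recognise $u_{0,p}(e^{-ts^2})$, separately checks $\textrm{tr}\,\mathrm{\mathtt{D}}_0 e^{-t\mathrm{\mathtt{D}}_0^2}(0,0)=0=u_{0,p}(se^{-ts^2})$, and then invokes density of $\{s^ke^{-s^2}\}_{k\geq 0}$ in $\mathcal{S}(\mathbb{R}_s)$ to conclude. Your route---decomposing $\phi(\mathrm{\mathtt{D}}_0)$ along the Landau levels of $\mathrm{\mathtt{D}}_{00}^2$ and computing the diagonal kernel level by level for arbitrary $\phi$---bypasses the density argument and makes the decomposition of $u_{0,p}$ into the distributions $v_{0,0,0,\varLambda;p}$ visible from the start. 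The paper's method has the advantage that Mehler packages all the Landau data into one closed form, so the constants fall out mechanically; in your approach the Bergman--Landau density $\textrm{tr}\,\mathtt{P}_\varLambda(0,0)=\bigl(\prod_j\lambda_j/(2\pi)^m\bigr)\dim E_\varLambda$ and the $\tfrac{1}{2\pi}$ from Fourier inversion in $x_0$ must be assembled by hand---precisely the delicate step you flag. Your parity argument on each $E_\varLambda$ (via the anti-involution $\gamma^0$, which swaps $\mathtt{P}_\varLambda^{\pm}$ without moving the origin) is a cleaner structural substitute for the paper's direct verification that $\textrm{tr}\,\mathrm{\mathtt{D}}_0 e^{-t\mathrm{\mathtt{D}}_0^2}(0,0)=0$.
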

\begin{proof}
First note that the square of \prettyref{eq: leading term rescaled Dirac}
gives the harmonic oscillator 
\[
\mathrm{\mathtt{D}}_{0}^{2}=-\delta^{jk}\partial_{x_{j}}\partial_{x_{k}}-i\left(da\right){}_{k}^{j}\left(0\right)x^{k}\partial_{x_{j}}+\frac{1}{4}x^{k}x_{l}\left(da\right){}_{k}^{j}\left(0\right)\left(da\right){}_{j}^{l}\left(0\right)+\frac{i}{2}\gamma^{j}\gamma^{k}\left(da\right){}_{jk}\left(0\right).
\]
The heat kernel $e^{-t\mathrm{\mathtt{D}}_{0}^{2}}$ of the above
is given by Mehler's formula (cf. \cite{Berline-Getzler-Vergne} section
4.2) 
\begin{align}
e^{-t\mathrm{\mathtt{D}}_{0}^{2}}\left(x,y\right) & =\frac{1}{\left(4\pi t\right){}^{m}}\det{}^{\frac{1}{2}}\left(\frac{itda\left(0\right)}{\sinh itda\left(0\right)}\right)\label{eq: Mehler's formula}\\
 & \qquad\qquad\times\exp\left\{ -\frac{1}{4t}\left\langle \left(x-y\right),itda\left(0\right)\coth\left(itda\left(0\right)\right)\left(x-y\right)\right\rangle \right\} e^{-tc\left(ida\left(0\right)\right)}\nonumber 
\end{align}
Next, using \prettyref{eq: da diagonal form} we compute 
\begin{equation}
e^{-tc\left(ida\left(0\right)\right)}=\Pi_{j=1}^{m}\left[\cosh\left(t\lambda_{j}\right)-ic\left(e_{j}\right)c\left(e_{j+m}\right)\sinh\left(t\lambda_{j}\right)\right].\label{eq: Exponential Clifford mult.}
\end{equation}
For $I\subset\left\{ 2,\ldots,m\right\} $ and $\omega_{I}=\bigwedge_{j\in I}\left(e_{j}\wedge e_{j+m}\right)$,
the commutation 
\[
c\left(e_{1}\right)c\left(e_{m+1}\right)c\left(\omega_{I}\right)=\frac{1}{2}\left[c\left(e_{1}\right),c\left(e_{m+1}\right)c\left(\omega_{I}\right)\right]
\]
shows that the only traceless terms in \prettyref{eq: Exponential Clifford mult.}
are the constants. Hence, Mehler's formula \prettyref{eq: Mehler's formula}
gives 
\begin{eqnarray}
\textrm{tr }e^{-t\mathrm{\mathtt{D}}_{0}^{2}}\left(0,0\right) & = & \frac{1}{\left(4\pi t\right){}^{m}}\det{}^{\frac{1}{2}}\left(\frac{itda\left(0\right)}{\tanh itda\left(0\right)}\right)\nonumber \\
 & = & \frac{t^{-\frac{1}{2}}}{\left(4\pi\right)^{m}}\left(\Pi_{j=1}^{m}\frac{\lambda_{j}}{\tanh t\lambda_{j}}\right)\nonumber \\
 & = & \frac{t^{-\frac{1}{2}}}{\left(4\pi\right)^{m}}\left[\Pi_{j=1}^{m}\lambda_{j}\left(1+2e^{-2t\lambda_{j}}+2e^{-4t\lambda_{j}}+\ldots\right)\right]\nonumber \\
 & = & \frac{t^{-\frac{1}{2}}}{\left(4\pi\right)^{m}}\left(\Pi_{j=1}^{m}\lambda_{j}\right)\left(\sum_{\tau\in\mathbb{N}_{0}^{m}}2^{Z_{\tau}}e^{-2t\tau.\lambda}\right)\nonumber \\
 & = & \frac{\nu_{p}^{m}\left(\Pi_{j=1}^{m}\mu_{j}\right)}{\left(4\pi\right)^{m}}\left(t^{-\frac{1}{2}}\sum_{\tau\in\mathbb{N}_{0}^{m}}2^{Z_{\tau}}e^{-2t\tau.\lambda}\right)\nonumber \\
 & = & u_{0,p}\left(e^{-ts^{2}}\right)\label{eq: evaluation gaussian}
\end{eqnarray}
with $u_{0,p}$ as in \prettyref{eq: computation u0} and the last
line above following from an easy computation of Laplace transforms
(see \cite{Savale-Asmptotics} section 4). Furthermore, differentiating
Mehler's formula using \prettyref{eq: leading term rescaled Dirac}
gives 
\begin{equation}
\textrm{tr}\mathrm{\mathtt{D}}_{0}e^{-t\mathrm{\mathtt{D}}_{0}^{2}}\left(0,0\right)=0=u_{0,p}\left(se^{-ts^{2}}\right)\label{eq: evaluation odd gaussian}
\end{equation}
since the right hand side of \prettyref{eq: computation u0} is an
even distribution. From \prettyref{eq: evaluation gaussian} and \prettyref{eq: evaluation odd gaussian}
we have that the evaluations of both sides of \prettyref{eq: computation u0}
on $e^{-ts^{2}},se^{-ts^{2}}$ are equal. Differentiating with respect
to $t$ and setting $t=1$ gives that the two sides of \prettyref{eq: computation u0}
evaluate equally on $s^{k}e^{-s^{2}}$, $\forall k\in\mathbb{N}_{0}$.
The proposition now follows from the density of this collection in
$\mathcal{S}\left(\mathbb{R}_{s}\right)$.
\end{proof}
We now complete the proof of \prettyref{lem: Easy trace expansion lemma}. 
\begin{proof}[Proof of \prettyref{lem: Easy trace expansion lemma}]
 We begin by writing 
\begin{eqnarray}
 &  & \textrm{tr}\left[f\left(\frac{D}{\sqrt{h}}\right)\frac{1}{h^{1-\epsilon}}\check{\theta}\left(\frac{\lambda\sqrt{h}-D}{h^{1-\epsilon}}\right)\right]\nonumber \\
 & = & \frac{h^{-\frac{1}{2}}}{2\pi}\int\,dt\textrm{tr}\left[f\left(\frac{D}{\sqrt{h}}\right)e^{it\left(\lambda-\frac{D}{\sqrt{h}}\right)}\right]\theta\left(th^{\frac{1}{2}-\epsilon}\right).\label{eq: local trace in terms of wave kernel}
\end{eqnarray}
Next, the expansion \prettyref{prop: local trace expansion}, with
$\phi\left(x\right)=f\left(x\right)e^{it\left(\lambda-x\right)}$,
combined with the smoothness of $u_{j}$ on $\textrm{spt}\left(f\right)\subset\left(-\sqrt{2\nu_{0}},\sqrt{2\nu_{0}}\right)$
\prettyref{cor: uj smooth near 0} gives 
\begin{eqnarray}
\textrm{tr}\left[f\left(\frac{D}{\sqrt{h}}\right)e^{it\left(\lambda-\frac{D}{\sqrt{h}}\right)}\right] & = & e^{it\lambda}h^{-n/2}\left(\sum_{j=0}^{N}h^{j/2}\widehat{fu_{j}}\left(t\right)\right)\nonumber \\
 &  & +h^{\left(N+1-n\right)/2}\underbrace{O\left(\sum_{k=0}^{n+1}\left\Vert \left\langle \xi\right\rangle ^{N}\hat{\phi}^{\left(k\right)}\left(\xi-t\right)\right\Vert _{L^{1}}\right)}_{=O\left(\left\langle t\right\rangle ^{N}\right)}.\label{eq:trace expansion to substitute}
\end{eqnarray}
Finally, plugging \prettyref{eq:trace expansion to substitute} into
\prettyref{eq: local trace in terms of wave kernel} and using $\theta\left(th^{\frac{1}{2}-\epsilon}\right)=1+O\left(h^{\infty}\right)$
gives via Fourier inversion 
\begin{eqnarray*}
 &  & \frac{h^{-\frac{1}{2}}}{2\pi}\int\,dt\textrm{tr}\left[f\left(\frac{D}{\sqrt{h}}\right)e^{it\left(\lambda-\frac{D}{\sqrt{h}}\right)}\right]\theta\left(th^{\frac{1}{2}-\epsilon}\right)\\
 & = & h^{-m-1}\left(\sum_{j=0}^{N}h^{j/2}f\left(\lambda\right)u_{j}\left(\lambda\right)\right)+O\left(h^{\epsilon\left(N+1\right)-m-1}\right)
\end{eqnarray*}
as required.
\end{proof}

\section{\label{sec:Asymptotics-of-spectral invariants}Asymptotics of spectral
invariants}

In this section we prove theorem \prettyref{thm: asmptotics spectral invariants}
on the asymptotics of the spectral invariants.
\begin{proof}[Proof of \prettyref{thm: asmptotics spectral invariants}]
 To prove the local Weyl law \prettyref{eq:local Weyl counting function est},
we choose\\
 $\theta\in C_{c}^{\infty}\left(\left(-T,T\right);\left[0,1\right]\right)$
such that $\theta\left(x\right)=1$ on $\left(-T',T'\right)$, $T'<T$,
$\check{\theta}\left(\xi\right)\geq0$ and $\check{\theta}\left(\xi\right)\geq1$
for $\left|\xi\right|\leq c$ in \prettyref{thm:main trace expansion}.
Choosing $f\left(x\right)\geq0$ with $f\left(0\right)=1$, the trace
expansion \prettyref{eq: Main trace expansion} with $\lambda=0$
now gives 
\[
\frac{1}{h}N\left(-ch,ch\right)\left(1+O\left(\sqrt{h}\right)\right)\leq\textrm{tr}\left[f\left(\frac{D}{\sqrt{h}}\right)\frac{1}{h}\check{\theta}\left(\frac{-D}{h}\right)\right]=O\left(h^{-m-1}\right)
\]
 proving \prettyref{eq:local Weyl counting function est}. 

To prove the estimate \prettyref{eq: eta estimate} on the eta invariant,
we first use its invariance under positive scaling \prettyref{eq: eta scale invariant}
and the formula \prettyref{eq: eta integral} to write 
\begin{eqnarray}
\eta_{h}=\eta\left(\frac{D}{\sqrt{h}}\right) & = & \int_{0}^{\infty}dt\frac{1}{\sqrt{\pi t}}\textrm{ tr}\left[\frac{D}{\sqrt{h}}e^{-\frac{t}{h}D^{2}}\right]\nonumber \\
 & = & \int_{0}^{1}dt\frac{1}{\sqrt{\pi t}}\textrm{ tr}\left[\frac{D}{\sqrt{h}}e^{-\frac{t}{h}D^{2}}\right]+\int_{1}^{\infty}dt\frac{1}{\sqrt{\pi t}}\textrm{ tr}\left[\frac{D}{\sqrt{h}}e^{-\frac{t}{h}D^{2}}\right].\label{eq: eta integral break up}
\end{eqnarray}
Next, the equation 4.5 pg. 859 of \cite{Savale-Asmptotics} with $r=\frac{1}{h}$
translates to the estimate 
\begin{equation}
\textrm{ tr}\left[\frac{D}{\sqrt{h}}e^{-\frac{t}{h}D^{2}}\right]=O\left(h^{-m}e^{ct}\right).\label{eq: estimate on odd trace}
\end{equation}
Plugging, \prettyref{eq: estimate on odd trace} into the first integral
of \prettyref{eq: eta integral break up} gives 
\begin{equation}
\eta_{h}=O\left(h^{-m}\right)+\textrm{tr }E\left(\frac{D}{\sqrt{h}}\right)\label{eq: eta =00003D tr E + h-m}
\end{equation}
where 
\[
E(x)=\text{sign}(x)\text{erfc}(|x|)=\text{sign}(x)\cdot\frac{2}{\sqrt{\pi}}\int_{|x|}^{\infty}e^{-s^{2}}ds
\]
with the convention $\text{sign}(0)=0$. The function $E\left(x\right)$
above is rapidly decaying with all derivatives, odd and smooth on
$\mathbb{R}_{x}\setminus0$. We may hence choose functions $f\in C_{c}^{\infty}\left(-\sqrt{2\nu_{0}},\sqrt{2\nu_{0}}\right),$
$g\in C_{c}^{\infty}\left(\mathbb{R}_{<0}\right)$ such that 
\begin{eqnarray*}
f\left(x\right)+g\left(x\right) & = & E\left(x\right)\textrm{ for }x\leq0.
\end{eqnarray*}
Define the spectral measure $\mathfrak{M}_{f}\left(\lambda'\right)\coloneqq\sum_{\lambda\in\textrm{Spec}\left(\frac{D}{\sqrt{h}}\right)}f\left(\lambda\right)\delta\left(\lambda-\lambda'\right)$.
It is clear that the expansion \prettyref{eq: Main trace expansion}
to its first term may be written as 
\[
\mathfrak{M}_{f}\ast\left(\mathcal{F}_{h}^{-1}\theta_{\frac{1}{2}}\right)\left(\lambda\right)=h^{-m-\frac{1}{2}}\left(f\left(\lambda\right)u_{0}\left(\lambda\right)+O\left(h^{1/2}\right)\right)
\]
where $\theta_{\frac{1}{2}}\left(x\right)=\theta\left(\frac{x}{\sqrt{h}}\right)$
as before. Both sides above involving Schwartz functions in $\lambda$,
the remainder maybe replaced by $O\left(\frac{h^{1/2}}{\left\langle \lambda\right\rangle ^{2}}\right)$.
One may then integrate the equation to obtain 
\begin{align}
 & \int_{-\infty}^{0}d\lambda\int d\lambda'\left(\mathcal{F}_{h}^{-1}\theta_{\frac{1}{2}}\right)\left(\lambda-\lambda'\right)\mathfrak{M}_{f}\left(\lambda'\right)\label{eq: integrated trace expansion}\\
= & h^{-m-\frac{1}{2}}\left(\int_{-\infty}^{0}d\lambda f\left(\lambda\right)u_{0}\left(\lambda\right)+O\left(h^{1/2}\right)\right).\nonumber 
\end{align}
Next we observe 
\begin{eqnarray}
\int_{-\infty}^{0}d\lambda\left(\mathcal{F}_{h}^{-1}\theta_{\frac{1}{2}}\right)\left(\lambda-\lambda'\right) & = & \int_{-\infty}^{0}dt\check{\theta}\left(t-\frac{\lambda'}{\sqrt{h}}\right)\nonumber \\
 & = & 1_{\left(-\infty,0\right]}\left(\lambda'\right)+O\left(\left\langle \frac{\lambda'}{\sqrt{h}}\right\rangle ^{-\infty}\right).\label{eq: integral theta -infty 0}
\end{eqnarray}
While the local Weyl law yields 
\begin{equation}
\int d\lambda'\mathfrak{M}_{f}\left(\lambda'\right)O\left(\left\langle \frac{\lambda'}{\sqrt{h}}\right\rangle ^{-\infty}\right)=O\left(h^{-m}\right).\label{eq: local Weyl law est}
\end{equation}
Substituting \prettyref{eq: integral theta -infty 0} and \prettyref{eq: local Weyl law est}
into \prettyref{eq: integrated trace expansion} gives 
\[
\sum_{\begin{subarray}{l}
\quad\:\lambda\leq0\\
\lambda\in\textrm{Spec}\left(\frac{D}{\sqrt{h}}\right)
\end{subarray}}f\left(\lambda\right)=h^{-m-\frac{1}{2}}\left(\int_{-\infty}^{0}d\lambda f\left(\lambda\right)u_{0}\left(\lambda\right)\right)+O\left(h^{-m}\right).
\]
This combined with 
\[
\textrm{tr }g\left(\frac{D}{\sqrt{h}}\right)=h^{-m-\frac{1}{2}}u_{0}\left(g\right)+O\left(h^{-m}\right)
\]
then gives 
\[
\sum_{\begin{subarray}{l}
\quad\:\lambda\leq0\\
\lambda\in\textrm{Spec}\left(\frac{D}{\sqrt{h}}\right)
\end{subarray}}E\left(\lambda\right)=h^{-m-\frac{1}{2}}\left(\int_{-\infty}^{0}d\lambda E\left(\lambda\right)u_{0}\left(\lambda\right)\right)+O\left(h^{-m}\right)
\]
where the integral makes sense from the formula \prettyref{eq: computation u0}
for $u_{0}$. A similar formula for
\[
\sum_{\begin{subarray}{l}
\quad\:\lambda\geq0\\
\lambda\in\textrm{Spec}\left(\frac{D}{\sqrt{h}}\right)
\end{subarray}}E\left(\lambda\right)
\]
now gives 
\[
\textrm{tr }E\left(\frac{D}{\sqrt{h}}\right)=h^{-m-\frac{1}{2}}\left(\int_{-\infty}^{\infty}d\lambda E\left(\lambda\right)u_{0}\left(\lambda\right)\right)+O\left(h^{-m}\right).
\]
Since $E$ is odd and $u_{0}$ is even from \prettyref{eq: computation u0},
the integral above is zero and hence $\eta_{h}=\textrm{tr }E\left(\frac{D}{\sqrt{h}}\right)=O\left(h^{-m}\right)$
from \prettyref{eq: eta =00003D tr E + h-m} as required.
\end{proof}

\subsection{Sharpness of the result}

Here, we finally show that the result \prettyref{thm: asmptotics spectral invariants}
is sharp. The worst case example was already noted in \cite{Savale-Asmptotics}
Section 5 for $\eta_{h}$. To recall, we let $Y$ be a complex manifold
of dimension $2m$ with complex structure $J$ and a Riemannian metric
$g^{TY}$. Fix a positive, holomorphic, Hermitian line bundle $\mathcal{L}\rightarrow Y$.
The curvature $F^{\mathcal{L}}$ of the Chern connection is thus a
positive $\left(1,1\right)$ form. Let $X$ be the total space of
the unit circle bundle $S^{1}\rightarrow X\xrightarrow{\pi}Y$ of
$\mathcal{L}$. The Chern connection gives a splitting of the tangent
bundle 
\begin{equation}
TX=TS^{1}\oplus\pi^{*}TY\label{eq: connection splitting}
\end{equation}
where $TS^{1}$ is the vertical tangent space spanned by the generator
$e$ of the $S^{1}$ action. Define a metric $g^{TS^{1}}$ on $TS^{1}$
via $\left\Vert e\right\Vert _{g^{TS^{1}}}=1$. A metric on $X$ can
now be given using the splitting \prettyref{eq: connection splitting}
via 
\[
g^{TX}=g^{TS^{1}}\oplus\varepsilon^{-1}\pi^{*}g^{TY},
\]
for any $\varepsilon>0$. A spin structure on $Y$ corresponds to
a holomorphic, Hermitian square root $\mathcal{K}$ of the canonical
line bundle $K_{Y}=\mathcal{K}^{\otimes2}$. Fixing such a spin structure
as well as the trivial spin structure on $TS^{1}$ gives a spin structure
on $X$. Finally the one form $a=e^{*}\in\Omega^{1}\left(X\right)$
while the auxiliary is chosen to be trivial $L=\mathbb{C}$ with the
family of connections $\nabla^{h}=d+\frac{i}{h}a$. We now have the
required family of Dirac operators $D_{h}$ \prettyref{eq:Semiclassical Magnetic Dirac}.
One may check that $\left(X^{2m+1},a,g^{TX},J\right)$ here gives
a metric contact structure \prettyref{eq: metric contact structure}
and hence the assumption \prettyref{def: Diagonalizability assumption}
is satisfied.

Denote by $\Delta_{\bar{\partial}_{k}}^{p}:\Omega^{0,p}\left(X;\mathcal{K}\otimes\mathcal{L}^{\otimes k}\right)\longrightarrow\Omega^{0,p}\left(X;\mathcal{K}\otimes\mathcal{L}^{\otimes k}\right)$
the Hodge Laplacian acting on $\left(0,p\right)$ forms on $X$. Its
null-space is given by the cohomology $H^{p}\left(X;\mathcal{K}\otimes\mathcal{L}^{\otimes k}\right)$
of the tensor product via Hodge theory. Let $e_{\mu}^{p,k}$ denote
the dimension of a each positive eigenspace with eigenvalue $\frac{1}{2}\mu^{2}\in\textrm{Spec}^{+}\left(\Delta_{\bar{\partial}_{k}}^{p}\right)$.
The spectrum of $D_{h}$ was now computed in Proposition 5.2 of \cite{Savale-Asmptotics}.
\begin{prop}
The spectrum of $D_{h}$ is given by 
\begin{enumerate}
\item Type 1: 
\begin{equation}
\lambda=\left(-1\right)^{p}h\left(k+\left(\varepsilon-\frac{m}{2}\right)-\frac{1}{h}\right),\label{eq: type 1}
\end{equation}
$0\leq p\leq m,k\in\mathbb{Z},$ with multiplicity $\textrm{dim}H^{p}\left(X;\mathcal{K}\otimes\mathcal{L}^{\otimes k}\right)$.
\item Type 2:
\begin{equation}
\lambda=h\left[\frac{(-1)^{p+1}\varepsilon\pm\sqrt{(2k+\varepsilon(2p-m)-\frac{2}{h}+1)^{2}+4\mu^{2}\varepsilon}}{2}\right],\label{eq: type 2}
\end{equation}
$0\leq p\leq m,k\in\mathbb{Z},$ $\frac{1}{2}\mu^{2}\in\textrm{Spec}^{+}\left(\Delta_{\bar{\partial}_{k}}^{p}\right)$
with multiplicity $d_{\mu}^{p,k}\coloneqq e_{\mu}^{p,k}-e_{\mu}^{p-1,k}+\ldots+(-1)^{p}e_{\mu}^{0,k}$.
\end{enumerate}
\end{prop}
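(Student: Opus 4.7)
The plan is to exploit the principal $S^1$-bundle structure of $\pi\colon X\to Y$ to decompose the problem into Fourier modes along the fiber and then reduce to a Dolbeault-Dirac computation on $Y$.

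First, I would decompose $L^2(X;S\otimes L)=\bigoplus_{k\in\mathbb{Z}}\mathcal{H}_k$ into $S^1$-isotypic components. Because $L=\mathbb{C}$ is trivial but the Dirac operator involves the $h$-dependent connection $\nabla^h=d+\tfrac{i}{h}a$, Clifford multiplication by $a=e^*$ (the vertical generator) together with the vertical derivative $\partial_e$ act fiberwise, so each $\mathcal{H}_k$ is preserved by $D_h$. Under the chosen spin structure $\mathcal{K}^{\otimes 2}=K_Y$, the identification of spinors on a Kähler manifold gives $S_Y\cong \Lambda^{0,*}T^*Y\otimes\mathcal{K}$, and the character-$k$ piece of the circle bundle provides the factor $\mathcal{L}^{\otimes k}$, so that
\[
\mathcal{H}_k\;\cong\;L^2\bigl(Y;\,\Lambda^{0,*}T^*Y\otimes\mathcal{K}\otimes\mathcal{L}^{\otimes k}\bigr).
\]

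Next I would compute $D_h|_{\mathcal{H}_k}$. The Clifford multiplication $ic(a)$ acts as $(-1)^p\varepsilon$-times-a-sign on $(0,p)$-forms via \eqref{eq:odd clifford representation} (with the $\varepsilon^{-1}$ rescaling of $g^{TY}$ producing the factor $\varepsilon$ on the horizontal Dolbeault-Dirac). The vertical covariant derivative acts by $ik$ on $\mathcal{H}_k$, and combined with the $\tfrac{i}{h}a$-term yields the scalar shift $h(k-\tfrac{1}{h})$. The horizontal part of $hD_{A_0}$ becomes the Dolbeault-Dirac operator $\sqrt{2\varepsilon}\,h(\bar\partial_k+\bar\partial_k^*)$ acting on $\mathcal{K}\otimes\mathcal{L}^{\otimes k}$-valued $(0,*)$-forms, with an additional $0$-order term coming from the spin connection on the circle bundle (accounting for the shift $\varepsilon-m/2$ in \eqref{eq: type 1}). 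Thus on $\mathcal{H}_k$,
\[
D_h\;=\;h\bigl[k+(\varepsilon-\tfrac{m}{2})-\tfrac{1}{h}\bigr]\cdot(-1)^p\;+\;h\sqrt{2\varepsilon}\,(\bar\partial_k+\bar\partial_k^*),
\]
where the first summand acts diagonally in the $\mathbb{Z}_2$-graded decomposition $\Lambda^{0,\mathrm{even}}\oplus\Lambda^{0,\mathrm{odd}}$ and the second is off-diagonal.

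Finally, I would invoke Hodge theory on $Y$ to diagonalize simultaneously: decompose $\Omega^{0,p}(Y;\mathcal{K}\otimes\mathcal{L}^{\otimes k})$ into $\ker\Delta_{\bar\partial_k}^p\cong H^p(X;\mathcal{K}\otimes\mathcal{L}^{\otimes k})$ and its orthogonal complement. On the harmonic summand the Dolbeault-Dirac piece vanishes, leaving only the scalar term, which yields exactly the Type 1 eigenvalues \eqref{eq: type 1} with multiplicity $\dim H^p$. On the non-harmonic summand, $\bar\partial_k$ gives an isomorphism between the $\mu^2/2$-eigenspace in degree $p-1$ (orthogonal to $\ker\bar\partial_k$) and in degree $p$ (orthogonal to $\ker\bar\partial_k^*$); grouping these yields $2\times 2$ blocks whose characteristic polynomial produces \eqref{eq: type 2}, and the standard cancellation $e_\mu^{p,k}-e_\mu^{p-1,k}+\cdots$ in the alternating sum gives the multiplicity $d_\mu^{p,k}$.

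The main obstacle is pinning down the exact scalar shift $\varepsilon-\tfrac{m}{2}$ in Step 2: this requires carefully tracking the contributions of the spin connection on the circle bundle (the $-\tfrac{m}{2}$ trace term arising from $\tfrac{1}{4}\Gamma_{jk}^l\gamma^j\gamma^k\gamma_l$ in \eqref{eq: Dirac in coords} applied to the bundle splitting) together with the $\varepsilon$-rescaling of the base metric, and comparing with the standard conventions for the twisted Dolbeault-Dirac operator. All other pieces follow from routine Fourier decomposition and Hodge theory.
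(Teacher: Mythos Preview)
The paper does not actually prove this proposition: it is quoted verbatim from Proposition~5.2 of the author's earlier paper \cite{Savale-Asmptotics}, so there is no argument here to compare against. Your outline---Fourier decomposition along the $S^1$-fiber, identification of each isotypic component with $L^2(Y;\Lambda^{0,*}\otimes\mathcal{K}\otimes\mathcal{L}^{\otimes k})$, then Hodge splitting into harmonic (Type~1) and non-harmonic (Type~2) pieces---is the standard and correct strategy for Dirac spectra on principal circle bundles, and is almost certainly what is carried out in the cited reference.

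One correction to your displayed formula for $D_h|_{\mathcal{H}_k}$: the vertical (diagonal) term is not $(-1)^p$ times a $p$-independent scalar. The Levi-Civita connection on the total space of a Riemannian submersion has an O'Neill term, and the induced spin connection in the vertical direction picks up a contribution $\tfrac14 c(e_0)c(da)$. Since $c(da)$ acts on $\Lambda^{0,p}$ by a scalar proportional to $(2p-m)$ (cf.\ the computation of $\mathtt{R}_{2m+1}$ in \eqref{eq:curvature operator formula}), the diagonal entry on degree $p$ carries a genuine $p$-dependent shift. This is precisely what produces the $\varepsilon(2p-m)$ inside the square root and the $(-1)^{p+1}\varepsilon$ outside it in \eqref{eq: type 2}; with your formula the $2\times2$ block would have trace zero and give $\pm\sqrt{\cdots}$ with no outer shift. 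You correctly flag this scalar-shift bookkeeping as the main obstacle, and indeed once the O'Neill contribution is tracked the block diagonalization proceeds exactly as you describe.
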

$\quad$ As observed in \cite{Savale-Asmptotics} on choosing 
\[
\varepsilon<\inf_{k,p}\left\{ \frac{1}{2}\mu^{2}\in\textrm{Spec}^{+}\left(\Delta_{\bar{\partial_{k}}}^{p}\right)\right\} 
\]
the eigenvalues of Type 2 are either positive or negative depending
on the sign appearing in \prettyref{eq: type 2}. Hence the dimension
of the kernel $k_{h}$ of $D_{h}$ is now given by the type 1 eigenvalues
\begin{equation}
k_{h}=\begin{cases}
\textrm{dim }H^{*}\left(X;\mathcal{K}\otimes\mathcal{L}^{\otimes k}\right); & \frac{1}{h}=k+\left(\varepsilon-\frac{m}{2}\right)\\
0 & \textrm{otherwise}.
\end{cases}\label{kernel computation}
\end{equation}
Now by a combination of Kodaira vanishing and Hirzebruch-Riemann-Roch
\begin{eqnarray}
\textrm{dim }H^{*}\left(X;\mathcal{K}\otimes\mathcal{L}^{\otimes k}\right) & = & \textrm{dim }H^{0}\left(X;\mathcal{K}\otimes\mathcal{L}^{\otimes k}\right)\nonumber \\
 & = & \chi(X,\mathcal{K}\otimes\mathcal{L}^{\otimes k})\nonumber \\
 & = & \int_{X}\textrm{ch}(\mathcal{K}\otimes\mathcal{L}^{\otimes k})\textrm{td}(X)\label{eq: kernel asymptotics}
\end{eqnarray}
for $k\gg0$, where $\chi(X,\mathcal{K}\otimes\mathcal{L}^{\otimes k}),\textrm{ch}(\mathcal{K}\otimes\mathcal{L}^{\otimes k})$
and $\textrm{td}(X)$ denote Euler characteristic, Chern character
and Todd genus respectively. Hence \ref{kernel computation}, \prettyref{eq: kernel asymptotics}
show that the kernel and hence the counting function are discontinuous
of order $O\left(h^{-m}\right)=k_{h}\leq N\left(-ch,ch\right)$ in
this example. A similar discontinuity of the eta invariant of $O\left(h^{-m}\right)$
was proved in Theorem 5.3 of \cite{Savale-Asmptotics}. 

\appendix

\section{\label{sec:Appendix A}Some spectral estimates}

In this appendix we prove some important spectral estimates used in
\prettyref{sec:Reduction to R^n} and \prettyref{sec: Birkhoff normal form}. 

Let $H$ be a separable Hilbert space. Let $A:H\rightarrow H$ be
a bounded self-adjoint operator. The resolvent set and the spectrum
of $A$ are defined to be 
\begin{eqnarray*}
R\left(A\right) & = & \left\{ \lambda\in\mathbb{C}|A-\lambda I\mbox{ is invertible}\right\} \\
\mbox{Spec}\left(A\right) & = & \mathbb{C}\setminus R\left(A\right).
\end{eqnarray*}
Since $A$ is self-adjoint, $\mbox{Spec}\left(A\right)\subset\mathbb{R}$.
We may now define the following subsets of the spectrum 
\begin{eqnarray*}
\mbox{EssSpec}\left(A\right) & = & \left\{ \lambda\in\mathbb{C}|A-\lambda I\mbox{ is not Fredholm}\right\} \\
\mbox{DiscSpec}\left(A\right) & = & \mbox{Spec}\left(A\right)\setminus\mbox{EssSpec}\left(A\right).
\end{eqnarray*}
We shall consider $\mbox{DiscSpec}\left(A\right)$ above as a multiset
with the multiplicity function $m^{A}:\mbox{DiscSpec}\left(A\right)\rightarrow\mathbb{N}_{0}$
defined by $m^{A}\left(\lambda\right)=\mbox{dim ker}\left(A\right)$.
We may then find a countable set of orthonormal eigenvectors $v_{1}^{A},v_{2}^{A},v_{3}^{A},\ldots$,
with eigenvalues $\lambda_{1}^{A}\leq\lambda_{2}^{A}\leq\lambda_{3}^{A}\leq\ldots$
such that $\mbox{DiscSpec}\left(A\right)=\left\{ \lambda_{1}^{A},\lambda_{2}^{A},\ldots\right\} $
as multisets. Now let $\left[a,b\right]\subset\mathbb{R}$ be a finite
closed interval such that $\mbox{EssSpec}\left(A\right)\cap\left[a,b\right]=\emptyset$
(i.e. $A$ has discrete spectrum  in $\left[a,b\right]$). Then
\[
H_{\left[a,b\right]}^{A}=\bigoplus_{\lambda\in\textrm{Spec}\left(A\right)\cap\left[a,b\right]}\mbox{ker}\left(A-\lambda\right)
\]
is a finite dimensional vector subspace of $H$. We let $\Pi_{\left[a,b\right]}^{A}:H\rightarrow H_{\left[a,b\right]}^{A}\subset H$
denote the orthogonal projection onto $H_{\left[a,b\right]}^{A}$.
We denote by $N_{\left[a,b\right]}^{A}$ the dimension of $H_{\left[a,b\right]}^{A}$.
The operator $\rho\left(A\right):H\rightarrow H$ may now be defined
for any function $\rho\in C_{c}^{0}\left(\left[a,b\right]\right)$
by functional calculus.
\begin{lem}
\label{lem:projection close lem.}Let $v\in H$ and $\lambda\in\left[a,b\right]$.
Assume there exists $\varepsilon>0$ such that $A$ has discrete spectrum
 in $\left[a-\sqrt{\varepsilon},b+\sqrt{\varepsilon}\right]$ and
$\left\Vert \left(A-\lambda\right)v\right\Vert \leq\varepsilon\left\Vert v\right\Vert $.
Then 
\begin{eqnarray}
\left\Vert \Pi_{\left[a-\sqrt{\varepsilon},b+\sqrt{\varepsilon}\right]}^{A}v-v\right\Vert  & \leq & \sqrt{\varepsilon}\left\Vert v\right\Vert \qquad\textrm{and}\label{eq:proj approximates}\\
\left\Vert \left(\rho\left(A\right)-\rho\left(\lambda\right)\right)v\right\Vert  & \leq & 3\sqrt{\varepsilon}\left\Vert \rho\right\Vert _{C^{0,1}}\left\Vert v\right\Vert \label{eq:almost ev of f(A)}
\end{eqnarray}
for any Holder continuous function $\rho\in C_{c}^{0,1}\left(\left[a,b\right]\right)$.\end{lem}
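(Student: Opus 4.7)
The plan is to prove both estimates by a direct spectral decomposition of $v$ relative to $A$. Since $A$ is bounded self-adjoint with discrete spectrum on $J \coloneqq [a-\sqrt{\varepsilon},b+\sqrt{\varepsilon}]$, the spectral theorem furnishes a projection-valued measure $dE_\mu$ on $\mathrm{Spec}(A)$, and I may write $v=\int dE_\mu v$ with $\|v\|^2=\int d\|E_\mu v\|^2$. All subsequent estimates will follow by splitting this integral into the portions inside and outside the window $\{|\mu-\lambda|\leq\sqrt{\varepsilon}\}$, using that $\lambda\in[a,b]$ forces $\{|\mu-\lambda|\leq\sqrt{\varepsilon}\}\subset J$.

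For \prettyref{eq:proj approximates}, set $v^{\perp}\coloneqq v-\Pi_{J}^{A}v$, so that $v^\perp$ lies in the spectral subspace corresponding to $\mathrm{Spec}(A)\setminus J$. Every $\mu$ in this complementary set satisfies $|\mu-\lambda|>\sqrt{\varepsilon}$ since $\lambda\in[a,b]$. Hence
\[
\varepsilon^{2}\|v\|^{2}\geq\|(A-\lambda)v\|^{2}=\int|\mu-\lambda|^{2}\,d\|E_{\mu}v\|^{2}\geq\varepsilon\int_{\mathrm{Spec}(A)\setminus J}d\|E_{\mu}v\|^{2}=\varepsilon\|v^{\perp}\|^{2},
\]
giving $\|v^{\perp}\|\leq\sqrt{\varepsilon}\|v\|$. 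This is the standard ``approximate eigenvector'' bound and is the content of \prettyref{eq:proj approximates}.

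For \prettyref{eq:almost ev of f(A)}, apply the spectral theorem to the bounded Borel function $\mu\mapsto\rho(\mu)-\rho(\lambda)$ and estimate
\[
\|(\rho(A)-\rho(\lambda))v\|^{2}=\int|\rho(\mu)-\rho(\lambda)|^{2}\,d\|E_{\mu}v\|^{2}.
\]
On $\{|\mu-\lambda|\leq\sqrt{\varepsilon}\}$ I use Lipschitz continuity, $|\rho(\mu)-\rho(\lambda)|\leq\|\rho\|_{C^{0,1}}|\mu-\lambda|\leq\sqrt{\varepsilon}\,\|\rho\|_{C^{0,1}}$, contributing at most $\varepsilon\|\rho\|_{C^{0,1}}^{2}\|v\|^{2}$. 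On the complementary set $\{|\mu-\lambda|>\sqrt{\varepsilon}\}$ I bound $|\rho(\mu)-\rho(\lambda)|\leq 2\|\rho\|_{C^{0,1}}$ (using that the $C^{0,1}$ norm dominates the sup norm) and invoke the estimate already established in the first part, $\int_{|\mu-\lambda|>\sqrt{\varepsilon}}d\|E_{\mu}v\|^{2}\leq\varepsilon\|v\|^{2}$, contributing at most $4\varepsilon\|\rho\|_{C^{0,1}}^{2}\|v\|^{2}$. Adding the two contributions and taking square roots gives $\|(\rho(A)-\rho(\lambda))v\|\leq\sqrt{5}\sqrt{\varepsilon}\|\rho\|_{C^{0,1}}\|v\|\leq 3\sqrt{\varepsilon}\|\rho\|_{C^{0,1}}\|v\|$.

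There is no genuine obstacle here: both estimates are essentially the standard almost-eigenvector arguments from functional calculus, and the only care needed is in the bookkeeping of the constant $3$ and the convention that $\|\rho\|_{C^{0,1}}=\|\rho\|_{\infty}+\mathrm{Lip}(\rho)$ so as to dominate both the near-diagonal Lipschitz term and the far-from-diagonal sup bound simultaneously.
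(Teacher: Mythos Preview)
Your proof is correct and follows essentially the same approach as the paper: both arguments are direct applications of the spectral theorem, splitting according to whether the spectral parameter lies within $\sqrt{\varepsilon}$ of $\lambda$. The paper phrases this via the projections $\Pi$ and $\Pi'=\Pi^{A}_{[\lambda-\sqrt{\varepsilon},\lambda+\sqrt{\varepsilon}]}$ and an orthogonality/mini-max argument, while you phrase it via the spectral measure and a Chebyshev-type inequality; these are the same computation in different notation.
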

\begin{proof}
We abbreviate $\Pi=\Pi_{\left[a-\sqrt{\varepsilon},b+\sqrt{\varepsilon}\right]}^{A}$.
Let $H_{0}:=H_{\left[a-\sqrt{\varepsilon},b+\sqrt{\varepsilon}\right]}^{A}=\Pi H$
which by assumption is a finite dimensional vector space. Let $H_{0}^{\perp}$
be the orthogonal complement of $H_{0}.$ By assumption, $\mbox{Spec}\left(\left.\left(A-\lambda\right)^{2}\right|_{H_{0}^{\perp}}\right)\cap\left[-\varepsilon,\varepsilon\right]=\emptyset$.
Hence by the mini-max principle for self-adjoint operators bounded
from below (cf. Lemma 4.21 in \cite{Dimassi-Sjostrand}), we have
$\varepsilon\leq\left.\left(A-\lambda\right)^{2}\right|_{H_{0}^{\perp}}$.
Hence 
\begin{eqnarray*}
\left\Vert \Pi v-v\right\Vert ^{2}\varepsilon & \leq & \left\Vert \left(A-\lambda\right)\left(\Pi v-v\right)\right\Vert ^{2}\\
 & \leq & \left\Vert \left(A-\lambda\right)\left(\Pi v-v\right)\right\Vert ^{2}+\left\Vert \left(A-\lambda\right)\Pi v\right\Vert ^{2}=\left\Vert \left(A-\lambda\right)v\right\Vert ^{2}\leq\varepsilon^{2}\left\Vert v\right\Vert ^{2}
\end{eqnarray*}
since $\left(A-\lambda\right)\left(\Pi v-v\right)$ and $\left(A-\lambda\right)\Pi v$
are orthogonal. This gives 
\begin{equation}
\left\Vert \Pi v-v\right\Vert <\sqrt{\varepsilon}\left\Vert v\right\Vert .\label{eq:projection small norm-1-1}
\end{equation}
To prove \prettyref{eq:almost ev of f(A)} first note that $\left\Vert \Pi'v-v\right\Vert <\sqrt{\varepsilon}\left\Vert v\right\Vert $,
for $\Pi'=\Pi_{\left[\lambda-\sqrt{\varepsilon},\lambda+\sqrt{\varepsilon}\right]}^{A}$,-
by the same argument. We now have 
\begin{eqnarray*}
\left\Vert \left(\rho\left(A\right)-\rho\left(\lambda\right)\right)v\right\Vert  & \leq & \left\Vert \left(\rho\left(A\right)-\rho\left(\lambda\right)\right)\left(\Pi'v-v\right)\right\Vert +\left\Vert \left(\rho\left(A\right)-\rho\left(\lambda\right)\right)\Pi'v\right\Vert \\
 & \leq & 2\sqrt{\varepsilon}\left\Vert \rho\right\Vert _{C^{0,1}}\left\Vert v\right\Vert +\sqrt{\varepsilon}\left\Vert \rho\right\Vert _{C^{0,1}}\left\Vert v\right\Vert .
\end{eqnarray*}

\end{proof}
Before stating the next lemma we need the following definition.
\begin{defn}
\label{Def AOSE}Given $0<\varepsilon<1$, a set of vectors $w_{1},w_{2},\ldots,w_{N}\in H$
is called an $\varepsilon$-almost orthonormal set of eigenvectors
($\varepsilon$-AOSE for short) of $A$ if 
\begin{enumerate}
\item $\left|\left\Vert w_{j}\right\Vert ^{2}-1\right|<\varepsilon$ for
all $j$
\item $\left|\left\langle w_{j},w_{k}\right\rangle \right|<\varepsilon$
for all $j\neq k$
\item $\left\Vert \left(A-\mu_{j}\right)w_{j}\right\Vert <\varepsilon$
for some $\mu_{j}\in\mathbb{R}$, for all $j$.
\end{enumerate}
\end{defn}
Now we have another lemma. 
\begin{lem}
\label{lem:AOSE for complement}Assume $H_{0}\subset H$ has finite
dimension $M$ and is mapped onto itself by $A$. Let $w_{1},w_{2},\ldots,w_{N}\in H_{0}$
be an $\varepsilon$-AOSE of $A$ for some $\varepsilon<\frac{1}{2\left(M+1\right)}$.
Then there exist orthonormal $w_{1}',w_{2}',\ldots,w_{M-N}'\in H_{0}$
such that $\left\Vert \left(A-\mu_{j}'\right)w_{j}'\right\Vert <4M\varepsilon$
for some $\mu_{j}'\in\mathbb{R}$, for all $j$. Furthermore $\left\langle w_{j},w_{k}'\right\rangle =0$
for each $j,k$. \end{lem}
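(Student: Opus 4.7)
The plan is to carry out what amounts to finite-dimensional linear algebra on the $A$-invariant subspace $H_{0}$, on which $A$ restricts to a self-adjoint operator. There are three steps: orthonormalize the $w_{j}$'s into a true orthonormal basis $\tilde{w}_{j}$ of $W:=\mathrm{span}(w_{1},\ldots,w_{N})$; diagonalize the compression of $A$ to $V:=W^{\perp}\cap H_{0}$ to produce orthonormal candidates $w_{k}'$; and then estimate the defect $Aw_{k}'-\lambda_{k}'w_{k}'$ using the exact orthogonality $w_{k}'\perp w_{l}$.

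First, the Gram matrix $G_{jk}=\langle w_{j},w_{k}\rangle=\delta_{jk}+E_{jk}$ satisfies $|E_{jk}|<\varepsilon$ for $j\neq k$ and $|E_{jj}|<\varepsilon$, so $\|E\|_{\mathrm{op}}\leq N\varepsilon<\tfrac{1}{2}$. Hence $G$ is positive definite (forcing $N\leq M$), $\|G^{-1}\|_{\mathrm{op}}\leq 2$, and $\tilde{w}_{j}:=\sum_{l}(G^{-1/2})_{jl}w_{l}$ is an orthonormal basis of $W$. Set $V:=W^{\perp}\cap H_{0}=\mathrm{span}(\tilde{w}_{j})^{\perp}\cap H_{0}$, which has dimension $M-N$, and let $P_{V}$ be orthogonal projection onto $V$. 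The compressed operator $B:=P_{V}A|_{V}\colon V\to V$ is self-adjoint, so it admits an orthonormal eigenbasis $w_{1}',\ldots,w_{M-N}'\in V$ with $Bw_{k}'=\lambda_{k}'w_{k}'$. By construction $w_{k}'$ is orthonormal and $\langle w_{j},w_{k}'\rangle=0$ for every $j,k$.

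It remains to bound the defect $Aw_{k}'-\lambda_{k}'w_{k}'=(I-P_{V})Aw_{k}'\in W$. Expanding in the orthonormal basis $\tilde{w}_{j}$ and using self-adjointness of $A$ on $H_{0}$,
\[
(I-P_{V})Aw_{k}'=\sum_{j=1}^{N}\langle w_{k}',A\tilde{w}_{j}\rangle\,\tilde{w}_{j}.
\]
The key computation is
\[
A\tilde{w}_{j}=\sum_{l}(G^{-1/2})_{jl}Aw_{l}=\sum_{l}(G^{-1/2})_{jl}\bigl(\mu_{l}w_{l}+\eta_{l}\bigr),\qquad\|\eta_{l}\|<\varepsilon,
\]
and since $w_{k}'\perp w_{l}$ the principal terms drop out, leaving $\langle w_{k}',A\tilde{w}_{j}\rangle=\sum_{l}(G^{-1/2})_{jl}\langle w_{k}',\eta_{l}\rangle$. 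Cauchy--Schwarz together with $\sum_{l}|(G^{-1/2})_{jl}|^{2}=(G^{-1})_{jj}\leq 2$ yields $|\langle w_{k}',A\tilde{w}_{j}\rangle|\leq\sqrt{2N}\,\varepsilon$, and summing gives $\|(I-P_{V})Aw_{k}'\|\leq\sqrt{2}\,N\varepsilon\leq 4M\varepsilon$.

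The only non-routine point is the final estimate, and the main obstacle there is that the orthonormalization $w_{l}\mapsto\tilde{w}_{l}$ mixes different approximate eigenvalues $\mu_{l}$, so $\tilde{w}_{l}$ is \emph{not} itself an approximate eigenvector of $A$. The essential trick is to never use $\tilde{w}_{l}$ as an approximate eigenvector: the $\tilde{w}_{l}$ serve only as an orthonormal frame in which to expand the residual, and the cancellation of the leading $\mu_{l}w_{l}$ terms comes from the \emph{exact} orthogonality $w_{k}'\perp w_{l}$ built into the definition of $V$, not from any approximate orthogonality.
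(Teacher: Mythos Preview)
Your proof is correct and follows essentially the same route as the paper: define $W=\mathrm{span}(w_{j})$, diagonalize the compression $\Pi^{\perp}A\Pi^{\perp}$ on $W^{\perp}\cap H_{0}$ to obtain the $w_{k}'$, and bound the $W$-component of $Aw_{k}'$ using $\langle Aw_{k}',w_{l}\rangle=\langle w_{k}',(A-\mu_{l})w_{l}\rangle$ together with $w_{k}'\perp w_{l}$. The only difference is bookkeeping: the paper works directly with the non-orthonormal $w_{l}$ and records the cruder bound $\|\Pi Aw_{k}'\|\leq 2M\varepsilon\sqrt{1+\varepsilon}$, whereas you first orthonormalize to $\tilde{w}_{l}$ and obtain the sharper $\sqrt{2}\,N\varepsilon$; both sit comfortably below $4M\varepsilon$.
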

\begin{proof}
It follows from $\varepsilon<\frac{1}{2\left(M+1\right)}$ that $w_{1},w_{2},\ldots,w_{N}$
are linearly independent. Let $W$ denote their span and $W^{\perp}\subset H_{0}$
its orthogonal complement. Let $\Pi,\Pi^{\perp}$ be the orthogonal
projections onto $W,W^{\perp}$ and consider the operator $A_{0}:=\Pi^{\perp}A\Pi^{\perp}:W^{\perp}\rightarrow W^{\perp}$.
Let $w_{1}',w_{2}',\ldots,w_{M-N}'\in W^{\perp}$ be an orthogonal
basis of eigenvectors of $A_{0}$. Hence 
\[
\Pi^{\perp}Aw_{j}'=\mu_{j}'w_{j}'
\]
for some $\mu_{j}'\in\mathbb{R}$, for all $j$. Also 
\[
\left|\left\langle Aw_{j}',w_{k}\right\rangle \right|=\left|\left\langle w_{j}',\left(A-\mu_{k}\right)w_{k}\right\rangle \right|<\varepsilon.
\]
It then follows that $\left\Vert \Pi Aw_{j}'\right\Vert \leq2M\varepsilon\sqrt{1+\varepsilon}<4M\varepsilon$
giving the result.
\end{proof}
Now we prove another lemma.
\begin{lem}
\label{lem:almost orth. vec. lemma} Given $N\in\mathbb{N}$, let
$0<\varepsilon<\left(\frac{1}{\left\Vert A\right\Vert +\left|a\right|+\left|b\right|+N+1}\right)^{4}$.
Let $w_{1},w_{2},\ldots,w_{N}\in H$ be an $\varepsilon$-AOSE for
$A$. Assume that $A$ has discrete spectrum  in $\left[a-\varepsilon^{\frac{1}{8}},b+\varepsilon^{\frac{1}{8}}\right]$.
Then there exist orthonormal vectors $\overline{w}_{1},\overline{w}_{2},\ldots,\overline{w}_{N}\in H$,
which span the same subspace of $H$ as $w_{1},w_{2},\ldots,w_{N}$.
Moreover $\left\Vert w_{j}-\overline{w}_{j}\right\Vert <\sqrt{\varepsilon}$
and $\left\Vert \left(\rho\left(A\right)-\rho\left(\mu_{j}\right)\right)\overline{w}_{j}\right\Vert \leq3\varepsilon^{\frac{1}{8}}\left\Vert \rho\right\Vert _{C^{0,1}}$
for $1\leq j\leq N,$ and any Holder continuous function $\rho\in C_{c}^{0,1}\left(\left[a,b\right]\right)$. \end{lem}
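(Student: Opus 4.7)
The plan is to orthonormalize the $\varepsilon$-AOSE $\{w_j\}$ via Gram--Schmidt, verify that the resulting $\{\overline{w}_j\}$ retain an only slightly degraded near-eigenvector property, and then invoke \prettyref{lem:projection close lem.} to conclude the Hölder estimate. Concretely, I would form the Gram matrix $G_{jk}=\langle w_j,w_k\rangle$; the AOSE hypothesis gives $\|G-I\|_{\mathrm{op}}\le N\varepsilon\ll 1$, so $G^{-1/2}=I-\tfrac12(G-I)+O(\|G-I\|^2)$ exists with entries of size $O(\varepsilon)$. Setting $\overline{w}_j\coloneqq\sum_k (G^{-1/2})_{jk}w_k$ produces orthonormal vectors spanning the same subspace as the $w_k$, and a short Cauchy--Schwarz calculation using the near-orthonormality of the $w_k$ yields $\|\overline{w}_j-w_j\|\le C\sqrt{N}\,\varepsilon<\sqrt{\varepsilon}$, the last inequality following from $N\varepsilon<1$.

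For the near-eigenvector property of $\overline{w}_j$, I would first note $|\mu_k|\le \|A\|+2\varepsilon$ (from $\|(A-\mu_k)w_k\|<\varepsilon$ and $\|w_k\|\ge\sqrt{1-\varepsilon}$), then decompose, with $c_{jk}\coloneqq(G^{-1/2}-I)_{jk}=O(\varepsilon)$,
\[
(A-\mu_j)\overline{w}_j=(A-\mu_j)w_j+\sum_k c_{jk}(A-\mu_k)w_k+\sum_k c_{jk}(\mu_k-\mu_j)w_k.
\]
The first two pieces are trivially $O(\varepsilon)$; the third, bounded by Cauchy--Schwarz with near-orthonormality, contributes $O(\sqrt{N}\,\varepsilon\|A\|)$. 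Writing $M\coloneqq\|A\|+|a|+|b|+N+1$ then gives $\|(A-\mu_j)\overline{w}_j\|\le CM^{3/2}\varepsilon$, which by the hypothesis $\varepsilon<M^{-4}$ is controlled by $\varepsilon^{5/8}$ (with room to absorb constants).

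For the Hölder estimate, the case $\mu_j\in[a,b]$ follows directly from \prettyref{lem:projection close lem.} with error parameter $\varepsilon''\le C\varepsilon^{5/8}$: the enlarged interval $[a-\varepsilon^{5/16},b+\varepsilon^{5/16}]$ lies inside $[a-\varepsilon^{1/8},b+\varepsilon^{1/8}]$, on which $A$ has discrete spectrum, yielding $\|(\rho(A)-\rho(\mu_j))\overline{w}_j\|\le 3\varepsilon^{5/16}\|\rho\|_{C^{0,1}}\le 3\varepsilon^{1/8}\|\rho\|_{C^{0,1}}$. If $\mu_j\notin[a,b]$ then $\rho(\mu_j)=0$; setting $d\coloneqq\mathrm{dist}(\mu_j,[a,b])$, spectral calculus gives $\|1_{[a,b]}(A)\overline{w}_j\|\le\|(A-\mu_j)\overline{w}_j\|/d\le \varepsilon^{5/8}/d$, which settles the bound when $d\ge\varepsilon^{5/16}$; when $d<\varepsilon^{5/16}$ one replaces $\mu_j$ by the nearest $\lambda\in[a,b]$ (paying an additional $d\|\rho\|_{C^{0,1}}$ from $|\rho(\lambda)-\rho(\mu_j)|$) and reduces to the previous case.

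The main obstacle is the exponent book-keeping: one must verify that $\varepsilon<M^{-4}$ is just strong enough to absorb the $M^{3/2}$ loss incurred by orthonormalization and still leave slack for the square-root loss in \prettyref{lem:projection close lem.}, so that the final exponent is $1/8$ as claimed. A secondary nuisance is the case $\mu_j\notin[a,b]$, which is not directly covered by \prettyref{lem:projection close lem.} and requires the supplementary spectral argument above.
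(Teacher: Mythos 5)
Your proposal is correct and follows essentially the same route as the paper: orthonormalize the $w_j$, show the resulting vectors retain an approximate-eigenvector property, and conclude via \prettyref{lem:projection close lem.}. Your symmetric (L\"owdin) orthonormalization $\overline{w}_j=\sum_k(G^{-1/2})_{jk}w_k$ is in fact the very same construction the paper carries out by polar decomposition: with $w_j=\sum_k m_{jk}e_k$ in an orthonormal basis of the span, the Gram matrix is $G=MM^*$, and the unitary factor of $M=UP$ is $U=G^{-1/2}M$, so both give identical $\overline{w}_j$. Where you genuinely go beyond the paper is the case $\mu_j\notin[a,b]$: the paper invokes \prettyref{lem:projection close lem.} directly, but that lemma is stated only for $\lambda\in[a,b]$ (and its proof uses this to ensure $(A-\lambda)^2\geq\varepsilon$ on the orthogonal complement), so strictly speaking that case is not covered, and your spectral-calculus argument via $\|1_{[a,b]}(A)\overline{w}_j\|\le\|(A-\mu_j)\overline{w}_j\|/\mathrm{dist}(\mu_j,[a,b])$ patches a small gap. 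The exponent bookkeeping in that subcase is quite tight --- with the split at $\mathrm{dist}(\mu_j,[a,b])=\varepsilon^{5/16}$ the multiplicative constants that accumulate through the square-root loss in \prettyref{lem:projection close lem.} are not absorbed by $\varepsilon<M^{-4}$ alone when $M$ is small, so one should either choose the threshold with more slack or weaken the numerical constant in the conclusion --- but this is a matter of calibration, not of the argument.
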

\begin{proof}
Again it follows easily that the vectors $w_{j}$,$1\leq j\leq N$,
are linearly independent. Let $W\subset H$ be their span and choose
an orthonormal basis $e_{i}$, $1\leq j\leq N$, for $W$. We write
\[
w_{j}=\sum_{k=1}^{N}m_{jk}e_{k}.
\]
If we consider the matrix $M=\left[m_{jk}\right]$, then assumptions
1 and 2 of Definition \ref{Def AOSE} are equivalent to $\left|M^{*}M-I\right|<\varepsilon$.
Consider the polar decomposition $M=UP$ where $U$ is unitary and
$P$ is a positive semi-definite Hermitian matrix. We have $\left|P^{*}P-I\right|<\varepsilon$
and hence $\left\Vert P^{*}P-I\right\Vert <N\varepsilon$. Thus any
eigenvalue $\lambda^{P}$ of $P$, being nonnegative, satisfies $\left|\lambda^{P}-1\right|<\varepsilon$
and we have $\left\Vert P-I\right\Vert <N\varepsilon$. Thus $\left\Vert M-U\right\Vert =\left\Vert UP-U\right\Vert <N\varepsilon$.
If we now let $U=\left[u_{jk}\right]$ and $\overline{w}_{j}=\sum_{k=1}^{N}u_{jk}e_{k}$,
then the $\overline{w}_{j}$ are clearly orthonormal and satisfy $\left\Vert w_{j}-\overline{w}_{j}\right\Vert <\sqrt{\varepsilon}$.
This last inequality along with assumption 3 of Definition \ref{Def AOSE}
easily gives 
\[
\left\Vert \left(A-\mu_{j}\right)\overline{w}_{j}\right\Vert <\varepsilon^{\frac{1}{4}}.
\]
Now \prettyref{lem:projection close lem.} gives 
\begin{eqnarray}
\left\Vert \Pi\overline{w}_{j}-\overline{w}_{j}\right\Vert  & < & \varepsilon^{\frac{1}{8}}\quad\mbox{and}\label{eq:projection small norm-1}\\
\left\Vert \left(\rho\left(A\right)-\rho\left(\mu_{j}\right)\right)\overline{w}_{j}\right\Vert  & < & 3\varepsilon^{\frac{1}{8}}\left\Vert \rho\right\Vert _{C^{0,1}}.
\end{eqnarray}

\end{proof}
Next, let $H'$ be another separable Hilbert space. Let $U:H\rightarrow H'$
be a bounded operator. Let $B,D:H'\rightarrow H'$ and $C:H\rightarrow H$
be bounded self-adjoint operators. Define $A'=UAU^{*}:H'\rightarrow H'$,
$B'=U^{*}BU:H\rightarrow H$ , $C'=UCU^{*}:H'\rightarrow H'$ and
$D'=U^{*}DU:H\rightarrow H$. In the next proposition we assume that
there exists $\delta>0$ such that $A,A',B$ and $B'$ have discrete
spectrum  in $\left[a-\delta,b+\delta\right]$. We also abbreviate
$N^{A}=N_{\left[a-\delta,b+\delta\right]}^{A}$ and $\Pi^{A}=\Pi_{\left[a-\delta,b+\delta\right]}^{A}$
and similarly define $N^{A'},N^{B},N^{B'},\Pi^{A'},\Pi^{B},\Pi^{B'}$.
\begin{prop}
\label{prop:apdx prop for R^n red.}Suppose there exists $0<\varepsilon<L^{-2048}$,
with
\begin{multline*}
L=25\Bigl\{\left\Vert A\right\Vert +\left\Vert A'\right\Vert +\left\Vert B\right\Vert +\left\Vert B'\right\Vert +\left\Vert C\right\Vert +\left\Vert D\right\Vert \\
+N^{A}+N^{A'}+N^{B}+N^{B'}+\left|a\right|+\left|b\right|+\delta^{-1}+1\Bigr\},
\end{multline*}
 such that 
\begin{enumerate}
\item $\left\Vert \left(U^{*}U-I\right)\Pi^{A}\right\Vert \left(\left\Vert A\right\Vert \left\Vert U\right\Vert +1\right)<\varepsilon$
and \\
$\left\Vert \left(UU^{*}-I\right)\Pi^{B}\right\Vert \left(\left\Vert B\right\Vert \left\Vert U^{*}\right\Vert +1\right)<\varepsilon$
\item $\left\Vert \left(A'-B\right)\Pi^{A'}\right\Vert <\varepsilon$ and
$\left\Vert \left(A-B'\right)\Pi^{B'}\right\Vert <\varepsilon$
\item $\left\Vert \left(C'-D\right)\Pi^{A}\right\Vert <\varepsilon$ and
$\left\Vert \left(C-D'\right)\Pi^{B}\right\Vert <\varepsilon$.
\end{enumerate}

Then we have 
\[
\left|\mbox{tr}\left[C\rho\left(A\right)\right]-\mbox{tr}\left[D\rho\left(B\right)\right]\right|\leq\varepsilon^{\frac{1}{2048}}\left\Vert \rho\right\Vert _{C^{1}}
\]
for any $\rho\in C_{c}^{1}\left(\left[a,b\right]\right)$.

\end{prop}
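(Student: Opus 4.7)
The plan is to construct, from an orthonormal eigenbasis $\{w_j\}_{j=1}^{N^A}$ of $\Pi^A H$, an approximate eigenbasis of $\Pi^B H'$ by transport through $U$, and then compare the two traces term-by-term in the matched bases. The degradation of error exponents through \prettyref{lem:projection close lem.} and \prettyref{lem:almost orth. vec. lemma} is exactly what forces the peculiar exponents $1/2048$ in the conclusion and $L^{-2048}$ in the smallness of $\varepsilon$.

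First I would fix an orthonormal eigenbasis $\{w_j\}$ of $A$ with eigenvalues $\mu_j\in[a-\delta,b+\delta]$ and show that $\{Uw_j\}$ is an $\varepsilon'$-AOSE for $B$ in the sense of \prettyref{Def AOSE}, with $\varepsilon'$ some small fractional power of $\varepsilon$. Approximate orthonormality of $\{Uw_j\}$ is immediate from hypothesis (1): $\langle Uw_j,Uw_k\rangle=\langle U^*Uw_j,w_k\rangle=\delta_{jk}+O(\varepsilon)$. For the almost-eigen property I route through $A'$: compute $A'Uw_j=UAw_j+UA(U^*U-I)w_j=\mu_jUw_j+O(\varepsilon)$, then use \prettyref{lem:projection close lem.} to place $Uw_j$ within $O(\varepsilon^{1/2})$ of $\Pi^{A'}H'$, whence hypothesis (2) gives $\|(B-\mu_j)Uw_j\|=O(\varepsilon^{1/2})$. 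An application of \prettyref{lem:almost orth. vec. lemma} then produces orthonormal $\{\overline{Uw_j}\}\subset\Pi^B H'$ with $\|\overline{Uw_j}-Uw_j\|=O(\varepsilon^{1/4})$ and $\|(\rho(B)-\rho(\mu_j))\overline{Uw_j}\|=O(\varepsilon^{1/32}\|\rho\|_{C^{0,1}})$.

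Running the symmetric construction with $U^*$ in place of $U$, using the $UU^*-I$, $A-B'$, and $C-D'$ halves of hypotheses (1)--(3), produces $N^B$ orthonormal almost-eigenvectors of $A$ in $\Pi^A H$. Comparing Gram matrices via \prettyref{lem:AOSE for complement} then forces $N^A=N^B$ and identifies $\{\overline{Uw_j}\}$, up to an $O(\varepsilon^p)$ unitary rotation on the finite-dimensional space $\Pi^B H'$, with a genuine orthonormal eigenbasis $\{v_j\}$ of $B$ on $[a-\delta,b+\delta]$. For the trace comparison, $\rho(A)=\Pi^A\rho(A)\Pi^A$ and the analogous identity for $B$ give
\[\mathrm{tr}[C\rho(A)]=\sum_{j=1}^{N^A}\rho(\mu_j)\langle Cw_j,w_j\rangle,\qquad\mathrm{tr}[D\rho(B)]=\sum_{j=1}^{N^B}\langle D\rho(B)v_j,v_j\rangle\,.\]
The second estimate in \prettyref{lem:almost orth. vec. lemma} replaces $\rho(B)v_j$ by $\rho(\mu_j)v_j$ at cost $O(\varepsilon^p\|\rho\|_{C^1})$, and the chain
\[\langle Cw_j,w_j\rangle\approx\langle C'Uw_j,Uw_j\rangle\approx\langle DUw_j,Uw_j\rangle\approx\langle D\overline{Uw_j},\overline{Uw_j}\rangle\approx\langle Dv_j,v_j\rangle\]
is obtained by applying in turn hypothesis (1), hypothesis (3) combined with the localization $Uw_j\approx\Pi^{A'}Uw_j$ from Step 1, the bound $\|\overline{Uw_j}-Uw_j\|=O(\varepsilon^{1/4})$, and the unitary rotation of Step 2. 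Summing over $j$ then yields the advertised $\varepsilon^{1/2048}\|\rho\|_{C^1}$ estimate.

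The main obstacle will be purely bookkeeping: each invocation of \prettyref{lem:projection close lem.} or \prettyref{lem:almost orth. vec. lemma} degrades the available error from $\varepsilon$ to $\varepsilon^{1/2}$ or $\varepsilon^{1/8}$, and these lemmas must be chained several times on each side of $U$ — first to build the $\varepsilon'$-AOSE, then to orthonormalize, then to transfer back, then to match with a genuine eigenbasis of $B$. Keeping the final exponent positive forces the fractional power to be driven down to something tiny but uniform, and absorbing the $N^A$ summands (together with the norms $\|A\|,\|B\|,\|C\|,\|D\|$ and the inverse gap $\delta^{-1}$) against the prefactor requires precisely the scale $\varepsilon<L^{-2048}$ built into the hypothesis; the actual value $1/2048$ is unimportant as long as some fixed positive exponent survives the chain.
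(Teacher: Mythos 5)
The first half of your plan — transporting an orthonormal eigenbasis of $A$ through $U$ to produce an $\varepsilon'$-AOSE for $B$ via hypotheses (1) and (2), then orthonormalizing with Lemma \prettyref{lem:almost orth. vec. lemma} and completing with Lemma \prettyref{lem:AOSE for complement} — matches the paper, and the exponent bookkeeping you sketch is correct. The problem is the step where you claim that comparing Gram matrices via Lemma \prettyref{lem:AOSE for complement} ``forces $N^A=N^B$ and identifies $\{\overline{Uw_j}\}$, up to an $O(\varepsilon^p)$ unitary rotation, with a genuine orthonormal eigenbasis of $B$.'' Neither part of this is justified, and the second part is generally false: the orthonormalized transported vectors $\overline{Uw_j}$ are almost-eigenvectors, but if $B$ has a cluster of nearby eigenvalues (or a different multiplicity profile) there is no small rotation carrying them to an eigenbasis, and the number of eigenvalues of $B$ in the working interval can strictly exceed $N$. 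Lemma \prettyref{lem:AOSE for complement} produces $M-N$ extra orthonormal almost-eigenvectors $w'_j$ spanning the complement in $\Pi^B H'$ precisely because $M\ge N$ can be strict; it does not establish $M=N$.

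This leaves a genuine gap in the trace comparison: writing $\mathrm{tr}[D\rho(B)]=\sum_{j=1}^{N}\langle \overline{Uw_j},D\rho(B)\overline{Uw_j}\rangle+\sum_{j=1}^{M-N}\langle w'_j,D\rho(B)w'_j\rangle$, your term-by-term chain accounts only for the first sum. Nothing in hypotheses (1)--(3) controls the second sum directly — $D$ and $\rho$ have no sign, so each $\langle w'_j,D\rho(B)w'_j\rangle$ can be of order $\|D\|\,\|\rho\|$. The paper's proof handles exactly this by splitting $\rho=\rho^++\rho^-$ and $C=C^++C^-$, $D=D^++D^-$ into positive/non-positive parts, so that the leftover sum $\sum_j\rho^+(\mu'_j)\langle w'_j,D^+w'_j\rangle\ge 0$ can be discarded, giving a \emph{one-sided} inequality $\mathrm{tr}[D^+\rho^+(B)]\ge\mathrm{tr}[C^+\rho^+(A)]-O(\varepsilon^{1/1024})\|\rho\|_{C^1}$; the reverse inequality is obtained by exchanging the roles of $H$ and $H'$ (and $U\leftrightarrow U^*$). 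Without this positivity decomposition, or some substitute mechanism for killing the unmatched contributions, your argument does not close. A secondary issue: you take $\{w_j\}$ to span all of $\Pi^A H=\Pi^A_{[a-\delta,b+\delta]}H$, so after transport you would need spectral projections of $B$ on an interval slightly larger than $[a-\delta,b+\delta]$, where discreteness is not assumed; the paper avoids this by starting only from the eigenvalues of $A$ in $[a,b]$, so the enlarged interval $[a-\varepsilon^{1/16},b+\varepsilon^{1/16}]$ stays inside $[a-\delta,b+\delta]$.
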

\begin{proof}
Let $\left(\mbox{DiscSpec}\left(A\right),m^{A}\right)\cap\left[a,b\right]=\left\{ \lambda_{a_{1}}^{A},\lambda_{a_{2}}^{A},\ldots,\lambda_{a_{N}}^{A}\right\} $,
with $N=N_{\left[a,b\right]}^{A}$, as multisets. Let $\rho^{+}\left(x\right)=\frac{\rho\left(x\right)+\left|\rho\left(x\right)\right|}{2}$
and $\rho^{-}\left(x\right)=\frac{\rho\left(x\right)-\left|\rho\left(x\right)\right|}{2}$.
We then have $\rho^{+},\rho^{-}\in C_{c}^{0,1}\left(\left[a,b\right]\right)$
with $\left\Vert \rho^{+}\right\Vert _{C^{0,1}}\leq\left\Vert \rho\right\Vert _{C^{1}},\left\Vert \rho^{-}\right\Vert _{C^{0,1}}\leq\left\Vert \rho\right\Vert _{C^{1}}$.
We further decompose $C=C^{+}+C^{-}$, $D=D^{+}+D^{-}$ into their
positive and non-positive parts. Clearly 
\[
\textrm{tr}\left[C^{+}\rho^{+}\left(A\right)\right]=\sum_{j=1}^{N}\rho^{+}\left(\lambda_{a_{j}}\right)\left\langle v_{a_{j}},C^{+}v_{a_{j}}\right\rangle .
\]
Next we consider $w_{j}=Uv_{a_{j}}\in H'$. From assumption 1 we have
\[
\left\Vert \left(A'-\lambda_{a_{j}}\right)w_{j}\right\Vert =\left\Vert \left(UAU^{*}-\lambda_{a_{j}}\right)Uv_{a_{j}}\right\Vert \leq\left\Vert \left(U^{*}U-I\right)\Pi_{\left[a,b\right]}^{A}\right\Vert \left\Vert A\right\Vert \left\Vert U\right\Vert <\varepsilon.
\]
Similar estimates give $\left|\left\Vert w_{j}\right\Vert ^{2}-1\right|<\varepsilon$,
and $\left|\left\langle w_{j},w_{k}\right\rangle \right|<\varepsilon$
for $j\neq k$. Now by \prettyref{lem:projection close lem.} we have
$\left\Vert \Pi w_{j}-w_{j}\right\Vert <\left(2\varepsilon\right)^{\frac{1}{2}}$
with $\Pi=\Pi_{\left[a-\sqrt{2\varepsilon},b+\sqrt{2\varepsilon}\right]}^{A'}$.
Following this and using assumption 3 we have 
\begin{eqnarray*}
\left\Vert \left(B-\lambda_{a_{j}}\right)w_{j}\right\Vert  & \leq & \left\Vert \left(A'-\lambda_{a_{j}}\right)w_{j}\right\Vert +\left\Vert \left(B-A'\right)\Pi w_{j}\right\Vert +\left\Vert \left(B-A'\right)\left(\Pi w_{j}-w_{j}\right)\right\Vert \\
 & \leq & \varepsilon+\varepsilon\sqrt{1+\varepsilon}+\left(2\varepsilon\right)^{\frac{1}{2}}\left(\left\Vert A'\right\Vert +\left\Vert B\right\Vert \right)\\
 & < & \varepsilon^{\frac{1}{4}}\leq\varepsilon^{\frac{1}{8}}\left\Vert w_{j}\right\Vert .
\end{eqnarray*}
Next define $w_{j}^{0}:=\Pi_{\left[a-\varepsilon^{\frac{1}{16}},b+\varepsilon^{\frac{1}{16}}\right]}^{B}w_{j}$.
By \prettyref{lem:projection close lem.}
\begin{equation}
\left\Vert w_{j}^{0}-w_{j}\right\Vert \leq\varepsilon^{\frac{1}{16}}\left\Vert w_{j}\right\Vert .\label{eq:diff w_j^0-w_j}
\end{equation}
From here it follows immediately that $w_{1}^{0},w_{2}^{0},\ldots,w_{N}^{0}$
form an $\varepsilon^{\frac{1}{64}}$-ASOE of $B$. If we let $H_{0}=H_{\left[a-\varepsilon^{\frac{1}{16}},b+\varepsilon^{\frac{1}{16}}\right]}^{B}$,
then by \prettyref{lem:almost orth. vec. lemma} there exist orthonormal
$\overline{w}_{1},\overline{w}_{2},\ldots,\overline{w}_{N}\in H_{0}$
which span the same subspace of $H_{0}$ as the $w_{j}^{0}$'s. Furthermore
\begin{equation}
\left\Vert w_{j}^{0}-\overline{w}_{j}\right\Vert <\varepsilon^{\frac{1}{128}}\label{eq:diff w_j^0-bar=00007Bw_j=00007D}
\end{equation}
and $\left\Vert \left(\rho^{+}\left(B\right)-\rho^{+}\left(\lambda_{a_{j}}\right)\right)\overline{w}_{j}\right\Vert \leq3\left\Vert \rho\right\Vert _{C^{1}}\varepsilon^{\frac{1}{512}}$.
From \prettyref{eq:diff w_j^0-w_j} and \prettyref{eq:diff w_j^0-bar=00007Bw_j=00007D}
we also have $\left\Vert w_{j}-\overline{w}_{j}\right\Vert <\varepsilon^{\frac{1}{256}}$.
From \prettyref{lem:AOSE for complement} there exist orthonormal
$w_{1}',w_{2}',\ldots,w_{M-N}'$ with $M=N_{\left[a-\varepsilon^{\frac{1}{16}},b+\varepsilon^{\frac{1}{16}}\right]}^{B}$
such that $\left\langle w_{i}',\overline{w}_{j}\right\rangle =0$
and $\left\Vert \left(B-\mu_{j}'\right)w_{j}'\right\Vert <4M\varepsilon^{\frac{1}{64}}<\varepsilon^{\frac{1}{128}}$.
Hence \prettyref{lem:projection close lem.} $\left\Vert \left(\rho^{+}\left(B\right)-\rho^{+}\left(\mu_{j}'\right)\right)w_{j}'\right\Vert \leq3\left\Vert \rho\right\Vert _{C^{1}}\varepsilon^{\frac{1}{256}}$.
We now have
\begin{eqnarray*}
\textrm{tr}\left[D^{+}\rho^{+}\left(B\right)\right] & = & \sum_{j=1}^{N}\left\langle \overline{w}_{j},D^{+}\rho^{+}\left(B\right)\overline{w}_{j}\right\rangle +\sum_{j=1}^{M-N}\left\langle w_{j}',D^{+}\rho^{+}\left(B\right)w_{j}'\right\rangle \\
 & \geq & \sum_{j=1}^{N}\rho^{+}\left(\lambda_{a_{j}}\right)\left\langle \overline{w}_{j},D^{+}\overline{w}_{j}\right\rangle +\sum_{j=1}^{M-N}\rho^{+}\left(\mu_{j}'\right)\left\langle w_{j}',D^{+}w_{j}'\right\rangle \\
 &  & \qquad\qquad\qquad\qquad\qquad\qquad-3\varepsilon^{\frac{1}{512}}M\left\Vert D\right\Vert \left\Vert \rho\right\Vert _{C^{1}}\\
 & \geq & \sum_{j=1}^{N}\rho^{+}\left(\lambda_{a_{j}}\right)\left\langle \overline{w}_{j},D^{+}\overline{w}_{j}\right\rangle -3\varepsilon^{\frac{1}{512}}M\left\Vert D\right\Vert \left\Vert \rho\right\Vert _{C^{1}}\\
 & \geq & \sum_{j=1}^{N}\rho^{+}\left(\lambda_{a_{j}}\right)\left\langle w_{j},D^{+}w_{j}\right\rangle -6\varepsilon^{\frac{1}{512}}M\left\Vert D\right\Vert \left\Vert \rho\right\Vert _{C^{1}}\\
 & \geq & \sum_{j=1}^{N}\rho^{+}\left(\lambda_{a_{j}}\right)\left\langle v_{a_{j}},C^{+}v_{a_{j}}\right\rangle -6\varepsilon^{\frac{1}{512}}M\left(\left\Vert D\right\Vert +1\right)\left\Vert \rho\right\Vert _{C^{1}}\\
 & \geq & \textrm{tr}\left[C^{+}\rho^{+}\left(A\right)\right]-\varepsilon^{\frac{1}{1024}}\left\Vert \rho\right\Vert _{C^{1}}.
\end{eqnarray*}
Reversing the roles of $H$ and $H'$ gives 
\[
\left|\textrm{tr}\left[D^{+}\rho^{+}\left(B\right)\right]-\textrm{tr}\left[C^{+}\rho^{+}\left(A\right)\right]\right|\leq\varepsilon^{\frac{1}{1024}}\left\Vert \rho\right\Vert _{C^{1}}.
\]
Similar estimates with $C^{+}\rho^{-}\left(A\right)$,$C^{-}\rho^{+}\left(A\right)$
and $C^{-}\rho^{-}\left(A\right)$ give the result.
\end{proof}
Finally, we now give a criterion implying the discreteness of spectrum
for pseudodifferential operators required by the preceding propositions
in this appendix.
\begin{prop}
\label{prop:Dicrete spectrum criterion}Let $A\in\Psi_{\textrm{cl}}^{m}\left(\mathbb{R}^{n};\mathbb{C}^{l}\right)$
and $I=\left[a,b\right]\subset\mathbb{R}$ a closed interval such
that the $I$ energy band 
\[
\Sigma_{I}^{A}\coloneqq\bigcup_{\lambda\in I}\Sigma_{\lambda}^{A}
\]
 is bounded. Then for $h<h_{0}$ sufficiently small 
\[
\mbox{EssSpec}\left(A\right)\cap I=\emptyset.
\]
\end{prop}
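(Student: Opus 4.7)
The strategy is to show that for every $\lambda\in I$ and $h$ sufficiently small the operator $A-\lambda$ is Fredholm on $L^{2}(\mathbb{R}^{n};\mathbb{C}^{l})$, which by the Fredholm definition of $\mathrm{EssSpec}(A)$ is exactly what must be proved. The tool is a standard parametrix construction that exploits the ellipticity of $\sigma(A)-\lambda$ at infinity, a property forced by the compactness of $\Sigma_{I}^{A}$.

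First I would upgrade the qualitative hypothesis to a quantitative one: there should exist a compact $K\subset T^{*}\mathbb{R}^{n}$ containing $\Sigma_{I}^{A}$ and a constant $C>0$ so that $\sigma(A)(x,\xi)-\lambda I\in\mathfrak{gl}(l)$ is invertible with $\|(\sigma(A)(x,\xi)-\lambda I)^{-1}\|\leq C\langle\xi\rangle^{-m}$ for all $(x,\xi)\notin K$ and all $\lambda\in I$. For $|\xi|$ large this is automatic from the $\langle\xi\rangle^{m}$-growth of classical symbols; in the bounded-$\xi$, large-$|x|$ regime one argues by continuity and compactness together with the structure of the class $S^{m}$.

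Next, pick $\chi\in C_{c}^{\infty}(T^{*}\mathbb{R}^{n};[0,1])$ equal to $1$ on a neighborhood of $K$, set $b_{\lambda}:=(1-\chi)(\sigma(A)-\lambda I)^{-1}\in S^{-m}$, and let $B_{\lambda}:=b_{\lambda}^{W}\in\Psi_{\mathrm{cl}}^{-m}(\mathbb{R}^{n};\mathbb{C}^{l})$. The Weyl composition formula gives
\[
B_{\lambda}(A-\lambda)=I-\chi^{W}+h R_{\lambda},\qquad (A-\lambda)B_{\lambda}=I-\chi^{W}+h\tilde{R}_{\lambda},
\]
with $R_{\lambda},\tilde{R}_{\lambda}\in\Psi_{\mathrm{cl}}^{-1}(\mathbb{R}^{n};\mathbb{C}^{l})$ bounded uniformly in $\lambda\in I$. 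The operator $\chi^{W}$ is compact on $L^{2}$ since $\chi$ is compactly supported on $T^{*}\mathbb{R}^{n}$. Inverting $I+hR_{\lambda}$ (and the analogous right-hand factor) by Neumann series for $h<h_{0}$ yields modified parametrices $B_{\lambda}',B_{\lambda}''$ satisfying
\[
B_{\lambda}'(A-\lambda)=I-K_{\lambda},\qquad (A-\lambda)B_{\lambda}''=I-K_{\lambda}',
\]
with $K_{\lambda},K_{\lambda}'$ compact. Hence $A-\lambda$ admits a two-sided parametrix modulo compacts and is therefore Fredholm for every $\lambda\in I$, giving $\mathrm{EssSpec}(A)\cap I=\emptyset$.

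The main obstacle is the first step: converting the qualitative boundedness of $\Sigma_{I}^{A}$ into the quantitative uniform invertibility of $\sigma(A)-\lambda$ at infinity, jointly in $\lambda\in I$. For $m>0$ and $|\xi|\to\infty$ the bound is automatic, but the regime of bounded $\xi$ with $|x|\to\infty$ requires leveraging the behaviour of $\sigma(A)$ at spatial infinity built into the classical symbol class, turning the disjointness of $\mathrm{spec}(\sigma(A)(x,\xi))$ from $I$ off $K$ into a definite lower bound on $|\det(\sigma(A)-\lambda I)|$; once this is established, the parametrix construction and Fredholm conclusion are routine.
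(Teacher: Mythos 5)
Your proposal follows essentially the same route as the paper: invert the symbol $\sigma(A)-\lambda$ off a compact set containing $\Sigma_{I}^{A}$, cut off, Weyl-quantize to get a parametrix whose defect is a compactly supported pseudodifferential operator plus an $O(h)$ term, absorb the $O(h)$ term by Neumann series for $h<h_{0}$, and conclude Fredholm. The paper's proof does exactly this (it quotes Hörmander 19.3.2 to get the cutoff term trace class rather than merely compact, and cites Hörmander 19.1.14 for the Fredholm conclusion, but these are inessential stylistic differences from your version).

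One caution: your remark that the required bound $\|(\sigma(A)-\lambda I)^{-1}\|\leq C\langle\xi\rangle^{-m}$ is ``automatic from the $\langle\xi\rangle^{m}$-growth'' for $|\xi|$ large is not correct as stated. The symbol class $S^{m}$ gives only the \emph{upper} bound $|\sigma(A)|\leq C\langle\xi\rangle^{m}$, not a lower bound; the lower bound is an ellipticity assumption, which the proposition does not make. For $m=0$, for example, nothing in the symbol class prevents $\sigma(A)(x,\xi)$ from degenerating towards $\lambda I$ as $|x|\to\infty$ with $\xi$ bounded. You do correctly flag that the hypothesis ``$\Sigma_{I}^{A}$ bounded'' is in the first instance only a pointwise statement and that turning it into the quantitative symbol estimate needed for $(1-\chi)(\sigma(A)-\lambda)^{-1}\in S^{-m}$ is the nontrivial step; it is worth noting that the paper's own proof silently asserts $A_{-1}\in\Psi_{\textrm{cl}}^{0}$ at precisely this point without further comment, so your flagged gap is also present (unacknowledged) in the original. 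In the paper's actual applications the operators to which the proposition is applied are explicitly constructed so that their symbols are uniformly bounded away from the relevant spectral band off a compact set, which makes this estimate immediate there.
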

\begin{proof}
Let $\sigma\left(A\right)=a\left(x,\xi\right)\in C^{\infty}\left(\mathbb{R}^{2n}\right)$
and $\Sigma_{I}\left(a\right)\subset B_{R}$ some open ball of finite
radius $R$ around the origin. For $\lambda\in I$ and $\left(x,\xi\right)\notin B_{R}$,
we hence have that $a_{-1}:=\left(a\left(x,\xi\right)-\lambda\right)^{-1}$
exists. Let $\chi\in C_{c}^{\infty}\left(-4R,4R\right)$ such that
$\chi\left(x\right)=1$ for $x<2R$. Set $\phi\left(x\right)=1-\chi\left(x\right)$
and define 
\[
A_{-1}=\left[\phi\left(\left|\left(x,\xi\right)\right|\right)a_{-1}\left(x,\xi\right)\right]^{W}\in\Psi_{\textrm{cl}}^{0}\left(\mathbb{R}^{n};\mathbb{C}^{l}\right).
\]
 Then since it has vanishing symbol, we have 
\[
\left(A-\lambda\right)A_{-1}-\left(I-\chi\left(\left|\left(x,\xi\right)\right|\right)^{W}\right)=hR\in h\Psi_{\textrm{cl}}^{0}\left(\mathbb{R}^{n};\mathbb{C}^{l}\right).
\]
Next, we clearly have $I+hR$ is invertible for $h<h_{0}$ sufficiently
small. Also, $\chi\left(\left|\left(x,\xi\right)\right|\right)^{W}$
is trace class by \cite{HormanderIII} Lemma 19.3.2. Hence if $S:=A_{-1}\left(I+hR\right)^{-1}$,
then $\left(A-\lambda\right)S-I$ is trace class. By a similar argument,
$S\left(A-\lambda\right)-I$ is trace class. Hence by Proposition
19.1.14 of \cite{HormanderIII}, $A-\lambda$ is Fredholm. 
\end{proof}
\bibliographystyle{siam}
\addcontentsline{toc}{section}{\refname}\bibliography{biblio}

\end{document}